\newtheorem{thm}{Theorem}[section]
\newtheorem{cor}[thm]{Corollary}
\newtheorem{lem}[thm]{Lemma}
\newtheorem{prop}[thm]{Proposition}
\theoremstyle{definition}
\newtheorem{defin}[thm]{Definition}
\theoremstyle{remark}
\newtheorem{remark}[thm]{Remark}
\newtheorem{ex}[thm]{Example}
\newtheorem{exs}[thm]{Examples}
\newtheorem{question}[thm]{Question}
\newtheorem{problem}[thm]{Problem}
\numberwithin{equation}{section}
\newcommand{\delete}[1]{} 
\newcommand{\sig}{\sigma}
\def\eps{{\varepsilon}}
\newcommand{\sk}{\vskip 0.3cm}
\newcommand{\ov}{\overline}
\newcommand{\ben}{\begin{enumerate}}
\newcommand{\een}{\end{enumerate}}
\newcommand{\bit}{\begin{itemize}}
\newcommand{\eit}{\end{itemize}}
\newcommand{\TFAE}{The following are equivalent: }
\def\R {{\mathbb R}}
\def\N {{\mathbb N}}
\def\Z {{\mathbb Z}}
\def\Q {{\mathbb Q}}
\def\T {{\mathbb T}}
\def\RUC{{\hbox{RUC\,}^b}}
\def\Iso{{\mathrm{Iso}}\,}
\def\Aut{{\mathrm Aut}\,}
\def\diam{{\mathrm{diam}}}
\def\A{{\mathcal{A}}}
\def\B{{\mathcal{B}}}
\def\F{{\mathcal F}}
\def\K{{\mathcal K}}
\def\Iso{\operatorname{Iso}}
\def\Asp{\operatorname{Asp}}
\def\RUC{\operatorname{RUC}}
\def\RMC{\operatorname{RMC}}
\def\SUC{\operatorname{SUC}}
\def\LUC{\operatorname{LUC}}
\def\UC{\operatorname{UC}}
\def\WAP{\operatorname{WAP}}
\def\Tame{\operatorname{Tame}}
\def\QED{\nobreak\quad\ifmmode\roman{Q.E.D.}\else{\rm Q.E.D.}\fi}
\def\a{\alpha}
\def\om{\omega}
\def\Om{\Omega}
\def\s{\sigma}
\newcommand{\Ga}{A}
\newcommand{\ga}{\gamma}
\newcommand{\del}{\delta}
\newcommand{\br}{\vspace{4 mm}}
\newcommand{\cls}{{\rm{cls\,}}}
\newcommand{\al}{\alpha}
\newcommand{\ep}{\varepsilon}
\newcommand{\Ucal}{\mathcal{U}}
\newcommand{\OC}{\bar{\mathcal{O}}}
\newcommand{\sgn}{{\rm{sgn}}}
\newcommand{\card}{\rm{card\,}}
\newcommand{\Exp}{{\rm{Exp\,}}}
\newcommand{\proj}{{\rm{proj\,}}}
\newcommand{\rest}{\upharpoonright}
\begin{document}

\title[]
{Eventual nonsensitivity and tame dynamical systems}

\author[]{Eli Glasner}
\address{Department of Mathematics,
Tel-Aviv University, Ramat Aviv, Israel}
\email{glasner@math.tau.ac.il}
\urladdr{http://www.math.tau.ac.il/$^\sim$glasner}

\author[]{Michael Megrelishvili}
\address{Department of Mathematics,
Bar-Ilan University, 52900 Ramat-Gan, Israel}
\email{megereli@math.biu.ac.il}
\urladdr{http://www.math.biu.ac.il/$^\sim$megereli}

\date{September 22, 2016}

\begin{abstract}
In this paper we characterize tame dynamical systems and functions in terms of eventual non-sensitivity
and eventual fragmentability.
As a notable application we obtain a neat characterization of tame subshifts 
$X \subset \{0,1\}^{\mathbb Z}$:
for every infinite subset $L \subseteq {\mathbb Z}$ there exists an infinite subset $K \subseteq L$
such that $\pi_{K}(X)$ is a countable subset of $\{0,1\}^K$.
The notion of eventual fragmentability
is one of the properties we encounter which indicate some ``smallness" of a family.
We investigate a ``smallness hierarchy" for families of continuous functions on compact dynamical systems, and link the existence of a ``small" family which separates points of a dynamical system $(G,X)$
to the representability of $X$ on ``good" Banach spaces.
For example, for metric dynamical systems the property of admitting a separating family which is eventually fragmented is equivalent to being tame.
We give some sufficient conditions for coding functions to be tame
and, among other applications, show that certain multidimensional analogues of Sturmian sequences are  tame. We also show that linearly ordered dynamical systems are tame and discuss examples where universal dynamical systems associated with certain Polish groups are tame.
\end{abstract}
 
\subjclass[2010]{Primary 37Bxx, 46-xx; Secondary 54H15, 26A45}

\keywords{Asplund space, entropy, enveloping semigroup,
fragmented function, non-sensitivity, null system,  Rosenthal space, Sturmian sequence,
subshift, symbolic dynamical system, tame function, tame system}

\thanks{This research was supported by a grant of Israel Science Foundation (ISF 668/13)}

\thanks{The first named author thanks the Hausdorff Institute at Bonn for the opportunity to participate in the Program ``Universality and Homogeneity" where part of this work was written, November 2013.}

\maketitle

\setcounter{tocdepth}{1}
\tableofcontents

\section{Introduction}

Tame dynamical systems were introduced by A. K\"{o}hler \cite{Ko}
(under the name ``regular systems") and
their theory was later developed in a series of works by several authors
(see e.g. \cite{Gl-tame}, \cite{GM1}, \cite{GM-rose}, \cite{H}, \cite{KL} and \cite{Gl-str}).   

More recently connections to other areas of mathematics like coding theory,
substitutions and tilings, and even model theory and logic were established 
(see e.g. \cite{Auj}, 
\cite{Ibar}, and the survey \cite{GM-survey} for more details). 
In the present work we introduce a new approach to the study of tame and other related
dynamical systems in terms of ``small families" of functions.

Given a topological semigroup $S$, one way to measure the complexity of a compact dynamical
$S$-system $X$ is to investigate its representability on ``good" Banach spaces, \cite{GM-survey, GM-AffComp,GM-rose}.
Another is to ask whether the points of $X$ can be separated by a norm bounded $S$-invariant family $F \subset C(X)$ of continuous functions on $X$, such that $F$ is ``small" in some sense or another.
Usually being ``small" means that the pointwise closure $\cls_p(F)$ of $F$ (the \emph{envelope of} $F$) in $\R^X$ is a ``small" compactum.
Two typical examples are (a) when $\cls_p(F) \subset C(X)$ and  (b) when
$\cls_p(F)$ consists of fragmented functions
(Baire 1, when $X$ is metrizable). 
It is equivalent to saying that $F$ does not contain independent subsequences.

It turns out (Theorem \ref{t:AspDuality})
that the first case (a) characterizes the reflexively representable dynamical systems,
(these are the dynamical analog of Eberlein compacta) or,
for metric dynamical systems $X$,
the class of WAP systems.

In the second case (b) we obtain a characterization of the class of Rosenthal representable dynamical systems, or, for metric dynamical systems $X$,
the class of tame systems
(a Banach space $V$ is said to be {\em Rosenthal} \cite{GM-rose, GM-AffComp}
if it does not contain an isomorphic copy of the Banach space $l_1$).

We also have a characterization of the intermediate class of Asplund representable,
or hereditarily nonsensitive (HNS) systems.
Namely, an $S$-system $X$ is HNS iff there exists a separating bounded family $F \subset C(X)$ which is a fragmented family (Definition \ref{d:sens-fr-f}).
See Theorem \ref{t:WRN} below.

These subjects are treated in Sections 2, 3 and 4. In section 5 we apply these results to symbolic $\Z$-systems, and, in Section 6, relate them to entropy theory.

In Section \ref{s:tame-type} we give some sufficient conditions on certain
coding functions which ensure that the associated dynamical systems are tame.
In particular, we conclude that some multidimensional Sturmian-like functions are tame (Theorem \ref{multi}).

In Section \ref{s:order} we show that order preserving dynamical systems are tame. The same is true for the system $(H_+(\T),\T)$, where
$H_+(\T)$ is the Polish group of orientation preserving homeomorphisms of the circle $\T$.
Recall that for every topological group $G$ there exists a universal minimal system $M(G)$
and also a universal irreducible affine $G$-system $I\!A(G)$. 
In Section \ref{sec,int}
we discuss some examples, where $M(G)$ and $I\!A(G)$ are tame. 
In particular, we show that this is the case for the group $G=H_+(\T)$, using a well known result of Pestov \cite{Pest98} which identifies $M(G)$ as the tautological action of $G$ on the circle $\T$.

In Section \ref{app}
we present the proof of Theorem \ref{main} communicated to
us by Stevo Todor\u{c}evi\'{c} which asserts that not every compact space is 
Rosenthal-representable. More precisely, $\beta \N$ cannot be $w^*$-embedded into the dual space $V^*$ of a Rosenthal Banach space $V$.

\sk
Our main results in this work are:

\bit 
\item
In Theorem \ref{t:AspDuality} we consider a hierarchy of smallness of bounded $S$-invariant
families $F \subset C(X)$ on a compact dynamical system $X$.
In particular, we investigate when the evaluation map $F \times X \to \R$ comes from the canonical bilinear evaluation map $V \times V^* \to \R$ for good classes of Banach spaces $V$.
We then show that fragmentability and eventual fragmentability of
a separating family $F$ characterize Asplund and Rosenthal representability respectively (Theorem \ref{t:WRN}).
\item
A characterization of tame systems and functions in terms of eventual non-sensitivity
(Theorems \ref{t:tame-f} and \ref{t:tame}).
\item
A characterization of tame symbolic dynamical systems 
(Theorems \ref{subshifts} and \ref{t:3}.2). 
A combinatorial characterization of tame subsets $D \subset \Z$ (i.e., subsets $D$ such that the associated subshift $X_D\subset \{0,1\}^{\Z}$ is tame), Theorem \ref{tame subsets in Z}.
\item Theorems \ref{2811} and \ref{t:circleWalk} (see also Remark \ref{applicBV}) 
give some useful sufficient conditions for the tameness of coding functions. 

\item 
Theorem \ref{t:OrderedAreTame} shows that any linearly ordered dynamical system is tame. 
 Moreover, such dynamical systems are representable on Rosenthal Banach spaces. 
 By Theorem \ref{GenH_+}, the universal minimal $G$-system $M(G)=\T$ for $G=H_+(\T)$ is tame. 
\eit


\subsection{Preliminaries}

We use the notation of \cite{GM-AffComp, GM-survey}.
By a topological space we mean mostly a Tychonoff (completely regular Hausdorff) space. 
The closure operator in topological spaces
will be denoted by $\cls$.  
A topological space $X$ is called \emph{hereditarily Baire} if every closed subspace of $X$ is a Baire space.
A function $f: X \to Y$ is \emph{Baire class 1 function} if the inverse image $f^{-1}(O)$ of every open set is $F_\sigma$ in $X$, \cite{Kech}.

When $(X,\mu)$ is a uniform space, a subset $\Gamma \subset \mu$ is said to be a (uniform) subbase of $\mu$ if 
the finite intersections of the elements of $\Gamma$ form a base of the uniform structure $\mu$. 
We allow uniform structures which are not necessarily Hausdorff (i.e. $\cap \{\al : \al \in \mu\}$
may properly contain the diagonal), such as uniform structures which are induced by a pseudometric. 
On the other hand, a compact space will mean ``compact and Hausdorff". Recall that any compact space $X$ admits a unique compatible 
uniform structure, namely the set of all neighborhoods of the diagonal in $X \times X$. 

For a pair of topological spaces $X$ and $Y$ we let $C(X,Y)$ denote the  
set of continuous functions from $X$ into $Y$.
We will take $C(X)$ to be the Banach 
algebra of {\em bounded} continuous real valued functions even when 
$X$ is not necessarily compact.
For a compact space $X$ we let
$$P(X)=\{\mu \in C(X)^*: \ \|\mu\|=\mu(\textbf{1})=1\}$$
be the $w^*$-compact  subset of $C(X)^*$ which, as usual, is identified with
the space of \emph{probability measures} on $X$.

All semigroups $S$ are assumed to be monoids, 
i.e., semigroups with a neutral element which will be denoted by $e$. 
An {\em action} of $S$ on a space $X$ is a map $\pi : S \times X \to X$ such that
$\pi(st,x) = \pi(s,\pi(t,x))$ for every $s, t \in S$ and $x \in X$. We usually simply write
$sx$ for $\pi(s,x)$.
Also actions are usually \emph{monoidal} (meaning $ex=x, \forall x \in X$). 
A topologized semigroup $S$ is said to be \emph{semitopological} if its multiplication is separately continuous. 

A \emph{dynamical $S$-system} $X$ 
(or an $S$-\emph{space}) 
is a topological space $X$ equipped with a separately continuous action $\pi: S \times X \to X$ of 
a semitopological semigroup $S$. 
As usual, a continuous map $\a : X \to Y$ between two $S$-systems is called an {\em $S$-map} 
when $\a(sx)=s\a(x)$ for every $(s,x) \in S \times X$.  

For every $S$-space $X$ we have a pointwise continuous monoid homomorphism
$j: S \to C(X,X)$, $j(s)=\tilde{s}$,
where $\tilde{s}: X \to X, x \mapsto sx=\pi(s,x)$ is the {\em $s$-translation} ($s \in S$). 

The {\em enveloping semigroup} $E(S,X)$ (or just $E(X)$) is
defined as the pointwise closure $E(S,X)={\cls}_p(j(S))$ of $\tilde{S}=j(S)$ in $X^X$. If $X$ is a compact $S$-system then $E(S,X)$ 
is always a right topological compact monoid.
Algebraic and topological properties of the families $j(S)$ and $E(X)$ reflect the asymptotic dynamical behavior of $(S,X)$. 
More generally, for a family $F \subset C(X,Y)$ we define its \emph{envelope} as the pointwise closure
$\cls_p(F)$ of $F$ in $Y^X$.

By an (invertible)
\emph{cascade} on $X$ we mean an $S$-action $S \times X \to X$, where 
$S:=\N \cup \{0\}$ is the additive semigroup of nonnegative integers
(respectively, $S=(\Z,+)$).
We write it sometimes as a pair $(X,\sigma)$ where $\sigma$ is the $s$-translation of $X$ corresponding to $s=1$
($0$ acts as the identity). 
By $\OC_S(x_0)$ we denote the closure of the orbit $Sx_0$ in $X$.   

Let $F, X,Y$ be topological spaces and $w: F \times X \to Y, w(f,x):=f(x)$ a function.
We say that $F$ has the {\em Double Limit Property} (DLP) on $X$ 
if for every sequence $\{f_n\} \subset F$ and every sequence
 $\{x_m\} \subset X$ the limits
 $$\lim_n \lim_m f_n(x_m) \ \ \  \text{and} \ \ \ \lim_m \lim_n f_n(x_m)$$
 are equal whenever they both exist. 
 We also say that $w$ has the DLP. 

Given a function $f \in C(X)$ on a compact $S$-space $X$ we consider its orbit
$fS : = \{f \circ \tilde{s}:  s \in S\} \subset C(X)$.
One can estimate the dynamical complexity of $f$ by considering the
pointwise closure
$$\cls_p(fS)=f E(S,X) := \{f \circ q : q \in E(S,X)\}$$ in $\R^X.$
Various degrees of ``smallness" of this compactum lead to a natural hierarchy. The classical examples are the almost periodic (AP) and weakly almost periodic (WAP) functions. The norm compactness of $\cls_p(fS)$
in $C(X)$ is the characteristic trait of a Bohr almost periodic function.
For the latter property we have:
\begin{defin} \label{d:WAP} Let $X$ be a compact $S$-system.
\ben
\item $f \in C(X)$ is said to be WAP if one of the following equivalent conditions is satisfied:
\ben
\item
$fS$ is weakly precompact in $C(X)$;
\item $\cls_p(fS) \subset C(X)$;
\item $fS$ has DLP on $X$.
\een
\item
$(S,X)$ is said to be WAP if one of the following equivalent conditions is satisfied:
\ben
\item every member $p \in E(S,X)$
is a continuous function $X \to X$;
\item $\WAP(X)=C(X)$.
\een

\een
\end{defin}

The equivalences can be verified using Grothendieck's classical results. See for example,
\cite[Theorem A4]{BJM} and \cite[Theorem A5]{BJM}.

\begin{remark} \label{r:JCont2} 
If $(S,X)$ is WAP then it is easy to see that $S \times X \to X$ has DLP
(assuming the contrary, choose $f \in C(X)$ which separates the corresponding double limits in $X$. Then $f \notin WAP(X)$).
If the compactum $X$ is metrizable (or, more generally, sequentially compact) then the converse is also true.
To see this use Definition \ref{d:WAP} and the diagonal arguments.

\end{remark}

\sk When $V$ is a Banach space we denote by $B$, or $B_V$,
the closed unit ball of $V$. $B^*=B_{V^*}$ and $B^{**}:=B_{V^{**}}$ will denote the
weak$^*$ compact unit balls in the dual $V^*$ and second dual
$V^{**}$ of $V$ respectively. 

The following DLP characterization of reflexive spaces combines Grothendieck's double limit 
criterion of weak compactness (see for example \cite[Theorem A5]{BJM}) and a well known fact that a Banach space $V$ is reflexive iff $B_V$ is weakly compact.  

\begin{thm} \label{t:ReflChar}
Let $V$ be a Banach space. The following conditions are
equivalent:
\ben
\item $V$ is reflexive.
\item $B$ has DLP on $B^*$.
\item every bounded subset $F \subset V$ has DLP on every bounded
 $X \subset V^*$.
\item $B \subset V$ is weakly compact.
\een
\end{thm}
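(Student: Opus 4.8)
The plan is to derive all four equivalences from the two classical results quoted just before the statement. Kakutani's theorem --- that a Banach space is reflexive if and only if its closed unit ball is weakly compact --- gives $(1)\Leftrightarrow(4)$ at once, so the real content is to tie the two DLP conditions $(2)$ and $(3)$ to weak compactness. For this I would use Grothendieck's double limit criterion in the form: a bounded set $A$ in a Banach space $W$ is relatively weakly compact if and only if for every sequence $\{f_n\}\subset A$ and every sequence $\{x_m\}\subset B_{W^*}$ the iterated limits $\lim_n\lim_m \langle x_m,f_n\rangle$ and $\lim_m\lim_n\langle x_m,f_n\rangle$ agree whenever both exist.

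The key step is $(2)\Leftrightarrow(4)$. I would apply Grothendieck's criterion to $W=V$ and $A=B\subset V$. Its conclusion, that for all $\{f_n\}\subset B$ and all $\{x_m\}\subset B^*=B_{V^*}$ the two iterated limits of $\langle x_m,f_n\rangle$ coincide, is exactly the assertion that $B$ has DLP on $B^*$, i.e. condition $(2)$. Thus $(2)$ is equivalent to $B$ being relatively weakly compact; and since $B$ is norm-closed and convex, hence weakly closed, relative weak compactness coincides with weak compactness, which is $(4)$.

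It then remains to fold in condition $(3)$. The implication $(3)\Rightarrow(2)$ is immediate upon specializing to $F=B$ and $X=B^*$. For $(4)\Rightarrow(3)$, I would assume $B$ is weakly compact, so that $V$ is reflexive; then every scalar multiple $rB$ is weakly compact, and an arbitrary bounded $F\subset V$ lies in some $rB$, whence its weak closure is a weakly closed subset of $rB$ and $F$ is relatively weakly compact. Grothendieck's criterion now yields DLP of $F$ on $B^*$, and a rescaling argument --- writing a bounded $X\subset V^*$ inside $sB^*$ and using the homogeneity of the pairing $\langle\cdot,\cdot\rangle$ --- upgrades this to DLP of $F$ on every bounded $X\subset V^*$, which is $(3)$.

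The argument is essentially bookkeeping around the two cited theorems, and I expect no serious obstacle. The only points needing care are matching the precise form of Grothendieck's criterion, in which the dual sequences are drawn from the unit ball $B^*$, with the DLP as defined in the text, and the harmless rescaling that passes from DLP on $B^*$ to DLP on an arbitrary bounded subset of $V^*$.
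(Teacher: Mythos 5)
Your proposal is correct and takes essentially the same route as the paper, which offers no detailed proof but simply remarks that the theorem ``combines Grothendieck's double limit criterion of weak compactness \ldots and a well known fact that a Banach space $V$ is reflexive iff $B_V$ is weakly compact.'' Your bookkeeping --- Kakutani for $(1)\Leftrightarrow(4)$, Grothendieck applied to $B$ against $B^*$ for $(2)\Leftrightarrow(4)$, and specialization plus rescaling to handle $(3)$ --- fills in exactly the details the authors intended.
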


\sk

Later we will recall 
and examine the definitions of Asplund and tame functions which are based on the concept of \emph{fragmentability} (Definition \ref{d:sens-fr-f} below). 
This concept has its roots in Banach space theory.
We will also introduce various definitions of non-sensitivity for invariant families of functions.
Such families are the main object of study in this paper.
As some recent results show this approach is quite effective and provides
the right level of generality. See for example the
study of HNS systems in \cite{GM1} and some applications to fixed point theorems
in \cite{GM-fp}.

\subsection{Some classes of dynamical systems}

We next briefly describe the two main classes of dynamical systems which will be analyzed.
We begin with a generalized version of the notion of dynamical sensitivity.

\begin{defin}  \label{d:HNS0} (See for example \cite{AuYo, GW1, GM1}.)
Let $(X,\tau)$ be a compact $S$ dynamical system endowed with its unique compatible
uniform structure $\mu$.
\begin{enumerate}
\item
The dynamical $S$-system $X$ has {\em sensitive dependence on initial conditions\/}
(or, simply is {\it sensitive}) if there exists an $\ep \in \mu$ such that for every open nonempty subset
$O \subset X$ there exist $s \in S$ and $x,y \in O$ such that $(sx,sy) \notin \ep$.
\item Otherwise we say that $(S,X)$ is {\em non-sensitive\/}, NS for short.
This means that for every $\ep \in \mu$ there exists an
open nonempty subset $O$ of $X$ such that $sO$ is $\ep$-small in
$(X, \mu)$ for all $s \in S$.
\end{enumerate}
\end{defin}

The following definition (for continuous group actions) originated in \cite{GM1}.

\begin{defin} \label{d:HNS1} \cite{GM1,GM-AffComp}
We say that a compact $S$-system $X$ is \emph{hereditarily non-sensitive} (HNS, in short)
if for every closed nonempty subset $A \subset X$ and for every entourage $\eps$ from the unique compatible uniformity on $X$ there exists
an open subset $O$ of $X$ such that $A \cap O$ is nonempty and $s (A \cap O)$ is $\eps$-small for every $s \in S$.
\end{defin}

\begin{thm} \label{t:HNSenv} \
\begin{enumerate}
\item \cite{GM-rose}
A dynamical system $(S,X)$ is HNS iff $E(S,X)$
(equivalently, $\tilde{S}$), as an $S$-invariant family,
 is fragmented.
\item \cite{GMU}
A metric dynamical system $(S,X)$ is HNS iff $E(S,X)$ is metrizable.
\end{enumerate}
\end{thm}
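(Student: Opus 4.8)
For part (1) I would first observe that Definition \ref{d:HNS1} is, after unwinding, exactly the assertion that the family $\tilde S = j(S)$ of continuous translations $\tilde s : X \to X$ is fragmented as a family of maps into the uniform space $(X,\mu)$: for a closed $A$ and an entourage $\ep$, an open $O$ with $A \cap O \ne \emptyset$ and $s(A\cap O)$ $\ep$-small for every $s$ is precisely a nonempty relatively open slice of $A$ on which every member of $\tilde S$ oscillates within $\ep$. (Passing from arbitrary subsets to closed ones is harmless, since a slice that works for $\cls(A)$, intersected with $A$, works for $A$.) It then remains to see that $\tilde S$ is fragmented iff its pointwise closure $E(S,X)=\cls_p(\tilde S)$ is. One direction is trivial, a subfamily of a fragmented family being fragmented. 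For the other, fix $A$ and $\ep$, choose a symmetric $\delta \in \mu$ with $\delta\circ\delta\circ\delta \subset \ep$, and use fragmentability of $\tilde S$ to produce a relatively open $U \subset A$ with $\tilde s(U)$ $\delta$-small for all $s$. Given $p \in E(S,X)$ and $x,y \in U$, approximate $p$ pointwise at the two points $x,y$ simultaneously by a single $\tilde s$; a three-fold composition then forces $(p(x),p(y)) \in \ep$, so $p(U)$ is $\ep$-small and $E(S,X)$ is fragmented. This also accounts for the parenthetical ``equivalently'' in the statement.

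For part (2) I would reduce, via part (1), to proving that for compact metric $X$ the family $E=E(S,X)\subset X^X$ is fragmented iff the (pointwise) compactum $E$ is metrizable. The first ingredient is that each $p\in E$ is of Baire class $1$: for metrizable, hence sequential, $E$ every $p$ is a pointwise limit of a sequence of the continuous translations $\tilde s$, and a fragmented map on a compact metric space is in any case Baire-$1$. For the implication metrizable $\Rightarrow$ fragmented, I would combine this Baire-$1$ structure with a Baire category argument exploiting the second countability of $E$: the continuity points of the countably many members of a dense subfamily form dense $G_\delta$ subsets of each closed $A$, and their intersection should deliver a slice of $A$ on which the whole family $E$ oscillates within $\ep$. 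For the converse, fragmented $\Rightarrow$ metrizable, I would use that a pointwise-compact fragmented family of Baire-$1$ functions on a Polish space is a Rosenthal compactum whose fragmentation furnishes a countable, topology-generating set of evaluations $p \mapsto p(x)$, embedding $E$ into a metrizable power of $X$; in the language of Bourgain--Fremlin--Talagrand theory, fragmentability is precisely what excludes the nonmetrizable Rosenthal compacta.

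I expect the genuine obstacle to lie in part (2), specifically in the passage between fragmentability and metrizability. Converting the pointwise, slice-by-slice control supplied by fragmentability into a global countable structure on $E$ (and, conversely, extracting uniform, family-wide oscillation control from mere metrizability of $E$) is exactly where the Baire-$1$ and Rosenthal-compactum input becomes indispensable. By contrast, part (1) is a formal reformulation of the HNS condition together with the elementary $\delta\circ\delta\circ\delta \subset \ep$ closure lemma, and I would treat it as essentially routine.
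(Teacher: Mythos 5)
A preliminary remark: the paper itself does not prove Theorem \ref{t:HNSenv} --- part (1) is quoted from \cite{GM-rose} and part (2) from \cite{GMU} --- but the relevant machinery is reproduced in Definition \ref{d:HNS}, Lemma \ref{l:FrFa} and Theorem \ref{t:HNS}, and it is against this that your proposal should be measured. Your part (1) is correct and is exactly the paper's treatment: HNS is verbatim the fragmentedness of the family $\tilde S$ (the paper calls this equivalence ``evident from the definitions''), and your $\delta\circ\delta\circ\delta\subset\varepsilon$ argument is a correct proof of the fact the paper quotes as \cite[Lemma 2.8]{GM-rose}, that the pointwise closure of a fragmented family is again fragmented. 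Part (2), however, has a genuine gap in each direction. For metrizable $\Rightarrow$ HNS, your key step --- that common continuity points of the countably many members of a dense subfamily ``deliver a slice of $A$ on which the whole family $E$ oscillates within $\varepsilon$'' --- is an invalid inference: a common continuity point gives, for each map separately, a neighborhood of small oscillation, but these neighborhoods shrink with the map and no single slice results. The Bernoulli shift makes this concrete: there the dense countable subfamily $\tilde S=\{\sigma^n\}$ of $E$ consists of everywhere continuous maps, so every point of every closed $A$ is a common continuity point, yet the system is sensitive and no slice of uniform small oscillation exists. The argument that actually works is Lemma \ref{l:FrFa}.2 (that is, \cite[Prop. 2.4]{GMU}), a Namioka-type category argument applied to $j\colon X\to C(E,X)$, $j(x)(p)=p(x)$, with $C(E,X)$ carrying the sup-metric: metrizability of $E$ makes $C(E,X)$ separable, density of the continuous members $\tilde S$ in $E$ makes $j$-preimages of closed balls closed (the sup over $E$ equals the sup over $\tilde S$), and Baire category inside each closed $A$ then produces a relatively open set on which all of $E$ has oscillation at most $\varepsilon$; it is this uniformity that your sketch never obtains.

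For HNS $\Rightarrow$ metrizable, your closing claim that ``fragmentability is precisely what excludes the nonmetrizable Rosenthal compacta'' is false if fragmentability refers to the individual members of $E$: in the Sturmian system (Example \ref{e:tameNOThns}.3) every $p\in E$ is a fragmented map, since the system is tame, yet $E$ contains a copy of the two arrows space and is not metrizable. What separates HNS from tame is fragmentedness of $E$ (equivalently of $\tilde S$) \emph{as a family}, and the missing mechanism behind your ``countable, topology-generating set of evaluations'' is the separability of the pseudometric $d_{S}(x,y)=\sup_{s\in S}d(sx,sy)$, which is condition (9) of Theorem \ref{t:HNS} and follows from family-fragmentedness via Lemma \ref{r:fr1}.6 and Lemma \ref{r:fr1}.5. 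Given a countable $d_{S}$-dense set $X_0\subset X$, the restriction map $E\to X^{X_0}$, $p\mapsto p|_{X_0}$, is continuous and injective --- if $p,q\in E$ agree on $X_0$ they agree on $X$, because $d\bigl(p(x'),p(x)\bigr)\le d_{S}(x',x)$ for every $p\in E$, as one sees by passing to the limit along a net $\tilde s_i\to p$ --- and a continuous injection of a compactum into the metrizable space $X^{X_0}$ is an embedding, so $E$ is metrizable. Without isolating equi-fragmentedness of the family (rather than Baire-one-ness of its members) as the hypothesis that produces $X_0$, this direction cannot be completed, and indeed the tame non-HNS examples show it would otherwise be false.
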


\sk

The second class of dynamical systems we will be interested in is the class of tame dynamical systems.
For the history of this notion, which is originally due to K\"{o}hler \cite{Ko}, we refer to
\cite{Gl-tame} and \cite{GM-rose}.
The following principal result is a dynamical analog
of a well known BFT dichotomy \cite{BFT,TodBook}.

\begin{thm} \label{D-BFT}
\cite{Ko, GM1, GM-AffComp} \emph{(A dynamical version of BFT dichotomy)}
Let $X$ be a compact metric dynamical $S$-system and let $E=E(X)$ be its
enveloping semigroup. We have the following alternative. Either
\begin{enumerate}
\item
$E$ is a separable Rosenthal compact (hence $E$ is Fr\'echet and ${card} \; {E} \leq
2^{\aleph_0}$); or
\item
the compact space $E$ contains a homeomorphic
copy of $\beta\N$ (hence ${card} \; {E} = 2^{2^{\aleph_0}}$).
\end{enumerate}
The first possibility
holds iff $X$ is a tame $S$-system.
\end{thm}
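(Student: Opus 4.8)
The statement is a transcription of the Bourgain--Fremlin--Talagrand (BFT) dichotomy \cite{BFT, TodBook} into the language of enveloping semigroups, so the plan is to apply the scalar BFT dichotomy to the orbits of individual functions and then transport the conclusion to $E=E(S,X)$. First I would fix a countable family $\{f_n\} \subset C(X)$ separating the points of the compact metric space $X$; since $\{f_n\}$ separates points, the map $p \mapsto (f_n \circ p)_n$ is a topological embedding of $X^X$ (hence of the compact set $E$) into $\prod_n \R^X$, whose $n$-th coordinate projection sends $E$ onto $\cls_p(f_n S)=f_n E$. Thus $E$ is a closed subspace of $\prod_n f_n E$, and everything is governed by the scalar compacta $f_n E \subset \R^X$. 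Tameness enters through its characterization via $\ell_1$: $(S,X)$ is tame precisely when no orbit $fS$ ($f \in C(X)$) contains a subsequence equivalent to the unit vector basis of $\ell_1$, equivalently (by Rosenthal's $\ell_1$-theorem) when every sequence in every $fS$ has a pointwise convergent subsequence.

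Suppose first we are in the tame case, so that each $f_n E = \cls_p(f_n S)$ is, by the scalar BFT dichotomy, a Rosenthal compactum, i.e. a pointwise compact set of Baire class $1$ functions on the Polish space $X$. Since the class of Rosenthal compacta is stable under countable products (realize $\prod_n f_n E$ as a pointwise compact family of Baire class $1$ functions on the Polish sum $\bigsqcup_n X$) and under closed subspaces, it follows that $E$ is itself a Rosenthal compactum; in particular every $p \in E$ is a Baire class $1$ (fragmented) self-map of $X$, and $E$ is Fr\'echet because Rosenthal compacta are angelic. Separability I would deduce from the metric structure: as $X$ is compact metric, $C(X,X)$ is separable in the uniform topology, so $\tilde S \subset C(X,X)$ has a countable subset that is uniformly, and a fortiori pointwise, dense in $E$. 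Finally, a separable Fr\'echet compactum has cardinality at most $2^{\aleph_0}$, since each point is the limit of one of the at most $2^{\aleph_0}$ sequences drawn from a fixed countable dense set.

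In the remaining case some orbit $fS$ contains an $\ell_1$-sequence $\{f \circ \tilde{s}_n\}$, and I would show that this already forces a copy of $\beta\N$ inside $E$. By Rosenthal's theorem such a sequence is independent: there are reals $a<b$ for which the family of sets $A_n = \{x \in X: f(s_n x) > b\}$ and $B_n = \{x \in X : f(s_n x) < a\}$ is boolean independent. For each ultrafilter $\mathcal U$ on $\N$ the limit $p_{\mathcal U} = \lim_{\mathcal U} \tilde{s}_n$ exists in the compact set $E$, and independence lets me separate $p_{\mathcal U}$ from $p_{\mathcal V}$ for $\mathcal U \neq \mathcal V$ by evaluating $f$ at a suitable point, so $\mathcal U \mapsto p_{\mathcal U}$ is a continuous injection of $\beta\N$ into $E$, hence an embedding. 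Since $\beta\N$ has cardinality $2^{2^{\aleph_0}}$ while a separable Rosenthal compactum has cardinality at most $2^{\aleph_0}$, the two alternatives are mutually exclusive, and the identification of the first one with tameness follows from the $\ell_1$-criterion recalled above.

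The main obstacle, I expect, is not the scalar dichotomy (which we may cite) but the two transport steps. On the tame side one must assemble Rosenthal-compactness of $E$ from that of the coordinate images $f_n E$ while simultaneously securing separability and the cardinality bound; on the non-tame side one must lift the copy of $\beta\N$ from a single orbit back up to the enveloping semigroup, where the delicate point is verifying that the independence of $\{f \circ \tilde s_n\}$ genuinely separates the ultrafilter limits $p_{\mathcal U}$ in $X^X$ and not merely their images under $f$.
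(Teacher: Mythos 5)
The paper does not actually prove Theorem \ref{D-BFT}: it is stated with citations to \cite{Ko}, \cite{GM1}, \cite{GM-AffComp}, so there is no in-paper proof to compare with, and your argument has to be judged against the standard proof in those references --- which it reconstructs correctly in all essentials (embedding $E$ into the countable product $\prod_n\cls_p(f_nS)$ over a separating sequence, applying the scalar BFT/Rosenthal dichotomy coordinatewise in the tame case, and extracting $\beta\N$ from an independent sequence via ultrafilter limits in the untame case). Three points need tightening. (i) A sequence equivalent to the unit basis of $l_1$ need not itself be independent (add a constant function to an independent sequence); what you need is an independent \emph{subsequence}, which exists by Theorem \ref{f:sub-fr}, since an $l_1$-sequence in $C(X)$, $X$ compact, has no pointwise convergent subsequence. (ii) In the $\beta\N$ step, injectivity of $\mathcal{U}\mapsto p_{\mathcal{U}}$ requires, for each $M\subseteq\N$, a single point $x_M$ lying in \emph{all} the sets indexed by $M$ on one side and in \emph{all} those indexed by $\N\setminus M$ on the other; an infinite intersection of your open sets $A_n,B_n$ may be empty, so replace them by the closed sets $\{f\circ\tilde{s}_n\ge b\}$ and $\{f\circ\tilde{s}_n\le a\}$ (which inherit independence) and invoke compactness of $X$. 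Note that separating the values $f(p_{\mathcal{U}}(x_M))\ge b > a \ge f(p_{\mathcal{V}}(x_M))$ already separates $p_{\mathcal{U}}$ from $p_{\mathcal{V}}$ as elements of $X^X$, so the ``delicate point'' you flag at the end is not actually an issue. (iii) The characterization of tameness you take as input is not the paper's definition (Definition \ref{d:tame}: every $p\in E(X)$ is fragmented); to avoid any circularity with Theorem \ref{D-BFT} itself, derive it from Theorem \ref{f:sub-fr} together with Lemma \ref{r:fr1}: tameness gives $\cls_p(fS)=fE\subset\B_1(X)$ for every $f\in C(X)$, and conversely, if $\cls_p(f_nS)$ consists of Baire~1 functions for a separating sequence $(f_n)$, then Lemma \ref{r:fr1}.9 makes every $p\in E$ fragmented, hence Baire~1 by Lemma \ref{r:fr1}.2.
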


Thus, a metrizable dynamical system is tame iff
$\card(E(X)) = 2^{\aleph_0}$ iff $E(X)$ is a Rosenthal compactum (or a Fr\'echet space).
Moreover, by \cite{GMU} a metric $S$-system is tame iff every $p \in E(X)$
is a Baire class 1 map $X \to X$.
This result led us to the following definition for general (not necessarily, metrizable) systems.

\begin{defin} \label{d:tame} \cite{GM-rose, GM-AffComp}
A compact $S$-system $X$ is said to be \emph{tame} if every $p \in E(X)$ is a fragmented map
(equivalently, Baire 1, when $X$ is metrizable).
\end{defin}

There are several other well known characterizations of tameness and in the present work we will
obtain two more:  by Theorem \ref{t:tame-f} $(S,X)$ is tame iff
$fS$ is an eventually fragmented family for every $f \in C(X)$, and, in Theorem \ref{t:tame}
we show that $(S,X)$ is tame iff
$\tilde{S} \subset X^X$,
 as an $S$-invariant family, is eventually weakly fragmented (see Definitions \ref{d:sens-fr-f} and \ref{d:E-HNS}).

Note that, as it directly follows from the definitions (when considering the enveloping semigroup characterizations),
 every WAP system is HNS and every HNS is tame.

\sk

\subsection{Some classes of functions}

A \emph{compactification} of $X$ is a pair $(\nu,Y)$ where $Y$ is a
compact (Hausdorff, by our convention) 
space and $\nu: X \to Y$ is a continuous map with a dense range. 
When $X$ and $Y$ are $S$-spaces and $\nu$ is an $S$-map we say that $\nu$ is 
an \emph{$S$-compactification}.

\begin{defin} \label{d:funct} 
Let $X$ be a (not necessarily, compact) $S$-system and let $f \in C(X)$.
\ben
\item
We say that $f$ \emph{comes} from the $S$-compactification $q: X \to Y$ 
if there exists a continuous function $f': Y \to \R$ such that $f=f' \circ q$.
 \item
 We say that $f \in C(X)$ is \emph{RMC} ({\it right multiplicatively continuous}) if
$f$ comes from some $S$-compactification $q: X \to Y$.
For every compact $S$-system $X$ we have $\RMC(X) = C(X)$.
\item
If we consider only jointly continuous $S$-actions on $Y$ then the
functions $f: X \to \R$ which come from such $G$-compactifications
$q: X \to Y$ are
{\it right uniformly continuous}. Notation: $f \in \RUC(X)$.

\item
$f$ is said to be: a)
 \emph{WAP}; b) \emph{Asplund}; c)  \emph{tame}
if  $f$ comes from an $S$-compactification $q: X \to Y$
 such that $(S,Y)$ is: WAP, HNS or tame respectively.
For the corresponding classes of functions we use the notation:
$\WAP(X), \Asp(X), \Tame(X)$, respectively. Each of these is a
norm closed
$S$-invariant subalgebra of the $S$-algebra
$\RMC(X) \subset C(X)$
and
$$ \WAP(X) \subset \Asp(X) \subset \Tame(X).$$
For more details see \cite{GM-AffComp,GM-survey}.
\item
Note that as a particular case of (3) we have defined the algebras
$\WAP(S), \Asp(S), \Tame(S)$ corresponding to the left action of $S$ on $X:=S$.
\een
\end{defin}

Below we give also 
characterizations of tame and Asplund functions in terms of tame and fragmented families, respectively. 
See Theorems \ref{l:functCases}, \ref{t:AspF} and \ref{t:tame-f}.

\begin{defin} \label{d:cyclic} \cite{GM1,GM-AffComp}
We say that a compact dynamical $S$-system $X$ is \emph{cyclic} if there exists $f \in C(X)$ such that $(S,X)$
is topologically $S$-isomorphic to the Gelfand space $X_f$ of the $S$-invariant unital subalgebra $\A_f \subset C(X)$ generated by the orbit $fS$.
\end{defin}

\begin{remark} \label{r:cycl-comes}
Let $X$ be a (not necessarily compact) $S$-system and $f \in \RMC(X)$.
Then, as was shown in \cite{GM-AffComp}, there exist:
a cyclic $S$-system $X_f$, a continuous $S$-compactification $\pi_f: X \to X_f$,
and a continuous function $\tilde{f}: X_f \to \R$ such that
$f=\tilde{f} \circ \pi_f$;
that is, $f$ comes from the $S$-compactification $\pi_f: X \to X_f$.
The collection of functions $\tilde{f} S$ separates points of $X_f$.
Finally, $f \in \RUC(X)$ iff the action of $S$ on $X_f$ is jointly continuous.
\end{remark}

\section{Sensitivity and fragmentability of families}

The following topological definitions
were motivated by
the notions of sensitivity and tameness in topological dynamics (Definitions \ref{d:HNS0} and \ref{d:tame} above),
as well as by the fragmentability concepts which come from Banach space theory.

\begin{defin} \label{d:sens-fr-f}
Let $(X,\tau)$ be a topological space, $(Y,\xi)$ a uniform space and $\eps \in \xi$ is an entourage.
We say that a family of (not necessarily continuous) functions $F=\{f_i: X \to Y\}_{i \in I}$ is:
\bit
\item [($\varepsilon$-NS)]
$\eps$-\emph{Non-Sensitive} ($\eps$-NS in short) if there exists a non-void open subset $O$ in $X$
such that $f_i(O)$ is $\eps$-small for every $i \in I$.
\item [(NS)]
\emph{Non-Sensitive}
if $F$ is $\eps$-NS for every $\eps \in \xi$.
\item [(E-NS)]
{\em Eventually Non-Sensitive}
if for every
infinite subfamily $L \subset F$ and every $\eps \in \xi$
there exists an
infinite subfamily $K \subset L$ which is $\eps$-NS.

\sk

\item [($\eps$-Fr)]
$\eps$-\emph{Fragmented} if for every nonempty 
(closed) subset $A$ of $X$ the restriction
 $F|_A:=\{f|_A: A \to Y\}_{f \in F}$ is an $\eps$-NS family.
\item [(Fr)=(HNS)]
\emph{Fragmented}, or \emph{Hereditarily Non-Sensitive},
if $F$ is
$\eps$-Fr
for every $\eps \in \xi$.
\item [(E-wFr)]
{\em Eventually weakly fragmented}
if for every
infinite subfamily $L \subset F$ and every $\eps \in \xi$
there exists an
infinite
$\eps$-Fr
subfamily $K \subset L$.
 \item [(E-Fr)] {\em Eventually Fragmented} if for every
 infinite subfamily $L \subset F$ there exists an
 infinite fragmented subfamily $K \subset L$.

\sk

\item [(HAE)]
\emph{Hereditarily Almost Equicontinuous}, or \emph{barely continuous},
if for every nonempty closed subset $A$ of $X$ the family $F|_A$ has a point of equicontinuity.
\item [(E-HAE)]
{\em Eventually Hereditarily Almost Equicontinuous}
if for every infinite subfamily $L \subset F$ there exists an
infinite HAE subfamily $K \subset L$.
 \eit
\end{defin}

When the family $F=\{f\}$ consists a single function we
retrieve the definitions of NS,
fragmented, and barely continuous functions.
The definition of a fragmented function (as in \cite{JOPV})
is a slight generalization of the original one 
(for the identity function $f:=id_X: (X,\tau) \to (X,d)$ with a pseudometric $d$ on $X$;
the \emph{$(\tau,d)$-fragmentability}) 
which is due to Jayne and Rogers from Banach space theory.
It appears implicitly in a work of Namioka and Phelps \cite{NP} which provides
a characterization of Asplund Banach spaces $V$ in terms of
(weak$^*$,norm)-fragmentability. The set of fragmented maps from $X$ into $Y:=\R$
is denoted by ${\mathcal F}(X)$.
See \cite{N, me-fr, Me-nz, GM1, GM-rose} for more details.
Barely continuous maps are well known also as the maps with the \emph{point of continuity property} (i.e., for every closed
nonempty $A \subset X$ the restriction $f_{|A}: A \to Y$ has a continuity point)

Eventually fragmented families were introduced in \cite{GM-rose}, where they yield a new
characterization of Rosenthal Banach spaces (see below Theorem \ref{f:RosFr}.4).

In Example \ref{e:DistEx} we present some simple examples 
illustrating the definitions of families of
functions which are (or are not) fragmented, eventually fragmented, or satisfy DLP.

For some applications of the fragmentability concept
for topological transformation groups, see~\cite{me-fr, Me-op, Me-nz, GM1, GM-fp, GM-rose, GM-AffComp}.
For other research directions involving fragmentability see for example \cite{KM}.

\begin{lem} \label{r:fr1} \cite{GM1, GM-rose}
\ben
\item (Fr), (E-Fr) and (E-wFr) in Definition \ref{d:sens-fr-f} 
are hereditary conditions. 
It is enough to check these conditions only for $\ep \in \gamma$ from 
a {\it subbase} $\gamma$ of $\xi$ and for closed nonempty subsets $A \subset X$. 
\item
If $X$ is Polish and $Y$ is a separable metric space then
$f: X \to Y$ is fragmented iff $f$ is a Baire class 1 function (i.e., the inverse image of every open set is
$F_\sigma$).
\item
When $X$ is hereditarily Baire
and $(Y,\rho)$ is a pseudometric space then $f: X \to Y$ is fragmented iff $f$ has the
point of continuity property.
\item
A topological space $(X,\tau)$ is {\em scattered\/} (i.e., every
nonempty subspace has an isolated point) iff $X$ is
$(\tau,\xi)$-fragmented, for arbitrary uniform structure $\xi$ on the \emph{set} $X$.
\item Let $(X,\tau)$ be a separable metrizable space and $(Y,\rho)$ a
pseudometric space. Suppose that $f: X \to Y$ is a fragmented onto map. Then $Y$ is separable.
\item $F=\{f_i: X \to (Y,\xi)\}_{i \in I}$ is a fragmented family iff
the induced map $X \to (Y^F, \xi_U)$ is fragmented,
where $\xi_U$ is the uniformity of uniform convergence on $Y^F$. \item
Let $\a: X \to X'$ be a continuous onto map between compact
spaces. Assume that $(Y, \xi)$ is a uniform space, $F:=\{f_i: X \to Y\}_{i \in I}$
and $F':=\{f'_i: X' \to Y\}_{i \in I}$ are families such that $f'_i \circ \a=f_i$ for every $i \in I$. Then $F$ is a fragmented family iff $F'$ is a fragmented family.
\item
(See 
\cite[Cor. 1D]{BFT} or \cite[Lemma 3.7]{Dulst})
Let $X$ be a hereditarily Baire space and $f: X \to \R$ an arbitrary function. \TFAE
\ben
\item $f$ has the point of continuity property (equivalently: fragmented, by (3)).
\item For every nonempty closed subset $K \subset X$ and $a<b$ in $Y$ the sets
$K \cap \{f \leq a\}$, $K \cap \{f \geq b\}$
are not both dense in $K$.
\een
\item
Let $p: X \to Y$ be a map from a topological space $X$ into a
compact space $Y$. Suppose that $\{f_i: Y \to Z_i\}_{i \in I}$ is
a system of continuous maps from $Y$ into Hausdorff uniform spaces
$Z_i$ such that it separates points of $Y$ and $f_i \circ p \in
{\mathcal F}(X,Z_i)$ for every $i \in I$. Then $p \in {\mathcal F}(X,Y)$.
\item \cite[Lemma 2.3.4]{GM-rose} 
Let $(X,\tau)$ and $(X',\tau')$ be compact spaces, and let
$(Y, \mu)$ and $(Y', \mu')$ be uniform spaces. Suppose that:
$\a: X \to X'$ is a continuous onto map, $\nu: (Y, \mu) \to
(Y', \mu')$ is uniformly continuous, $\phi: X \to Y$ and
$\phi': X' \to Y'$ are maps such that the following diagram

\begin{equation*}
\xymatrix {
	(X, \tau) \ar[d]_{\a} \ar[r]^{\phi} & (Y, \mu)
	\ar[d]^{\nu} \\
	(X', \tau') \ar[r]^{\phi'} & (Y', \mu') }
\end{equation*}
commutes. If $X$ is fragmented by $\phi$ then $X'$ is
fragmented by $\phi'$.
\een
\end{lem}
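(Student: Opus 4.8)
The plan is to prove the ten items in four clusters: the stability/reduction statements (1) and (6); the point-of-continuity characterizations (3), (8), (2) (in that order, to avoid circularity); the scattered and separability statements (4), (5); and finally the transfer-along-maps statements (10), (7), (9), of which (10) carries essentially all the difficulty.

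For (1), stability under passing to subfamilies is immediate, since the defining clauses of (Fr), (E-Fr), (E-wFr) quantify over all indices and over infinite subfamilies, and both kinds of quantification only become easier when one shrinks $F$ to $F'\subseteq F$. The reduction to closed $A$ uses that if $O$ is open with $\cls(A)\cap O\neq\emptyset$ and $f_i(\cls(A)\cap O)$ $\eps$-small for all $i$, then by density of $A$ in $\cls(A)$ the set $A\cap O$ is nonempty with $\eps$-small image. The subbase reduction rests on the observation that, for a fixed family, the set of entourages $\eps$ for which it is $\eps$-Fr is closed under finite intersections: given that it is $\eps$-Fr and $\delta$-Fr and a closed $A$, first extract open $O$ with each $f_i(A\cap O)$ $\eps$-small, then apply $\delta$-fragmentation to the closed set $\cls(A\cap O)$ to obtain $O'$ with each $f_i(\cls(A\cap O)\cap O')$ $\delta$-small; density makes $A\cap O\cap O'$ nonempty with $(\eps\cap\delta)$-small image. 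For (E-wFr) one iterates this: extract an infinite $\eps$-Fr $K_1\subseteq L$, then an infinite $\delta$-Fr $K_2\subseteq K_1$; since $\eps$-Fr is hereditary, $K_2$ is both, hence $(\eps\cap\delta)$-Fr. Item (6) is pure unwinding: a basic entourage of $\xi_U$ is $\hat\eps=\{(u,v)\in (Y^F)^2: (u_i,v_i)\in\eps\ \forall i\}$, and the evaluation $e_F\colon X\to Y^F$ satisfies that $e_F(O)$ is $\hat\eps$-small exactly when every $f_i(O)$ is $\eps$-small; since the $\hat\eps$ form a subbase of $\xi_U$, fragmentedness of $e_F$ and of $F$ coincide by the subbase reduction of (1).

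The cluster (3), (8), (2) is the classical fragmentation $=$ point-of-continuity circle. For (3), that PCP implies fragmentedness is immediate: a continuity point of $f|_A$ furnishes, for each $\eps$, a relatively open $\eps$-small neighborhood. Conversely, fix a closed $A$ (a Baire space, as $X$ is hereditarily Baire) and set $U_n=\bigcup\{O\ \text{open}: \diam f(A\cap O)<\tfrac1n\}$; fragmentation shows that $A\cap U_n$ is dense open in $A$ (for density, apply $\tfrac1n$-fragmentation to the closure in $A$ of any relatively open set), so by the Baire property $\bigcap_n(A\cap U_n)\neq\emptyset$, and any of its points is a continuity point of $f|_A$. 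Item (8) is the two-valued reformulation, following \cite[Cor. 1D]{BFT} (or \cite[Lemma 3.7]{Dulst}): if both $K\cap\{f\le a\}$ and $K\cap\{f\ge b\}$ were dense in some closed $K$, then $f|_K$ would have oscillation $\ge b-a$ everywhere, contradicting PCP; conversely, if PCP fails, the same Baire-category argument as in (3) produces such a $K$ and $a<b$, violating (b). Item (2) then follows from (3) together with the classical Baire characterization theorem (\cite{Kech}): for $X$ Polish and $Y$ separable metric, $f$ is Baire class $1$ iff $f|_A$ has a continuity point for every closed $A$, i.e.\ iff $f$ is fragmented.

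For (4), if $X$ is scattered then any nonempty closed $A$ has an isolated point $x_0$, so $\{x_0\}=A\cap O$ is relatively open with one-point (hence $\eps$-small) image under $\mathrm{id}\colon(X,\tau)\to(X,\xi)$, giving $(\tau,\xi)$-fragmentedness for every $\xi$; conversely, if $X$ is not scattered it contains a nonempty closed $A$ without isolated points, and the discrete uniformity $\xi$ (from the $0$–$1$ metric, with subbasic entourage $\{(x,y):x=y\}$) fails to fragment $A$, since every nonempty relatively open subset of $A$ has at least two points. Item (5) uses the transfinite fragmentation index: define closed sets $A_0=X$ and $A_{\beta+1}=A_\beta\setminus O_\beta$, where $O_\beta$ is open with $A_\beta\cap O_\beta\neq\emptyset$ and $\diam f(A_\beta\cap O_\beta)<\tfrac1n$ (furnished by fragmentation), intersecting at limit stages; strict descent together with the hereditary Lindel\"of property of the separable metric space $X$ forces the chain to reach $\emptyset$ at a countable ordinal, so $X$ is covered by countably many pieces with $\tfrac1n$-small image. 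Choosing one image point from each yields a countable $\tfrac1n$-net in $Y=f(X)$, and the union over $n$ is a countable dense subset, so $Y$ is separable.

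The substantive content is the transfer lemma (10), from which (7) and (9) follow. For (10) I would fix a closed $A'\subseteq X'$ and $\eps'\in\mu'$ and, by uniform continuity of $\nu$, choose $\eps\in\mu$ with $(\nu\times\nu)(\eps)\subseteq\eps'$. Set $A=\a^{-1}(A')$, closed and compact in $X$. The key device is to replace $A$ by a minimal closed $M\subseteq A$ with $\a(M)=A'$, which exists by Zorn's lemma (a nested intersection of closed sets each surjecting onto $A'$ still surjects, by compactness); then $\a|_M\colon M\to A'$ is an irreducible closed surjection. Applying fragmentedness of $\phi$ to $M$ gives a nonempty relatively open $W\subseteq M$ with $\phi(W)$ $\eps$-small. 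By irreducibility, $\a(M\setminus W)$ is a proper closed subset of $A'$, so $V':=A'\setminus\a(M\setminus W)$ is nonempty and relatively open in $A'$, and each $a'\in V'$ has all its $M$-preimages inside $W$; hence $\phi'(a')=\nu(\phi(x))$ for some $x\in W$, so $\phi'(V')\subseteq\nu(\phi(W))$ is $\eps'$-small, and $\phi'$ fragments $X'$. I expect this irreducibility trick — that an irreducible surjection sends a nonempty open set to one with nonempty interior, precisely converting a fragmentation piece downstairs into an open piece upstairs — to be the crux of the whole lemma, since pushing fragmentation forward through a non-open quotient is otherwise obstructed. Granting (10): item (7) is its family version, obtained by applying (10) to the evaluation maps $e_F,e_{F'}$ of (6) (with $\nu$ the identity) for the direction ``$F$ fragmented $\Rightarrow F'$ fragmented'', together with the trivial pullback computation $f_i(A\cap\a^{-1}(O'))=f'_i(\a(A)\cap O')$ for the converse; and item (9) follows because the separating continuous family $\{f_i\}$ embeds the compact $Y$ into $\prod_i Z_i$, so the entourages $(f_i\times f_i)^{-1}(\eta)$ form a subbase of the unique uniformity of $Y$, whence by the subbase reduction of (1) the fragmentedness of $p$ reduces to that of each $f_i\circ p$, which is assumed.
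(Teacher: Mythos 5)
Your proposal is correct, but note that the paper itself offers no proof of this lemma: it is quoted from \cite{GM1, GM-rose}, with item (8) attributed to \cite[Cor.~1D]{BFT} and \cite[Lemma 3.7]{Dulst} and item (10) to \cite[Lemma 2.3.4]{GM-rose} (the source file even contains a commented-out sketch of item (1) whose finite-intersection argument --- refine an $\eps$-small relatively open piece by applying $\delta$-fragmentation to its closure --- is exactly yours). Measured against those sources, your reconstruction follows the standard route throughout: the subbase/closed-set reductions and the evaluation-map reformulation (6) are the expected bookkeeping; the Baire-category oscillation argument for (3); the transfinite exhaustion bounded by hereditary Lindel\"ofness for (5); and, most importantly, you correctly identified that (10) is the one nontrivial item and rediscovered precisely the mechanism of \cite[Lemma 2.3.4]{GM-rose}: pass by Zorn's lemma to a minimal closed $M\subseteq \a^{-1}(A')$ with $\a(M)=A'$, so that $\a|_M$ is irreducible and the complement $A'\setminus \a(M\setminus W)$ of the image of the complement of the fragmentation piece $W$ is a nonempty relatively open set with $\phi'(A'\setminus \a(M\setminus W))\subseteq \nu(\phi(W))$. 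Your derivations of (7) and (9) from (10), (6) and the subbase reduction of (1) are also the natural ones and are sound.

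One small compression to be aware of: in (8), the phrase ``the same Baire-category argument as in (3) produces such a $K$ and $a<b$'' elides a step. The argument of (3) yields a closed $K$ on which the oscillation of $f|_K$ is bounded below by some $\eps>0$ at every point, but a single pair $a<b$ with both $K\cap\{f\le a\}$ and $K\cap\{f\ge b\}$ dense requires in addition the countable decomposition $K=\bigcup_{a<b,\,a,b\in\Q}\bigl(\cls(K\cap\{f\le a\})\cap \cls(K\cap\{f\ge b\})\bigr)$ (using that at each $x\in K$ the upper and lower limits of $f$ differ by at least $\eps$, so some rational pair fits strictly between them) followed by a second application of the Baire property to find a pair whose associated closed set has nonempty relative interior, and then passing to the closure of that interior. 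This is exactly what the cited proofs in \cite{BFT} and \cite{Dulst} do, so it is a fixable omission rather than a flaw; similarly, your item (2) defers, as the paper does, to the classical Baire characterization theorem in \cite{Kech}, which is legitimate.
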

\delete{\begin{proof} 	
		(1) 
		Let $F=\{f_i: X \to Y\}_{i \in I}$ be a family of functions, $\mu_B \subset \mu$ is a base of $\mu$, $\emptyset \neq A \subset X$. Suppose that $F$ is $\eps$-NS for every $\eps \in \mu_B$. We have to show that 
		$F$ is $\eps$-NS for every $\eps \in \mu$. It is enough to show that 
		$F$ is $(\eps_1 \cap \eps_2)$-NS whenever $F$ is $\eps_i$-NS for every $i \in \{0,1\}$. 
		There exists an open subset $O_1 \subset X$ such that 
		$A \cap O_1 \neq \emptyset$ and $f(A \cap O_1)$ is $\eps_1$-small for every $f \in F$. Now for $A \cap O_1$ we can choose an open subset $O_2 \subset X$ such that $(A \cap O_1) \cap O_2$ is nonempty and $f(A \cap O_1 \cap O_2)$ is $\eps_2$-small. Then $f(A \cap (O_1 \cap O_2))$ is $\eps$-small.   
		
	\end{proof}
	}

\begin{lem} \label{l:propFamil}
{} \

\ben
\item Always, HAE $\subset$ HNS $\subset$ NS and E-HAE $\subset$ E-Fr $\subset$
E-wFr $\subset$ E-NS.
\item
In the definitions (E-Fr), (E-HAE), (E-HAE) and (E-NS)
one can assume that the infinite sets $K$ (and $L$) are \emph{countable}.
\item
When $(Y,\xi)$ is a pseudometric uniformity and every $f_i \in F$ is a fragmented map,
then E-Fr=E-wFr.
\item
When $X$ is a hereditarily Baire space and $(Y,\xi)$ is a pseudometric uniformity then
HAE $=$ Fr.
\item
If, in addition
to the conditions in (4),
every $f_i \in F$ is a fragmented map then
E-HAE $=$ E-Fr $=$ E-wFr.
\een
\end{lem}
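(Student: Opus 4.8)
The plan is to prove the five items in order, bootstrapping the later, harder ones from the earlier structural facts. For (1) I would first dispatch the three pointwise inclusions and then lift them to their eventual counterparts. The inclusion HNS $\subseteq$ NS is immediate: taking $A=X$ in the definition of $\eps$-Fr shows that a fragmented family is $\eps$-NS for every $\eps$. For HAE $\subseteq$ HNS, fix a closed nonempty $A$ and a point of equicontinuity $x_0$ of $F|_A$; given $\eps\in\xi$ choose a symmetric $\de\in\xi$ with $\de\circ\de\subseteq\eps$, and the neighborhood $U$ of $x_0$ supplied by equicontinuity then has $f_i(U\cap A)$ $\eps$-small for all $i$, so $F|_A$ is $\eps$-NS and $F$ is $\eps$-Fr. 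The eventual inclusions E-HAE $\subseteq$ E-Fr $\subseteq$ E-wFr $\subseteq$ E-NS are then obtained by applying these pointwise inclusions (together with the observation that a fragmented family is $\eps$-Fr, hence $\eps$-NS, for each fixed $\eps$) to the infinite subfamily $K$ produced by the stronger property.

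For (2) the key observation is that each of the properties $\eps$-NS, $\eps$-Fr, Fr and HAE is inherited by subfamilies, since the witnessing open set (respectively point of equicontinuity) for a family continues to work for any subfamily. Consequently, given an arbitrary infinite $L\subseteq F$, I would pass to a countable infinite $L_0\subseteq L$, apply the (countable form of the) eventual property to $L_0$, and note that the resulting infinite subfamily $K\subseteq L_0\subseteq L$ is automatically countable and still witnesses the property for $L$.

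The substance of the lemma is (3), and here I expect the main obstacle. Since E-Fr $\subseteq$ E-wFr is already in (1), I must show E-wFr $\Rightarrow$ E-Fr. The engine is a finite-union stability lemma: \emph{if $F_1,F_2$ are $\eps$-Fr then so is $F_1\cup F_2$}. To prove it, fix closed nonempty $A$, use $\eps$-Fr of $F_1$ to find open $O_1$ with $A\cap O_1\ne\emptyset$ and $F_1$ $\eps$-small on it, then apply $\eps$-Fr of $F_2$ to the closed set $B=\cls(A\cap O_1)\subseteq A$ to get open $O_2$ with $B\cap O_2\ne\emptyset$ and $F_2$ $\eps$-small on it; since $O_2$ meets $\cls(A\cap O_1)$ it meets $A\cap O_1$, and $O=O_1\cap O_2$ then witnesses $\eps$-NS for $F_1\cup F_2$ on $A$. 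Granting this, I would exploit that a pseudometric uniformity $\xi$ has a countable base $\{\eps_n\}$ and diagonalize: starting from a countable infinite $L_0\subseteq L$, recursively extract infinite subfamilies $L_0\supseteq L_1\supseteq L_2\supseteq\cdots$ with $L_n$ being $\eps_n$-Fr via E-wFr, choose distinct $f_n\in L_n$, and set $K=\{f_n:n\ge 1\}$. For each fixed $m$ the tail $\{f_n:n\ge m\}\subseteq L_m$ is $\eps_m$-Fr, while the finite head $\{f_1,\dots,f_{m-1}\}$ is $\eps_m$-Fr because every $f_i$ is a fragmented function (so each singleton is $\eps_m$-Fr and finite unions stay $\eps_m$-Fr); by the union lemma $K$ is $\eps_m$-Fr for every $m$, hence fragmented by part (1) of Lemma \ref{r:fr1}. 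The two hypotheses enter precisely here: pseudometricity supplies the countable base driving the diagonalization, and individual fragmentedness lets me re-absorb the finite head.

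Finally, for (4) and (5) I would reduce to a Baire-category argument and then assemble. For (4), HAE $\subseteq$ Fr is in (1); conversely, fix closed nonempty $A$, which is Baire since $X$ is hereditarily Baire, and let $W_n$ be the union of all relatively open subsets of $A$ on which $F$ is $\eps_n$-small. Each $W_n$ is open in $A$, and applying $\eps_n$-Fr to $\cls_A(V)$ for an arbitrary nonempty relatively open $V$ shows $W_n$ is dense; Baire category then yields a point $x_0\in\bigcap_n W_n$, which is a point of equicontinuity of $F|_A$, so $F$ is HAE. For (5), under the hypotheses of both (3) and (4) we have E-Fr $=$ E-wFr by (3), and E-Fr $=$ E-HAE because the infinite fragmented subfamily $K$ extracted from any infinite $L$ is HAE by (4); combined with E-HAE $\subseteq$ E-Fr from (1) this yields E-HAE $=$ E-Fr $=$ E-wFr.
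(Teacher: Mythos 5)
Your proposal is correct and follows essentially the same route as the paper: the substantive item (3) is exactly the paper's diagonal argument over a countable base of the pseudometric uniformity, with the finite initial segments of the diagonal sequence absorbed via the individual fragmentedness of each $f_i$ together with the union-stability of $\eps$-fragmented families, and items (1), (2), (5) are handled just as the paper does (it declares (1), (2) trivial and gets (5) by combining (3) and (4)). The only cosmetic difference is in (4), where the paper cites Lemma \ref{r:fr1}.6 and Lemma \ref{r:fr1}.3, while you unfold those citations into a direct Baire-category argument (dense open sets $W_n$ of local $\eps_n$-smallness); the mathematical content is identical.
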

\begin{proof}
(1) and (2) are trivial.

To get (3) use a diagonal argument. This is possible because the pseudometric uniformity $\xi$
has a countable basis for the uniform structure.
We have a decreasing sequence of entourages
$\eps_1 \supset \eps_2 \supset \cdots$ which
form a basis of the uniform structure $\xi$.
Using the E-wFr condition for a given infinite sequence in $F$ we extract a subsequence which is $\eps_1$-Fr subsequence.
Now for this subsequence one can extract an $\eps_2$-Fr subsequence, and so on.
Consider the diagonal sequence. This will be an $\eps$-Fr family for all $\eps \in \xi$.
In the verification it is important to note that, by our assumption, every individual $f_i \in F$ is a fragmented map. This guarantees that every finite subfamily is fragmented. So, in particular, each initial finite segment of the diagonal sequence is fragmented.
Finally note that the union of two fragmented
families is a fragmented family.

For (4) we recall some observations from \cite{GM1}.
By Lemma \ref{r:fr1}.6,
a family $F=\{f_i: X \to (Y,\xi)\}_{i \in I}$ is a fragmented family iff
the induced map $X \to (Y^F, \xi_U)$ is fragmented.
Now, when $X$ is hereditarily Baire and $\xi$ is pseudometrizable we obtain, using Lemma \ref{r:fr1}.3,
that $F$ is fragmented iff $F$ is HAE.

For (5) combine parts (3) and (4) to get  E-HAE $=$ E-wFr.
\end{proof}

Of course the condition that $(Y,\xi)$ be a pseudometric uniform space is satisfied when $Y = \R$,
so that this assumption is automatically fulfilled for families in $C(X)$.

\begin{lem} \label{l:FrFa} \cite{GM-rose, GM-AffComp}
\begin{enumerate}
\item
Suppose $F$ is a compact space, $X$ is \v{C}ech-complete,
$Y$ is a uniform space
and we are given a separately continuous map $w: F \times X \to Y$.
Then the naturally associated family
$\tilde{F}:=\{\tilde{f}: X \to Y\}_{f \in F}$ (where $\tilde{f}(x) = w(f,x)$) is fragmented.
\item
Suppose $F$ is a compact metrizable space, $X$ a hereditarily Baire space,
(e.g., \v{C}ech-complete, compact or Polish),
and $M$ separable
and metrizable.
Assume that we are given a map $w: F\times X \to M$ such that (i)
$\tilde{x}: F \to M, f \mapsto w(f,x)$ is continuous for every $x \in X$, and
(ii) $\tilde{f}: X \to M, x \mapsto w(f,x)$ is continuous for
every $f \in Y$ for a dense subset $Y$ of $F$.
Then the family $\tilde{F}$ is
HAE
(hence, fragmented).
\item
\emph{(A version of Osgood's theorem)}
Let $f_n: X \to \R$ be a pointwise convergent sequence of continuous functions on a
hereditarily Baire space $X$.
Then $\{f_n\}_{n \in \N}$ is a fragmented family.
\end{enumerate}
\end{lem}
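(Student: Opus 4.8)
The three parts share a common template: fix a nonempty closed $A\subseteq X$ and an entourage $\eps$, and produce a nonempty relatively open $O\subseteq A$ on which every member of the family has $\eps$-small image; this is exactly the $\eps$-NS property of the restricted family, and letting $A$ and $\eps$ vary yields fragmentation. By Lemma~\ref{r:fr1}.1 it suffices to treat $\eps$ from a subbase, and since closed subspaces of \v{C}ech-complete (resp. hereditarily Baire) spaces are again of the same type, $A$ will be a Baire space in each case. I would prove (1) by a Namioka-type joint-continuity argument, and (2), (3) by direct Baire-category oscillation arguments.

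For part (1), I would first reduce to a pseudometric target: a single subbase entourage $\eps$ sits inside $\{d<1\}$ for a uniformly continuous pseudometric $d$ on $Y$ obtained from a descending chain $\eps=\eps_0\supseteq\eps_1\supseteq\cdots$ with $\eps_{k+1}\circ\eps_{k+1}\circ\eps_{k+1}\subseteq\eps_k$. Composing $w$ with the canonical map $Y\to(Y_d,d)$ preserves separate continuity, so I may assume $Y$ metric. Now apply Namioka's joint-continuity theorem to $w\colon F\times A\to Y$ with $F$ compact and $A$ \v{C}ech-complete: there is a dense $G_\delta$ set $D\subseteq A$ such that $w$ is jointly continuous at every point of $F\times\{a\}$ for $a\in D$. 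Fixing $a\in D$, joint continuity at each $(f,a)$ gives a basic open set $W_f\times O_f\ni(f,a)$ on which $w$ stays within $\eps/2$ of $w(f,a)$; compactness of $F$ yields a finite subcover $W_{f_1},\dots,W_{f_n}$, and $O:=\bigcap_i O_{f_i}$ is the desired neighborhood, since for any $f\in W_{f_i}$ and $x,x'\in O$ the values $w(f,x),w(f,x')$ are each within $\eps/2$ of $w(f_i,a)$, so $\tilde f(O)$ is $\eps$-small for all $f\in F$. Here Namioka's theorem carries the analytic weight.

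Part (2) is the delicate one. Since the conclusion is HAE, I would work directly with equicontinuity: put $\Psi(x)=w(\cdot,x)\in C(F,M)$ and note that a point of equicontinuity of $\{\tilde f\}_{f\in F}|_A$ is precisely a point of $\|\cdot\|_\infty$-continuity of $\Psi|_A$. Because $f\mapsto w(f,x)$ is continuous and $Y$ is dense, controlling all $f\in F$ reduces by a limit argument to controlling $f\in Y$ (if $d(w(f',x),w(f',a))\le\eps$ for all $f'\in Y$ near $x$, then letting $f'\to f$ gives the same bound for $f\in F$). The oscillation set $\{x\in A:\ \mathrm{osc}_{\Psi}(x)<\eps\}$ is open, and the statement follows from the Baire category theorem once I show it is \emph{dense}; this density is the main obstacle. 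I would argue by contradiction on a relatively open $W$ with $\mathrm{osc}_\Psi\ge\eps$ throughout: decompose the Baire space $\overline W$ as $\bigcup_n C_n$, where $C_n=\{x:\ d(w(f,x),w(f',x))\le\eps/4\ \text{for all } f,f'\in Y \text{ with } \rho(f,f')\le 1/n\}$. Each $C_n$ is closed (the relevant supremum of the continuous functions $x\mapsto d(w(f,x),w(f',x))$, $f,f'\in Y$, is lower semicontinuous), the $C_n$ increase, and $\bigcup_n C_n=\overline W$ by uniform continuity of $f\mapsto w(f,x)$ on the compact metric space $F$. Baire then yields a relatively open $V\subseteq W\cap C_N$; choosing a finite $1/N$-net $g_1,\dots,g_m\in Y$ of $F$ and shrinking $V$ to a neighborhood $O$ of a point $x_0$ on which the finitely many continuous functions $\tilde g_i$ vary by at most $\eps/4$, a three-term estimate (net approximation, continuity of $\tilde g_i$, net approximation) bounds the oscillation of the \emph{whole} family on $O$ by $3\eps/4<\eps$, contradicting $\mathrm{osc}_\Psi\ge\eps$ on $W$. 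Intersecting over $\eps=1/k$ produces a dense $G_\delta$ of equicontinuity points, giving HAE, and Lemma~\ref{l:propFamil}.4 upgrades HAE to Fr.

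Part (3) is the classical Osgood argument and is the cleanest. For closed nonempty $A$ (Baire) and $\eps>0$, set $A_N=\{x\in A:\ |f_n(x)-f_m(x)|\le\eps/4\ \text{for all } n,m\ge N\}$; these sets are closed, increasing, and cover $A$ because $(f_n(x))$ is pointwise Cauchy. Baire gives a relatively open $V\subseteq A_N$, and at a point $x_0\in V$ the continuity of the finitely many functions $f_1,\dots,f_N$ provides $O\subseteq V$ with $|f_i(x)-f_i(x_0)|\le\eps/4$ for $i\le N$ and $x\in O$. A routine triangle-inequality split (tails via the $A_N$-estimate routed through index $N$, the finitely many initial functions via $O$) then shows that $f_n(O)$ is $\eps$-small for every $n$, so $\{f_n\}|_A$ is $\eps$-NS. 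As $A$ and $\eps$ are arbitrary, $\{f_n\}$ is a fragmented family.
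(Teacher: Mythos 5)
Your proposal is correct, and for part (1) it is essentially the paper's argument: both reduce the uniform target to (pseudo)metric targets (the paper composes $w$ with uniformly continuous maps $\varphi_i\colon Y \to M_i$ generating the uniformity, you build a pseudometric from a chain of entourages below a subbase element) and then invoke Namioka's joint continuity theorem on $F \times A$; you merely spell out the compactness-of-$F$ step that converts joint continuity along $F \times \{a\}$ into $\eps$-smallness of $\tilde f(O)$ for \emph{all} $f$ simultaneously, which the paper leaves implicit. The divergence is in (2) and (3). For (2) the paper is a two-line proof that outsources the analytic content to \cite[Proposition 2.4]{GMU}: the map $j\colon A \to C(F,M)$, $j(x)=\tilde x$, has a sup-metric continuity point on every closed $A$, which is exactly equicontinuity of $\tilde F|_A$ at a point. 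Your Baire-category argument (the closed increasing sets $C_n$ defined via pairs $f,f' \in Y$ with $\rho(f,f')\le 1/n$, a finite net from the dense set $Y$, and the density-plus-continuity limit argument to pass from $Y$ to all of $F$) is in effect a self-contained inline proof of that cited proposition; it buys independence from the external reference at the cost of length, and is mathematically the same idea. For (3) the paper does something slicker than your classical Osgood argument: it applies (2) to $F:=\{f\}\cup\{f_n : n \in \N\}$ with the pointwise topology (a compact metrizable space, being a convergent sequence with its limit in $\R^X$), taking $Y:=\{f_n\}$ as the dense set of continuous members, so (3) is literally a special case of (2). Your direct proof via the sets $A_N=\{x : |f_n(x)-f_m(x)|\le \eps/4,\ n,m \ge N\}$ is more elementary and avoids (2) altogether; both are valid. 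Two cosmetic points: your net argument in (2) should use a $1/(2N)$-net (or strict inequalities) so the approximation $f_k \to f$, $f_k \in Y$, lands inside the $1/N$-ball used in the definition of $C_N$; and for the final step HAE $\Rightarrow$ fragmented you can quote Lemma \ref{l:propFamil}.1 (HAE $\subset$ HNS holds always) rather than the conditional equality of Lemma \ref{l:propFamil}.4, though the hypotheses of the latter are indeed satisfied here.
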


\begin{proof}
(1): There exists a collection of uniformly continuous maps $\{\varphi_i: Y \to M_i\}_{i \in I}$
into metrizable uniform spaces $M_i$ which generates the uniformity on $Y$.
Now for every closed subset $A \subset X$ apply Namioka's joint continuity theorem 
to the separately continuous map
$\varphi_i \circ w: F \times A \to M_i$ and take into account Lemma \ref{r:fr1}.1.

(2): Since every $\tilde{x}: F \to M$ is continuous, the natural map $j: X
\to C(F, M), \ j(x)=\tilde{x}$ is well defined.
By assumption every closed nonempty subset $A \subset X$ is Baire.
By \cite[Proposition 2.4]{GMU}, 
$j |_{A}: A \to
C(F, M)$ has a point of continuity, where $C(F,M)$ carries the
sup-metric.
Hence, $\tilde{F}_{A} =\{\tilde{f}
\upharpoonright_{A}: A \to M \}_{f \in F}$ is equicontinuous at
some point $a\in A$. This implies 
that the family $\tilde{F}$ is HAE.

(3) : Follows from (2) applied to the evaluation map $w : F \times X \to \R$, where
$F:=\{f\} \cup \{f_n: n \in \N\} \subset \R^X$ with $f:=\lim f_n$, the pointwise limit.
\end{proof}

For other properties of fragmented maps and fragmented families
we refer to \cite{Me-nz, GM1, GM-rose}.

\subsection{Sensitivity conditions in dynamical systems}

\begin{defin} \label{d:E-HNS}
Let $(X,\tau)$ be a compact $S$-dynamical system endowed with its unique compatible uniform structure $\xi$.
The set of translations
$\tilde{S}$ can be treated as a family of functions
$(X,\tau) \to (X,\xi)$. We say that the $S$-system $X$ is
NS, HNS, HAE, E-wFr, E-Fr E-HAE whenever the family $\tilde{S}$
has the same
property in the sense of Definition \ref{d:sens-fr-f}.
\end{defin}

It is easy to see that 
this definition, in the case of HNS, 
coincides with the class described in Definition \ref{d:HNS1}.
We will see below that tameness  (Definition \ref{d:tame}) is equivalent to E-wFr, and to E-Fr if $X$, in addition, is metrizable.

In the list of classes in Definitions \ref{d:E-HNS} and \ref{d:sens-fr-f}
the 
most important for the present work are the 
classes: 
HNS=Fr, E-Fr and E-wFr. 

\subsection{Fragmentability and Banach spaces}

\subsubsection{Asplund Banach spaces}

Recall that a Banach space $V$ is an  {\it Asplund space}\index{Asplund space}\index{Banach space!Asplund} if the dual of every
separable linear subspace is separable.

In the following result the equivalence of (1), (2) and (3) is well known
and (4) is a reformulation of (3) in terms of fragmented families.

\begin{thm} \label{f:Asp} \emph{\cite{NP, N}}
Let $V$ be a Banach space. The following conditions are
equivalent:
\ben
\item $V$ is an Asplund space.
\item $V^*$ has the Radon-Nikod\'ym property.
\item
Every bounded subset $A$ of the dual $V^*$ is (weak${}^*$,norm)-fragmented.
\item $B$ 
is a fragmented family of real valued maps on the compactum $B^*$.
\een
\end{thm}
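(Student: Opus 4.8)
The plan is to treat the four conditions in two groups. The implications $(1)\Leftrightarrow(2)\Leftrightarrow(3)$ are the classical characterizations of Asplund spaces and of the Radon--Nikod\'ym property of a dual space, and I would simply import them from Namioka--Phelps \cite{NP} and Namioka \cite{N}: the equivalence $(1)\Leftrightarrow(2)$ is the Namioka--Phelps/Stegall theorem, while $(2)\Leftrightarrow(3)$ is the standard identification of the RNP of $V^*$ with the (weak${}^*$,norm)-fragmentability of its bounded subsets. The genuine task for this theorem is therefore to establish $(3)\Leftrightarrow(4)$, i.e.\ to show that fragmentability of the \emph{family} $B=B_V$, viewed as a family of real valued functions on the weak${}^*$ compactum $B^*=B_{V^*}$, is merely a restatement of the (weak${}^*$,norm)-fragmentability of $B^*$.

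The crux is the duality identity. Each $v\in B$ acts on $B^*$ by $v(\phi)=\phi(v)$, a weak${}^*$-continuous real valued function, so $B$ is indeed a family of maps $B^*\to\R$ in the sense of Definition \ref{d:sens-fr-f}. I would then observe that for all $\phi,\psi\in V^*$,
\[
\sup_{v\in B}\,|v(\phi)-v(\psi)| \;=\; \sup_{v\in B_V}\,|(\phi-\psi)(v)| \;=\; \|\phi-\psi\|.
\]
Consequently, for a weak${}^*$-open set $O$, the assertion ``$v(O)$ is $\eps$-small for every $v\in B$'' is \emph{exactly} the assertion that $O$ has norm-diameter at most $\eps$. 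Feeding this into the definition of an $\eps$-fragmented family, the statement that $B$ is $\eps$-Fr on a closed set $A\subset B^*$ becomes: there is a weak${}^*$-open $O$ with $O\cap A\neq\emptyset$ whose norm-diameter is at most $\eps$. This is precisely $\eps$-fragmentation of the identity $(B^*,w^*)\to(B^*,\|\cdot\|)$. Alternatively, and perhaps more cleanly, I would invoke Lemma \ref{r:fr1}.6: the family $B$ is fragmented iff the induced map $B^*\to(\R^{B},\xi_U)$, $\phi\mapsto(v\mapsto\phi(v))$, is fragmented, where $\xi_U$ is the uniformity of uniform convergence; and by the displayed identity this map is nothing but the canonical isometric embedding of $(B^*,\|\cdot\|)$, so its fragmentedness (with weak${}^*$ domain) is verbatim the (weak${}^*$,norm)-fragmentability of $B^*$.

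Finally I would reconcile the quantifiers: $(3)$ ranges over all bounded subsets $A\subset V^*$, whereas $(4)$ concerns the single ball $B^*$. Since every bounded set lies in some multiple $rB^*$ and both the norm and fragmentation scale linearly in $r$ (an $\eps$-fragmentation of $B^*$ transports under $\phi\mapsto r\phi$ to an $r\eps$-fragmentation of $rB^*$), while fragmentability is hereditary by Lemma \ref{r:fr1}.1, the condition over all bounded sets reduces to the single case $A=B^*$; the reverse implication $(3)\Rightarrow(4)$ is trivial. I do not expect a serious analytic obstacle in $(3)\Leftrightarrow(4)$: the whole difficulty is bookkeeping the three-way identification between the uniform smallness of the evaluation family, the norm on $V^*$, and the uniformity of uniform convergence, together with the routine scaling. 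The substantive depth of the theorem sits entirely in the cited classical equivalence $(1)\Leftrightarrow(2)$, which I would not attempt to reprove.
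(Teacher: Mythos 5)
Your proposal is correct and follows essentially the same route as the paper: the paper explicitly treats (1)--(3) as the classical Namioka--Phelps/Namioka results to be cited, and presents (4) as a mere reformulation of (3) in terms of fragmented families, which is exactly the duality identity $\sup_{v\in B}|v(\phi)-v(\psi)|=\|\phi-\psi\|$ plus scaling/hereditarity bookkeeping that you carry out. Your unpacking of that reformulation (via Lemma \ref{r:fr1}.6 and the uniformity of uniform convergence) is accurate and fills in what the paper leaves implicit.
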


For separable $V$, the assertion (4) can be derived from Lemma \ref{l:FrFa}.2.


Reflexive spaces and spaces of the type
$c_0(A)$ are Asplund.
By \cite{NP} the Banach space $C(K)$ for compact $K$ is Asplund iff $K$ is a scattered compactum
(see also Lemma \ref{r:fr1}.4).
Namioka's joint continuity theorem implies that
every weakly compact set in a Banach space is norm fragmented, \cite{N}. This
explains why every reflexive space is Asplund.

\subsubsection{Banach spaces not containing $l_1$}
\label{s:ind}

\begin{defin}\label{d:l1}
Let $f_n: X \to \R$ be a uniformly bounded sequence of functions on a \emph{set} $X$. Following Rosenthal we say that
this sequence is an \emph{$l_1$-sequence} on $X$ if there exists a real constant $a >0$
such that for all $n \in \N$ and choices of real scalars $c_1, \dots, c_n$ we have
$$
a \cdot \sum_{i=1}^n |c_i| \leq ||\sum_{i=1}^n c_i f_i||.
$$
\end{defin}


For every $l_1$-sequence $f_n$ the closed linear span in $l_{\infty}(X)$
is linearly homeomorphic to the Banach space $l_1$.
In fact, in this case the map $l_1 \to l_{\infty}(X), \ (c_n) \to \sum_{n \in \N} c_nf_n$ is a linear homeomorphic embedding.

\begin{defin}\label{d:ind}
A sequence $f_n$ of
real valued functions on
a set
$X$ is said to be \emph{independent} if
there exist real numbers $a < b$ such that
$$
\bigcap_{n \in P} f_n^{-1}(-\infty,a) \cap  \bigcap_{n \in M} f_n^{-1}(b,\infty) \neq \emptyset
$$
for all finite disjoint subsets $P, M$ of $\N$.
\end{defin}

Clearly every subsequence of an independent sequence is again independent.

\begin{defin} \label{d:tameF} 
Let us say that a family $F$ of real valued (not necessarily, continuous)  functions on a set $X$ is {\it tame} if $F$ does not contain an independent sequence.
\end{defin}
A word of warning is in place here. In Definition \ref{d:funct} (4) the notion of a tame function 
$f : X \to \R$ was introduced. 
Note that when a function $f$ is tame in the sense of Definition \ref{d:funct} (4) this, of course, 
does not mean that the singleton family $\{f\}$ is tame.
However, as we will see (Theorem \ref{l:functCases}.3) it is true that, when $X$ is a compact $S$-system,
$f : X \to \R$ is a tame function iff the family 
$fS=\{f \circ s : s \in S\}$ is a tame family.


\sk

\begin{defin}
A Banach space $V$ is said to be {\em Rosenthal} if it does
not contain an isomorphic copy of $l_1$.
\end{defin}

Every Asplund space is Rosenthal (because $l_1^*= l_{\infty}$ is nonseparable).

\sk

\begin{defin} \label{d:Ros-F} \cite{GM-rose}
Let $X$ be a topological space. We say that a subset $F\subset
C(X)$ is a \emph{Rosenthal family}
(for $X$) if $F$ is norm bounded and
the pointwise closure ${\cls_p}(F)$ of $F$ in $\R^X$ consists of fragmented maps,
that is,
${\cls_p}(F) \subset {\mathcal F}(X).$
\end{defin}

The following useful result synthesizes some known results.
It is based on results of Rosenthal \cite{Ro}, Talagrand \cite[Theorem 14.1.7]{Tal} and van Dulst \cite{Dulst}.
In \cite[Prop. 4.6]{GM-rose} we show why eventual fragmentability of $F$ can be included in this list.

\begin{thm} \label{f:sub-fr}
Let $X$ be a compact space and $F \subset C(X)$ a bounded subset.
The following conditions are equivalent:
\begin{enumerate}
\item  $F$ is a tame family. 
\item
$F$ does not contain a subsequence equivalent to the unit basis of $l_1$.
\item
Each sequence in $F$ has a pointwise convergent subsequence in $\R^X$.
\item
$F$ is a Rosenthal family for $X$.
\item $F$ is an eventually fragmented family.
\end{enumerate}
\end{thm}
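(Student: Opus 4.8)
The plan is to use Rosenthal's $l_1$-theorem as the backbone and to organize the five conditions around the pivot condition (3), reading \emph{tame} and \emph{eventually fragmented} as sequential reformulations of relative pointwise compactness. I would first dispose of the implications among (1), (2) and (3), whose content is purely Rosenthal's theorem \cite{Ro} together with the combinatorial identification of independent sequences with $l_1$-sequences. Concretely, a bounded independent sequence (Definition \ref{d:ind}) is automatically an $l_1$-sequence (Definition \ref{d:l1}): given scalars $c_1,\dots,c_n$, split the indices according to the sign of $c_i$ and use independence to produce, for the two opposite sign-patterns, points $x,y\in X$ at which $\sum_i c_if_i$ differs by at least $(b-a)\sum_i|c_i|$, whence $\|\sum_i c_if_i\|\ge\tfrac{b-a}{2}\sum_i|c_i|$. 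This gives (1)$\Rightarrow$(2) after passing to a subsequence, while (2)$\Rightarrow$(1) is immediate since an $l_1$-basis is independent. The equivalence (2)$\Leftrightarrow$(3) is exactly Rosenthal's dichotomy: a bounded sequence in $l_\infty(X)$ either has a weak Cauchy (hence, being pointwise bounded, a pointwise convergent) subsequence, or a subsequence equivalent to the $l_1$-basis. Thus (1), (2), (3) are interchangeable, and from here on I would work with (3).

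Next I would connect (3) with the fragmentability conditions (4) and (5). For (3)$\Rightarrow$(5), take an infinite $L\subseteq F$, list it as a sequence, and use (3) to extract a pointwise convergent subsequence $K=\{f_{n_k}\}$ with limit $g\in\R^X$. Since $X$ is compact, hence hereditarily Baire, the version of Osgood's theorem in Lemma \ref{l:FrFa}.3 shows that $K\cup\{g\}$, being a pointwise convergent sequence of continuous functions, is a fragmented family; as fragmentability passes to subfamilies (Lemma \ref{r:fr1}.1), $K$ is an infinite fragmented subfamily and (5) holds. For (3)$\Rightarrow$(4) I would first invoke the angelicity half of the BFT theorem: condition (3) forces $\cls_p(F)$ to be Fr\'echet--Urysohn, so each $g\in\cls_p(F)$ is the pointwise limit of a \emph{sequence} from $F$, and the same Osgood argument shows $g$ is fragmented, i.e. $\cls_p(F)\subset\mathcal{F}(X)$. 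The reverse implication (4)$\Rightarrow$(3) uses that a compactum consisting of fragmented functions is a Rosenthal compactum and hence sequentially compact, so every sequence in $F\subset\cls_p(F)$ has a pointwise convergent subsequence.

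Finally I would close the loop with (5)$\Rightarrow$(1), arguing by contraposition. If $F$ is not tame it contains an independent sequence $(f_n)$, every subsequence of which is again independent (Definition \ref{d:tameF} and the remark after Definition \ref{d:ind}). By the identification in the first paragraph each such subsequence is an $l_1$-sequence, so its pointwise closure in $\R^X$ contains a homeomorphic copy of $\beta\N$, and in particular a non-fragmented limit function; hence no infinite subfamily of $(f_n)$ is fragmented, contradicting (5). Together with (1)$\Leftrightarrow$(3) this makes (5) equivalent to the rest, completing the equivalences.

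I expect the main obstacle to be the \emph{non-metrizable} case. The classical BFT and van Dulst statements are phrased in terms of Baire class $1$ functions on Polish spaces, whereas here $X$ is an arbitrary compactum, so the whole point is that ``Baire $1$'' must be systematically replaced by ``fragmented'' (the two coincide only in the metrizable setting, Lemma \ref{r:fr1}.2), and one must check that the angelicity and sequential-compactness inputs survive this replacement. A secondary technical point, which makes the passage between (4) and (5) transparent, is to verify that the pointwise closure of a fragmented family again consists of fragmented maps; this is the stability property underlying the identification of $\cls_p(F)$ with a Rosenthal compactum.
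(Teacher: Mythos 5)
Your proof has a genuine gap, and it sits exactly where you flagged ``the main obstacle'': the angelicity inputs you rely on do not merely need checking in the non-metrizable case --- they are \emph{false} there. Specifically, condition (3) does \emph{not} force $\cls_p(F)$ to be Fr\'echet--Urysohn, and a pointwise compact set of fragmented functions on a general compactum need \emph{not} be a Rosenthal compactum, nor sequentially compact. Counterexample: let $X=\omega_1\cup\{\infty\}$ be the one-point compactification of an uncountable discrete set and let $F=\{\chi_A : A\subset\omega_1 \ \text{finite}\}\subset C(X)$. Then $F$ is bounded and tame: if $(\chi_{A_n})$ were independent, the traces $A_n\cap A_1$, $n\geq 2$, would have to be pairwise distinct subsets of the finite set $A_1$ (equality $A_n\cap A_1=A_m\cap A_1$ empties the combination $A_1\cap A_n\cap A_m^c$), which is impossible; hence (3) holds by Rosenthal's dichotomy, and since $X$ is scattered \emph{every} function on $X$ is fragmented, so (4) and (5) hold trivially. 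Nevertheless $\cls_p(F)$ is homeomorphic to the Cantor cube $\{0,1\}^{\omega_1}$: the function equal to $1$ on all of $\omega_1$ lies in the closure but is not the pointwise limit of any \emph{sequence} from $F$, because a sequential limit of indicators of finite sets has countable support. So $\cls_p(F)$ is not Fr\'echet--Urysohn, is not a Rosenthal compactum (those are Fr\'echet by BFT \cite{BFT}), and under CH is not even sequentially compact. Consequently your argument for (3)$\Rightarrow$(4) says nothing about the points of $\cls_p(F)$ that are not sequential limits, and your justification of (4)$\Rightarrow$(3) evaporates. The same root problem infects (5)$\Rightarrow$(1): ``the closure contains a copy of $\beta\N$, and in particular a non-fragmented limit function'' is a non sequitur, since on a scattered compactum $\beta\N$ embeds into a pointwise compact set consisting \emph{entirely} of fragmented functions; producing a non-fragmented limit of an independent sequence requires an actual construction, not the presence of $\beta\N$.

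The correct repair is not angelicity but countable reduction, which is precisely what the paper's machinery is built for (the paper itself gives no proof of this theorem: it cites Rosenthal \cite{Ro}, Talagrand \cite{Tal}, van Dulst \cite{Dulst} and refers to \cite[Prop. 4.6]{GM-rose} for condition (5)). Given a sequence $K=(f_n)\subset F$, map $X$ onto the \emph{metrizable} compactum $X_K\subset\R^K$, $\pi(x)=(f_n(x))_n$; each $f_n$ factors as $\tilde f_n\circ\pi$ with $\tilde f_n\in C(X_K)$, independence transfers back and forth by Lemma \ref{l_1}, fragmentability transfers by Lemma \ref{r:fr1}.7, and on the Polish space $X_K$ one may legitimately invoke BFT angelicity, the identification of fragmented with Baire~1 maps (Lemma \ref{r:fr1}.2), and the fact that an independent sequence of continuous functions on a Polish space has a non-Baire-1 cluster point; then pull the non-fragmented function back to $X$. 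Two smaller points: your justifications for (1)$\Leftrightarrow$(2) are attached to the wrong directions --- the elementary estimate ``independent $\Rightarrow$ $l_1$'' proves (2)$\Rightarrow$(1), while (1)$\Rightarrow$(2) needs the nontrivial fact that every $l_1$-sequence has an independent \emph{subsequence}; the claim ``an $l_1$-basis is independent'' is false as stated (multiply an independent sequence by alternating signs). On the positive side, your (3)$\Rightarrow$(5) via Osgood's theorem (Lemma \ref{l:FrFa}.3) is correct as written, and it is indeed the one step that needs no metrizability.
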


Note that the compactness in Theorem \ref{f:sub-fr} is essential even for Polish spaces $X$. 

We will also need some characterizations of Rosenthal spaces.

\begin{thm} \label{f:RosFr}
Let $V$ be a Banach space. The following conditions are
equivalent:
\begin{enumerate}
\item
$V$ is a Rosenthal Banach space.
\item \emph{(E. Saab and P. Saab \cite{SS})}
Each $x^{**} \in V^{**}$ is a fragmented map when restricted to the
weak${}^*$ compact ball $B^*$. Equivalently, $B^{**} \subset \F(B^*)$.
\item $B$ is a \emph{Rosenthal family} for the weak$^*$ compact unit ball $B^{*}$.
\item $B$ is an eventually fragmented family of maps on $B^{*}$.
\end{enumerate}
\end{thm}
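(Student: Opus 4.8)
The plan is to reduce everything to Theorem \ref{f:sub-fr} applied to the compact space $X := B^*$ and the bounded family $F := B \subset C(B^*)$, where $B$ is viewed as a set of (weak$^*$ continuous) real valued functions on $B^*$ via the canonical pairing $v \mapsto (\varphi \mapsto \varphi(v))$. First I would record the two standard facts that bridge the Banach space and the family-of-functions languages. The evaluation embedding $V \to C(B^*)$ is isometric, since $\sup_{\varphi \in B^*}|\varphi(v)| = \|v\|$ for every $v \in V$; consequently the notion of a sequence in $B$ being equivalent to the unit vector basis of $l_1$ in the sense of Definition \ref{d:l1} (computed in $l_\infty(B^*)$) coincides with the usual Banach space notion of an $l_1$-sequence in $V$. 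Second, by Goldstine's theorem the canonical image of $B$ is weak$^*$ dense in $B^{**}$, and since pointwise convergence on $B^*$ of the associated functions is exactly weak$^*$ convergence in $V^{**}$, we obtain the identification
$$\cls_p(B) = \{x^{**}|_{B^*} : x^{**} \in B^{**}\} = B^{**}|_{B^*}$$
inside $\R^{B^*}$.

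With these identifications the equivalence $(2) \Leftrightarrow (3)$ is immediate: the family $B$ is automatically norm bounded, and the condition $\cls_p(B) \subset \F(B^*)$ defining a Rosenthal family for $B^*$ is, by the Goldstine identification, precisely the statement $B^{**}|_{B^*} \subset \F(B^*)$ of $(2)$. The equivalence $(3) \Leftrightarrow (4)$ is then a direct application of Theorem \ref{f:sub-fr} (the equivalence of its conditions $(4)$ and $(5)$) to $X = B^*$ and $F = B$.

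It remains to connect these with $(1)$. Here I would use that $V$ fails to be Rosenthal precisely when $V$ contains a bounded sequence equivalent to the $l_1$ basis; after rescaling by a uniform constant such a sequence may be taken inside $B$, and conversely any $l_1$-sequence in $B$ witnesses a copy of $l_1$ in $V$. By the isometric identification above this says exactly that $B$, as a family in $C(B^*)$, contains a subsequence equivalent to the unit basis of $l_1$. Thus $V$ is Rosenthal iff $B$ contains no such subsequence, which by the equivalence of conditions $(1)/(2)$ and $(4)$ in Theorem \ref{f:sub-fr} is equivalent to $B$ being a Rosenthal family for $B^*$, i.e.\ to $(3)$. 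This closes the cycle.

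The genuine mathematical content --- Rosenthal's $l_1$ dichotomy together with the fragmentability characterization of tame families --- is already packaged in Theorem \ref{f:sub-fr}, so the only real work here is bookkeeping: getting the isometry and the Goldstine identification right so that the Banach space statements $(1)$ and $(2)$ translate faithfully into statements about the family $B \subset C(B^*)$. The step I expect to require the most care is verifying that Definition \ref{d:l1}, phrased for sequences in $l_\infty(X)$, produces exactly the Banach space notion of an $l_1$-copy in $V$, including the handling of scaling so that the witnessing sequence lands in $B$ rather than merely in $V$.
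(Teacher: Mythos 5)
Your proof is correct, and for the equivalence of (1), (3) and (4) it is exactly the paper's route: both reduce to Theorem \ref{f:sub-fr} applied to $X=B^*$ and $F=B$, using the isometric identification of $l_1$-sequences in $B\subset C(B^*)$ (Definition \ref{d:l1}) with Banach-space copies of $l_1$ in $V$. Where you genuinely diverge is condition (2): the paper does not prove it at all, but simply cites it as a reformulation of the Saab--Saab criterion \cite{SS} (originally stated via the point of continuity property), whereas you derive $(2)\Leftrightarrow(3)$ directly from Goldstine's theorem, via the identification $\cls_p(B)=\{x^{**}|_{B^*}: x^{**}\in B^{**}\}$ inside $\R^{B^*}$. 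That identification is sound --- a pointwise limit on $B^*$ of a net from $B$ extends by homogeneity to a pointwise limit on $V^*$ of linear functionals of norm $\le 1$, hence lies in $B^{**}$, and conversely Goldstine gives density --- so your argument makes the whole theorem self-contained modulo Theorem \ref{f:sub-fr}, and in fact recovers the Saab--Saab characterization $(1)\Leftrightarrow(2)$ as a corollary of Rosenthal's dichotomy plus the fragmentability machinery, rather than invoking it as an external input. The trade-off is that the paper's version records the historical attribution and avoids redoing the functional-analytic bookkeeping, while yours exhibits that the $l_1$-dichotomy packaged in Theorem \ref{f:sub-fr} is the only deep ingredient needed.
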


Condition (2) is a reformulation (in terms of fragmented maps) of a
criterion from \cite{SS} which was originally stated in terms of the point of continuity property.
The equivalence of (1), (3) and (4) follows from Theorem \ref{f:sub-fr}.

\subsection{More properties of fragmented families}

Here we demonstrate a general principle:
the fragmentability of a family of continuous maps defined on a compact space is ``countably-determined".
 The following theorem 
is inspired by results of Namioka and it can be deduced after some reformulations from \cite[Theorems 3.4 and 3.6]{N}. See also \cite[Theorem 2.1]{CNO}.

\begin{thm} \label{t:countDetermined}
Let $F=\{f_i: X \to Y\}_{i \in I}$ be a
bounded
family of \textbf{continuous} maps from a compact (not necessarily metrizable)
space $(X,\tau)$ into a pseudometric space $(Y,d)$.
The following conditions are equivalent:
\ben
\item
$F$ is a fragmented family of functions on $X$.
\item
Every \emph{countable} subfamily $K$ of $F$ is fragmented.
\item
For every countable subfamily $K$ of $F$
the pseudometric space $(X,\rho_{K,d})$ is separable,
where
$$
\rho_{K,d}(x_1,x_2):=\sup_{f \in K} d(f(x_1),f(x_2)).
$$
\een
\end{thm}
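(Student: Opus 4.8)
The plan is to prove the cycle (1) $\Rightarrow$ (2) $\Rightarrow$ (3) $\Rightarrow$ (1). The implication (1) $\Rightarrow$ (2) is immediate, since by Lemma \ref{r:fr1}.1 fragmentability is hereditary, so every subfamily of a fragmented family is fragmented. The heart of the matter is to reduce fragmentability of a \emph{countable} family to separability, and then to bootstrap from countable subfamilies to the whole family. For the first reduction I fix a countable $K=\{f_n\}\subseteq F$ and pass to the continuous evaluation $e_K\colon X\to Y^{\N}$, $x\mapsto (f_n(x))_n$, whose image $Z_K:=e_K(X)$ is a compact \emph{metrizable} (second countable) space. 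Writing $\pi_n\colon Z_K\to Y$ for the coordinates, we have $f_n=\pi_n\circ e_K$, so by the factorization Lemma \ref{r:fr1}.7 the family $K$ is fragmented on $X$ iff $\{\pi_n\}$ is fragmented on $Z_K$; moreover $\rho_{K,d}$ is the $e_K$-pullback of the sup-pseudometric $\bar\rho$ on $Z_K$, which is lower semicontinuous since each $(z,z')\mapsto d(\pi_n(z),\pi_n(z'))$ is continuous. Thus (2) $\Leftrightarrow$ (3) follows once I establish, on the metrizable compactum $Z_K$, that \emph{$Z_K$ is fragmented by $\bar\rho$ iff $(Z_K,\bar\rho)$ is separable}. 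For separable $\Rightarrow$ fragmented I would cover a closed subset by countably many $\bar\rho$-balls of small radius, pass to their $\tau_{Z_K}$-closures (whose $\bar\rho$-diameter does not grow, by lower semicontinuity of $\bar\rho$), and use the Baire property of the compactum to extract a nonempty relatively open set of small diameter. For fragmented $\Rightarrow$ separable I would run the $\eps$-fragmentation derivation: since $Z_K$ is hereditarily Lindel\"of the derivation has countable length, and at each stage the peeled relatively open set is covered by countably many $\bar\rho$-small sets, each containing at most one point of a prescribed $\eps$-separated set; hence every $\eps$-separated set is countable. (This is the Namioka-type phenomenon \cite[Theorem 3.4]{N}.)

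The main step is (3) $\Rightarrow$ (1), which I prove by contraposition. If $F$ is not fragmented, then for some $\eps$ the transfinite $\eps$-derivation on closed sets, in which $A_{\alpha+1}$ is obtained from $A_\alpha$ by deleting the union of all relatively $\tau$-open subsets of $\rho_{F,d}$-diameter $<\eps$ (intersections at limit stages), stabilizes by compactness at a nonempty closed \emph{kernel} $A$: every nonempty relatively open $O\subseteq A$ has $\rho_{F,d}$-diameter $\ge \eps$ (were the stable value empty, $F$ would be $\eps$-fragmented). I would then build a countable $K=\bigcup_m K_m\subseteq F$ by bootstrapping. Having chosen the countable $K_m$, pass to the metrizable quotient $q_m:=e_{K_m}\colon X\to Z_m$, fix the countable base $\Bcal_m$ of $Z_m$ consisting of finite intersections of coordinate preimages of rational balls, and for every $V\in\Bcal_m$ with $q_m^{-1}(V)\cap A\neq\emptyset$ (a nonempty relatively open subset of the kernel, hence of $\rho_{F,d}$-diameter $\ge\eps$) adjoin to $K_{m+1}$ a function $f_{m,V}\in F$ whose oscillation on $q_m^{-1}(V)\cap A$ exceeds $\eps/2$.

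Let $K:=\bigcup_m K_m$ and $q:=e_K\colon X\to Z:=e_K(X)$, with each $q_m$ factoring through $q$. The key observation is that every basic $\tau_Z$-open set involves only finitely many coordinates, hence lies in some $K_m$, so its $q$-preimage equals $q_m^{-1}(V)$ for a corresponding $V\in\Bcal_m$. Consequently $q(A)$ is a kernel for $\{\pi_n\}$ on $Z$: given a nonempty relatively $\tau_Z$-open subset of $q(A)$, shrink it to a basic set meeting $q(A)$ and invoke the adjoined witness to see its $\bar\rho$-diameter is $\ge\eps/2$. Hence $\{\pi_n\}$ is not fragmented on $Z$, so by Lemma \ref{r:fr1}.7 the countable family $K$ is not fragmented on $X$, and by the equivalence of the first paragraph $(X,\rho_{K,d})$ is not separable, contradicting (3). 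This closes the cycle. (Throughout, Lemma \ref{l:propFamil}.2 lets me work with countable subfamilies without loss.)

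I expect the main obstacle to be exactly the non-metrizability of $X$. A Cantor/derivation scheme carried out directly on $X$ breaks down because the defining relatively open sets need not shrink to a neighbourhood basis, and countably many functions cannot witness oscillation on the (possibly uncountably many) pairwise disjoint open subsets of a non-separable $X$. The device that overcomes this is the systematic passage to the metrizable quotients $e_K(X)$, combined with the elementary but decisive fact that a basic open set of $e_K(X)$ depends on only finitely many coordinates; it is this finiteness that allows a single countably generated bootstrapping to capture the whole family's failure of fragmentability. Care is needed to keep the derivation and the bootstrapping phrased in terms of \emph{relatively} open sets and to use the hereditary character of fragmentability (Lemma \ref{r:fr1}.1), as well as lower semicontinuity of the sup-pseudometric wherever diameters are pushed to closures.
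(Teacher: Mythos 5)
Your proof is correct, and for the crucial implication it takes a genuinely different route from the paper's. The easy parts coincide: (1) $\Rightarrow$ (2) is heredity, and your handling of countable families via the evaluation map $e_K\colon X\to Y^K$, the factorization Lemma \ref{r:fr1}.7, and separability of fragmented images of second countable spaces is essentially the paper's proof of (2) $\Rightarrow$ (3) (the paper simply cites Lemmas \ref{r:fr1}.5 and \ref{r:fr1}.6 where you re-derive them). The divergence is in (3) $\Rightarrow$ (1). The paper argues by contraposition with a direct dyadic-tree construction: inside the bad closed set $A$ it builds relatively open sets $V_n$ and functions $f_n\in F$ with $d(f_n(x),f_n(y))>\eps$ on $V_{2n}\times V_{2n+1}$ and $V_{2n}\cup V_{2n+1}\subset V_n$; compactness places a point on each of the uncountably many branches, and these points form an uncountable $\rho_{K,d}$-discrete set, so $(X,\rho_{K,d})$ fails to be separable for the countable family $K=\{f_n\}$ --- one stroke, no Baire category, and no need for the converse direction. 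You instead run a Namioka-style countable-determination argument: a transfinite derivation produces the kernel $A$; a bootstrap through the metrizable quotients $Z_m=e_{K_m}(X)$, exploiting that basic open sets of $Z=e_K(X)$ depend on only finitely many coordinates, shows that non-fragmentability of $F$ passes to a countable subfamily $K$ (this is in effect a direct proof of Theorem \ref{c:countDetermined2}, which the paper instead deduces as a corollary of the present theorem); you then need the extra implication ``separable $\Rightarrow$ fragmented'' (Baire category plus lower semicontinuity of the sup-pseudometric) to convert this into a violation of (3). Your route is longer and uses more machinery, but it is more modular: it isolates the full equivalence (2) $\Leftrightarrow$ (3) for countable families and establishes the countable determination (1) $\Leftrightarrow$ (2) without ever mentioning separability; the paper's tree argument is shorter and produces a sharper witness, namely an explicit uncountable $\eps$-discrete set.

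One detail in your bootstrap needs adjusting. As written, $\Bcal_m$ consists of finite intersections of coordinate preimages of ``rational balls''; if this means balls of rational radius with \emph{arbitrary} centers in $Y$, then $\Bcal_m$ is uncountable (since $Y$ need not be separable) and $K$ would no longer be countable. The fix is immediate: each $f(X)$ is a compact pseudometric space, hence separable, so take centers in a countable dense subset of $f(X)$; alternatively, use an arbitrary countable base $\Bcal_m$ of the second countable space $Z_m$ and, in the key step, first write a basic open set of $Z$ with coordinates in $K_m$ as $p_m^{-1}(V'')$ for the projection $p_m\colon Z\to Z_m$ and then shrink $V''$ to a member of $\Bcal_m$. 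With that one-line repair the argument goes through exactly as you describe.
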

\begin{proof}
(1) $\Rightarrow$ (2) is trivial.

(2) $\Rightarrow$ (3):
Let $K$ be a countable subfamily of $F$.
Consider the natural map $$\pi: X \to Y^K, \pi(x)(f):=f(x).$$
By (2), $K$ is a fragmented family.
Thus by Lemma \ref{r:fr1}.6 the map $\pi$ is $(\tau,\mu_K)$-fragmented, where $\mu_K$ is the uniformity
of $d$-uniform convergence on $Y^K:=\{f: K \to (Y,d)\}$.
Then the map
$\pi$ is also $(\tau,d_K)$-fragmented, where $d_K$ is the pseudometric on $Y^K$ defined by
$$d_K(z_1,z_2):=\sup_{f \in K} d(z_1(f), z_2(f)).$$
Since $d$ is bounded, $d_K(z_1,z_2)$ is finite and $d_K$ is well-defined.
Denote by $(X_K,\tau_p)$ the subspace $\pi(X) \subset Y^K$ in pointwise topology.
Since $K \subset C(X)$, the induced map $\pi_0: X \to X_K$ is a continuous
map onto the compact space $(X_K, \tau_p)$. Denote by $i: (X_K,\tau_p) \to (Y^K,d_K)$ the inclusion map.
So, $\pi=i \circ \pi_0$, where the map $\pi$ is $(\tau,d_K)$-fragmented.
Then by Lemma \ref{r:fr1}.7 
we obtain that $i$ is $(\tau_p,d_K)$-fragmented. It immediately follows that the identity map
$id: (X_K,\tau_p) \to (X_K,d_K)$ is $(\tau_p,d_K)$-fragmented.

Since $K$ is countable, $(X_K, \tau_p) \subset Y^K$ is metrizable.
Therefore, $(X_K, \tau_p)$ is second countable (being a metrizable compactum). Now, since $d_K$ is a pseudometric on $Y^K$,
and $id: (X_K,\tau_p) \to (X_K,d_K)$ is $(\tau_p,d_K)$-fragmented, we can apply Lemma \ref{r:fr1}.5. 
It directly implies that the set $X_K$ is a separable subset of $(Y^K, d_K)$.
This means that $(X,\rho_{K,d})$ is separable.

(3) $\Rightarrow$ (1) :
Suppose that $F$ is not fragmented.
Thus, there exists a non-empty closed subset $A \subset X$ and an $\eps >0$ such that
for each non-empty open subset $O \subset X$ with $O \cap A \neq \emptyset$ there is
some $f \in F$ such that
$f(O \cap A)$ is not $\eps$-small in $(Y,d)$.
Let $V_1$ be an arbitrary non-empty relatively open subset in $A$. There are
$a,b \in V_1$ and
$f_1 \in F$ such that $d(f_1(a),f_1(b)) > \eps$. Since $f_1$ is continuous we can choose
relatively open subsets $V_2,V_3$ in $A$
with $\cls (V_2 \cup V_3) \subset V_1$
such that $d(f_1(x),f_1(y)) > \eps$ for every $(x,y) \in V_2 \times V_3$.

By induction we can construct a sequence $\{V_n\}_{n \in \N}$ of non-empty relatively open subsets in $A$ and a sequence $K:=\{f_n\}_{n \in \N}$ in $F$ such that:

\bit
\item [(i)]
$V_{2n} \cup V_{2n+1} \subset V_n$
for each $n \in \N$;
\item [(ii)] $d(f_n(x),f_n(y)) > \eps$ for every $(x,y) \in V_{2n} \times V_{2n+1}$.

\eit

\sk

We claim that $(X,\rho_{K,d})$  is not separable,
where $$\rho_{K,d}(x_1,x_2):=\sup_{f \in K} d(f(x_1),f(x_2)).$$

In fact, for each \emph{branch}
$$
\a:=V_1 \supset V_{n_1} \supset V_{n_2} \supset \cdots
$$
where for each $i, n_{i+1}=2n_i$ or $2n_i+1$, by compactness of $X$ one can choose an element
$$
x_{\a} \in \bigcap_{i \in \N} {\cls}(V_{n_i}).
$$
If $x=x_\alpha$ and $y=x_\beta$ come from different branches,
then there is an $n \in \N$ such that
$x \in {\cls}(V_{2n})$ and
$y \in {\cls}(V_{2n+1})$ or (vice versa).
In any case it follows from (ii) and the continuity of $f_n$ that $d(f_n(x),f_n(y)) \geq \eps$,
hence $\rho_{K,d}(x,y) \geq \eps$. Since there are uncountably many branches we conclude
that $A$ and hence also $X$ are not $\rho_{K,d}$-separable.
\end{proof}

\begin{defin} \label{d:AspSet}
\cite{Fa, Me-nz}
Let $X$ be a compact space and $F \subset C(X)$ a norm bounded family of continuous real valued functions on $X$. Then $F$ is said to be an
\emph{Asplund family for} $X$ if for every countable subfamily $K$ of $F$
the pseudometric space $(X,\rho_{K,d})$ is separable,
where
$$
\rho_{K,d}(x_1,x_2):=\sup_{f \in K} |f(x_1) - f(x_2)|.
$$
\end{defin}


\begin{cor} \label{c:AspSet}
Let $X$ be a compact space and $F \subset C(X)$ a norm bounded family of continuous real valued functions on $X$. Then $F$ is fragmented if and only if $F$ is an Asplund family for $X$.
\end{cor}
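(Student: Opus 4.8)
The plan is to recognize Corollary \ref{c:AspSet} as the immediate specialization of Theorem \ref{t:countDetermined} to the target space $Y = \R$ equipped with its standard metric $d(a,b) = |a-b|$. So the whole proof amounts to matching notation and invoking the equivalence already established there.

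First I would note that $(\R, d)$ is a metric (hence pseudometric) space, and that a norm bounded family $F \subset C(X)$ is exactly a bounded family of continuous maps from the compact space $X$ into the pseudometric space $(\R, d)$. Thus all the hypotheses of Theorem \ref{t:countDetermined} are satisfied. Under this identification the pseudometric appearing in condition (3) of that theorem,
$$
\rho_{K,d}(x_1,x_2) = \sup_{f \in K} d(f(x_1), f(x_2)) = \sup_{f \in K} |f(x_1) - f(x_2)|,
$$
coincides verbatim with the pseudometric used in Definition \ref{d:AspSet} to define an Asplund family. Hence condition (3) of Theorem \ref{t:countDetermined} — that $(X, \rho_{K,d})$ be separable for every countable subfamily $K \subset F$ — is literally the statement that $F$ is an Asplund family for $X$.

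Then I would simply apply the equivalence (1) $\Leftrightarrow$ (3) of Theorem \ref{t:countDetermined}: $F$ is a fragmented family if and only if for every countable subfamily $K \subset F$ the pseudometric space $(X, \rho_{K,d})$ is separable. Combining with the previous paragraph, the right-hand condition is precisely ``$F$ is an Asplund family for $X$'', which yields the asserted equivalence.

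There is essentially no obstacle here, since the corollary is a direct reading of the theorem through Definition \ref{d:AspSet}; the only thing requiring (trivial) care is confirming that the two pseudometrics are identical and that $Y=\R$ fulfills the standing assumptions (compact domain, continuous and bounded family into a pseudometric codomain) of Theorem \ref{t:countDetermined}.
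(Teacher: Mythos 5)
Your proposal is correct and is exactly the intended argument: the paper states the corollary without proof precisely because it is the immediate specialization of Theorem \ref{t:countDetermined} (equivalence (1) $\Leftrightarrow$ (3)) to $Y=\R$ with $d(a,b)=|a-b|$, where norm boundedness of $F$ gives the boundedness hypothesis and condition (3) becomes verbatim Definition \ref{d:AspSet}. Nothing is missing.
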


\begin{thm} \label{c:countDetermined2}
Let $F=\{f_i: X \to Y\}_{i \in I}$ be a family of continuous maps
from a compact (not necessarily metrizable) space $(X,\tau)$ into a uniform space $(Y,\mu)$.
Then $F$ is fragmented if and only if every countable subfamily $A \subset F$ is fragmented.
\end{thm}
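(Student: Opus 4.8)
The plan is to reduce the general uniform target to the pseudometric case already settled in Theorem \ref{t:countDetermined}. The forward implication is immediate: if $F$ is fragmented, then for every closed nonempty $A \subset X$ and every $\eps \in \mu$ there is an open $O$ with $f_i(O \cap A)$ being $\eps$-small for \emph{all} $i \in I$, and this condition persists verbatim for any subfamily, in particular every countable one. So the content is entirely in the converse.

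For the converse I would first invoke Lemma \ref{r:fr1}.1, which lets me verify the $\eps$-fragmentability condition only for $\eps$ ranging over a subbase of $\mu$. Since every uniformity is generated by a family of pseudometrics, I may take the subbase to consist of entourages of the form $\eps_{d,\delta} = \{(y,y') : d(y,y') < \delta\}$, where $d$ runs over compatible (i.e.\ $\mu$-uniformly continuous) pseudometrics on $Y$ and $\delta > 0$; replacing $d$ by $\min(d,1)$ I may moreover assume each such $d$ is bounded. Fix one such $d$. Because the identity map $(Y,\mu) \to (Y,d)$ is uniformly continuous, each $f_i$ is also continuous as a map $X \to (Y,d)$, so the family $F$ satisfies the hypotheses of Theorem \ref{t:countDetermined} with the bounded pseudometric space $(Y,d)$ as target.

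The key transfer step is to observe that the countable-subfamily hypothesis passes from $(Y,\mu)$ to $(Y,d)$. Indeed, the entourages $\eps_{d,\delta'}$ ($\delta' > 0$) generating the $d$-uniformity all belong to $\mu$; hence any countable $K \subset F$ that is fragmented as a family of maps into $(Y,\mu)$ is in particular $\eps_{d,\delta'}$-fragmented for every $\delta' > 0$, i.e.\ fragmented as a family of maps into $(Y,d)$. Applying Theorem \ref{t:countDetermined} to the target $(Y,d)$ now yields that $F$ itself is fragmented into $(Y,d)$, and therefore $\eps_{d,\delta}$-fragmented. As $\eps_{d,\delta}$ was an arbitrary subbase element, Lemma \ref{r:fr1}.1 gives that $F$ is fragmented into $(Y,\mu)$, completing the argument.

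I expect the only real subtlety to be the bookkeeping around the subbase and the boundedness assumption of Theorem \ref{t:countDetermined}: one must choose the generating pseudometrics bounded (via truncation) so that the pseudometric $\rho_{K,d}$ appearing in that theorem is well defined, while checking that truncation does not disturb $\eps$-smallness for the relevant small $\eps$. Beyond this, the proof is a direct application of the established pseudometric case together with the hereditary character of the fragmentation condition recorded in Lemma \ref{r:fr1}.1.
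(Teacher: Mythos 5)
Your proof is correct and follows essentially the same route as the paper: the authors likewise reduce to Theorem \ref{t:countDetermined} by noting that the uniformity $\mu$ is generated by (bounded, uniformly continuous) pseudometrics and invoking Lemma \ref{r:fr1}.1 to check fragmentability only on the resulting subbase, with boundedness arranged by replacing $d$ with $\frac{d}{1+d}$ where you use $\min(d,1)$. Your writeup merely makes explicit the transfer of the countable-subfamily hypothesis from $(Y,\mu)$ to $(Y,d)$, which the paper leaves implicit.
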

\begin{proof} The proof can be reduced to Theorem \ref{t:countDetermined}.
Every uniform space can be uniformly approximated by pseudometric spaces.
Using Lemma \ref{r:fr1}.1
we can suppose that $(Y,\mu)$ is pseudometrizable; i.e.
there exists a pseudometric $d$ such that $\mathrm{unif}(d)=\mu$.
Moreover, replacing $d$ by the uniformly equivalent pseudometric $\frac{d}{1+d}$ we can suppose that $d \leq 1$.
\end{proof}

\section{Banach representations of dynamical systems and of functions}
\label{s:repr}

A \emph{representation} of a semigroup $S$ (with identity element $e$) on a Banach space $V$
is a co-homomorphism $h: S \to \Theta(V)$, where
$\Theta(V):=\{T \in L(V): \ ||T|| \leq 1\}$,
with $h(e)=id_V$. Here $L(V)$ is the 
set of continuous
linear operators $V\to V$ and $id_V$ is the identity operator.
This is equivalent to the requirement that
$h: S \to \Theta(V)^{op}$ be a monoid homomorphism, where $\Theta(V)^{op}$
is the opposite semigroup of $\Theta(V)$.
If $S=G$ is a group then $h(G) \subset \Iso(V)$, where
$\Iso(V)$ is the group of all linear isometries from $V$ onto $V$.

Since $\Theta(V)^{op}$ acts from the right on $V$
and from the left on $V^*$
we sometimes write $vs$ for $h(s)(v)$ and $s \varphi$ for $h(s)^* (\varphi)$, where $h(s)^*: V^* \to V^*$ is the adjoint of $h(s): V \to V$.
Then $\langle vs, \varphi \rangle=\langle v, s\varphi  \rangle$. In this way we get the \emph{dual action} (induced by $h$)
$$S \times V^* \to V^*, \ (s \varphi)(v):=\varphi(vs)= \langle vs, \varphi \rangle.$$

\begin{defin} \label{d:repr}  \cite{Me-nz,GM1}
Let $X$ be
an $S$-space.
\ben
\item
A \emph{representation} of $(S,X)$ on a Banach space $V$ is a pair
$$h: S \to \Theta(V), \ \ \a: X \to V^*$$
where $h: S \to \Theta(V)$ is a weakly continuous representation (co-homomorphism) of semigroups
and $\a: X \to V^*$ is a weak$^*$ continuous bounded
$S$-mapping with respect to the dual action
$S \times V^* \to V^*, \ (s \varphi)(v):=\varphi(vs).$  

\begin{equation*} \label{diag2}
\xymatrix{ S \ar@<-2ex>[d]_{h} \times  X \ar@<2ex>[d]^{\a} \ar[r]  & X \ar[d]^{\a} \\
\Theta^{op} \times V^* \ar[r]  &  V^* }
\end{equation*}

We say that the representation $(h,\a)$ is \emph{strongly continuous} if
$h$ is strongly continuous. \emph{Faithful} will mean that $\a$ is a topological embedding.


\item In particular, if in (1) $S:=G$ is a group then,
necessarily, $h(G)$ is a subgroup of $\Iso(V)$.

\item
If $\K$ is a subclass of the class of Banach spaces, we say
that a dynamical system $(S,X)$ is
{\em (strongly) $\K$-representable}
if there exists a weakly (respectively, strongly) continuous
faithful representation of $(S,X)$ on a Banach space $V \in \K$.
\item
A dynamical system $(S,X)$ is said to be {\em (strongly)} \emph{$\K$-approximable}
if $(S,X)$ can be embedded in a product of (strongly) $\K$-representable $S$-spaces.
\item
For a topological group $G$ $\K$-\emph{representability} will mean that there exists
an embedding (equivalently, a co-embedding) of $G$ into the
group $\Iso(V)$ where $V \in \K$ and $\Iso(V)$ is endowed with the strong operator topology. 
\een
\end{defin}

Note that when $X$ is compact then every weak-star continuous $\a: X \to V^*$ is necessarily bounded.

\begin{remark}
The notion of a reflexively (Asplund) representable compact dynamical system is a dynamical version of
the purely topological notion of an \emph{Eberlein} (respectively,
\emph{Radon-Nikod\'ym} (RN)) compactum, in the sense of Amir and
Lindenstrauss (respectively, in the sense of Namioka).
\end{remark}

\begin{defin} \label{d:DSclasses}
We say that a dynamical system $(S,X)$ is: (i) {\it Eberlein} when it is reflexively representable;
(ii) Radon-Nikod\'ym (RN) when it is Asplund representable;
and, (iii) \emph{Weakly Radon-Nikod\'ym} (WRN), when it is Rosenthal representable.
\end{defin}

As a word of warning note that 
if $X$, as a compactum, satisfies one of the properties above (Eberlein, RN or WRN) this does not mean that $(S,X)$ has the same property.
In fact {\it every} compact metrizable space is obviously (uniformly) Eberlein as it can be embedded in a separable Hilbert space.
However,
it is not hard to find metric dynamical systems which distinguish the classes of dynamical systems mentioned above.

\begin{ex} \label{e:DistEx} \
\ben
\item Let $X=[0,1]$ be the unit interval. Consider the cascade $(\Z,X)$ generated by the homeomorphism $\s(x)=x^2$. Then $(\Z,X)$,  as a dynamical system, is RN and not Eberlein.
To see this observe that the pair of sequences $x_n=1-\frac{1}{n}$ in $X=[0,1]$
and $\s^m \in G$ with $\s^m(x)=x^{2^m}$ does not satisfy DLP. The corresponding limits are 0 and 1.
This means that $(\Z,X)$ is not Eberlein (Remark \ref{r:JCont2} and Theorem \ref{t:tame=R-repr}).
The enveloping semigroup $E(\Z,X)$ is metrizable, being homeomorphic to the two-point compactification of $\Z$.
Hence, by \cite{GMU}, $(\Z,X)$ is RN.
The sequence $\{\s^m: [0,1] \to [0,1]\}_{m \in \N}$ is a fragmented family which does not satisfy DLP.
\item The Sturmian symbolic dynamical system $(X,\s)$ from Example \ref{e:tameNOThns}.3 is WRN but not RN.
The sequence $\{\s^n: X \to X\}_{n \in \N}$ of (positive) iterations is an eventually fragmented but not a fragmented family.
\item The natural action of the Polish group $H_+[0,1]$ of increasing homeomorphisms of $[0,1]$ on $[0,1]$ is tame but not HNS; this system is WRN but not RN. 
See for example \cite{GM-AffComp} or Section \ref{s:orderOnI} below. The family $H_+[0,1]$, as a family  of functions, (or any dense subsequence
in $H_+[0,1]$)
is eventually fragmented (equivalently, tame) but not fragmented.
\item The Bernoulli shift $(\Z,\{0,1\}^{\Z})$ is not WRN (equivalently, not tame). In fact,
the enveloping semigroup of this system can be identified with
$\beta \Z$.
Now use the dynamical version of BFT dichotomy (Theorem \ref{D-BFT}).
Another way to see that the shift system is not tame is by the well known
fact (see for example \cite{TodBook}) that
the sequence of projections on the Cantor cube $$\{\pi_m: \{0,1\}^{\N} \to \{0,1\}\}_{m \in \N}$$ is independent.
Hence by Theorem \ref{f:sub-fr} this family fails to be eventually fragmented.
\een
\end{ex}

\sk

Note that the classes from Definition \ref{d:DSclasses} are closed under countable products (see \cite{GM-AffComp}).

For a compact space $X$ we denote by $H(X)$ the topological group of
self-homeomorphisms of $X$ endowed with the compact open topology.
%

\begin{lem} \label{GrRepr}
Let $X$ be a compact $G$-space, where $G$ is a topological subgroup of $H(X)$. Assume that $(h,\a)$
is a
strongly continuous
faithful representation of $(G,X)$ on a Banach space $V$
(that is, $h: G \to \Iso(V)$ is strongly continuous and $\a: X \to (V^*,w^*)$ is an embedding, see Definition \ref{d:repr}).
Then $h: G \to \Iso(V)$ is a topological group embedding.
\end{lem}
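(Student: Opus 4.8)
Since $h$ is assumed strongly continuous, the plan is to verify the two remaining ingredients of a topological group embedding: that $h$ is injective, and that $h^{-1}\colon h(G)\to G$ is continuous for the strong operator topology. Throughout I would exploit that, for a compact $X$, the compact open topology on $H(X)$ (and hence the subspace topology on $G$) coincides with the topology of uniform convergence relative to the unique uniformity $\mu$ of $X$, and that a subbase of $\mu$ is transported by the embedding $\a$ from the weak$^*$ uniformity of $\a(X)\subset V^*$.

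For injectivity I would argue directly from faithfulness. If $h(g)=h(g')$ then $h(g)^*=h(g')^*$, so using that $\a$ is an $S$-map, $\a(gx)=h(g)^*\a(x)=h(g')^*\a(x)=\a(g'x)$ for every $x\in X$. As $\a$ is an embedding, hence injective, this gives $gx=g'x$ for all $x$, and since $G\subset H(X)$ acts faithfully on $X$ we conclude $g=g'$.

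The core of the proof is a single uniform estimate. Writing $M:=\sup_{x\in X}\|\a(x)\|<\infty$ (finite because $\a(X)$ is weak$^*$ compact, hence norm bounded), the defining relation $\langle v,\a(sx)\rangle=\langle h(s)v,\a(x)\rangle$ gives, for all $v\in V$ and $x\in X$,
\[
\bigl|\langle v,\a(gx)-\a(g'x)\rangle\bigr|=\bigl|\langle h(g)v-h(g')v,\a(x)\rangle\bigr|\le M\,\|h(g)v-h(g')v\|,
\]
a bound that is \emph{uniform in $x$}. Given a net $g_i$ with $h(g_i)\to h(g)$ in the strong operator topology, this shows $\sup_{x\in X}|\langle v,\a(g_ix)-\a(gx)\rangle|\to 0$ for each fixed $v$. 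Since the entourages $\{(x,y):|\langle v,\a(x)-\a(y)\rangle|<\delta\}$ ($v\in V$, $\delta>0$) form a subbase of $\mu$, intersecting finitely many of them shows $g_i\to g$ uniformly on $X$, i.e. $g_i\to g$ in $G$. By the net characterization of continuity this yields continuity of $h^{-1}$, completing the proof that $h$ is a topological group embedding.

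The one step I expect to require care — and the only real obstacle — is precisely this upgrade from pointwise to uniform convergence. Reading off only the weak$^*$ convergence $\langle v,\a(g_ix)\rangle\to\langle v,\a(gx)\rangle$ for fixed $x$ would give mere pointwise convergence $g_ix\to gx$, which on a compact space need not be uniform. What rescues the argument is that the norm boundedness of $\a(X)$ makes the estimate above independent of $x$, so that strong operator convergence of the $h(g_i)$ forces genuine uniform convergence of the homeomorphisms $g_i$.
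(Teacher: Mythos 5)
Your proof is correct and is essentially the paper's argument, unpacked: the paper's proof is a one-line citation of the fact that the strong operator topology on $\Iso(V)$ coincides with the compact-open (equivalently, uniform-convergence) topology of its adjoint action on the weak$^*$ compact ball $(B^*,w^*)$, and your uniform estimate $|\langle v,\a(g_ix)-\a(gx)\rangle|\le M\,\|h(g_i)v-h(g)v\|$ combined with the subbase-of-entourages argument is precisely a direct proof of the direction of that identification needed here, transported to $X$ through the equivariant embedding $\a$. Your explicit verification of injectivity (which the paper leaves implicit in the word ``faithful'') is also fine.
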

\begin{proof}
Recall that the strong operator topology on $\Iso(V)^{op}$ is identical with the compact open topology inherited from the action of this group on the weak-star compact unit ball $(B^*,w^*)$.
\end{proof}

We recall the following theorems.

\begin{thm} \label{t:tame=R-repr}
\emph{(\cite[Theorem 11]{GM-AffComp} and \cite{Me-nz})}
Let $X$ be a compact $S$-system.
\ben
\item $(S,X)$ is a tame system
iff $(S,X)$ is Rosenthal-approximable.
\item $(S,X)$ is a HNS system
iff $(S,X)$ is Asplund-approximable.
\item $(S,X)$ is a WAP system
iff $(S,X)$ is reflexively-approximable. 
\een
(*) If $X$ is metrizable then in (1), (2) and (3) ``approximable" can be replaced by ``representable" and
the corresponding Banach space can be assumed to be separable.
\end{thm}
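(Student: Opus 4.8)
The plan is to prove all three equivalences by one common scheme, handling the two implications separately; I will carry out case (1) (tame $\Leftrightarrow$ Rosenthal-approximable) in detail, since (2) and (3) are entirely parallel, with ``Rosenthal'' replaced by ``Asplund'' (resp. ``reflexive'') and Theorem \ref{f:RosFr} replaced by Theorem \ref{f:Asp} (resp. Theorem \ref{t:ReflChar} together with Remark \ref{r:JCont2}).

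\emph{The easy implication} ($\Leftarrow$). I would first check that a single $\Kcal$-representable system already has the corresponding dynamical property, and then that this property is inherited by subsystems and arbitrary products; since approximability means embeddability into a product of representable systems, these two facts combine to give the implication. So let $(h,\a)$ be a faithful representation of $(S,X)$ on a Rosenthal space $V$, with $\a\colon X\hookrightarrow (V^*,w^*)$ and $h\colon S\to\Theta(V)$. Via $\a$ each translation $\tilde s$ is the restriction to $\a(X)$ of the dual operator $h(s)^*$, so every $p\in E(S,X)$ is a restriction of a $w^*$-limit of such operators. Fixing $v\in V$ and the defining net of $p$, the vectors $vs=h(s)v$ lie in $B_V$ and $w^*$-accumulate to some $\eta\in B^{**}$, whence $x\mapsto\langle p(\a(x)),v\rangle=\eta(\a(x))$. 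By Theorem \ref{f:RosFr}.2 we have $B^{**}\subset\F(B^*)$, so this coordinate is a fragmented function on $\a(X)$. As $v$ ranges over $V$ these continuous coordinates separate the points of $X$, so $p$ is a fragmented map by Lemma \ref{r:fr1}.9, i.e. $(S,X)$ is tame (Definition \ref{d:tame}). Closure under subsystems and products is then routine (restriction of a fragmented map is fragmented, and $E$ of a product embeds coordinatewise into the product of the factors' enveloping semigroups; see Lemma \ref{r:fr1}.7, .9, .10).

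\emph{The main implication} ($\Rightarrow$). Assume $(S,X)$ is tame. Since every $p\in E(X)$ is fragmented, for each $f\in C(X)$ we have $\cls_p(fS)=fE\subset\F(X)$, so by Theorem \ref{f:sub-fr} the bounded orbit $fS$ is a Rosenthal, equivalently tame, family. By Remark \ref{r:cycl-comes}, $f$ then factors through a cyclic compact $S$-system $\pi_f\colon X\to X_f$ on which $\tilde f S$ is a bounded family separating points; moreover $\tilde f S$ is again a tame family, since independent sequences are transported across the factor $\pi_f$. Because $C(X)$ separates points of $X$, the diagonal map $X\to\prod_f X_f$ is an $S$-embedding, so it suffices to represent each $X_f$ on a Rosenthal Banach space. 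This reduces everything to a single ``cyclic'' construction, in which the only input is that $F:=\tilde f S\subset C(X_f)$ is a bounded tame family separating points.

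\emph{The cyclic construction and the main obstacle.} For fixed $f$ I would form the bounded, $S$-invariant, absolutely convex set $K=\overline{\mathrm{co}}(F\cup -F)\subset C(X_f)$ and apply the Davis--Figiel--Johnson--Pe\l czy\'nski interpolation to $K$, producing a Banach space $V_f$ together with a continuous $S$-equivariant injection $V_f\hookrightarrow C(X_f)$ such that $K\subset B_{V_f}$ and $S$ acts on $V_f$ by contractions. Defining $\a\colon X_f\to V_f^*$ by evaluation $\a(x)=\delta_x$ gives a $w^*$-continuous $S$-map (each $v\in V_f$ being a continuous function on $X_f$) which is injective because $F\subset V_f$ separates points; this is the desired faithful representation. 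The two nontrivial points, and the main obstacle, are: (a) that $K$ remains weakly precompact, i.e. contains no $l_1$-sequence --- this is exactly where tameness of $F$ is used, via the stability of the no-$l_1$ property under passage to the closed absolutely convex hull (Rosenthal's $l_1$-theorem, cf. Theorem \ref{f:sub-fr}); and (b) that the interpolation space $V_f$ inherits the geometric class of $K$, namely Rosenthal from ``$K$ weakly precompact'', Asplund from ``$K$ an Asplund (RN) set'', and reflexive from ``$K$ weakly compact'' --- the last being the classical DFJP theorem and the first two its fragmentability refinements (this is where Theorem \ref{f:Asp} and Theorem \ref{f:RosFr} enter). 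Finally, for the metrizable refinement $(*)$, $C(X)$ is separable, so I would fix a countable separating sequence $f_n\in C(X)$, assemble the single normalized $S$-invariant family $F=\{2^{-n}\,\tilde f_n s : n\in\N,\ s\in S\}$, whose envelope is still fragmented (tame), and run the construction once; separability of the resulting $V$ follows from metrizability of $X$, upgrading ``approximable'' to ``representable'' on a separable Banach space.
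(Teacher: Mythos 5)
Your proposal is correct and follows essentially the same route as the paper: there the theorem is deduced from Theorem \ref{t:AspDuality} (whose ``if'' parts rest on the dynamical version of the DFJP interpolation and whose ``only if'' parts rest on the fragmentability characterizations of reflexive, Asplund and Rosenthal spaces, Theorems \ref{t:ReflChar}, \ref{f:Asp}, \ref{f:RosFr}) together with the observation that a point-separating family forces the map $\a$ to be injective --- which is exactly your reduction to the cyclic factors $X_f$ carrying the separating tame family $\tilde f S$, followed by DFJP applied to $\overline{\mathrm{co}}(F\cup -F)$. Your converse direction via Saab--Saab ($B^{**}\subset\F(B^*)$, Theorem \ref{f:RosFr}.2) plus Lemma \ref{r:fr1}.9, and your single aggregated family $\{2^{-n}f_n s\}$ in the metrizable case (the paper instead invokes closure of the classes under countable $l_2$-sums, Remark \ref{Appr=Repr}), are only cosmetic variants of the same argument.
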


\begin{remark} \label{r:JCont1} \
If the given action $S \times X \to X$ is jointly continuous then the representations in Theorem
\ref{t:tame=R-repr} can be assumed to be strongly continuous.
The same is true for the results below: Theorems \ref{t:AspDuality},
\ref{t:WRN} \ref{t:AspF}, \ref{t:HNS}, \ref{t:tame-f}, \ref{t:tame} and \ref{t:OrderedAreTame}.1.
 \end{remark}

Since every reflexive Banach space is Asplund and every Asplund Banach space is Rosenthal, 
Theorem \ref{t:tame=R-repr} immediately implies that every WAP system is HNS and every HNS system is tame. 

\br

Of course not every $\K$-approximable system  is $\K$-representable.
For example, $(S,X)$ with $S:=\{e\}$ and $X:=[0,1]^{\R}$
is clearly reflexively-approximable but not reflexively-representable (because $X$, as a compactum, is not Eberlein).

\begin{remark} \label{r:subrepresentations} 
	(subrepresentations) 
	\ben 
	\item Let $(h,\a)$ be a representation of an $S$-space $X$ on $V$. For every $S$-invariant closed linear subspace $V_0 \subset V$ we have a natural induced representation $(h_0,\a_0)$ on $V_0$. Indeed, $h_0: S \to \Theta(V_0)^{op}$ is uniquely defined using the induced action $V_0 \times S \to V_0$. Define $\a_0$ as the composition $\a_0:=i^* \circ \a: X \to V_0^*$, where $i^*: V^* \to V_0^*$ is the adjoint of the embedding $i: V_0 \hookrightarrow V$.
\item If in (1),   
$V_0$ separates the points of $\a(X)$ then $\a_0$ is an injection iff $\a$ is injective. So, if $X$ is compact then we get a faithful representation (in the sense of Definition \ref{d:repr}). This argument implies that if a compact {\it metrizable} $S$-system $X$ is strongly representable 
on $V$ then it is strongly representable on a {\it separable} Banach subspace $V_0$ of $V$.

\een    
\end{remark} 

Next we deal with the representability of families of real-valued functions on compact systems.
This topic is closely related to the ``smallness" of
the family $F$ in terms of its pointwise closure in the spirit of Theorem \ref{f:sub-fr}.

\begin{defin} \label{d:dualities}
Let $\K \subset \mathbf{Ban}$ be a subclass of Banach spaces.
\ben

\item
Let $X$ be
an $S$-space
and $(h,\a)$ a representation of $(S,X)$ on a Banach space $V$.
Let $F \subset C(X)$ be a bounded
$S$-invariant
family of continuous functions on $X$ and
$\nu: F \to V$ a
bounded mapping. We say that $(\nu,h,\a)$ is an $F$-\emph{representation} of the triple
$(F,S,X)$ if
$\nu$ is an $S$-mapping (i.e., $\nu(fs)=\nu(f)s$ for every $(f,s) \in F \times S$) and
$$
f(x)= \langle \nu(f), \a(x) \rangle
\ \ \ \forall \ f \in F, \ \ \forall \ x \in X.
$$
\begin{equation}  \label{diag1}
\xymatrix{ F \ar@<-2ex>[d]_{\nu} \times X
\ar@<2ex>[d]^{\a} \ar[r]  & \R \ar[d]^{id_{\R}} \\
V \times V^* \ar[r]  &  \R }
\end{equation}
\item
We say that a family $F \subset C(X)$ is
\emph{$\K$-representable}
if there exists a Banach space $V \in \K$
and a representation $(\nu,h,\a)$ of the triple $(F,S,X)$.
A function $f \in C(X)$ is said to be \emph{$\K$-representable} if
the orbit $fS$ is $\K$-representable.

Note that we do not assume in (1) or (2) that $\a$ is injective.
However, when the family $F$ separates points on $X$ it follows that the map $\alpha$ is necessarily an injection.
\item
In particular, we obtain the definitions of reflexively,  Asplund and Rosenthal representable families of functions on dynamical systems.
\een
\end{defin}

Clearly, every bounded
$S$-invariant family
$F \subset C(X)$ on an $S$-system $X$ is Banach representable via the canonical representation on $V=C(X)$.

\begin{remark} \label{Appr=Repr}
In some particular cases
$\K$-approximability and $\K$-representability are equivalent.
This happens for example in the following important cases:
\ben
\item $X$ is metrizable and $\K$ is closed under countable $l_2$-sums;
\item  $(S,X_f)$ is a cyclic system, $\K$ is closed under subspaces and
$f$ is $\K$-representable.
\item $(S,X_f)$ is a cyclic system and $\K$ is one of the following classes: reflexive, Asplund, Rosenthal.

The assertions
(1) and (2) are quite straightforward. In order to verify (3) (by reducing it to the case (2))
we note that every WAP (Asplund, tame) function $f \in C(X)$
is reflexive (Asplund, Rosenthal) representable.
See Theorems \ref{t:AspF} and \ref{t:tame-f} below.
\een
\end{remark}

The following result is very close to \cite[Theorem 12]{GM-AffComp}. It serves as a central
ingredient in the proof of Theorem \ref{t:tame=R-repr}.
We give a sketch of the proof just to illustrate the definitions.

\begin{thm} \label{t:AspDuality} \emph{(Small families of functions)}

Let $X$ be a compact $S$-space and $F \subset
C(X)$ a norm bounded $S$-invariant subset of $C(X)$.
\ben
\item
$(F,S,X)$ admits a Rosenthal representation iff $F$ is an eventually fragmented family iff $\cls_p(F) \subset \F(X)$.
\item
$(F,S,X)$ admits an Asplund representation iff $F$ is a fragmented family iff the envelope $\cls_p(F)$ of $F$ is a fragmented family.
\item
$(F,S,X)$ admits a reflexive representation iff
$\cls_p(F) \subset C(X)$ iff  $F$ has
DLP on $X$.
\een
\end{thm}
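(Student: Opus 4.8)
The plan is to treat the three items in parallel, since each pairs an intrinsic ``smallness'' condition on the family $F$ with a Banach-space characterization of the relevant class $\K$: for item (1) the Rosenthal-space criterion of Theorem \ref{f:RosFr} together with Theorem \ref{f:sub-fr}; for item (2) the Asplund criterion of Theorem \ref{f:Asp} together with Corollary \ref{c:AspSet}; and for item (3) the reflexivity criterion of Theorem \ref{t:ReflChar} together with Grothendieck's double-limit criterion. Within each item the equivalence of the two family conditions is essentially already available --- for (1) it is the equivalence of (4) and (5) in Theorem \ref{f:sub-fr}, for (3) it is Grothendieck's criterion ($\cls_p(F)\subset C(X)$ iff $F$ has DLP, cf. Definition \ref{d:WAP} and \cite[Theorem A5]{BJM}), and for (2) one passes from $F$ to $\cls_p(F)$ by a countable-determination argument (Theorem \ref{t:countDetermined}/Corollary \ref{c:AspSet}), noting that a fragmented $F$ is eventually fragmented, so $\cls_p(F)$ is angelic and each of its points is a sequential pointwise limit from $F$; hence the pseudometrics $\rho_{K,d}$ for countable $K\subset\cls_p(F)$ are controlled by those for countable subfamilies of $F$. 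Thus it suffices, for each item, to prove the two implications between ``$(F,S,X)$ admits a $\K$-representation'' and the corresponding family condition.

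For the implication \emph{representation $\Rightarrow$ smallness}, suppose $(\nu,h,\a)$ is a $\K$-representation on $V\in\K$ (Definition \ref{d:dualities}). After rescaling we may assume $\nu(F)\subset B=B_V$ and $\a(X)\subset B^*$. The defining identity $f(x)=\langle\nu(f),\a(x)\rangle$ exhibits $F$ as the pullback, along the $w^*$-continuous map $\a\colon X\to \a(X)\subset B^*$, of the family $\nu(F)\subset B$ regarded as functions on $\a(X)$. By the appropriate ball-characterization, the family $B$ acting on $B^*$ is eventually fragmented (item 1, Theorem \ref{f:RosFr}.4), fragmented (item 2, Theorem \ref{f:Asp}.4), or has DLP (item 3, Theorem \ref{t:ReflChar}.2). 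Each such property is inherited by the subfamily $\nu(F)$ restricted to the subdomain $\a(X)$, and then transfers to $F$ on $X$: since $\a(X)$ is compact one checks that $\cls_p(F)=\{g\circ\a : g\in\cls_p(\nu(F)|_{\a(X)})\}$, so fragmentation of the envelope pulls back along the continuous surjection $\a$ by Lemma \ref{r:fr1}, and DLP pulls back directly from its definition. This yields the required family condition.

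For the converse, \emph{smallness $\Rightarrow$ representation}, which is the substantial direction, the plan is to build $V$ by the Davis--Figiel--Johnson--Pe\l czy\'nski interpolation scheme, in the form adapted to these settings in \cite{GM-rose, GM-AffComp}. Put $W=\overline{\mathrm{aco}}(F)$, the norm-closed absolutely convex hull of $F$ in $C(X)$; it is bounded, closed, convex, symmetric and $S$-invariant (the $S$-action on $C(X)$ being by linear isometries). The interpolation applied to $W$ produces a Banach space $V$ and an injective bounded $S$-equivariant operator $j\colon V\to C(X)$ with $F\subset j(B_V)$ after scaling. Setting $\nu=j^{-1}|_F\colon F\to V$ and $\a=j^*\circ\delta\colon X\to V^*$, where $\delta\colon X\to C(X)^*$ is the Dirac embedding, one gets $\langle\nu(f),\a(x)\rangle=\langle j\nu(f),\delta_x\rangle=f(x)$; the $S$-equivariance of $j$ and $\delta$ gives the equivariance of $\nu$ and $\a$, and weak continuity of $h$ together with $w^*$-continuity of $\a$ are routine. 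Thus $(\nu,h,\a)$ is an $F$-representation of $(F,S,X)$ on $V$.

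The heart of the matter --- and the step I expect to be the main obstacle --- is then to verify that $V$ lands in the prescribed class $\K$. For item (3) this is classical: the interpolation space is reflexive exactly when $W$ is weakly compact, which by Krein--\v{S}mulian together with Grothendieck's criterion ($\cls_p(F)\subset C(X)$) is equivalent to $F$ having DLP. For item (2) one needs that the interpolation space is Asplund precisely when $W$ is a fragmented (Asplund) set, which via Corollary \ref{c:AspSet} corresponds to $F$ being a fragmented family. For item (1), the most delicate case, one needs that $V$ contains no isomorphic copy of $l_1$ precisely when $W$ contains no $l_1$-sequence; here Theorem \ref{f:sub-fr} is the essential bridge, converting the combinatorial condition ``$F$ contains no independent sequence'' (equivalently, $F$ is eventually fragmented, equivalently $\cls_p(F)\subset\F(X)$) into the absence of $l_1$-sequences, while the stability of this property under passage from $F$ to $\overline{\mathrm{aco}}(F)$ and its inheritance by the interpolation space are exactly the facts established in \cite{GM-rose}. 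Assembling these with the forward direction completes all three equivalences.
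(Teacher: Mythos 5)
Your proposal follows essentially the same route as the paper's (sketched) proof: the ``representation $\Rightarrow$ smallness'' direction via the ball characterizations (Theorems \ref{f:RosFr}.4, \ref{f:Asp}.4, \ref{t:ReflChar}) pulled back through $\nu$ and $\a$, and the converse via an $S$-equivariant DFJP interpolation applied to the absolutely convex hull of $F$, with the class membership of the interpolation space deferred to \cite{DFJP}, \cite{GM-rose} and \cite{GM-AffComp} --- exactly the references the paper itself defers to. The only methodological difference is in item (3): the paper invokes the ready-made reflexive representation theorem \cite[Theorem 4.11]{Me-nz} together with \cite[Theorem A.4]{BJM}, whereas you run DFJP directly and use the classical criterion that the interpolation space is reflexive iff $W$ is weakly compact, combined with Krein--\v{S}mulian and Grothendieck's theorem; this is a legitimate variant (indeed it is what the proof of the cited theorem does), and it has the small advantage of treating all three items by one uniform construction.

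There is, however, one step that fails as written: your argument that fragmentedness of $F$ passes to the envelope $\cls_p(F)$ in item (2). You propose to control the pseudometrics $\rho_{K,d}$ for countable $K \subset \cls_p(F)$ by those of countable subfamilies of $F$ and then conclude via Theorem \ref{t:countDetermined} / Corollary \ref{c:AspSet}. But both of these results are stated (and proved --- continuity is used essentially in the implication $(3) \Rightarrow (1)$ of Theorem \ref{t:countDetermined}) for families of \emph{continuous} functions, while $\cls_p(F)$ in general contains discontinuous functions, so the separability of the pseudometrics does not, by that theorem, yield fragmentedness of the envelope. The repair is cheap and is the route the paper takes when it needs this fact elsewhere (see the discussion after Definition \ref{d:HNS}): the pointwise closure of a fragmented family is again a fragmented family \cite[Lemma 2.8]{GM-rose}, by a direct $\ep$-argument --- if $O$ witnesses $\ep$-nonsensitivity of $F$ on a closed set $A$, then $g(O\cap A)$ is $\ep$-small (in the closed sense) for every pointwise limit $g$ of members of $F$. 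With this substitution, and the converse being trivial since $F \subset \cls_p(F)$ and fragmentedness is hereditary, your argument is complete at the same level of rigor as the paper's own proof.
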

\begin{proof}
The ``if part" in the proof of (1) and (2) is based on a dynamical version of the well known DFJP construction, \cite{DFJP}.
See the proof of Theorem 12 in \cite[Sect. 8]{GM-AffComp}.
The ``only if part" is a direct consequence of the characterizations of
Asplund and Rosenthal spaces in terms of fragmented and eventually fragmented families,
Theorems \ref{f:Asp}.4 and \ref{f:RosFr}.4.

For the ``if part" in (3) we use \cite[Theorem 4.11]{Me-nz} which we reformulate here in
terms of Definition \ref{d:dualities}.
Let $w: F \times X \to \R$ be a separately continuous bounded map with compact spaces $F$ and $X$
such that $S$ acts on $X$ (from left) and on $F$ (from right) via separately continuous actions
such that $w(f,sx)=w(fs,x)$. Assume that $F$ (regarded as a (bounded) family of maps $X \to \R$)
separates the points of $X$. Then according to \cite[Theorem 4.11]{Me-nz} there exists a reflexive Banach space
$V$ and a faithful representation $(\nu,h,\a)$ of the triple $(F,S,X)$.

Another important part of the proof of (3) (which is close to Grothendieck's double limit theorem) is Theorem A.4 in \cite{BJM}. It asserts that for every compact space $X$ a bounded family $F \subset C(X)$
 has the Double Limit Property on $X$ iff
$\cls_p(F) \subset C(X)$.

For the ``only if" part in
(3) observe that if $V$ is a reflexive Banach space then every bounded subset $F$ of the dual $V^*$
has DLP on every bounded subset $X \subset V$
(Theorem \ref{t:ReflChar}).
\end{proof}

Note that, in Definition \ref{d:dualities}, when the family $F$ separates points on $X$ it follows that the map $\alpha$ is necessarily an injection. In view of this remark, Theorem \ref{t:AspDuality} implies Theorem \ref{t:tame=R-repr} and also
the essential part in
the following useful result 
which is interesting even for trivial actions.

\begin{thm} \label{t:WRN}
A compact $S$-system $X$ is RN (WRN, Eberlein) iff there exists
a bounded $S$-invariant $X$-separating family $F \subset C(X)$ which is  fragmented (resp.: eventually fragmented, DLP).
\end{thm}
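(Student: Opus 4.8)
The plan is to derive Theorem \ref{t:WRN} from the machinery assembled in Theorem \ref{t:AspDuality} together with the representability characterizations in Theorem \ref{t:tame=R-repr}. The statement is essentially a translation: the ``small separating family'' side is supplied by Theorem \ref{t:AspDuality}, and the ``RN/WRN/Eberlein'' side is the representability of the whole system. I will treat the three cases (RN $\leftrightarrow$ fragmented, WRN $\leftrightarrow$ eventually fragmented, Eberlein $\leftrightarrow$ DLP) in parallel, since each reduces to the same two-step argument; I will spell out the WRN case and indicate that the other two are identical mutatis mutandis.

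For the direction from a small family to representability, suppose $F \subset C(X)$ is a bounded $S$-invariant $X$-separating family that is eventually fragmented. By Theorem \ref{t:AspDuality}(1), $(F,S,X)$ admits a Rosenthal representation $(\nu,h,\a)$ on a Rosenthal Banach space $V$, so that $f(x)=\langle \nu(f),\a(x)\rangle$ for all $f\in F$, $x\in X$. The key observation is that because $F$ \emph{separates the points} of $X$, the weak$^*$-continuous $S$-map $\a\colon X\to V^*$ is injective: if $\a(x)=\a(y)$ then $f(x)=\langle\nu(f),\a(x)\rangle=\langle\nu(f),\a(y)\rangle=f(y)$ for every $f\in F$, forcing $x=y$. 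Since $X$ is compact and $\a$ is a continuous injection into the Hausdorff space $(V^*,w^*)$, it is a topological embedding. Thus $(h,\a)$ is a \emph{faithful} representation of $(S,X)$ on the Rosenthal space $V$ in the sense of Definition \ref{d:repr}, i.e.\ $(S,X)$ is Rosenthal-representable, which is precisely WRN by Definition \ref{d:DSclasses}. The fragmented and DLP cases use parts (2) and (3) of Theorem \ref{t:AspDuality}, yielding Asplund- and reflexive-representability (RN and Eberlein) respectively, by the same injectivity argument.

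For the converse direction, suppose $(S,X)$ is WRN, so by definition there is a faithful representation $(h,\a)$ of $(S,X)$ on a Rosenthal Banach space $V$, with $\a\colon X\to (V^*,w^*)$ an embedding. I set $F:=\{\,v\mapsto \langle\a(\cdot),v\rangle \ : \ v\in B_V\,\}$, more precisely the family of coordinate functions $f_v\colon X\to\R$, $f_v(x)=\langle v,\a(x)\rangle=\a(x)(v)$, for $v$ ranging over the unit ball $B=B_V$. Each $f_v$ is continuous (as $\a$ is weak$^*$-continuous and $v\in V=V^{**}$ restricted to $B^*\supset \a(X)$ is weak$^*$-continuous), the family is norm bounded, and it is $S$-invariant because $f_v\circ\tilde{s}=f_{vs}$ and $B$ is $\Theta(V)^{op}$-invariant under $v\mapsto vs$. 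Since $\a$ is injective and $V$ separates points of $V^*$, the family $F$ separates points of $X$. Finally, because $V$ is Rosenthal, Theorem \ref{f:RosFr}(4) says that $B$ is an eventually fragmented family of maps on $B^*$; pulling back along the embedding $\a$ (using the hereditary behaviour of eventual fragmentability under restriction to the subspace $\a(X)$, and Lemma \ref{r:fr1}.1) shows $F$ is eventually fragmented on $X$. The RN case invokes Theorem \ref{f:Asp}(4) instead, and the Eberlein case invokes Theorem \ref{t:ReflChar}(2)--(3) to obtain DLP.

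The main obstacle, and the step requiring the most care, is the \emph{converse} direction, specifically transporting the smallness property of $B\subset V$ on the abstract ball $B^*$ down to the concrete compact system $X$ via the embedding $\a$. One must check that eventual fragmentability (respectively fragmentability, DLP) of the coordinate family on $\a(X)\subset B^*$ descends to $F$ on $X$; this is where the hereditary nature of these conditions (Lemma \ref{r:fr1}.1, and the functoriality in Lemma \ref{r:fr1}.7, .10) is essential, since $\a(X)$ is a weak$^*$-closed subset of $B^*$ and $\a$ realizes the required commuting square. The forward direction is comparatively routine once Theorem \ref{t:AspDuality} is invoked, the only genuine point being the injectivity of $\a$ from the separation hypothesis, which the excerpt in fact flags explicitly in the remark preceding the theorem.
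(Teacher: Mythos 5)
Your proposal is correct and follows essentially the same route as the paper: the ``if'' direction is Theorem \ref{t:AspDuality} combined with the observation (made explicitly in the paper just before the theorem) that a separating family forces $\a$ to be injective, hence an embedding by compactness, while the ``only if'' direction takes the family induced by $B_V$ on $X \subset V^*$ and invokes Theorems \ref{t:ReflChar}.3, \ref{f:Asp}.4 and \ref{f:RosFr}.4. Your write-up merely fills in details (continuity, $S$-invariance, separation of the coordinate family, and the transfer of fragmentability via Lemma \ref{r:fr1}) that the paper leaves implicit.
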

\begin{proof}
The ``if part" follows from Theorem \ref{t:AspDuality}.

The ``only if part" follows by
considering the family $F$ of functions on $X \subset V^*$ induced by $B_V$ taking into account
the
characterization properties of reflexive, Asplund and Rosenthal Banach spaces. See
Theorems \ref{t:ReflChar}.3, \ref{f:Asp}.4 and \ref{f:RosFr}.4, respectively.
\end{proof}

\begin{thm} \label{l:functCases}
Let $X$ be a compact $S$-system and $f \in C(X)$.
\ben
\item $f \in \WAP(X)$ if and only if $fS$ has DLP on $X$.
\item $f \in \Asp(X)$ if and only if $fS$ is a fragmented family.
\item $f \in \Tame(X)$ if and only if $fS$ is eventually fragmented 
if and only if $fS$ is a tame family.
\een
\end{thm}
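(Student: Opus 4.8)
The plan is to prove all three equivalences by a single two-sided scheme. On one side, by Definition \ref{d:funct} membership of $f$ in $\WAP(X)$, $\Asp(X)$ or $\Tame(X)$ means exactly that $f$ \emph{comes} from an $S$-compactification $q\colon X\to Y$ whose range system $(S,Y)$ is WAP, HNS or tame; on the other side, by Theorem \ref{t:AspDuality} the three family properties of $fS$ — DLP, fragmentedness, eventual fragmentedness — are precisely what makes the triple $(fS,S,X)$ reflexively, Asplund or Rosenthal representable. I will use two transfer principles. First, the dynamical smallness of $(S,Y)$ descends to the single orbit family $f'S\subset C(Y)$: via $\WAP(Y)=C(Y)$ in the WAP case, via fragmentedness of $\tilde{S}_Y$ composed with the uniformly continuous $f'$ in the HNS case, and via $\cls_p(f'S)=f'E(Y)$ together with Definition \ref{d:tame} in the tame case. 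Second, each family property transfers along an onto $S$-map $q$ between compacta: by Lemma \ref{r:fr1}.7 for fragmentedness, and by verbatim pullback of the defining double-limit and independence conditions for DLP and for the tame-family property (which equals eventual fragmentedness by Theorem \ref{f:sub-fr}).

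For the direction \emph{``$f$ lies in the class $\Rightarrow$ $fS$ has the family property''}, I write $f=f'\circ q$ with $q\colon X\to Y$ an $S$-compactification onto the good system $Y$ (onto since $q(X)$ is compact and dense). If $Y$ is WAP then $f'\in C(Y)=\WAP(Y)$, so $f'S$ has DLP on $Y$ by Definition \ref{d:WAP}. If $Y$ is HNS then $\tilde{S}_Y$ is fragmented (Definition \ref{d:E-HNS}) and post-composing this family with the uniformly continuous $f'$ shows $f'S$ is fragmented. If $Y$ is tame then every $p\in E(Y)$ is fragmented (Definition \ref{d:tame}), so each $f'\circ p$ is fragmented (fragmented map followed by uniformly continuous $f'$), i.e. $\cls_p(f'S)=f'E(Y)\subset\F(Y)$, whence $f'S$ is eventually fragmented (equivalently a tame family) by Theorem \ref{f:sub-fr}. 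In each case the transfer principle along the onto map $q$ carries the property from $f'S$ on $Y$ to $fS$ on $X$.

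For the converse \emph{``$fS$ has the family property $\Rightarrow$ $f$ lies in the class''}, I first invoke Theorem \ref{t:AspDuality} to get a representation $(\nu,h,\alpha)$ of $(fS,S,X)$ on a Banach space $V$ that is reflexive, Asplund or Rosenthal accordingly, with $f(x)=\langle\nu(f),\alpha(x)\rangle$ for all $x$. Setting $Y:=\cls_{w^*}(\alpha(X))\subset(V^*,w^*)$ produces a compact $S$-subsystem of the dual action, and the inclusion $Y\hookrightarrow V^*$ together with $h$ is a faithful representation of $(S,Y)$ on $V$; hence $Y$ is Eberlein, RN or WRN, and therefore WAP, HNS or tame by Theorem \ref{t:tame=R-repr} (representability trivially implies approximability). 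Finally $\alpha\colon X\to Y$ is an $S$-compactification and $f=\widehat{\nu(f)}\circ\alpha$, where $\widehat{\nu(f)}$ is evaluation at $\nu(f)\in V$, so $f$ comes from the good system $Y$ and thus lies in the required class. The extra clause ``eventually fragmented $\Leftrightarrow$ tame family'' in (3) is immediate from Theorem \ref{f:sub-fr}.

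The step I expect to be most delicate is the transfer of \emph{eventual} fragmentedness across the surjection $q$ in part (3): unlike fragmentedness, which Lemma \ref{r:fr1}.7 handles directly, eventual fragmentedness quantifies over infinite subfamilies, so that lemma does not apply verbatim. I will circumvent this by passing to the tame-family reformulation of Theorem \ref{f:sub-fr}: a sequence $\{f'\circ\tilde{s}_n\}$ in $f'S$ is independent on $Y$ exactly when the pulled-back sequence $\{f\circ\tilde{s}_n\}=\{(f'\circ\tilde{s}_n)\circ q\}$ is independent on $X$, since surjectivity of $q$ makes the intersection condition of Definition \ref{d:ind} equivalent on $X$ and on $Y$. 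Hence $fS$ contains an independent sequence iff $f'S$ does, and ``no independent sequence'' is preserved in both directions, yielding the desired equivalence with no infinite-subfamily bookkeeping.
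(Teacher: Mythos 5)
Your proof is correct. The forward implications (membership in $\WAP(X)$, $\Asp(X)$, $\Tame(X)$ implies the corresponding property of $fS$) coincide with the paper's own argument: factor $f=f'\circ q$ through a WAP/HNS/tame compactification, transfer the smallness of $(S,Y)$ to the orbit $f'S$, and pull back along the surjection $q$; note that your worry about transferring \emph{eventual} fragmentability is unnecessary, since Lemma \ref{r:fr1}.7 can simply be applied subfamily-by-subfamily (infinite subfamilies of $fS$ and $f'S$ correspond bijectively because $q$ is onto), although your workaround via independence (Lemma \ref{l_1}.2 plus Theorem \ref{f:sub-fr}) is equally sound. Where you genuinely diverge is the converse, which the paper does not prove inside this theorem at all: it defers to Theorems \ref{t:AspF} and \ref{t:tame-f}, whose route is to lift the family property to $\tilde{f}S$ on the \emph{cyclic} factor $X_f$ (Lemma \ref{r:fr1}.7), use that $\tilde{f}S$ separates the points of $X_f$ to conclude via Theorem \ref{t:WRN} that $X_f$ is Eberlein/RN/WRN, hence WAP/HNS/tame by Theorem \ref{t:tame=R-repr}, and observe that $f$ comes from $X_f$. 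You instead take the representation $(\nu,h,\alpha)$ of $(fS,S,X)$ furnished directly by Theorem \ref{t:AspDuality} and build the target compactification yourself as $Y=\cls_{w^*}(\alpha(X))$ in the dual ball: $Y$ is weak$^*$ compact and $S$-invariant, the action on it is separately continuous because $h$ is weakly continuous and each $h(s)^*$ is weak$^*$ continuous, the inclusion $Y\hookrightarrow V^*$ with $h$ is a faithful representation (Definitions \ref{d:repr} and \ref{d:dualities}), so $Y$ is Eberlein/RN/WRN and hence in the right dynamical class by Theorem \ref{t:tame=R-repr}, and $f$ factors through $\alpha$ as evaluation at $\nu(f)$. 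Both converses ultimately rest on Theorem \ref{t:AspDuality}; yours trades the cyclic-system machinery (Remark \ref{r:cycl-comes} and point separation) for a direct functional-analytic construction, and it buys a self-contained proof that avoids the paper's forward references to theorems proved later (which in turn cite the forward half of the present theorem), at the cost of redoing by hand the compactness and equivariance checks that the cyclic compactification packages for free.
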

\begin{proof} (1) This was mentioned in Definition \ref{d:WAP}.

(2)
If $f \in \Asp(X)$ then it comes from a HNS factor $q: X \to Y$ and
$f' \circ q=f$ for some $f' \in C(Y)$.
Since $(S,Y)$ is HNS, the family of translations $\{\tilde{s}: Y \to Y\}$ is
fragmented. It follows that $f'S$ and hence also $fS$ are fragmented.

(3)
If $f \in \Tame(X)$ then it comes from a tame factor $q: X \to Y$ and $f' \circ q=f$ for some $f' \in C(Y)$.
Since $(S,Y)$ is tame, every $p \in E(S,Y)$ is a tame function. Therefore, $f'E(S,Y)=\cls_p (f'S) \subset \F(Y)$.
That is, $f'S$ is a Rosenthal family.
Then $f'S$ is eventually fragmented by Theorem \ref{f:sub-fr}. Now, Lemma \ref{r:fr1}.7 guarantees
that $fS$ is also eventually fragmented. 
Finally note that by Theorem \ref{f:sub-fr} the family $fS$ is eventually fragmented iff $fS$ is tame. 

The "only if" parts of (2) and (3) come from Theorems \ref{t:AspF} and \ref{t:tame-f}.
\end{proof}

\subsection{The purely topological case}

Note that the definitions and results of Section \ref{s:repr} (for instance, Theorems \ref{t:AspDuality} 
and \ref{t:WRN})
make sense in the purely topological setting, for trivial
$S=\{e\}$ actions,
yielding characterizations of ``small families" of  functions, and of RN, WRN and Eberlein compact spaces.

The ``only if" parts of these results, in the cases of Eberlein and RN compact spaces
(with trivial actions), are consequences of known characterizations of reflexive and Asplund Banach spaces.
The Eberlein case yields a well-known result: a compact space $X$
is Eberlein iff there exits a pointwise compact subset $Y \subset C(X)$ which separates the points of $X$.
The RN case is very close to results of Namioka
\cite{N} (up to some reformulations).
The case of WRN spaces seems to be new.

Recall that by a classical result of 
Benyamini-Rudin-Wage \cite{BRW},
(which answered a question posed by Lindenstrauss)
continuous surjective maps preserve the class of Eberlein compact spaces.
The same is true for uniformly Eberlein (that is, Hilbert representable) compacta.
Recently, Aviles and Koszmider \cite{AK} proved that this is not the case for the class RN
of Asplund representable compacta, answering a long standing open problem posed by
Namioka \cite{N}.
In view of these results
the following question seems to be interesting.

\begin{question}
Is the class of WRN compact spaces closed under continuous onto maps
(in the realm of Hausdorff spaces) ? 
\end{question}

We posed this question in early versions of this article. Very recently a counterexample was found by G. Martinez-Cervantes \cite{Mart-Cerv}. 

\begin{remark} \label{r: Todorc} \
\ben
\item
$\beta \N$ is an example of a compact space which is not WRN.
We thank Stevo Todor\u{c}evi\'{c} for communicating to us a beautiful
proof of this fact which is presented as an appendix to this work. See Theorem \ref{main} in Section \ref{app}.
\item
Below, in Corollary \ref{2arrows}, we show that the two arrows space is WRN. This space is not RN by a result of Namioka \cite[Example 5.9]{N}.
\een
So we have 
$$
RN \subsetneq WRN \subsetneq Comp
$$
\end{remark}


One may show that a compact $G$-space $K$ is WRN iff the Banach $G$-space $C(K)$ 
is Rosenthal generated. Meaning that there exists a Rosenthal $G$-space $V$ and a
linear dense (injective) continuous $G$-operator $V \to C(K)$. For compact spaces $K$ it is a WRN analog of Stegall's result for RN compacts and Asplund spaces (see for example \cite[Theorem 1.5.4]{Fa}).

\begin{thm} \label{t:WRNE(X)}  
	A metric dynamical $S$-system $X$ whose enveloping semigroup 
	$E(S,X)$ is a WRN compactum is tame.
\end{thm}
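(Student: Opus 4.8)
The plan is to argue by contradiction, combining the dynamical BFT dichotomy (Theorem \ref{D-BFT}) with the appendix result (Theorem \ref{main}) that $\beta\N$ is not WRN. Since $X$ is compact and metrizable, the enveloping semigroup $E=E(S,X)$ is governed by the dichotomy: either $E$ is a separable Rosenthal compactum, in which case the final clause of Theorem \ref{D-BFT} gives at once that $(S,X)$ is tame, or else $E$ contains a homeomorphic copy of $\beta\N$. Hence it is enough to exclude the second alternative under the hypothesis that $E$ is WRN.

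To do this I would first record that WRN-ness of a compactum passes to closed subspaces. Unwinding Definition \ref{d:DSclasses} for the trivial action $S=\{e\}$, the assumption that $E$ is WRN means precisely that there are a Rosenthal Banach space $V$ and a $w^*$-continuous embedding $\alpha\colon E \to V^*$ (automatically bounded, since $E$ is compact). For any closed $Y\subseteq E$ the restriction $\alpha|_Y$ is a continuous injection of the compactum $Y$ into the Hausdorff space $(V^*,w^*)$, hence a $w^*$-embedding, so $Y$ is again WRN. I would then note that any homeomorphic copy of $\beta\N$ inside $E$ is compact and therefore closed in the Hausdorff space $E$. Putting these together, if $E$ contained a copy of $\beta\N$, that copy would be a closed, hence WRN, subspace; this would make $\beta\N$ itself $w^*$-embeddable into the dual of a Rosenthal space, contradicting Theorem \ref{main}.

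Consequently the second horn of the dichotomy cannot occur, $E$ must be a separable Rosenthal compactum, and by Theorem \ref{D-BFT} the system $(S,X)$ is tame, as required.

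Since the two deep inputs (the dichotomy and the non-embeddability result of Theorem \ref{main}) are quoted directly, there is no serious obstacle here; the only point demanding a moment's care is the hereditarity step, where one must use that, for a compactum, ``WRN'' is literally the existence of a $w^*$-embedding into a Rosenthal dual, so that passing to a closed subset costs nothing.
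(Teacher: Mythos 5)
Your proof is correct and follows essentially the same route as the paper's: combine the dynamical BFT dichotomy (Theorem \ref{D-BFT}) with Todor\u{c}evi\'{c}'s Theorem \ref{main} to rule out a copy of $\beta\N$ inside $E(S,X)$. The paper leaves the hereditarity step (that a closed subspace of a WRN compactum is WRN, via restricting the $w^*$-embedding) implicit, but your making it explicit is the only difference.
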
  
\begin{proof}
	If $E(S,X)$ is a WRN compactum, then, by Todor\u{c}evi\'{c}'s Theorem \ref{main} $X$ can not contain a copy of $\beta \N$. Hence, by the dynamical version of the BFT Theorem \ref{D-BFT}, the system $(S,X)$ is tame.   
\end{proof}

\begin{question}
	Is the converse true ? I.e. is it true that the enveloping semigroup
	of a metric tame system is necessarily a WRN compactum ?
\end{question}


\sk

One can also derive the following corollary of Theorem \ref{t:AspDuality}.

\begin{cor} \label{c:DLPisFr}
Let $X$ be a compact space and $F \subset C(X)$ a bounded family. If
$F$ has DLP on $X$ then $F$ is a fragmented family.
\end{cor}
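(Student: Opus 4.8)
The statement follows by composing the three parts of Theorem \ref{t:AspDuality}, applied in the purely topological setting. First I would fix the trivial monoid $S=\{e\}$ acting trivially on $X$; then every bounded $F\subset C(X)$ is automatically $S$-invariant, so all the hypotheses of Theorem \ref{t:AspDuality} are in force and its conclusions are available. (The remark opening the subsection on the purely topological case guarantees that this specialization is legitimate.)

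Now assume $F$ has DLP on $X$. By Theorem \ref{t:AspDuality}.3 this is equivalent to the existence of a reflexive representation $(\nu,h,\a)$ of the triple $(F,S,X)$ --- and also to $\cls_p(F)\subset C(X)$. The key point is that every reflexive Banach space is Asplund (recorded just after Theorem \ref{f:Asp}); hence the very same data $(\nu,h,\a)$ constitute an Asplund representation of $(F,S,X)$. Applying Theorem \ref{t:AspDuality}.2 in the reverse direction, the existence of an Asplund representation forces $F$ to be a fragmented family, which is exactly the assertion.

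Alternatively, one can argue entirely at the level of the envelope and bypass the representation machinery. By Theorem \ref{t:AspDuality}.3, DLP yields $\cls_p(F)\subset C(X)$, so the evaluation map $w\colon \cls_p(F)\times X\to\R$, $w(g,x)=g(x)$, is separately continuous: continuity in $x$ holds because each $g\in\cls_p(F)$ is continuous, while continuity in $g$ is built into the pointwise topology. Since $\cls_p(F)$ is pointwise compact and $X$ is compact, hence \v{C}ech-complete, Lemma \ref{l:FrFa}.1 shows that the associated family --- namely $\cls_p(F)$ itself, viewed as maps $X\to\R$ --- is fragmented. By the envelope form of Theorem \ref{t:AspDuality}.2 (that $F$ is fragmented iff $\cls_p(F)$ is a fragmented family), $F$ is fragmented.

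There is no genuine obstacle here beyond assembling the cited equivalences in the correct order; the only point requiring care is the inclusion reflexive $\subset$ Asplund (equivalently, in the second route, the verification of separate continuity of the evaluation on $\cls_p(F)\times X$), which is precisely what lets one pass from the DLP/reflexive side of Theorem \ref{t:AspDuality} to its fragmented/Asplund side.
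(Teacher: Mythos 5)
Your proof is correct and follows essentially the same route as the paper: the paper likewise passes from Theorem \ref{t:AspDuality}.3 to a reflexive (hence Asplund) representation of $(F,\{e\},X)$ and concludes fragmentability from the Asplund characterization (Theorem \ref{f:Asp}.4, which is exactly the content of the direction of Theorem \ref{t:AspDuality}.2 you invoke), and it also mentions your second route via Lemma \ref{l:FrFa}.1 as an alternative. Your write-up merely spells out the details that the paper leaves implicit.
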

\begin{proof}
Theorem \ref{t:AspDuality}.3 guarantees that there exists a representation of $(F,\{e\},X)$ on a reflexive space $V$.
Since $V$ is Asplund we easily obtain by Theorem \ref{f:Asp}.4 that $F$ is fragmented. Alternatively, one can derive the latter statement from Lemma \ref{l:FrFa}.1.
\end{proof}

\begin{remark} \label{GenerRayn}
For trivial actions, Theorem \ref{t:AspDuality}.3 yields the following corollary.
Let $w: F \times X \to \R$ be a separately continuous bounded function with
compact spaces $F$ and $X$. Then there exists a reflexive Banach space $V$ and weakly continuous maps $\nu: F \to V, \ \a: X \to V^*$ such that
$\langle \nu(f),\a(x) \rangle =w(f,x)$.
This is a result of Raynaud \cite[Prop. 1.1]{Rayn} who
generalized an earlier result by Krivine and Maurey \cite[Theorem II.3]{KMa} which dealt with metrizable $X$ and $F$.
One can refine these results (even in the general action setting) as follows.
The fundamental DFJP-factorization construction from \cite{DFJP} has an ``isometric modification" \cite{LNO}.
Taking into account this modification
(which is
compatible with our $S$-space setting)
note that in Theorem \ref{t:AspDuality} we can prove more.
Namely,
if the given family $F \subset C(X)$ is \emph{bounded by the constant} $1$,
then we can assume that $\nu(F) \subset B$ and $\a(X) \subset B^*$.
Hence the following
sharper
(than the diagram \ref{diag1} in Definition \ref{d:dualities}) diagram commutes:
$$\xymatrix{ F \ar@<-2ex>[d]_{\nu} \times X
\ar@<2ex>[d]^{\a} \ar[r]  & [-1,1] \ar[d]^{id} \\
B \times B^* \ar[r]  &  [-1,1] }$$
For more details see \cite{GM-AffComp}.
\end{remark}

\section{WAP, HNS and tame systems}

\subsection{WAP systems and functions}

Besides the three equivalent conditions in
Definition \ref{d:WAP}.1 (for being a WAP function) we can now say
a bit more. In the proof below we twice use the following observation:
if a continuous function on a compact $S$-system $X$ comes from an $S$-factor $q: X \to Y$ with $\tilde{f} \in C(X)$, $f=\tilde{f} \circ q$, then
$fS$ has DLP on $X$ iff $\tilde{f}S$ has DLP on $Y$. This means, by Definition \ref{d:WAP}.1, that $f \in \WAP(X)$ iff
$\tilde{f} \in \WAP(Y)$.

\begin{lem} \label{t:WAP-f}
Let $X$ be a compact
$S$-space and $f \in C(X)$. The following conditions are equivalent:
\ben
\item $f \in \WAP(X)$.
\item $f$ is reflexively representable.
\item The cyclic $S$-space $X_f$ is reflexively representable (i.e., Eberlein).
\item $f$ comes from an Eberlein factor.
\item  $f$ comes from a WAP factor.
\een
\end{lem}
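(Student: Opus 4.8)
The plan is to establish the single implication cycle $(1)\Rightarrow(2)\Rightarrow(3)\Rightarrow(4)\Rightarrow(5)\Rightarrow(1)$, using the cyclic model of Remark~\ref{r:cycl-comes} as the bridge between assertions about the function $f$ and assertions about the system $X_f$. For $(1)\Rightarrow(2)$ I would simply unwind definitions: since $X$ is compact, $f\in\WAP(X)$ means, by Definition~\ref{d:WAP}.1, that the orbit $fS$ has the Double Limit Property on $X$, and Theorem~\ref{t:AspDuality}.3 then produces a reflexive representation of the triple $(fS,S,X)$, i.e.\ $f$ is reflexively representable. For $(2)\Rightarrow(3)$ I would pass to the cyclic model, writing $f=\tilde f\circ\pi_f$ for the $S$-compactification $\pi_f\colon X\to X_f$ with $\tilde f\in C(X_f)$, and recalling that $\tilde f S$ separates the points of $X_f$. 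The factor observation recorded just before the statement shows that $fS$ has DLP on $X$ iff $\tilde f S$ has DLP on $X_f$; combined with Theorem~\ref{t:AspDuality}.3 this yields a reflexive representation $(\nu,h,\alpha)$ of $(\tilde f S,S,X_f)$. Because $\tilde f S$ separates points, $\alpha\colon X_f\to V^*$ is injective (Definition~\ref{d:dualities}), and a continuous injection of a compactum into a Hausdorff space is an embedding; hence $(\nu,h,\alpha)$ is in fact a faithful representation of the system $(S,X_f)$ on the reflexive space $V$, so $X_f$ is Eberlein.

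The remaining implications are soft. The step $(3)\Rightarrow(4)$ is immediate, since the factorization $f=\tilde f\circ\pi_f$ already exhibits $f$ as coming from the Eberlein factor $\pi_f\colon X\to X_f$. For $(4)\Rightarrow(5)$ I would note that any Eberlein (reflexively representable) system is a fortiori reflexively-approximable, hence WAP by Theorem~\ref{t:tame=R-repr}.3; thus an Eberlein factor is in particular a WAP factor. Finally, $(5)\Rightarrow(1)$ is once more the factor observation preceding the lemma: if $f=\tilde f\circ q$ comes from a WAP factor $q\colon X\to Y$, then $\tilde f\in\WAP(Y)=C(Y)$, and since $fS$ has DLP on $X$ iff $\tilde f S$ has DLP on $Y$, we conclude $f\in\WAP(X)$.

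The only step carrying genuine content is $(2)\Rightarrow(3)$, where one must upgrade a statement about the family of functions $fS$ on $X$ to a statement about the cyclic \emph{system} $X_f$ itself. The crux is the point-separation of $\tilde f S$ on $X_f$ supplied by Remark~\ref{r:cycl-comes}: this is exactly what turns the weak$^*$-continuous evaluation map $\alpha$ into an embedding, and thereby converts an $F$-representation of the triple into a \emph{faithful} representation of the dynamical system. Everything else is either a definitional unwinding (Definition~\ref{d:WAP} together with the factor observation) or a direct appeal to Theorem~\ref{t:AspDuality}.3 and to the implication ``reflexively representable $\Rightarrow$ WAP'' extracted from Theorem~\ref{t:tame=R-repr}.3.
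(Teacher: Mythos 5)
Your proof is correct and follows essentially the same route as the paper: the factor observation that $fS$ has DLP on $X$ iff $\tilde f S$ has DLP on the factor, Theorem~\ref{t:AspDuality}.3, the point-separation of $\tilde f S$ on $X_f$ from Remark~\ref{r:cycl-comes}, and ``Eberlein $\Rightarrow$ WAP'' from Theorem~\ref{t:tame=R-repr}.3. The only cosmetic differences are that you arrange a single implication cycle where the paper proves $(1)\Leftrightarrow(2)$ directly and then cycles $(2)\Rightarrow(3)\Rightarrow(4)\Rightarrow(5)\Rightarrow(1)$, and that you spell out the injectivity-implies-embedding step which the paper absorbs into its appeal to Theorem~\ref{t:tame=R-repr} (via the remark preceding Theorem~\ref{t:WRN}).
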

\begin{proof}

(1) $\Leftrightarrow$ (2):
Use Theorem \ref{t:AspDuality}.3.

The implications (2) $\Rightarrow$ (3) and (5) $\Rightarrow$ (1) follow from the above observation.
In the first case use also the facts that because $\tilde{f}S$ has DLP on
$X_f$ and $\tilde{f}S$ separates the points of $X_f$,
we can apply Theorem \ref{t:tame=R-repr}.

(3) $\Rightarrow$ (4):  Is trivial because $f$ comes from the factor $X \to X_f$.

(4) $\Rightarrow$ (5): Every Eberlein system is WAP by Theorem \ref{t:tame=R-repr}.3.
\end{proof}


\sk

\subsection{HNS systems and functions}

Recall the definition of HNS systems (see Definition \ref{d:HNS1} and Theorem \ref{t:HNSenv}.1). 

\begin{defin} \label{d:HNS} \cite{GM1,GM-AffComp}
A compact $S$-system $X$ is \emph{hereditarily non-sensitive} (HNS, in short)
if one of the following equivalent conditions are satisfied:
\ben
\item
For every closed nonempty 
(not necessarily $S$-invariant) 
subset $A \subset X$ and for every entourage $\eps$ from the unique compatible uniformity on $X$ there exists
an open subset $O$ of $X$ such that $A \cap O$ is nonempty and $s (A \cap O)$ is $\eps$-small for every $s \in S$.
\item
The family of translations $\widetilde{S}:=\{\tilde{s}: X \to X\}_{s \in S}$ is a fragmented family of maps.
 \item
 $E(S,X)$  is a fragmented family of maps from $X$ into itself.
 \een
\end{defin}

The equivalence of (1) and (2) is evident from the definitions.
Clearly, (3) implies (2) because $\widetilde{S} \subset E(S,X)$. As to the implication (2) $\Rightarrow$ (3),
observe that the pointwise closure of a fragmented family is again
a fragmented family, \cite[Lemma 2.8]{GM-rose}.

\begin{remark}
Note that if $S=G$ is a group then in Definition \ref{d:HNS}.1 one can
consider only {\it $G$-invariant} closed subsets $A$
(see the proof of \cite[Lemma 9.4]{GM1}).
\end{remark}

\begin{lem} \label{l:HNS-prop} {} \
\ben
\item
The class of HNS $S$-dynamical systems is closed under subsystems, products and factors.
\item
A compact dynamical $S$-system $X$ is HNS iff $\Asp(X)=C(X)$.
\een
\end{lem}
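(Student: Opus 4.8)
The forward implication of (2) is immediate: if $X$ is HNS, then the identity $\id : X \to X$ is an $S$-compactification onto a system $(S,X)$ which is HNS, and every $f \in C(X)$ comes from it, so $f \in \Asp(X)$ by Definition \ref{d:funct}(4); hence $\Asp(X) = C(X)$. Throughout (1) I would work with the characterization of HNS as fragmentability of the translation family $\tilde S = \{\tilde s : X \to X\}_{s \in S}$ (Definition \ref{d:HNS}(2)), and exploit that, by Lemma \ref{r:fr1}(1), fragmentability is hereditary and need only be tested on entourages from a subbase of the uniformity. For a subsystem $Y \subset X$ (closed, $S$-invariant), every closed $A \subset Y$ is closed in $X$, so a witnessing open $O \subset X$ for $\tilde S$ yields $O \cap Y$ for $\tilde S|_Y$, since $A \cap (O \cap Y) = A \cap O$; thus $\tilde S|_Y$ is fragmented. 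For a product $X = \prod_j X_j$ of HNS systems with the coordinatewise action, the entourages $p_j^{-1}(\eps_j)$ (with $p_j$ the projections) form a subbase of the product uniformity, so it suffices to check $p_j^{-1}(\eps_j)$-fragmentability. Given a closed $A \subset X$, I would set $B = \cls\, p_j(A)$ and apply HNS of $X_j$ to $B$, obtaining open $U \subset X_j$ with $B \cap U \ne \emptyset$ and $s(B \cap U)$ $\eps_j$-small for all $s$; then $O = p_j^{-1}(U)$ meets $A$ (as $p_j(A)$ is dense in $B$) and $p_j(\tilde s(A \cap O)) = s\, p_j(A \cap O) \subset s(B \cap U)$ is $\eps_j$-small, as required.

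The factor case is the one requiring a genuine functoriality tool, and I expect it to be the main obstacle of part (1). Given a factor map $\pi : X \to X'$ (continuous onto $S$-map) with $X$ HNS, I would introduce the auxiliary family $\{\pi \circ \tilde s : X \to X'\}_{s \in S}$. Since $\pi$ is an $S$-map, $\pi \circ \tilde s = \tilde s' \circ \pi$, so these are exactly the maps $f'_i \circ \pi$ for the translations $f'_i = \tilde s'$ of $X'$; hence Lemma \ref{r:fr1}(7), applied with $\a = \pi$ and common codomain $Y = X'$, identifies fragmentability of $\{\tilde s'\}$ on $X'$ with that of $\{\pi \circ \tilde s\}$ on $X$. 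The latter holds because post-composing the fragmented family $\tilde S$ with the map $\pi$ (which is uniformly continuous, $X$ being compact) preserves fragmentability: an entourage $\eps'$ of $X'$ pulls back under $\pi$ to an entourage $\eps$ of $X$, and an open set making $\tilde s(A \cap O)$ $\eps$-small makes $\pi(\tilde s(A \cap O))$ $\eps'$-small. Thus $X'$ is HNS. (As a sanity check, these three closure properties are also consistent with Theorem \ref{t:tame=R-repr}(2), which presents HNS as Asplund-approximability.)

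For the substantive direction of (2), assume $\Asp(X) = C(X)$ and aim to show $\tilde S$ is fragmented. By Theorem \ref{l:functCases}(2) the hypothesis says that $fS = \{f \circ \tilde s\}$ is a fragmented family of real-valued functions for every $f \in C(X)$. The key observation is that the entourages $\eps_{f,\delta} = \{(x,y) : |f(x) - f(y)| < \delta\}$, with $f \in C(X)$ and $\delta > 0$, form a subbase of the unique uniformity of the compact space $X$, so by Lemma \ref{r:fr1}(1) it suffices to verify that $\tilde S$ is $\eps_{f,\delta}$-fragmented for each such $f$ and $\delta$. But for a closed $A \subset X$, the set $\tilde s(A \cap O)$ is $\eps_{f,\delta}$-small precisely when $(f \circ \tilde s)(A \cap O)$ is $\delta$-small; hence the $\delta$-level fragmentability of the family $fS$ produces exactly the open set $O$ witnessing $\eps_{f,\delta}$-fragmentability of $\tilde S$. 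Letting $f$ and $\delta$ vary shows $\tilde S$ is fragmented, i.e. $X$ is HNS. (An alternative, slightly heavier route would factor each $f$ through its cyclic system $X_f$, note that $\tilde f S$ is a separating fragmented family so $X_f$ is RN by Theorem \ref{t:WRN}, and embed $X$ into $\prod_f X_f$ to finish via part (1); but the direct subbase reduction above is cleaner.)
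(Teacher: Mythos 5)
Your proof is correct, but it is worth noting that the paper itself gives no argument here at all: its entire ``proof'' of Lemma \ref{l:HNS-prop} is a citation (``For group actions this was proved in \cite[Sect. 2]{GM1}. The same method works for general semigroup actions.''). So your proposal is genuinely different in kind --- a self-contained derivation built from the paper's own internal machinery rather than an appeal to an external source. The three pillars you use are all sound and non-circular: (a) Lemma \ref{r:fr1}.1, which lets you test fragmentability of $\tilde S$ only on closed sets and only on a subbase of entourages (this is what makes both the product case and the hard direction of (2) short); (b) Lemma \ref{r:fr1}.7, which transfers fragmentability of $\{\pi\circ\tilde s\}=\{\tilde s'\circ\pi\}$ across the factor map $\pi$ --- the right tool for factors, since post-composition with the uniformly continuous $\pi$ manifestly preserves fragmentability; and (c) the observation that the functional entourages $\eps_{f,\delta}$ generate the unique uniformity of the compactum $X$, which converts the hypothesis $\Asp(X)=C(X)$ (via the forward direction of Theorem \ref{l:functCases}.2, proved in the paper \emph{before} this lemma and independently of it, so there is no circularity) directly into $\eps_{f,\delta}$-fragmentability of $\tilde S$. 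What your approach buys is transparency and verifiability within the paper; what the paper's citation buys is brevity and, implicitly, consistency with the original group-action treatment in \cite{GM1}, whose method (per the paper's claim) is what your argument effectively reconstructs in the semigroup setting.
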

\begin{proof}
For group actions this was proved in
\cite[Sect. 2]{GM1}. The same method works for general semigroup actions.
\end{proof}

We collect here some characterizations of Asplund functions and HNS systems.
For continuous group actions they can be found in
\cite{GM1, GMU, GM-rose}.

\begin{thm} \label{t:AspF}
Let $X$ be a compact $S$-space and $f \in C(X)$.
The following conditions are equivalent:
\ben
\item $f \in \Asp(X)$. 
\item $fS$ is a fragmented family.
 \item $f$
  is Asplund representable.
 \item The cyclic $S$-system $X_f$ is RN.
  \item $f$ comes from an RN-factor.
\een
\end{thm}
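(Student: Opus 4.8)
The plan is to prove the cycle of implications $(1) \Rightarrow (2) \Leftrightarrow (3) \Rightarrow (4) \Rightarrow (5) \Rightarrow (1)$, assembling the representation results already established. The whole theorem is essentially a bookkeeping exercise combining Theorems \ref{t:AspDuality}, \ref{t:WRN} and \ref{t:tame=R-repr} with the cyclic-factor machinery of Remark \ref{r:cycl-comes} and the fragmentability transfer of Lemma \ref{r:fr1}.7; the care required is to keep the argument free of circular use of Theorem \ref{l:functCases}.2, whose converse direction is precisely the implication $(2) \Rightarrow (1)$ being proved here.

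For $(1) \Rightarrow (2)$ I would argue as in the forward direction of Theorem \ref{l:functCases}.2: if $f$ comes from a HNS factor $q \colon X \to Y$ with $f = f' \circ q$, $f' \in C(Y)$, then the family of translations on $Y$ is fragmented (Definition \ref{d:HNS}), so the uniform continuity of $f'$ makes $f'S$ fragmented, and pulling back along the $S$-map $q$ (Lemma \ref{r:fr1}.7, both $X$ and $Y$ being compact) shows that the family $fS$, being the composition of $f'S$ with $q$, is fragmented. The equivalence $(2) \Leftrightarrow (3)$ is then immediate from Theorem \ref{t:AspDuality}.2 applied to $F = fS$: by Definition \ref{d:dualities} the statement that $f$ is Asplund representable means exactly that the triple $(fS, S, X)$ admits an Asplund representation, and \ref{t:AspDuality}.2 identifies this with the fragmentability of $fS$.

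For $(2) \Rightarrow (4)$ I would pass to the cyclic factor of Remark \ref{r:cycl-comes}: there are a cyclic $S$-system $X_f$, an $S$-compactification $\pi_f \colon X \to X_f$ and a continuous $\tilde f \colon X_f \to \R$ with $f = \tilde f \circ \pi_f$, and the family $\tilde f S$ separates the points of $X_f$. Since $\pi_f$ is an $S$-map between compacta, the relation $f \circ \tilde s = \tilde f \circ \tilde s \circ \pi_f$ (for $s \in S$) exhibits $fS$ as the pullback of $\tilde f S$ along $\pi_f$, so Lemma \ref{r:fr1}.7 transfers the fragmentability of $fS$ to $\tilde f S$. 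Now $\tilde f S$ is a bounded, $S$-invariant, point-separating, fragmented family on $X_f$, whence Theorem \ref{t:WRN} (the RN case) yields that $X_f$ is RN.

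The remaining implications are short. For $(4) \Rightarrow (5)$, the factorization $f = \tilde f \circ \pi_f$ exhibits $f$ as coming from $\pi_f \colon X \to X_f$, which by $(4)$ is an RN factor. For $(5) \Rightarrow (1)$, if $f$ comes from an RN factor $q \colon X \to Y$, then $Y$, being Asplund representable (Definition \ref{d:DSclasses}), is a fortiori Asplund-approximable, hence HNS by Theorem \ref{t:tame=R-repr}.2; so $f$ comes from a HNS factor, i.e. $f \in \Asp(X)$. I expect the only genuine work to lie in the two fragmentability-transfer steps across $\pi_f$, namely verifying the hypotheses of Lemma \ref{r:fr1}.7 and the separation property of $\tilde f S$ supplied by Remark \ref{r:cycl-comes}.
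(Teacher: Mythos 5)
Your proof is correct and follows essentially the same route as the paper's: the same cycle $(1)\Rightarrow(2)\Leftrightarrow(3)$, $(2)\Rightarrow(4)\Rightarrow(5)\Rightarrow(1)$, using Theorem \ref{t:AspDuality}.2, the cyclic factor of Remark \ref{r:cycl-comes} with Lemma \ref{r:fr1}.7 and Theorem \ref{t:WRN}, and Theorem \ref{t:tame=R-repr}.2 at the end. The only cosmetic difference is that you inline the argument for $(1)\Rightarrow(2)$ (fragmentability of the translations on the HNS factor plus uniform continuity and the pullback lemma), whereas the paper cites Theorem \ref{l:functCases}, whose proof of exactly that direction is the argument you wrote out; your caution about circularity is well placed but the paper's arrangement is already non-circular for the same reason.
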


\begin{proof}
(1) $\Rightarrow$ (2): This is Theorem \ref{l:functCases}.1.

(2) $\Leftrightarrow$ (3):
By Theorem \ref{t:AspDuality}.2.

(2) $\Rightarrow$ (4):
 Let $\tilde{f}$ be the function on the cyclic $S$-factor $X_f$ such that $f=\tilde{f} \circ \pi_f$
(see Remark \ref{r:cycl-comes}).
By Lemma \ref{r:fr1}.7, $fS$ is a fragmented family on $X$ iff $\tilde{f} S$ is a fragmented family on $X_f$.
In this case, since $\tilde{f} S$ separates the points of $X_f$, we can conclude by Theorem \ref{t:WRN} that $X_f$ is Asplund-representable.

(4) $\Rightarrow$ (5): This is trivial because $f$ comes from the factor
$X \to X_f$.

(5) $\Rightarrow$ (1):
An Asplund-representable (that is, RN) system is Asplund-approximable,
and therefore it is HNS by Theorem \ref{t:tame=R-repr}.2.
\end{proof}

\begin{thm} \label{t:HNS} \cite{GM1,GMU,GM-AffComp}
Let $X$ be a compact $S$-space.
The following conditions are equivalent:
\ben
\item
$(S,X)$ is \rm{HNS}.
\item
The family $\tilde{S} \subset X^X$ of translations
(equivalently, $E(S,X)$)
 is a fragmented family.
\item
For every countable subset $A \subset S$ the family $\tilde{A}$ is fragmented.
\item
$(S,X)$ has sufficiently many representations on Asplund Banach spaces. 
\item $\Asp(X)=C(X)$.

\sk

\noindent{
If $X$ is metrizable then each of the conditions above is equivalent also to any of the following conditions:}

\item  The enveloping semigroup $E(X)$ of $(S,X)$ is metrizable.

\item $(S,X)$ is RN (that is, $(S,X)$ admits a faithful representation on a (separable) Asplund space).

\item $(S,X)$ is HAE.

\item  The pseudometric
$$
d_{S}(x,y):=\sup\{d(sx,sy):  \ \ s \in S\}
$$
on $X$ is separable (where $d$ is a compatible metric on $X$).

 \item For every $f \in C(X)$ the pointwise closure ${\cls}_p(fS)$ is a metrizable subset of $\R^X$.
 \een
\end{thm}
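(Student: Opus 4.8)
The plan is to prove the five general conditions equivalent first, and then, assuming $X$ metrizable, to absorb the remaining five conditions into that equivalence class by a mixture of direct citation and two genuinely new embedding arguments. For the general part most links are already available. The equivalence (1) $\Leftrightarrow$ (2) is built into Definition \ref{d:HNS} together with the remark following it that the pointwise closure of a fragmented family is again fragmented (so that $\tilde{S}$ fragmented $\Leftrightarrow E(S,X)$ fragmented). The equivalence (2) $\Leftrightarrow$ (3) is a direct application of Theorem \ref{c:countDetermined2} to the family $\tilde{S}$ of continuous self-maps of the compact space $X$, noting that the countable subfamilies of $\tilde{S}$ are exactly the families $\tilde{A}$ with $A \subseteq S$ countable. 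Finally (1) $\Leftrightarrow$ (5) is Lemma \ref{l:HNS-prop}.2 and (1) $\Leftrightarrow$ (4) is Theorem \ref{t:tame=R-repr}.2, which closes the loop (1)--(5).

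For metric $X$ I would first fold in the conditions that only need citation. Condition (6) is tied in by Theorem \ref{t:HNSenv}.2, which says precisely that a metric system is HNS iff $E(S,X)$ is metrizable; condition (7) follows from Theorem \ref{t:tame=R-repr}.2 together with its metric addendum $(*)$, by which ``RN-approximable'' upgrades to ``RN-representable'' in the metric case; and condition (8) comes from Lemma \ref{l:propFamil}.4: since a compact metric $X$ is hereditarily Baire and the target carries the (pseudo)metric uniformity, we have HAE $=$ Fr for the family $\tilde{S}$, so $X$ is HAE iff $\tilde{S}$ is fragmented, i.e. iff (2) holds.

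The two conditions requiring actual work are (9) and (10). For (9), observe that $d_S = \rho_{S,d}$ in the notation of Theorem \ref{t:countDetermined}, and that $\rho_{K,d} \le d_S$ for every countable $K \subseteq S$. If $d_S$ is separable, then each $(X,\rho_{K,d})$, being the image of $(X,d_S)$ under the $1$-Lipschitz identity, is separable, so $\tilde{S}$ is fragmented by Theorem \ref{t:countDetermined}; this gives (9) $\Rightarrow$ (2). Conversely, assuming HNS (equivalently (6)), the semigroup $E := E(S,X)$ is compact metrizable by Theorem \ref{t:HNSenv}.2; using continuity of $p \mapsto d(px,py)$ on $E$ and density of $\tilde{S}$ one checks $d_S(x,y) = \sup_{p \in E} d(px,py)$, so the map $\psi \colon (X,d_S) \to (C(E,X),\sup)$, $\psi(x)(p) = px$, is an isometric embedding, and since $E$ and $X$ are compact metric the space $C(E,X)$ is separable, whence $(X,d_S)$ is separable. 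For (10), the easy direction (6) $\Rightarrow$ (10) uses that $\cls_p(fS) = fE(X)$ is a continuous image of the metrizable compactum $E(X)$, hence metrizable. For the converse I would fix a countable point-separating family $\{f_n\} \subset C(X)$ (possible as $C(X)$ is separable) and consider the diagonal map $E(X) \to \prod_n \cls_p(f_n S)$, $q \mapsto (f_n \circ q)_n$; it is continuous and, because $\{f_n\}$ separates points of $X$, injective, hence a topological embedding of the compactum $E(X)$, so that metrizability of each $\cls_p(f_n S)$ forces metrizability of $E(X)$ and returns us to (6).

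The main obstacle is precisely these ``hard'' directions of (9) and (10): extracting genuine separability or metrizability of the whole family from the HNS hypothesis, rather than merely of its countable subfamilies (which is all that Theorem \ref{t:countDetermined} supplies directly). The two devices that overcome this are the isometric embedding of $(X,d_S)$ into the separable space $C(E,X)$, which crucially exploits the metrizability of the enveloping semigroup, and the diagonal embedding of $E(X)$ into a countable product of the pointwise closures $\cls_p(f_nS)$; both work because $C(X)$ is separable and $E(X)$ is compact, so that the continuous injections involved are automatically embeddings.
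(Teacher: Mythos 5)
Your proposal is correct, and for conditions (1)--(8) it coincides with the paper's proof: the same citations are used in the same way (Definition \ref{d:HNS} for (1)$\Leftrightarrow$(2), Theorem \ref{c:countDetermined2} for (2)$\Leftrightarrow$(3), Theorem \ref{t:tame=R-repr}.2 for (4) and (7), Lemma \ref{l:HNS-prop}.2 for (5), the metrizable-enveloping-semigroup theorem for (6), and Lemma \ref{l:propFamil}.4 for (8)). The genuine divergence is in (9) and (10), and there your route is different from the paper's. For (2)$\Rightarrow$(9) the paper stays inside the fragmentability machinery: $\tilde{S}$ fragmented means the natural map $X \to X^S$ (uniformity of $d$-uniform convergence) is fragmented by Lemma \ref{r:fr1}.6, and then Lemma \ref{r:fr1}.5 gives separability of the image, i.e.\ of $(X,d_S)$; this is more elementary than your argument, which pivots on the deeper cited fact (6) that $E(S,X)$ is metrizable and then isometrically embeds $(X,d_S)$ into the separable space $C(E,X)$. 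On the other hand, your treatment of (10) is a gain in self-containedness: the paper disposes of (2)$\Leftrightarrow$(10) by an external citation to \cite[Lemma 4.4]{GM-rose}, whereas you prove both directions inside the paper's toolkit --- $\cls_p(fS)=fE(X)$ is a continuous (hence metrizable) image of the metrizable compactum $E(X)$ in one direction, and the diagonal embedding $E(X) \hookrightarrow \prod_n \cls_p(f_nS)$ along a countable separating family in the other. Likewise your explicit proof of (9)$\Rightarrow$(2) via Theorem \ref{t:countDetermined} (monotonicity $\rho_{K,d} \le d_S$ plus the countable-determination criterion) fills in a converse the paper leaves implicit. In short: the paper's argument for (9) is lighter, yours for (10) is more complete; both are valid.
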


\begin{proof}
(1) $\Leftrightarrow$ (2): Directly from Definition \ref{d:HNS}.

(2) $\Leftrightarrow$ (3): Directly from Theorem \ref{c:countDetermined2}.

(1) $\Leftrightarrow$ (4): Theorem \ref{t:tame=R-repr}.2.

(1) $\Leftrightarrow$ (5): Lemma \ref{l:HNS-prop}.2.

\sk
If $X$ is metrizable then

(1) $\Leftrightarrow$ (6): By \cite[Theorem 7]{GM-AffComp} (for group actions see \cite{GMU}).

(1) $\Leftrightarrow$ (7): By Theorem \ref{t:tame=R-repr}.

(2) $\Leftrightarrow$ (8): Lemma \ref{l:propFamil}.4.

(2) $\Leftrightarrow$ (9):
The family $\tilde{S}$ of the corresponding translation maps is a fragmented family.
This means that the natural map $X \to X^{S}$ is fragmented,
where $X^{S}$ carries the uniformity of $d$-uniform convergence.
It then follows, by Lemma \ref{r:fr1}.5,
that the image of $X$ into $X^{S}$ is separable.
This exactly means that $(X,d_{S})$ is separable.

(2) $\Leftrightarrow$ (10): Apply \cite[Lemma 4.4]{GM-rose}  to the family $fS$ for every $f \in C(X)$.
\end{proof}

\subsection{Tame systems and functions}

Recall that, by Definition \ref{d:tame}, a compact dynamical $S$-system $X$ is said to be tame
if every $p \in E(X)$ is a fragmented map.
For HNS systems, $E(X)$ is a fragmented family, hence, every HNS system is tame.

\begin{lem} \label{l:tame-prop} {} \
\ben
\item
The class of tame $S$-systems is closed under subsystems,
arbitrary products and factors.
\item
A compact $S$-dynamical system $X$ is tame iff $\Tame(X)=C(X)$.
\een
\end{lem}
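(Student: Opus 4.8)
The plan is to reduce both statements to the enveloping semigroup characterization of tameness (Definition \ref{d:tame}), namely that $(S,X)$ is tame precisely when every $p \in E(X)$ is a fragmented map, and then to exploit the stability properties of fragmented maps recorded in Lemma \ref{r:fr1}.

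\emph{Part (1).} I would treat the three operations via the functoriality of the assignment $X \mapsto E(X)$. For a closed $S$-invariant subset $X_0 \subseteq X$, the restriction $p \mapsto p|_{X_0}$ is a continuous surjection $E(X) \to E(X_0)$, so every element of $E(X_0)$ has the form $p|_{X_0}$; since fragmentability is hereditary (Lemma \ref{r:fr1}.1), $p|_{X_0}$ is fragmented whenever $p$ is, giving tameness of $X_0$. For a factor map $\alpha \colon X \to Y$, the induced map $\alpha_* \colon E(X) \to E(Y)$ is onto and satisfies the intertwining relation $\alpha \circ p = \alpha_*(p) \circ \alpha$; feeding the resulting commuting square into Lemma \ref{r:fr1}.10 (with $\phi = p$, $\phi' = \alpha_*(p)$ and $\nu = \alpha$, the latter uniformly continuous between compacta) converts fragmentability of $p$ into that of $\alpha_*(p)$, and as $p$ ranges over $E(X)$ this exhausts $E(Y)$. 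For a product $X = \prod_i X_i$, I would use that every $p \in E(X)$ carries coordinate data $p_i \in E(X_i)$ obeying $\pi_i \circ p = p_i \circ \pi_i$, obtained by extracting, coordinatewise, a cluster point of an approximating net of translations. Each $p_i$ is fragmented by tameness of $X_i$, so each composite $\pi_i \circ p = p_i \circ \pi_i$ is fragmented (being the continuous $\pi_i$ followed by the fragmented $p_i$); since the continuous projections $\{\pi_i\}$ separate the points of $X$, Lemma \ref{r:fr1}.9 yields that $p$ itself is fragmented.

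\emph{Part (2).} The implication that $X$ tame $\Rightarrow \Tame(X) = C(X)$ is immediate: the identity $\id \colon X \to X$ is then a tame $S$-compactification of $X$, so every $f \in C(X) = \RMC(X)$ comes from it and hence lies in $\Tame(X)$, while the inclusion $\Tame(X) \subseteq C(X)$ always holds. For the converse, assume $\Tame(X) = C(X)$. Each $f \in C(X)$ then comes from some tame factor $q_f \colon X \to Y_f$, which is surjective because $X$ is compact. I would form the diagonal $S$-map $\Phi = (q_f)_{f} \colon X \to \prod_{f \in C(X)} Y_f$. By Part (1) the product $\prod_f Y_f$ is tame, and hence so is its closed $S$-subsystem $\Phi(X)$. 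Because $C(X)$ separates the points of $X$ and each $f$ factors through $q_f$, the map $\Phi$ is injective, hence an $S$-isomorphism of $X$ onto $\Phi(X)$ by compactness. Thus $X$ is $S$-isomorphic to a tame system and is therefore tame.

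The main obstacle is the product case of Part (1): one must verify carefully that the coordinate maps $p_i$ genuinely lie in $E(X_i)$, and not merely in $X_i^{X_i}$, and that the intertwining relation $\pi_i \circ p = p_i \circ \pi_i$ holds, after which the point-separating fragmentability criterion Lemma \ref{r:fr1}.9 applies cleanly. The surjectivity and intertwining of the induced maps on enveloping semigroups used in the subsystem and factor cases are standard but should be stated explicitly, and Part (2) then rests entirely on Part (1).
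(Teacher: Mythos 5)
Your proof is correct, but it is worth noting that it is not what the paper does: the paper's entire proof is the citation ``for group actions this was proved in \cite{GM-rose}; the same method works for general semigroup actions.'' What you have produced is a self-contained reconstruction inside the paper's own toolkit, reducing everything to the enveloping-semigroup characterization of tameness (Definition \ref{d:tame}) plus the fragmentability lemmas: heredity (Lemma \ref{r:fr1}.1) for subsystems, the commuting-square transfer (Lemma \ref{r:fr1}.10) applied to $\alpha \circ p = \alpha_*(p) \circ \alpha$ for factors, and the point-separation criterion (Lemma \ref{r:fr1}.9) for products. The delicate points you flag all go through, and in fact the product case is even easier than you suggest: for $p \in E\bigl(\prod_i X_i\bigr)$ and a net $\tilde{s}_\nu \to p$, the limit $\lim_\nu \tilde{s}_\nu x_i = \pi_i(p(x))$ exists for every $x_i \in X_i$ and depends only on $x_i$ (choose any $x$ extending $x_i$), so the whole net of restricted translations converges and $p_i \in E(X_i)$ with $\pi_i \circ p = p_i \circ \pi_i$ --- no subnet extraction is needed; the composite $p_i \circ \pi_i$ is fragmented because for closed $A$ the image $\pi_i(A)$ is compact, hence closed, so the open set furnished by fragmentedness of $p_i$ pulls back; and surjectivity of the restriction map $E(X) \to E(X_0)$ and of $\alpha_*\colon E(X) \to E(Y)$ follows from compactness of $E(X)$. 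Your Part (2) (diagonal embedding of $X$ into the product of all its tame factors $Y_f$, then Part (1)) is the standard reduction. What your route buys is that the lemma is visibly true for arbitrary semitopological semigroup actions --- no group structure enters anywhere --- which is precisely the assertion the paper makes without argument; what the paper's citation buys is brevity.
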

\begin{proof}
For group actions this was proved in
\cite{GM-rose}. The same method works for general semigroup actions.
\end{proof}

In the following theorem we collect, for the reader's convenience, the various characterizations we have for tame functions.   
\begin{thm} \label{t:tame-f}
Let $X$ be a compact
$S$-space and $f \in C(X)$.
The following conditions are equivalent:
\ben
\item
$f \in \Tame(X)$.
\item
$fS$ is a tame family. 
\item
${\cls}_p(fS) \subset {\mathcal F}(X)$.  
\item $fS$ is an eventually fragmented family.
\item For every countable infinite subset $A \subset S$ there exists a countable infinite subset $A' \subset A$ such that
the corresponding pseudometric
$$
\rho_{f, A'}(x,y):=\sup\{|f(gx)-f(gy)|:  \ \ g \in A'\}
$$
on $X$ is separable.
\item $f$ is Rosenthal representable.
\item The cyclic $S$-space $X_f$ is Rosenthal representable (i.e., WRN).
\item $f$ comes from a WRN-factor.
\een
\end{thm}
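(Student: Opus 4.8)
The plan is to treat almost all of the listed equivalences as specializations of the general ``small family'' machinery to the concrete bounded, $S$-invariant family $F := fS \subset C(X)$, and only then to bring in the genuinely dynamical conditions (1), (7) and (8) through the cyclic factor $X_f$ of Remark \ref{r:cycl-comes}. I would first record that $fS = \{f \circ \tilde s : s \in S\}$ is norm bounded (by $\|f\|$) and $S$-invariant, so that Theorems \ref{f:sub-fr}, \ref{t:AspDuality} and \ref{t:countDetermined} apply to it verbatim.

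First, applying Theorem \ref{f:sub-fr} to $F = fS$ yields at once the equivalence of (2), (3) and (4) (tame family $=$ Rosenthal family $=$ eventually fragmented). Applying Theorem \ref{t:AspDuality}.1 to the triple $(fS,S,X)$ adjoins (6) to this block, since ``$f$ is Rosenthal representable'' means precisely that $(fS,S,X)$ admits a Rosenthal representation. For (4) $\Leftrightarrow$ (5) I would observe that $\rho_{f,A'} = \rho_{fA',d}$ (with $d$ the usual metric on $\R$ and $fA' := \{f\circ\tilde g : g \in A'\}$), so by Theorem \ref{t:countDetermined} separability of $(X,\rho_{f,A'})$ is equivalent to fragmentability of $fA'$. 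The delicate point is the bookkeeping between subsets of $S$ and subfamilies of $fS$: given an infinite $L \subset fS$ one picks, for each $h \in L$, a preimage $g_h$ with $f\circ\tilde{g_h} = h$; the map $h \mapsto g_h$ is injective, so $A := \{g_h\}$ is infinite and $g \mapsto f\circ\tilde g$ restricts to a bijection $A \to L$. Under this correspondence (5) says exactly that every infinite $L \subset fS$ has an infinite fragmented subfamily, i.e. (4); finite images $fA$ are automatically fragmented with $(X,\rho_{fA,d})$ compact, hence separable, so that degenerate case is harmless.

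It remains to connect this block to (1), (7), (8). For (1) $\Rightarrow$ (4): if $f = f'\circ q$ comes from a tame factor $q:X\to Y$, then tameness of $(S,Y)$ gives $\cls_p(f'S) = f' E(S,Y) \subset \mathcal{F}(Y)$, so $f'S$ is a Rosenthal family and hence eventually fragmented; since $f\circ\tilde s^X = (f'\circ\tilde s^Y)\circ q$ and $q$ is onto, Lemma \ref{r:fr1}.7 transfers eventual fragmentability subfamily-by-subfamily to $fS$. For (6) $\Rightarrow$ (7) I would pass to the cyclic system: by Remark \ref{r:cycl-comes} we have $\pi_f:X\to X_f$ onto, $\tilde f:X_f\to\R$ with $f = \tilde f\circ\pi_f$, and $\tilde f S$ separating on $X_f$; since $(\tilde f\circ\tilde s)\circ\pi_f = f\circ\tilde s$, the same Lemma \ref{r:fr1}.7 argument shows $\tilde f S$ is eventually fragmented, hence a bounded, $S$-invariant, point-separating, eventually fragmented family, so Theorem \ref{t:WRN} makes $X_f$ a WRN system. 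Then (7) $\Rightarrow$ (8) is immediate, and (8) $\Rightarrow$ (1) holds because a WRN (Rosenthal representable) factor is in particular Rosenthal-approximable, hence tame by Theorem \ref{t:tame=R-repr}.1, so $f$ comes from a tame factor.

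I expect the only real obstacle to be conceptual rather than computational: the passage from the ``soft'' combinatorial smallness of the family $fS$ to an \emph{actual} tame (WRN) dynamical factor, carried out in (6) $\Rightarrow$ (7) and resting entirely on the DFJP-type construction behind Theorems \ref{t:AspDuality} and \ref{t:WRN}. Everything else is bookkeeping: checking boundedness, $S$-invariance and point-separation for the relevant families, and verifying that eventual fragmentability is preserved under the onto factor maps $q$ and $\pi_f$ via the diagram Lemma \ref{r:fr1}.7. Care is needed only in the $S$-versus-$fS$ indexing used for (4) $\Leftrightarrow$ (5) and in confirming that the finite-image cases there cause no difficulty.
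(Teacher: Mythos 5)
Your proposal is correct and follows essentially the same route as the paper's proof: (2)--(4) via Theorem \ref{f:sub-fr}, (4)$\Leftrightarrow$(6) via Theorem \ref{t:AspDuality}.1, (4)$\Leftrightarrow$(5) via Theorem \ref{t:countDetermined}, the cyclic factor $X_f$ with Lemma \ref{r:fr1}.7 and Theorem \ref{t:WRN} for (7), and Theorem \ref{t:tame=R-repr}.1 for (8)$\Rightarrow$(1); your (1)$\Rightarrow$(4) argument is exactly the paper's proof of Theorem \ref{l:functCases}.3, which the paper simply cites. The only added value is your explicit bookkeeping between subsets of $S$ and subfamilies of $fS$ in (4)$\Leftrightarrow$(5), which the paper leaves implicit.
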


\begin{proof}
(1) $\Rightarrow$ (4): Theorem \ref{l:functCases}.3.

The equivalence of
(2), (3) and (4) follows from Theorem \ref{f:sub-fr}.

(4) $\Leftrightarrow$ (5):
Use Theorem \ref{t:countDetermined}.

(4) $\Leftrightarrow$ (6):
By Theorem \ref{t:AspDuality}.1.

(4) $\Rightarrow$ (7):
 Let $\tilde{f}$ be the function on the cyclic $S$-factor $X_f$ such that $f=\tilde{f} \circ \pi_f$
(Remark \ref{r:cycl-comes}).
By Lemma \ref{r:fr1}.7,
$fS$ is an eventually fragmented family on $X$ iff $\tilde{f} S$
is an eventually fragmented family on $X_f$.
In this case, since $\tilde{f} S$ separates the points of $X_f$ we can conclude by Theorem \ref{t:WRN} that $X_f$ is 
Rosenthal-representable.

(7) $\Rightarrow$ (8): Is trivial because $f$ comes from the factor $X \to X_f$.

(8) $\Rightarrow$ (1):
 By Theorem \ref{t:tame=R-repr}.1 every WRN system is tame.
\end{proof}

\begin{thm} \label{t:tame} \cite{GM1,GMU,GM-AffComp}
Let $X$ be a compact dynamical $S$-system.
The following conditions are equivalent:
\ben
\item
$(S,X)$ is tame
(that is, every $p \in E(X)$ is a fragmented map).
\item
$(S,X)$ is 
eventually weakly fragmented (E-wFr in Definition \ref{d:E-HNS}).
\item
$(S,X)$ has sufficiently many representations on Rosenthal Banach spaces.
\item $\Tame(X)=C(X)$.

\sk

\noindent{If $X$ is metrizable then each of the conditions above is equivalent also to any of the following conditions:}

\item
Every $p \in E(X)$ is a Baire 1 map.

\item
$(S,X)$ is WRN (that is, $(S,X)$ admits a faithful representation on a (separable) Rosenthal space).

\item
$(S,X)$ is E-HAE.

\item
$(S,X)$ is 
eventually fragmented. 

\item
For every infinite subset $A \subset S$ there exists a (countable) infinite subset $A' \subset A$ such that
the corresponding pseudometric
$$
\rho_{A'}(x,y):=sup\{d(sx,sy):  \  s \in A'\}
$$
on $X$ is separable, where $d$ is a compatible metric on $X$.

\een
\end{thm}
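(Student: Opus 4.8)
The plan is to assemble the full web of equivalences from results already established, with the genuinely new work concentrated in the equivalence (1) $\Leftrightarrow$ (2). Two easy anchors are available at once: (1) $\Leftrightarrow$ (4) is exactly Lemma \ref{l:tame-prop}.2, and (1) $\Leftrightarrow$ (3) is Theorem \ref{t:tame=R-repr}.1. Thus it suffices to splice condition (2) into this chain, for which I would prove (2) $\Leftrightarrow$ (4).

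For (2) $\Leftrightarrow$ (4) I would first invoke Lemma \ref{r:fr1}.1 to reduce the verification of the E-wFr condition for $\tilde{S}$ to entourages $\eps$ drawn from a subbase of the unique compatible uniformity $\xi$ of the compact space $X$; such a subbase is furnished by the sets $\eps_{f,\delta}=\{(x,y): |f(x)-f(y)|<\delta\}$ with $f \in C(X)$ and $\delta>0$. The key translation is the elementary observation that, for a closed nonempty subset $B \subset X$ and an infinite $A \subset S$, the set $\tilde{s}(O)$ is $\eps_{f,\delta}$-small exactly when $(f\circ\tilde{s})(O)$ has diameter $<\delta$; hence $\tilde{A}|_B$ is $\eps_{f,\delta}$-NS if and only if the scalar family $(fA)|_B=\{f\circ\tilde{s}|_B : s \in A\}$ is $\delta$-NS, so $\tilde{A}$ is $\eps_{f,\delta}$-Fr if and only if $fA$ is $\delta$-Fr. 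Tracking the quantifiers (identifying infinite subfamilies of $\tilde{S}$, resp.\ of $fS$, with infinite subsets of $S$) shows that $\tilde{S}$ is E-wFr if and only if, for every $f \in C(X)$, the orbit $fS \subset C(X)$ is E-wFr. Since $fS$ consists of continuous, hence fragmented, real-valued functions and $\R$ carries a pseudometric uniformity, Lemma \ref{l:propFamil}.3 gives E-wFr $=$ E-Fr for $fS$; thus $\tilde{S}$ is E-wFr iff every $fS$ is eventually fragmented, which by Theorem \ref{t:tame-f} means $f \in \Tame(X)$ for every $f$, that is $\Tame(X)=C(X)$, which is (4).

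For metrizable $X$ the remaining conditions are handled as follows. A compact metric $X$ is Polish, so for each $p \in E(X)$, viewed as a map into the separable metric space $X$, Lemma \ref{r:fr1}.2 identifies fragmentedness with being Baire class $1$; this yields (1) $\Leftrightarrow$ (5). Condition (6) is the representable (rather than merely approximable) form of (3), available for metrizable $X$ by the clause $(*)$ of Theorem \ref{t:tame=R-repr}, so (1) $\Leftrightarrow$ (6). Since $X$ is hereditarily Baire, its metric uniformity is pseudometric, and every translation $\tilde{s}$ is continuous and hence a fragmented map, Lemma \ref{l:propFamil}.5 applies to the family $\tilde{S}$ and gives E-HAE $=$ E-Fr $=$ E-wFr, i.e.\ (2) $\Leftrightarrow$ (7) $\Leftrightarrow$ (8). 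Finally, for (8) $\Leftrightarrow$ (9) I would use Theorem \ref{t:countDetermined}: taking $L=\tilde{A}$ for an infinite $A \subset S$, eventual fragmentability produces (by Lemma \ref{l:propFamil}.2) a countable infinite fragmented subfamily $\tilde{A'}$, and the equivalence (1) $\Leftrightarrow$ (3) of Theorem \ref{t:countDetermined}, applied to the continuous family $\tilde{A'}: X \to (X,d)$, says precisely that $\tilde{A'}$ is fragmented if and only if $(X,\rho_{A'})$ is separable, where $\rho_{A'}(x,y)=\sup_{s\in A'} d(sx,sy)$ (using that a coarser pseudometric $\rho_K \le \rho_{A'}$ inherits separability). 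Reading this for all infinite $A$ in both directions gives (8) $\Leftrightarrow$ (9).

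The main obstacle is the equivalence (1) $\Leftrightarrow$ (2). All the heavy Banach-space machinery (the DFJP factorization behind Theorem \ref{t:tame=R-repr} and the duality of Theorem \ref{t:AspDuality}) is already in place, so the work here is the careful passage between fragmentability of the $X$-valued family $\tilde{S}$ and eventual fragmentability of each scalar orbit $fS$. The delicate point is quantifier management: one must be sure that single-coordinate entourages $\eps_{f,\delta}$ suffice (Lemma \ref{r:fr1}.1) and that the per-$\delta$ fragmentability for a fixed $f$ can be upgraded to genuine eventual fragmentability of $fS$ via the diagonal argument of Lemma \ref{l:propFamil}.3. The metrizable equivalence (8) $\Leftrightarrow$ (9) is the other nontrivial ingredient, resting on the countable-determinacy of fragmentability in Theorem \ref{t:countDetermined}.
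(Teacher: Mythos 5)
Your proposal is correct, and for the heart of the theorem it takes a genuinely different route from the paper's. The paper proves (1) $\Rightarrow$ (2) dynamically: it first reduces to countable subsemigroups $S_1 \subset S$ and then, via cyclic compactifications, to metrizable $S_1$-factors $M$; tameness makes $E(S_1,M)$ a Rosenthal compactum, hence Fr\'echet, so any infinite subset of $S_1$ contains a sequence converging in $E(S_1,M)$, and that sequence is a fragmented family by the Osgood-type Lemma \ref{l:FrFa}.3. You bypass all of this by proving (4) $\Rightarrow$ (2) directly: reduce, by Lemma \ref{r:fr1}.1, the E-wFr condition for $\tilde{S}$ to the subbasic entourages $\eps_{f,\delta}$, observe that a subfamily $\tilde{A}$ is $\eps_{f,\delta}$-Fr exactly when $fA$ is $\delta$-Fr, and then quote Theorem \ref{t:tame-f} (tameness of $f$ equals eventual fragmentability of $fS$). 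Since Theorem \ref{t:tame-f} is established before Theorem \ref{t:tame} and independently of it, there is no circularity; the deep input (Rosenthal's dichotomy, packaged in Theorem \ref{f:sub-fr}) enters through that theorem rather than through the Fr\'echet property of enveloping semigroups. What your route buys is a shorter, purely function-family argument with no passage to factors or enveloping semigroups; what the paper's route buys is the structural fact that for tame metric systems sequences suffice in $E(X)$. Your direction (2) $\Rightarrow$ (4) coincides with the paper's proof of (2) $\Rightarrow$ (1), and items (5), (6), (7) are treated identically; for (8) $\Leftrightarrow$ (9) you run Theorem \ref{t:countDetermined} in both directions, whereas the paper proves (8) $\Rightarrow$ (9) from Lemma \ref{r:fr1}.6 and Lemma \ref{r:fr1}.5 and closes the cycle by (9) $\Rightarrow$ (4); both are sound. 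The only points you gloss over are bookkeeping: the maps $s \mapsto \tilde{s}$ and $s \mapsto f\tilde{s}$ need not be injective, so identifying infinite subfamilies of $\tilde{S}$ or of $fS$ with infinite subsets of $S$ requires choosing representatives and disposing of the degenerate case where $\{f\tilde{s} : \tilde{s} \in L\}$ is finite (such a family, consisting of finitely many continuous functions, is automatically fragmented). These are easily patched and do not constitute a gap.
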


\begin{proof}
(1) $\Leftrightarrow$ (3): Theorem \ref{t:tame=R-repr}.1.

(1) $\Leftrightarrow$ (4):
Lemma \ref{l:tame-prop}.2.

(1) $\Rightarrow$ (2):
We modify a proof from \cite[Prop. 4.1]{GM1} where we dealt with jointly continuous group actions.
By Definitions \ref{d:sens-fr-f}, \ref{d:E-HNS} and Lemma \ref{l:propFamil}.2,
it suffices to show that $(S_1,X)$ is E-wFr for every \emph{countable} subsemigroup $S_1 \subset S$.
Clearly,
$(S_1,X)$ remains tame.
Since $S_1$ is countable, metric factors separate points on $X$
(and therefore $(S_1,X)$ can be embedded in a product of metrizable $S_1$-systems).
Now we use \emph{cyclic compactifications} (Remark \ref{r:cycl-comes}). Observe that since $S_1$ is countable, every cyclic $S_1$-factor $X_f$ of $X$ is metrizable for every $f \in C(X)$.

It is easy to see that the class of E-wFr systems is
closed under products and passing to subsystems.
So, it suffices to show that every metrizable $S_1$-factor $M$ of $X$ is E-wFr.
By Lemma \ref{l:tame-prop}.1, $(S_1,M)$ is also tame.
Therefore $E(S_1,M)$ is a Rosenthal compactum. Hence, the topological space
$E(S_1,M)$ has the Fr\'{e}chet property, \cite{BFT}.
Let $j: S_1 \to E(S_1,M), s \mapsto \tilde{s}$ be the canonical Ellis compactification. Then, 
given an infinite subset  $j(L) \subset j(S_1)$,
there exists a countable subset
$\{t_n\}_{n \in \N}$ in $L$ such that $K:=\{j(t_n)\}_{n \in \N}$ is infinite and the sequence
$j(t_n)$ converges in
$E(S_1,M)$.
We apply Lemma \ref{l:FrFa}.3 to conclude
that $K$, as a family of maps from $M$ into itself, is
fragmented, so that $(S_1,M)$ is indeed E-wFr.

(2) $\Rightarrow$ (1):
Let $(S,X)$ be E-wFr. In order to show that $X$ is tame it suffices to prove,
by Lemma \ref{l:tame-prop}.2,
that $f \in \Tame(X)$ for every $f \in C(X)$.
By our assumption (2) $\tilde{S}$
is E-Fr. Since $f: (X,d) \to \R$ is uniformly continuous we obtain that $fS$ is E-wFr.
By Lemma \ref{l:propFamil}.3 we conclude that $fS$ is
eventually fragmented. Thus, $f$ is tame by Theorem \ref{t:tame-f}.

\sk
If $X$ is metrizable then

(1) $\Leftrightarrow$ (5): Lemma \ref{r:fr1}.2.

(1) $\Leftrightarrow$ (6): By Theorem \ref{t:tame=R-repr}.

(2) $\Leftrightarrow$ (7) $\Leftrightarrow$ (8): By Lemma
\ref{l:propFamil}.5.

(8) $\Rightarrow$ (9):
For every countable infinite subset $A \subset S$ there exists a countable infinite subset $A' \subset A$ such that
$A'$ is a fragmented family. This means, by Lemma \ref{r:fr1}.6, that the induced map $X \to X^{A'}$ is fragmented,
where $X^{A'}$ carries the uniformity of uniform convergence. Since $X$ is second countable,
Lemma \ref{r:fr1}.5 implies
that the image of $X$ into $X^{A'}$ is separable. This exactly means that $(X,\rho_{A'})$ is separable.

(9) $\Rightarrow$ (4): Let $f \in C(X)$. Since $f: (X,d) \to \R$ is uniformly continuous
it is easy to see
that the map
$1_X: (X,\rho_{A'}) \to (X,\rho_{f, A'})$ is uniformly continuous.
This implies that $(X,\rho_{f, A'})$ is also separable.
By Theorem \ref{t:tame-f} we conclude that $f \in \Tame(X)$.
\end{proof}

Since every E-Fr is E-wFr, the implication (8) $\Rightarrow$ (1) holds for every (not necessarily metric) compact system.

\section{A characterization of tame symbolic systems}

\subsection{Symbolic systems}

The classical \emph{Bernoulli shift system} is defined as the cascade $(\Z,\Omega)$, where $\Omega:=\{0,1\}^{\Z}$.
We have the natural
$\Z$-action on 
the compact metric space  
$\Omega$ induced by the left $T$-shift:
$$
\Z \times \Omega \to \Omega, \ \ \ T^m (\omega_i)_{i \in \Z}=(\omega_{i+m})_{i \in \Z} \ \ \ \forall (\omega_i)_{i \in \Z} \in \Omega, \ \ \forall m \in \Z.
$$

More generally, for a
discrete monoid $S$ and a finite alphabet $A:=\{0,1, \dots,n\}$
the compact space $\Omega:=A^S$ is an $S$-space under the action
$$ S \times \Omega \to \Omega, \ \ (s \omega)(t)=\omega (ts),\ \omega \in A^S, \ \ s,t \in S.$$
A closed $S$-invariant subset $X \subset A^S$ defines a
subsystem $(S,X)$. Such systems are called {\em subshifts\/} or
{\em symbolic dynamical systems\/}.
For a nonempty $L \subseteq S$ define the natural projection $$\pi_L: A^S \to A^{L}.$$
The compact zero-dimensional space $A^S$ is metrizable iff $S$ is countable
(and, in this case, $A^S$ is homeomorphic to the Cantor set).

It is easy to see
that the full shift system $\Omega=A^S$ (hence also every subshift) is  \emph{uniformly expansive}. This means that
there exists an entourage $\eps_0 \in \mu$ in the natural uniform structure of $A^S$ such that
for every distinct $\omega_1 \neq \omega_2$ in $\Omega$ one can find
$s \in S$ with
$(s\omega_1, s\omega_2) \notin \eps_0$. Indeed, take
$$\eps_0:=\{(u,v) \in \Omega \times \Omega: \  u(e)=v(e)\},$$
where $e$, as usual, is the neutral element of $S$.

\begin{lem} \label{subshiftsAREcyclic}
Every symbolic dynamical $S$-system $X \subset \Omega=A^S$ is cyclic (Definition \ref{d:cyclic}).
\end{lem}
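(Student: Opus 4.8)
The plan is to exhibit a single continuous function whose $S$-orbit separates the points of $X$; cyclicity then follows immediately from the general cyclic compactification construction recalled in Remark \ref{r:cycl-comes}.

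First I would take $f \in C(X)$ to be the evaluation at the neutral element, $f(\omega):=\omega(e)$, i.e. the restriction to $X$ of the coordinate projection $\pi_{\{e\}}$ (this is real-valued since $A=\{0,1,\dots,n\} \subset \R$, and continuous). Using the action formula $(s\omega)(t)=\omega(ts)$ I compute, for every $s \in S$,
$$(f \circ \tilde{s})(\omega)=f(s\omega)=(s\omega)(e)=\omega(s),$$
so that the orbit $fS=\{f \circ \tilde{s}: s \in S\}$ consists precisely of the coordinate evaluations $\omega \mapsto \omega(s)$, that is, the restrictions $\pi_{\{s\}}|_X$ for $s \in S$.

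Next I would observe that this orbit separates the points of $X$, and that this is nothing other than the \emph{uniform expansiveness} recorded immediately above the lemma. Indeed, given $\omega_1 \neq \omega_2$ in $X$, uniform expansiveness provides $s \in S$ with $(s\omega_1,s\omega_2) \notin \eps_0$, where $\eps_0=\{(u,v): u(e)=v(e)\}$; this means exactly that $\omega_1(s)=(s\omega_1)(e) \neq (s\omega_2)(e)=\omega_2(s)$, i.e. $(f \circ \tilde{s})(\omega_1) \neq (f \circ \tilde{s})(\omega_2)$. Hence $fS$ separates points, and therefore so does the $S$-invariant unital subalgebra $\A_f \subset C(X)$ it generates.

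Finally, applying Remark \ref{r:cycl-comes} to $f \in C(X)=\RMC(X)$, I obtain a cyclic $S$-system $X_f$ together with a continuous $S$-compactification $\pi_f: X \to X_f$ such that $\pi_f(x)=\pi_f(y)$ if and only if $g(x)=g(y)$ for every $g \in fS$. Since $X$ is compact, $\pi_f$ is a continuous surjective $S$-map between compacta; the point separation of $fS$ just established makes $\pi_f$ injective, hence a homeomorphism, and being an $S$-map it is an $S$-isomorphism $X \cong X_f$. Thus $(S,X)$ is cyclic. The only substantive point in the argument is the identification of uniform expansiveness with the point-separating property of the \emph{single} orbit $fS$; once that is in place, compactness upgrades the canonical $S$-map $\pi_f$ to an isomorphism with no further work.
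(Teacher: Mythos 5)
Your proof is correct and follows essentially the same route as the paper's: take $f=\pi_e$, observe that $fS=\{\pi_s\}_{s \in S}$ is exactly the family of coordinate projections and hence separates the points of $X$, and conclude that $(S,X)$ is cyclic. The only cosmetic difference is the finishing step: the paper invokes the Stone--Weierstrass theorem to identify the algebra $\A_f$ with $C(X)$, while you argue directly that point separation of $fS$ makes the canonical compactification map $\pi_f\colon X \to X_f$ injective, hence by compactness an $S$-isomorphism---two equivalent ways to close the argument.
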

\begin{proof} It suffices to find $f \in C(X)$ such that the orbit $fS$ separates the points of $X$ since then, by the Stone-Weierstrass theorem, $(S,X)$ is isomorphic to its cyclic $S$-factor $(S,X_f)$.
The family
$$\{\pi_{s}: X \to A=\{0,1, \dots,n\} \subset \R\}_{s \in S}$$
of basic projections clearly separates points on $X$ and we let
$f:=\pi_e: X \to \R$.
Now observe that $fS=\{\pi_{s}\}_{s \in S}$.
\end{proof}

\begin{prop} \label{p:scattered} \cite[Prop. 7.15]{Me-nz}
Every scattered compact jointly continuous $S$-space $X$ is RN.
\end{prop}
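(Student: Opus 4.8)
The plan is to apply Theorem~\ref{t:WRN}: to prove that a compact $S$-system $X$ is RN it suffices to exhibit a norm bounded, $S$-invariant, point-separating family $F \subset C(X)$ which is a fragmented family. The whole point of the argument is that scatteredness makes the fragmentability requirement automatic, so that essentially any natural choice of a bounded separating $S$-invariant $F$ will do.

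First I would record an elementary observation closely related to the easy direction of Lemma~\ref{r:fr1}.4: on a scattered space \emph{every} family of functions is fragmented. Indeed, let $F=\{f_i:X\to (Y,\xi)\}_{i\in I}$ be an arbitrary family into a uniform space, fix $\eps\in\xi$, and let $A\subset X$ be nonempty. Since $X$ is scattered, the subspace $A$ has an isolated point $a$, so the singleton $\{a\}$ is relatively open in $A$; as $f_i(\{a\})=\{f_i(a)\}$ is a single point, it is trivially $\eps$-small for every $i\in I$. Hence $F|_A$ is $\eps$-NS for every nonempty $A$ and every $\eps\in\xi$, which is exactly the assertion that $F$ is fragmented in the sense of Definition~\ref{d:sens-fr-f}. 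Note that this uses nothing about continuity of the $f_i$, only that $X$ is scattered.

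Next I would take $F:=B_{C(X)}$, the closed unit ball of $C(X)$, and check the hypotheses of Theorem~\ref{t:WRN}. It is norm bounded by definition; it separates the points of $X$ because $X$ is compact Hausdorff; and it is $S$-invariant, since $s\in S$ acts by $f\mapsto fs=f\circ\tilde{s}$ and the continuity of $\tilde{s}:X\to X$ gives $\|f\circ\tilde{s}\|\le\|f\|$, so that $B_{C(X)}\,s\subseteq B_{C(X)}$. By the previous paragraph $F$ is fragmented. Theorem~\ref{t:WRN} then yields that $X$ is RN, i.e.\ Asplund representable. Finally, because the given action is jointly continuous, Remark~\ref{r:JCont1} permits one to take this Asplund representation to be strongly continuous.

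There is really no hard computation here: once one notices that scatteredness forces \emph{every} family, in particular every bounded separating family of real-valued functions, to be fragmented, Theorem~\ref{t:WRN} delivers representability directly. The only point worth flagging is conceptual rather than technical. One should resist the natural-looking route through HNS, namely scattered $\Rightarrow\tilde{S}$ fragmented $\Rightarrow$ HNS, because for non-metrizable $X$ the HNS property only supplies Asplund \emph{approximability} (Theorem~\ref{t:tame=R-repr}.2), not the representability demanded by the statement. Feeding a separating \emph{real-valued} fragmented family into Theorem~\ref{t:WRN} is precisely what bridges this approximable-versus-representable gap and produces an honest faithful Asplund representation of $X$.
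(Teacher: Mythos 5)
Your proof is correct, but it takes a different route from the paper's. The paper's proof is a two-line appeal to the Namioka--Phelps theorem: a compactum $X$ is scattered iff $C(X)$ is an Asplund Banach space, so the canonical representation of $(S,X)$ on $V:=C(X)$ (with $\a: X\hookrightarrow B_{V^*}$ the evaluation embedding) is already a faithful representation on an Asplund space, and nothing more needs to be built. You instead verify that $B_{C(X)}$ is a fragmented family of functions \emph{on $X$} --- your observation that scatteredness makes every family fragmented is exactly the easy direction of Lemma~\ref{r:fr1}.4 and is sound --- and then feed this separating $S$-invariant family into Theorem~\ref{t:WRN}, whose ``if'' direction runs the DFJP factorization to manufacture an Asplund space. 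The trade-off: the paper's argument is shorter and names the representing space concretely as $C(X)$, at the price of importing the external Banach-space fact that $C(K)$ is Asplund iff $K$ is scattered (equivalently, that $B_{C(X)}$ is fragmented on the dual ball $B^*$, which is stronger than fragmentability on $X$); your argument stays entirely inside the paper's fragmentability formalism and only needs the trivial statement about families on scattered spaces. Your closing caveat is also well taken: the detour through HNS would only give Asplund approximability for non-metrizable $X$, and indeed the paper invokes this proposition in Theorem~\ref{f} precisely to upgrade from approximable to representable.
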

\begin{proof}
A compactum $X$ is scattered iff $C(X)$ is Asplund, \cite{NP}. Now use the canonical
$S$-representation  $S \to \Theta(V)_s,  \a: X \hookrightarrow B^*$  of $(S,X)$ on the Asplund space $V := C(X)$.
\end{proof}

The following result recovers and extends
\cite[Sect. 10]{GM1} and \cite[Sect. 7]{Me-nz}.

\begin{thm} \label{f}
For a discrete monoid $S$ and a finite alphabet $A$ let $X \subset A^S$ be a subshift.
The following conditions are equivalent:
\begin{enumerate}
\item
$(S,X)$ is Asplund representable (that is, RN).
\item $(S,X)$ is HNS.
\item $X$ is scattered.
\sk
If, in addition, $X$ is metrizable (e.g., if $S$ is countable) then
 each of the conditions above is equivalent also to:

\item $X$ is countable.
\end{enumerate}
\end{thm}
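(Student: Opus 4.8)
The plan is to establish the cycle of implications $(3) \Rightarrow (1) \Rightarrow (2) \Rightarrow (3)$ among the first three conditions, and then to handle the metrizable case by proving $(3) \Leftrightarrow (4)$ through the classical fact that a compact metrizable space is scattered if and only if it is countable. For $(3) \Rightarrow (1)$ I would simply invoke Proposition \ref{p:scattered}, which asserts that a scattered compact jointly continuous $S$-space is RN; here one first notes the harmless point that, since $S$ is discrete and each translation $\tilde{s}: X \to X$ of a subshift is continuous, the action $S \times X \to X$ is jointly continuous, so the hypothesis is met. For $(1) \Rightarrow (2)$, an Asplund representable system is in particular Asplund-approximable, and hence HNS by Theorem \ref{t:tame=R-repr}.2. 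Neither of these two implications uses the symbolic structure.

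The heart of the matter, and the step I expect to be the main obstacle, is $(2) \Rightarrow (3)$, where the symbolic structure enters through uniform expansiveness via the entourage
$$\eps_0 = \{(u,v) \in \Omega \times \Omega : \ u(e) = v(e)\}.$$
Assume $(S,X)$ is HNS, so by Definition \ref{d:HNS} the family $\tilde{S}$ of translations is fragmented, hence in particular $\eps_0$-fragmented. Fix a nonempty closed subset $A \subseteq X$. By $\eps_0$-fragmentability (Definition \ref{d:sens-fr-f}, using Lemma \ref{r:fr1}.1 to restrict attention to closed subsets) there is a nonempty relatively open $O = A \cap U \subseteq A$ such that $\tilde{s}(O)$ is $\eps_0$-small for every $s \in S$. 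The key computation is that, since $(s\omega)(e) = \omega(s)$, the condition that $\tilde{s}(O)$ be $\eps_0$-small says precisely that $\omega_1(s) = \omega_2(s)$ for all $\omega_1, \omega_2 \in O$; letting $s$ range over all of $S$ forces $\omega_1 = \omega_2$. Thus $O$ is a singleton, i.e. $A$ has an isolated point. The delicate part to organize cleanly is exactly this collapse: it is the simultaneous use of \emph{all} translations $\tilde{s}$, $s \in S$, granted by expansiveness, that shrinks the relatively open set to a single point.

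Since every nonempty closed subset of $X$ then has an isolated point, $X$ is scattered. Here one uses the standard fact that a space is scattered iff each of its nonempty closed subsets has an isolated point: if some nonempty $Y \subseteq X$ were dense-in-itself, then its closure $\bar{Y}$ would be a nonempty closed dense-in-itself set (any point isolated in $\bar{Y}$ by a neighborhood meeting $Y$ only in itself would already be isolated in $Y$), contradicting the hypothesis on closed sets.

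For the metrizable equivalence $(3) \Leftrightarrow (4)$ I would use the classical dichotomy for compact metric spaces. If $X$ is countable, then no nonempty closed subset can be perfect, since a nonempty perfect compact metric space contains a copy of the Cantor set and is uncountable; hence every nonempty closed subset has an isolated point and $X$ is scattered. Conversely, if $X$ is scattered and metrizable, the Cantor--Bendixson derivatives terminate, and because $X$ is second countable each successive difference of derived sets consists of isolated points and is therefore at most countable, while the process stops at a countable ordinal; so $X$ is a countable union of countable sets and is countable. Everything in this last paragraph, and in the easy implications above, is either a direct citation or a standard topological fact, so the essential content of the theorem is concentrated in the expansiveness argument of $(2) \Rightarrow (3)$.
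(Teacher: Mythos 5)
Your proposal is correct and follows essentially the same route as the paper: $(3)\Rightarrow(1)$ via Proposition \ref{p:scattered}, $(1)\Rightarrow(2)$ via Theorem \ref{t:tame=R-repr}.2, uniform expansiveness for $(2)\Rightarrow(3)$, and the scattered-iff-countable dichotomy for the metrizable case. Your $(2)\Rightarrow(3)$ step is just a concrete unwinding of the paper's more abstract version, which notes that expansiveness forces the uniformity $\mu_S$ of uniform convergence to coincide with the discrete uniformity and then cites Lemma \ref{r:fr1}.4; your direct computation with $\eps_0$ showing that the fragmentation witness $O$ collapses to a singleton is the same idea made explicit.
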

\begin{proof}
(1) $\Rightarrow$ (2):  Follows directly from Theorem \ref{t:tame=R-repr}.2.

 (2) $\Rightarrow$ (3):
Let $\mu$ be the natural uniformity on $X$ and $\mu_S$ the (finer) uniformity of uniform convergence on $X \subset X^S$
(we can treat $X$ as a subset of $X^S$ under the assignment $x \mapsto \hat{x}$, where $\hat{x}(s)=sx$).
If $X$ is HNS then the family $\tilde{S}$ is fragmented. This means that $X$ is $\mu_S$-fragmented.
As we already mentioned, every subshift $X$ is uniformly $S$-expansive.
 Therefore, $\mu_S$ coincides with the discrete uniformity $\mu_{\Delta}$ on $X$ (the largest possible uniformity on the \emph{set} $X$).
 Hence, $X$ is also $\mu_{\Delta}$-fragmented.
This means, by Lemma \ref{r:fr1}.4, that $X$ is a scattered compactum.

 (3) $\Rightarrow$ (1):
 Use Proposition \ref{p:scattered}.

\sk
If $X$ is metrizable then

 (4) $\Leftrightarrow$ (3): A scattered compactum is metrizable iff it is countable.
\end{proof}

Every zero-dimensional compact $\Z$-system $X$
can be embedded into a
product $\prod X_f$ of (cyclic) subshifts
$X_f$ (where, one may consider only continuous functions $f: X \to \{0,1\}$) of the Bernoulli system $\{0,1\}^{\Z}$.

\sk

For more information about countable
(that is, HNS)
subshifts see e.g.  \cite{Sh} and \cite{C-W}.
In \cite{AG-16} the authors show that the class of WAP $\Z$-subshifts is very rich and
provide a powerful method for constructing such systems with various properties.
However, no concrete combinatorial characterisation of this class is known to us.

\begin{problem}
Find a nice characterization for WAP (necessarily, countable) $\Z$-subshifts.
\end{problem}


Next we consider tame subshifts.

\begin{thm} \label{subshifts}
Let $X$ be a subshift of $\Omega=A^S$.
The following conditions are equivalent:

\ben
\item
$(S,X)$ is a tame system.
\item
For every infinite subset $L \subseteq S$ there exists an infinite subset
$K \subseteq L$
and a countable subset $Y \subseteq X$ such that
$$
 \pi_{K}(X)=\pi_{K}(Y).
$$
That is,
$$
\forall x=(x_s)_{s \in S} \in X,  \    \exists y=(y_s)_{s \in S} \in Y \quad {\text{with}}\quad
 x_{k}=y_{k} \ \ \forall k \in K.
$$
\item
For every infinite subset $L \subseteq S$ there exists an infinite subset $K \subseteq L$
such that
$\pi_{K}(X)$ is a countable subset of $A^K$.
\item
$(S,X)$ is Rosenthal representable (that is, WRN).
\een
\end{thm}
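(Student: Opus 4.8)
The plan is to dispose of the two ``soft'' equivalences $(1)\Leftrightarrow(4)$ and $(2)\Leftrightarrow(3)$ quickly, and then to concentrate the real work on the bridge $(1)\Leftrightarrow(3)$ linking tameness to the combinatorial countability condition. The starting point for everything is Lemma \ref{subshiftsAREcyclic}: the subshift $X$ is cyclic, realized by the single function $f:=\pi_e\in C(X)$ whose orbit is exactly the family of basic projections $fS=\{\pi_s:s\in S\}$, where $\pi_s(x)=x_s$, and this family separates the points of $X$. For $(1)\Leftrightarrow(4)$ I would argue that since $X=X_f$ is cyclic, Rosenthal-approximability and Rosenthal-representability coincide (Remark \ref{Appr=Repr}.3), so by Theorem \ref{t:tame=R-repr}.1 the system is tame iff it is WRN; alternatively one reads this off Theorem \ref{t:tame-f} together with Lemma \ref{l:tame-prop}.2, using that $f\in\Tame(X)$ forces $\Tame(X)=C(X)$ once $fS$ separates points. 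The equivalence $(2)\Leftrightarrow(3)$ is purely set-theoretic: $(2)\Rightarrow(3)$ because $\pi_K(X)=\pi_K(Y)$ is the image of a countable set, hence countable; conversely, if $\pi_K(X)$ is countable one chooses a single preimage $y_c\in X$ for each $c\in\pi_K(X)$ and lets $Y$ be the resulting countable set, so that $\pi_K(Y)=\pi_K(X)$.

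The heart of the matter is $(1)\Leftrightarrow(3)$, which I would route through eventual fragmentability. By Theorem \ref{t:tame-f} the system $(S,X)$ is tame iff $f\in\Tame(X)$ iff the family $fS=\{\pi_s:s\in S\}$ is eventually fragmented (condition (E-Fr) of Definition \ref{d:sens-fr-f}), and by Lemma \ref{l:propFamil}.2 the infinite subfamilies witnessing this may be taken countable. The decisive step is a translation of fragmentability into countability of projections. For a countable $K\subseteq S$, apply Theorem \ref{t:countDetermined} to the bounded family $\{\pi_s:s\in K\}$ of continuous maps $X\to A\subset\R$: this family is fragmented iff the pseudometric $\rho_K(x,y)=\sup_{s\in K}|x_s-y_s|$ makes $X$ separable. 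Now the finiteness of the alphabet enters: since distinct letters of $A=\{0,1,\dots,n\}$ differ by at least $1$, one has $\rho_K(x,y)=0$ when $\pi_K(x)=\pi_K(y)$ and $\rho_K(x,y)\ge 1$ otherwise, so $\rho_K$ is, up to scale, the discrete metric on the fibres of $\pi_K$. Hence a countable $\rho_K$-dense set must meet every fibre, and $(X,\rho_K)$ is separable \emph{iff} $\pi_K(X)$ is countable. This yields the clean dictionary: for countable $K$, the family $\{\pi_s:s\in K\}$ is fragmented iff $\pi_K(X)$ is countable.

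With this dictionary the equivalence $(1)\Leftrightarrow(3)$ becomes a matter of matching quantifiers. For $(3)\Rightarrow(1)$: given any countable infinite subfamily of $fS$, index it as $\{\pi_s:s\in L_0\}$ with $L_0$ infinite; condition (3) produces an infinite $K\subseteq L_0$ with $\pi_K(X)$ countable, whence $\{\pi_s:s\in K\}$ is an infinite fragmented subfamily, so $fS$ is eventually fragmented and $X$ is tame. For $(1)\Rightarrow(3)$: given infinite $L\subseteq S$, if $\{\pi_s:s\in L\}$ is finite then $\pi_L(X)$ is finite (the letters on $L$ are determined by finitely many coordinates) and any infinite $K\subseteq L$ works; otherwise eventual fragmentability of $fS$ yields an infinite countable fragmented subfamily $\{\pi_s:s\in K\}$ with $K\subseteq L$, and the dictionary gives $\pi_K(X)$ countable.

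I expect the main obstacle to be precisely the dictionary step --- establishing that, \emph{for the finite alphabet}, fragmentability of a countable family of projections is equivalent to countability of the corresponding projection $\pi_K(X)$ --- since this is where Theorem \ref{t:countDetermined} and the discrete nature of $A$ must be combined correctly; the pseudometric $\rho_K$ genuinely collapses only because $A$ is finite, and the argument would fail for a family taking densely many values. A secondary, purely bookkeeping, difficulty is the possible non-injectivity of $s\mapsto\pi_s$: one must phrase the passage between ``infinite subfamilies of $fS$'' and ``infinite subsets of $S$'' so that collapsing of indices is harmless, which is handled by the finite-family case above and by always selecting distinct representatives when extracting $K$ from a genuinely infinite subfamily.
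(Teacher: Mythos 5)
Your proposal is correct and follows essentially the same route as the paper: reduce everything to the single function $f=\pi_e$ via cyclicity, invoke Theorem \ref{t:tame-f} to characterize tameness of $f$, and use the discreteness of the finite alphabet to convert separability of the sup-pseudometric $\rho_{K}$ into countability of $\pi_K(X)$. The only cosmetic difference is that you pass through eventual fragmentability (condition (4) of Theorem \ref{t:tame-f}) and then apply Theorem \ref{t:countDetermined}, whereas the paper quotes condition (5) directly --- but these coincide, since the paper's own proof of $(4)\Leftrightarrow(5)$ in Theorem \ref{t:tame-f} is exactly an appeal to Theorem \ref{t:countDetermined}; your extra care about non-injectivity of $s\mapsto\pi_s$ is a harmless refinement the paper leaves implicit.
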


\begin{proof}
(1) $\Leftrightarrow$ (2): As in the proof of Lemma \ref{subshiftsAREcyclic}
define $f:=\pi_e \in C(X)$. Then $X$ is isomorphic to the cyclic $S$-space $X_f$.
By Theorem \ref{t:tame}, $(S,X)$ is a tame system iff $C(X)=\Tame(X)$. By Lemma \ref{subshiftsAREcyclic}, $C(X)=\A_f$,
so we have only to show that $f \in \Tame(X)$.

By Theorem \ref{t:tame-f}, $f:=\pi_e: X \to \R$ is a tame function iff
for every infinite subset
$L \subset S$ there exists a countable infinite subset 
$K \subset L$ such that the corresponding pseudometric
$$
\rho_{f, K}(x,y):=
\sup_{k \in K} \{| (\pi_e)(kx) - (\pi_e)(ky)|\}
= \sup_{k \in K}\{|x_{k} - y_{k}|\}
$$
on $X$ is separable.
The latter assertion means that there exists a countable subset $Y$ which is
$\rho_{f, K}$-dense in $X$.
Thus for every $x \in X$ there is a point $y \in Y$ with $\rho_{f, K}(x,y) < 1/2$.
As the values of the function $f=\pi_{0}$ are in the set $A$, we
conclude that  $\pi_{K}(x)=\pi_{K}(y)$, whence
$$
\pi_{K}(X)=\pi_{K}(Y).
$$

The equivalence of (2) and (3) is obvious.

(1) $\Rightarrow$ (4): $(S,X)$ is Rosenthal-approximable (Theorem \ref{t:tame=R-repr}.1).
On the other hand, $(S,X)$ is cyclic (Lemma \ref{subshiftsAREcyclic}).
By Theorem \ref{t:tame-f}.7 we can conclude that $(S,X)$ is WRN.

(4) $\Rightarrow$ (1): Follows directly by Theorem \ref{t:tame=R-repr}.1.
\end{proof}

\begin{remark}
From Theorem \ref{subshifts}  we can deduce the following peculiar fact.
If $X$ is a tame subshift of $\Omega=\{0,1\}^{\Z}$ and $L \subset \Z$ an infinite set,
then there exist an infinite subset $K \subset L$,  $k \ge 1$, and
$a \in \{0,1\}^{2k+1}$ such that
$ X \cap [a] \not=\emptyset$ and
$\forall x, x' \in X \cap [a]$ we have $x|_K = x'|_K$. Here
$[a] =\{z \in \{0,1\}^\Z : z(j) = a(j),\ \forall |j| \le k\}$.
In fact,
since $\pi_K(X)$ is a countable closed
set it contains an isolated point, say $w$, and then the open set
$\pi_K^{-1}(w)$ contains a subset $[a] \cap X$ as required.
\end{remark}

\subsection{Tame and HNS subsets of $\Z$}

We say that a subset $D \subset \Z$ is \emph{tame} if
the characteristic function $\chi_D: \Z \to \R$ is a tame function on the group $\Z$.
That is, when this function \emph{comes}
from a pointed compact tame
$\Z$-system $(X,x_0)$.
Analogously, we say that $D$ is {\em HNS} (or \emph{Asplund}),  \emph{WAP}, or
\emph{Hilbert} if $\chi_D: \Z \to \R$ is an Asplund, WAP or Hilbert function on $\Z$, respectively.
By basic properties of the {\em cyclic system} $X_D : =\cls \{\chi_D \circ T^n : n \in \Z\} \subset \{0,1\}^\Z$
(see Remark \ref{r:cycl-comes}), 
the subset $D \subset \Z$ is tame (Asplund, WAP)
iff the associated subshift $X_D$ is tame (Asplund, WAP).

Surprisingly it is
not known whether
$X_f: =\cls \{f \circ T^n : n \in \Z\} \subset \R^\Z$
is a Hilbert system when $f: \Z \to \R$ is a Hilbert function (see \cite{GW-Hi}).
The following closely related question from \cite{Me-opit} is also open:
Is it true that Hilbert representable compact metric $\Z$-spaces are closed under factors?

\begin{remark} \label{r:Rup}
The definition of WAP sets was introduced by Ruppert \cite{Rup-WAPsets}.
He has the following characterisation (\cite[Theorem 4]{Rup-WAPsets}):
\sk

$D \subset \Z$ is a WAP subset if and only if every infinite subset $B \subset \Z$ contains a finite
subset $F \subset B$ such that the set
$$
\cap_{b \in F} (b+D) \setminus \cap_{b \in B \setminus F} (b+D)
$$
is finite.
See also \cite{Gl-tf}.
\end{remark}

\begin{thm} \label{tame subsets in Z}
	Let $D$ be a subset of $\Z$. 
The following conditions are equivalent:
\ben
\item
$D$ is a tame subset (i.e., the associated subshift $X_D\subset \{0,1\}^{\Z}$ is tame).
\item
For every infinite subset $L \subseteq \Z$ there exists an infinite subset
$K \subseteq L$
and a countable subset $Y \subseteq \beta \Z$ such that
for every $x \in \beta \Z$ there exists $y \in Y$ such that
$$
n+D \in x \Longleftrightarrow n+D \in y \ \ \ \forall n \in K
$$
(treating $x$ and $y$ as ultrafilters on the set $\Z$).

\een
\end{thm}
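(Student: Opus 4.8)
The plan is to recognize $X_D$ as a canonical continuous equivariant image of $\beta\Z$, under which the coordinate projections $\pi_K$ become ultrafilter membership patterns, and then to invoke the projection criterion for tame subshifts furnished by Theorem~\ref{subshifts}.

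First I would reduce everything to a statement about projections of $X_D$. By the definition of a tame subset recorded above, $D$ is tame exactly when the subshift $X_D=\cls\{\chi_D\circ T^n:n\in\Z\}\subset\{0,1\}^\Z$ is a tame $\Z$-system. Applying the equivalence of items (1) and (3) of Theorem~\ref{subshifts} with $S=\Z$ and $A=\{0,1\}$, tameness of $X_D$ is equivalent to: for every infinite $L\subseteq\Z$ there is an infinite $K\subseteq L$ for which $\pi_K(X_D)$ is a countable subset of $\{0,1\}^K$. It therefore suffices to match, for a fixed infinite $K$, the countability of $\pi_K(X_D)$ with the inner clause of condition~(2).

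Second I would build the bridge to $\beta\Z$. The orbit map $\Z\to X_D$, $n\mapsto T^n\chi_D=\chi_{D-n}$, extends by the universal property of the Stone--\v{C}ech compactification to a continuous $\Z$-map $\omega\colon\beta\Z\to X_D$, $x\mapsto x\text{-}\lim_n T^n\chi_D$; its image is compact and contains the dense orbit, so $\omega$ is onto. Reading off the $n$-th coordinate through the ultrafilter limit gives
$$\omega^x(n)=x\text{-}\lim_m\chi_D(n+m)=[\,D-n\in x\,],$$
so that $\pi_K\circ\omega$ carries $x$ to the membership pattern $\bigl([\,D-n\in x\,]\bigr)_{n\in K}$. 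Surjectivity of $\omega$ then identifies $\pi_K(X_D)$ with the set of all such patterns as $x$ ranges over $\beta\Z$.

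Finally I would read off the equivalence. Declaring $x\sim_K y$ to mean that $D-n\in x\Leftrightarrow D-n\in y$ for all $n\in K$, the $\sim_K$-classes are in bijection with $\pi_K(X_D)$; hence $\pi_K(X_D)$ is countable iff $\sim_K$ has countably many classes iff there is a countable set $Y\subseteq\beta\Z$ meeting every class---which is precisely the assertion that every $x\in\beta\Z$ agrees on $K$ with some $y\in Y$. Passing from the ``$D-n$'' form to the ``$n+D$'' form of~(2) is the routine observation that replacing $L$ by $-L$ and $K$ by $-K$ turns $D-(-n)$ into $n+D$, while the quantifier ``for every infinite $L$ there is an infinite $K\subseteq L$'' is invariant under $n\mapsto-n$; combining this with the reduction of the first step yields (1)$\Leftrightarrow$(2). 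The only genuinely delicate points are verifying the surjectivity of $\omega$ and the coordinate formula for $\omega^x$, together with the harmless sign bookkeeping needed to meet the exact convention of~(2); the remaining passage between ``countably many projection patterns'' and ``a countable transversal of ultrafilters'' is purely formal.
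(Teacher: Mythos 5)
Your proof is correct, but it takes a different route from the paper's. The paper never mentions $X_D$ in its argument: it invokes the universality of the greatest ambit $(\Z,\beta\Z)$ to reduce tameness of the subset $D$ to tameness of the extended characteristic function $\chi_{\overline{D}}\in C(\beta\Z)$, and then applies the pseudometric criterion of Theorem~\ref{t:tame-f}(5) directly on $\beta\Z$, observing that for a $\{0,1\}$-valued function a countable $\eps$-dense set with $\eps<1$ is exactly a countable transversal for the agreement relation. You instead stay on the subshift side, quoting the projection criterion of Theorem~\ref{subshifts}(3) for $X_D$ and then transporting it to $\beta\Z$ through the canonical surjection $\omega\colon\beta\Z\to X_D$ extending the orbit map. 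Since Theorem~\ref{subshifts} is itself proved from Theorem~\ref{t:tame-f}(5), the two arguments rest on the same underlying fact (separability of $\rho_{f,K}$ for a two-valued $f$ means countably many patterns), but your decomposition buys a more transparent combinatorial picture --- the bijection between $\pi_K(X_D)$ and the $\sim_K$-classes of ultrafilters makes the passage from ``countable projection'' to ``countable transversal'' purely formal --- at the cost of the extra (routine) verifications of surjectivity of $\omega$ and the coordinate formula $\omega(x)(n)=[\,D-n\in x\,]$. The sign discrepancy between your $D-n$ and the statement's $n+D$ is indeed harmless, exactly as you say, since the quantifier ``for every infinite $L$ there is an infinite $K\subseteq L$'' is invariant under $n\mapsto -n$; note the paper's own proof is equally cavalier about this point.
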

\begin{proof}
By the universality of the greatest ambit $(\Z, \beta \Z)$ it suffices to check when
the function
$$
f=\chi_{\overline{D}}: \beta \Z \to \{0,1\}, \ f(x) =1 \Leftrightarrow x \in \overline{D},
$$
the natural extension function of $\chi_{D}: \Z \to  \{0,1\}$,
is tame (in the usual sense, as a function on the compact cascade $\beta \Z$),
where we denote by $\overline{D}$ the closure of $D$ in $\beta \Z$ (a clopen subset).
Applying Theorem \ref{t:tame-f} to $f$ we see that the following condition is both necessary and sufficient:
For every infinite subset $L \subseteq \Z$ there exists an infinite subset
$K \subseteq L$ and a countable subset $Y \subseteq \beta \Z$ which is dense in the
pseudometric space
$(\beta \Z, \rho_{f, K})$.
Now saying that $Y$ is dense is the same as the requiring that $Y$ be
$\eps$-dense for every $0< \eps < 1$.
However, as $f$ has values in $\{0,1\}$ and $0 <\eps <1$ we conclude that for every $x \in \beta\Z$ there is $y \in Y$ with
$$
x \in n+\overline{D}  \Longleftrightarrow y \in n+\overline{D}  \ \ \ \forall n \in K,
$$
and the latter is equivalent to
$$
n+D \in x \Longleftrightarrow n+D \in y \ \ \ \forall n \in K.
$$
\end{proof}

\begin{thm} \label{Asplund subsets in Z}
		Let $D$ be a subset of $\Z$. 
The following conditions are equivalent:
\ben
\item
$D$ is an Asplund subset (i.e., the associated subshift $X_D\subset \{0,1\}^{\Z}$ is Asplund).
\item
There exists a \emph{countable} subset $Y \subseteq \beta \Z$ such that
for every $x \in \beta \Z$ there exists $y \in Y$ such that
$$
n+D \in x \Longleftrightarrow n+D \in y \ \ \ \forall n \in \Z.
$$

\een
\end{thm}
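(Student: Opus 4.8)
The plan is to follow the proof of Theorem \ref{tame subsets in Z} essentially verbatim, replacing the \emph{tame} characterization of functions by the \emph{Asplund} one; the decisive structural difference is that Asplund-ness corresponds to genuine (rather than eventual) fragmentability, and it is precisely this that will erase the inner ``pass to an infinite subset $K \subseteq L$'' quantifier present in the tame statement. First, by the universality of the greatest ambit $(\Z,\beta\Z)$, the set $D$ is Asplund exactly when its defining function, extended to $\beta\Z$, is an Asplund function there; that is, it suffices to decide when
$$
f=\chi_{\overline D}:\beta\Z \to \{0,1\}, \qquad f(x)=1 \iff x \in \overline D,
$$
is an Asplund function on the compact cascade $\beta\Z$, where $\overline D$ denotes the (clopen) closure of $D$ in $\beta\Z$. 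Since $\overline D$ is clopen and each translation $T^n$ is a homeomorphism, $fS=\{f\circ T^n : n \in \Z\}$ is a family of continuous $\{0,1\}$-valued functions on $\beta\Z$.

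Next I would invoke Theorem \ref{t:AspF}: $f \in \Asp(\beta\Z)$ if and only if $fS$ is a fragmented family. By Theorem \ref{t:countDetermined} (equivalently Corollary \ref{c:AspSet}) the fragmentability of this family of continuous functions on the compactum $\beta\Z$ is countably determined: $fS$ is fragmented iff for every countable subfamily $K\subseteq fS$ the pseudometric space $(\beta\Z,\rho_K)$ is separable, where $\rho_K(x,y)=\sup_{g\in K}|g(x)-g(y)|$. Here is the one genuinely different step from the tame case. Because $S=\Z$ is countable, the whole orbit $fS$ is itself a countable family, so one may take $K=fS$; and since $\rho_K \le \rho_{fS}$ for every $K\subseteq fS$ (so that the identity $(\beta\Z,\rho_{fS})\to(\beta\Z,\rho_K)$ is a uniformly continuous surjection and preserves separability), the entire countable-determination condition collapses to the single requirement that $(\beta\Z,\rho_{fS})$ be separable, where $\rho_{fS}(x,y)=\sup_{n\in\Z}|f(T^n x)-f(T^n y)|$. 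This is exactly why no outer ``for every infinite $L\subseteq\Z$'' quantifier survives, in contrast to the statement obtained from Theorem \ref{t:tame-f} in the proof of Theorem \ref{tame subsets in Z}.

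Finally I would translate separability of $(\beta\Z,\rho_{fS})$ into the stated combinatorial condition, exactly as in the tame proof. Separability means the existence of a countable $\rho_{fS}$-dense set $Y\subseteq\beta\Z$; as $f$ takes values in $\{0,1\}$ the pseudometric $\rho_{fS}$ takes only the values $0$ and $1$, so being dense is the same as being $\eps$-dense for some $0<\eps<1$, i.e.\ every $x\in\beta\Z$ admits $y\in Y$ with $\rho_{fS}(x,y)=0$. Unwinding this equality gives $f(T^n x)=f(T^n y)$ for all $n\in\Z$, which under the identification of points of $\beta\Z$ with ultrafilters reads $n+D\in x \Longleftrightarrow n+D\in y$ for all $n\in\Z$, which is precisely condition (2). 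The only point requiring care is the bookkeeping of the shift action on $\beta\Z$ against the clopen set $\overline D$, so that $f(T^n x)=1 \iff n+D\in x$; this computation is identical to the one already carried out in Theorem \ref{tame subsets in Z}, and I expect it to be the most error-prone — though entirely routine — ingredient.
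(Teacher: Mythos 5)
Your proof is correct and follows essentially the same route the paper intends: the paper's own proof is just the one-line remark ``Similar to Theorem \ref{tame subsets in Z} using Theorem \ref{t:HNS}'', and you have filled in exactly that argument, correctly identifying that the countability of $\Z$ collapses the Asplund-family (countable-determination) criterion to separability of the single pseudometric $\rho_{f,\Z}$, which is precisely what removes the quantifier over infinite subsets $L$ present in the tame case. If anything, your citations (Theorem \ref{t:AspF} together with Theorem \ref{t:countDetermined} and Corollary \ref{c:AspSet}) are the more apt ones, since the separability condition in Theorem \ref{t:HNS} is stated only for metrizable phase spaces while $\beta\Z$ is non-metrizable.
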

\begin{proof} Similar to Theorem \ref{tame subsets in Z} using Theorem \ref{t:HNS}.
\end{proof}

\begin{ex} \label{ex:AspnotWAP}
$\N$ is an Asplund subset of $\Z$ which is not a WAP subset.
In fact, let $X_{\N}$ be the corresponding subshift. Clearly $X_{\N}$ is homeomorphic
to the two-point compactification of $\Z$, with $\{\bf{0}\}$ and $\{\bf{1}\}$ as minimal subsets.
Since a
transitive WAP
system admits a unique minimal set, we conclude that $X_{\N}$ is not WAP
(see e.g. \cite{Gl-03}).
On the other hand, since $X_{\N}$ is countable we can apply Theorem \ref{f} to show that it is HNS.
Alternatively, using Theorem \ref{Asplund subsets in Z}, we can take $Y$ to be $\Z \cup \{p,q\}$,
where we choose $p$ and $q$ to be any two non-principal ultrafilters such that $p$ contains $\N$
and $q$ contains $-\N$.
\end{ex}

\section{Entropy and null systems} \label{s:entropy}

We begin by recalling the basic definitions
of topological (sequence) entropy.
Let $(X,T)$ be a cascade,
i.e., a $\Z$-dynamical system, and $A=\{a_0<a_1<\ldots\}$ a
sequence of integers.
Given an open cover $\Ucal$ define
$$
h^A_{top}(T,\Ucal)=\limsup_{n\to\infty} \frac{1}{n}N(\bigvee
_{i=0}^{n-1}T^{-a_i}(\Ucal))
$$
The {\it topological entropy along the sequence $A$} is then defined by
$$
h^A_{top}(T)= \sup
\{h^A_{top}(T,\Ucal) :  \Ucal\ \text{an open cover of $X$}\}.
$$
When the phase space $X$ is zero-dimensional, one can replace
open covers by clopen partitions.
We recall that a dynamical system $(T,X)$ is called {\em null}
if $h^A_{top}(T) =0$ for every infinite $A \subset \Z$.
Finally when $Y \subset \{0,1\}^{\Z}$, and $A \subset \Z$
is a given subset of $\Z$, we say that $Y$ {\em is free on} $A$
or that $A$ {\em is an interpolation set for} $Y$,
if $\{y|_A : y \in Y\} = \{0,1\}^A$.

By theorems of Kerr and Li \cite{KL05}, \cite{KL}
every null $\Z$-system is tame, and every tame system has zero topological entropy.
From results of Glasner-Weiss \cite{GW} (for (1)) and Kerr-Li \cite{KL}
(for (2) and (3)), the following results can be easily deduced.
(See Propositions 3.9.2, 6.4.2 and 5.4.2 of \cite{KL} for the positive topological entropy,
the untame, and the nonnull claims, respectively.)

\begin{thm} \label{t:3}  \
\begin{enumerate}
\item
A subshift $X \subset \{0,1\}^{\Z}$ has positive topological entropy iff
there is a subset $A \subset \Z$ of positive density such that
$X$ is free on $A$.
\item
A subshift $X \subset \{0,1\}^{\Z}$ is not tame iff
there is an infinite subset $A \subset \Z$ such that
$X$ is free on $A$.
\item
A subshift $X \subset \{0,1\}^{\Z}$ is not null iff for every
$n \in \N$ there is a finite subset $A_n \subset \Z$ with $|A_n| \ge n$
such that $X$ is free on $A_n$.
\end{enumerate}
\end{thm}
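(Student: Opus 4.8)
The plan is to translate the combinatorial condition ``$X$ is free on $A$'' into the language of \emph{combinatorial independence} and then quote the three independence dichotomies of Glasner--Weiss and Kerr--Li. For a subshift $X\subseteq\{0,1\}^{\Z}$ set $[i]:=\{x\in X: x_0=i\}$ for $i\in\{0,1\}$; these form a clopen partition of $X$. Since $(T^jx)_0=x_j$, for each $\sigma\in\{0,1\}^A$ we have
$$
\{x\in X: x_j=\sigma(j)\ \forall j\in A\}=\bigcap_{j\in A}T^{-j}[\sigma(j)] .
$$
Hence $X$ is free on $A$, i.e. $\{x|_A:x\in X\}=\{0,1\}^A$, exactly when every such intersection is nonempty, which is precisely the statement that $A$ is an \emph{independence set} for the pair $([0],[1])$ in the sense of Kerr--Li. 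Thus freeness on a positive-density set, on an infinite set, and on arbitrarily large finite sets correspond respectively to $([0],[1])$ possessing a positive-density independence set, an infinite independence set, and arbitrarily large finite independence sets.

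Next I would invoke the three characterizations. By the Glasner--Weiss theorem \cite{GW} (see also \cite[Prop.~3.9.2]{KL}), $(T,X)$ has positive topological entropy iff it admits a nondiagonal IE-pair, i.e. a pair of points whose neighborhoods have independence sets of positive density. By \cite[Prop.~6.4.2]{KL} the system is nontame iff it admits a nondiagonal IT-pair, one whose neighborhoods share an infinite independence set. By \cite[Prop.~5.4.2]{KL} the system is nonnull iff it admits a nondiagonal IN-pair, one with arbitrarily large finite independence sets.

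The connecting step, where the actual work lies, is to pass between an arbitrary nondiagonal independence pair and the single canonical pair $([0],[1])$. If $(x,y)\in X\times X$ is nondiagonal then $x$ and $y$ differ in some coordinate $n$; since all three classes of independence pairs are shift-invariant, after replacing the independence set $A$ by $A-n$ we may assume $x_0\neq y_0$, so $x\in[0]$ and $y\in[1]$. As $[0]$ and $[1]$ are themselves clopen neighborhoods of $x$ and $y$, independence of the orbit pair $(x,y)$ along $A$ forces independence of $([0],[1])$ along $A$; conversely, an independence set for the clopen pair $([0],[1])$ yields freeness directly, and by the closedness of the sets of IE-, IT- and IN-pairs it also produces a nondiagonal point pair. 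Consequently the existence of a nondiagonal IE- (resp.\ IT-, IN-) pair is equivalent to $([0],[1])$ having a positive-density independence set (resp.\ an infinite one, resp.\ arbitrarily large finite ones), and the dictionary of the first paragraph converts this into the freeness statements of (1), (2) and (3).

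I expect the main obstacle to be the density bookkeeping in part (1): the IE-pair formalism carries the requirement that independence sets have positive density, and one must check both that the shift reduction preserves positive density (translation by $n$ clearly does) and, more importantly, that the Glasner--Weiss variational argument delivers positive density rather than merely an infinite independence set. The easy direction of (1)---that freeness on a positive-density set forces positive entropy---follows from a direct word count: if $|A\cap[-n,n]|\geq cn$ then $X$ carries at least $2^{cn}$ words of length $2n+1$, giving entropy at least $c\log 2>0$. Parts (2) and (3) are comparatively routine once the dictionary is in place, since there only the cardinality type (infinite versus arbitrarily large finite) of the independence set matters, and this is manifestly preserved under the shift reduction.
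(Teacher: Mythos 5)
Your proposal follows essentially the same route as the paper: translate ``$X$ is free on $A$'' into ``$A$ is an independence set for the cylinder pair $(U_0,U_1)$ with $U_i=\{z\in X: z(0)=i\}$'', quote the Kerr--Li IE/IT/IN-pair characterizations (Propositions 3.9.2, 6.4.2, 5.4.2 of \cite{KL}), and use shift-invariance of the pair classes to normalize the coordinate where the two points of a nondiagonal pair differ to $0$, so that $U_0,U_1$ become admissible neighborhoods. The only cosmetic differences are in the easy directions (the paper invokes Theorem \ref{subshifts} for (2), you offer a direct word count for (1)), and your worry about density bookkeeping in (1) is already absorbed by the cited IE-pair statement, which is formulated with positive-density independence sets.
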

\begin{proof}
We consider the second claim; the other claims are similar.

Certainly if there is an infinite $A \subset \Z$ on which $X$ is free then
$X$ is not tame (e.g. use Theorem \ref{subshifts}).
Conversely, if $X$ is not tame then, by Propositions 6.4.2 of \cite{KL},
there exists a non diagonal IT pair $(x,y)$.
As $x$ and $y$ are distinct there is
an $n$ with, say, $x(n) =0, y(n)=1$. Since $T^n(x,y)$ is also an IT pair
we can assume that $n=0$. Thus $x \in U_0$ and $y \in U_1$, where these are
the cylinder sets $U_i = \{z \in X : z(0) = i\}, i=0,1$.
Now by the definition of an IT pair there is an infinite set $A \subset \Z$
such that the pair $(U_0,U_1)$ has $A$ as an independence set. This
is exactly the claim that $X$ is free on $A$.
\end{proof}


The following theorem was proved (independently) by
Huang \cite{H}, Kerr and Li \cite{KL}, and Glasner \cite{Gl-str}.
See also Remark \ref{r:11} below.
\begin{thm} \label{almost-auto}  \emph{(A structure theorem for minimal tame dynamical systems)}
Let $(G,X)$ be a tame minimal metrizable dynamical system with $G$ an
 abelian group.
Then:
\begin{enumerate}
\item
$(G,X)$ is an almost one to one extension $\pi : X \to Y$ of a minimal equicontinuous system $(G,Y)$.
\item
$(G,X)$ is uniquely ergodic and the factor map $\pi$ is, measure
theoretically, an isomorphism of the corresponding measure preserving
system on $X$ with the Haar measure on the equicontinuous factor $Y$.
\end{enumerate}
\end{thm}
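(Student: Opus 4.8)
The plan is to route the argument through the enveloping semigroup $E=E(X)$, exploiting that tameness of a metric system forces every $p\in E$ to be a Baire class $1$ self-map of $X$ (Theorem \ref{t:tame}; recall that $X$ is compact metric, hence Polish and hereditarily Baire, so by Lemma \ref{r:fr1}.2--3 each such $p$ has a dense $G_\delta$ set of continuity points). First I would construct the maximal equicontinuous factor $\pi\colon X\to Y$. Since $G$ is abelian the regionally proximal relation $Q$ is a closed invariant equivalence relation, so one may take $Y=X/Q$; then $(G,Y)$ is minimal equicontinuous, its enveloping semigroup $E(Y)$ is a compact topological group $K$ acting transitively on $Y$, and $(G,Y)$ is uniquely ergodic with invariant (Haar) measure $\nu$, which is ergodic. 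The canonical restriction map $\pi_*\colon E\to E(Y)=K$ is a continuous semigroup homomorphism.

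For part (1) I would reduce almost one-to-oneness to producing a single singleton fiber. The set $Y_0=\{y\in Y:\ \pi^{-1}(y)\ \text{is a singleton}\}$ is $G$-invariant, since $\pi^{-1}(gy)=g\pi^{-1}(y)$, and it is $G_\delta$, since fibers of a map between compacta vary upper semicontinuously and so $y\mapsto\diam\pi^{-1}(y)$ is upper semicontinuous. Hence, by minimality of $(G,Y)$, as soon as $Y_0\neq\emptyset$ it is dense and therefore residual, which is exactly the assertion that $\pi$ is an almost one-to-one extension. To find a point of $Y_0$ I would fix a minimal left ideal $L\subseteq E$ and an idempotent $u\in L$. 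Because $\pi_*(u)$ is an idempotent of the group $K$ it equals the identity, so $u$ preserves each fiber: $\pi(ux)=\pi(x)$ for all $x$. Examining the dense $G_\delta$ set $C_u$ of continuity points of the Baire $1$ map $u$, the crucial claim is that a continuity point $x_0$ of $u$ has trivial regionally proximal cell, $Q[x_0]=\{x_0\}$; granting this, $\pi(x_0)\in Y_0$ and part (1) follows.

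I expect this last claim to be the main obstacle. The difficulty is that the elements of $E$ are only Baire $1$, not continuous, so the very definition of $Q$ (via nets $x_i\to x_0$, $x_i'\to x_0'$ and $g_i$ with $d(g_ix_i,g_ix_i')\to 0$) cannot simply be summarized by a continuous limit $p\in E$ identifying $x_0$ with $x_0'$. Transferring the abundance of continuity points into genuine fiber-rigidity is therefore delicate, and it is exactly here that the Fréchet (Rosenthal) nature of $E$ coming from the BFT dichotomy (Theorem \ref{D-BFT}) and the abelianness of $G$ (which makes $Q$ an equivalence relation and keeps the idempotent structure of the minimal ideal under control) must both be used. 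Concretely, one must show that a distinct point $x_0'\in Q[x_0]$ would be proximal to $x_0$, that a minimal idempotent collapses such a proximal pair, and that continuity of $u$ at $x_0$ is incompatible with this collapse in the presence of minimality, so that the cell degenerates.

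For part (2) I would note that any $G$-invariant probability measure $\mu$ on $X$ pushes forward to a $G$-invariant measure on $Y$, which by unique ergodicity of $(G,Y)$ must equal $\nu$. The invariant Borel set $Y_0$ then satisfies $\nu(Y_0)\in\{0,1\}$ by ergodicity of $\nu$, and the crux is to rule out $\nu(Y_0)=0$, i.e. to promote the residual (topological) smallness of the complement to measure zero. This is the measure-theoretic analogue of the obstacle in part (1) and again leans on tameness: using the Baire $1$ selecting idempotent $u$ and a category-to-measure transfer along fibers one shows that $\nu$-almost every fiber is a singleton, so $\nu(Y_0)=1$. Granting $\nu(Y_0)=1$, the map $\pi$ restricts to a Borel bijection $\pi^{-1}(Y_0)\to Y_0$ of full measure whose inverse is a measurable section; consequently $\mu$ is uniquely determined as the push-forward of $\nu$ by this section. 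This yields both the uniqueness of $\mu$ (unique ergodicity) and the measure-theoretic isomorphism of $(X,\mu,G)$ with the Haar system $(Y,\nu,G)$.
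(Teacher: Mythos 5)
First, note that the paper does not prove Theorem \ref{almost-auto} at all: it is quoted as a known result, with three independent proofs attributed to Huang \cite{H}, Kerr--Li \cite{KL} and Glasner \cite{Gl-str}, so there is no in-paper argument to compare yours against. Judged on its own terms, your outline follows the general lines of Glasner's proof (enveloping semigroup, Baire class $1$ elements, continuity points of a minimal idempotent, the maximal equicontinuous factor $X/Q$), and the soft reductions are correct: $Y_0$ is an invariant $G_\delta$, so one singleton fiber plus minimality gives almost one-to-one, and $\pi_*\mu=\nu$ together with $\nu(Y_0)=1$ gives unique ergodicity and the measure isomorphism. But the two statements on which everything hinges are exactly the ones you leave unproved, and the hints you give for them do not close. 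For part (1), the claim that a continuity point $x_0$ of the minimal idempotent $u$ has $Q[x_0]=\{x_0\}$ is not established: regionally proximal pairs need not be proximal, and even granting proximality and that $u$ itself collapses the pair, the equality $ux_0=ux_0'$ with $x_0\neq x_0'$ is perfectly compatible with $u$ being continuous at $x_0$ (idempotents are far from injective), so ``continuity is incompatible with the collapse'' is not yet an argument. The published proofs need genuinely more: Glasner's route goes through the finer facts that $C_v$, the set of continuity points of $v$ restricted to $\overline{vX}$, is a dense $G_\delta$ subset of $\overline{vX}$ which is moreover \emph{contained in} $vX$, combined with a point-distality argument --- precisely the ingredients the paper does quote, in its proof of Theorem \ref{pd} --- while Huang and Kerr--Li instead use sensitivity/IT-pair machinery.

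For part (2) the gap is of the same kind but worse: ``a category-to-measure transfer along fibers'' is not a proof, and in general a residual invariant set can be $\nu$-null, so nothing in your text rules out $\nu(Y_0)=0$. The actual input here (again visible in the paper's proof of Theorem \ref{pd}) is \cite[Proposition 4.3]{Gl-str}, namely $\mu(vX)=1$ for a minimal idempotent $v$, which exploits in an essential, quantitative way that $v$ is Baire class $1$ and hence $\mu$-measurable. So your proposal is a correct high-level roadmap, consistent with how the theorem is actually proved in the literature, but both of its load-bearing steps are missing.
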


\begin{exs} \label{IP} \
\begin{enumerate}
\item
According to Theorem \ref{almost-auto} the Morse minimal system,
which is uniquely ergodic and has zero entropy,
is nevertheless
 not tame as it fails to be an almost 1-1 extension of its adding machine factor.
We can therefore deduce that, a fortiori, it is not null.
\item
Let $L = IP\{10^t\}_{t=1}^\infty \subset \N$ be the IP-sequence generated by
the powers of ten, i.e.
$$
L =\{10^{a_1} + 10^{a_2} + \cdots + 10^{a_k} :
1 \le a_1 < a_2 < \cdots < a_k\}.
$$
Let $f= 1_L$ and let $X= \OC_T(f) \subset \{0,1\}^\Z$, where $T$ is
the shift on $\Om = \{0,1\}^\Z$.
The subshift $(T,X)$ is not tame. In fact it can be shown that
$L$ is an interpolation set for $X$.
\item
  
Take $u_n$ to be the concatenation of the words $a_{n,i} 0^n$,
where $a_{n,i},\ i=1,2,3, \dots,2^n$ runs over $\{0,1\}^n$. Let $v_n = 0^{|u_n|}$,
$w_n = u_nv_n$ and $w_\infty$ the infinite concatenation
$\{0,1\}^\N \ni w_\infty = w_1w_2w_3\cdots$. Finally define $w \in \{0,1\}^\Z$
by $w(n) = 0$ for $n \le 0$ and $w(n) = w_\infty(n)$.
Then $X=\OC_T(w) \subset \{0,1\}^\Z$ is a countable subshift, hence HNS
and a fortiori tame, but for an appropriately chosen sequence the sequence entropy of $X$ is $\log 2$.
Hence, $X$ is not null.
Another example of a countable nonnull subshift can be found in \cite[Example 5.12]{H}.
\item
In \cite[Section 11]{KL} Kerr and Li construct a Toepliz ( =  a minimal almost one-to-one extension
of an adding machine) which is tame but not null.
\item
In \cite[Theorem 13.9]{GY} the authors show that for interval maps
being tame is the same as being null.
\end{enumerate}
\end{exs}

\begin{remark}
Let $\sigma: [0,1] \to [0,1]$ be a continuous self-map on the closed interval.
In an unpublished paper \cite{Mich+} the authors show that
the enveloping semigroup $E(X)$ of the cascade ($\N \cup \{0\}$-system) $X=[0,1]$ is either metrizable or
it contains a topological copy of $\beta \N$.
The metrizable enveloping semigroup case occurs exactly when the system is HNS.
This was proved in \cite{GMU} for group actions but it remains true for semigroup actions,
\cite{GM-AffComp}.
The other case occurs iff $\sigma$ is Li-Yorke chaotic.
Combining this result with Example
\ref{IP}.4
one gets:  HNS\ =\ null\ = \ tame, for any cascade
$([0,1],\sigma)$.
\end{remark}


Combining Theorem \ref{subshifts}(2) and Theorem \ref{t:3}(2) we obtain the following surprizing dichotomy:

\begin{thm}\label{dich}
	For a subshift $X \subset A^Z$ we have the following dichotomy:
	\begin{enumerate}
		\item
		Either there exists an infinite subset $L \subset \Z$ such that $X$ is free on $L$,
		or 
		\item
		for every infinite subset $L \subseteq S$ there exists an infinite subset $K \subseteq L$
		such that
		$\pi_{K}(X)$ is a countable subset of $A^K$.
	\end{enumerate}
\end{thm}

\section{Some examples of tame functions and systems}
\label{s:tame-type}

The class of tame dynamical systems is quite large and contains the class of HNS (hence also of WAP) systems.
Also, as was mentioned above, every null $\Z$-system is tame.

\begin{ex} \label{e:tameNOThns} \
\ben
\item
In his paper \cite{Ellis93} Ellis, following Furstenberg's
classical work \cite{Furst-63-Poisson}, investigates the projective
action of $GL(n,\R)$
on the projective space $\mathbb{P}^{n-1}$. It follows from his
results that the corresponding enveloping semigroup is
not first countable. However,
in a later work \cite{Ak-98}, Akin studies the action of $G=GL(n,\R)$
on the sphere $\mathbb{S}^{n-1}$ and shows that here the enveloping
semigroup is first countable (but not metrizable).
It follows that the dynamical systems
$D_1=(G, \mathbb{P}^{n-1})$ and $D_2=(G, \mathbb{S}^{n-1})$
are tame but not HNS. Note that $E(D_1)$ is Fr\'echet,
being a continuous image of a first countable compact space, namely $E(D_2)$.

\item (Huang \cite{H})
An almost 1-1 extension $\pi: X \to Y$ of
a minimal equicontinuous metric $\Z$-system
$Y$ with $X \setminus X_0$ countable,
where $X_0=\{x \in X : |\pi^{-1}\pi(x)|=1\}$, is tame.

\item (See \cite{GM1}) Consider an irrational rotation $(\T,R_\alpha)$. Choose
$x_0 \in \T$ and split each point of the orbit $x_n=x_0 + n \alpha$
into two points  $x_n^{\pm}$.
This procedure
results is a {\em Sturmian}
(symbolic)
dynamical system $(X,\s)$ which is a minimal almost
1-1 extension of $(\T,R_\alpha)$.
Then $E(X,\s) \setminus \{\s^n\}_{n \in \Z}$ is homeomorphic to the two arrows space, a basic example of a non-metrizable Rosenthal
compactum. It follows that $E(X,\s)$ is also a Rosenthal compactum. Hence, $(X,\s)$ is tame but not HNS (by Theorems \ref{D-BFT} and \ref{t:HNSenv}.2).
\item
Let $P_0$ be the set
$[0, c)$ and $P_1$ the set $[c, 1)$; let $z$ be a point in $[0, 1)$ (identified with $\T$) via the rotation $R_{\a}$ we get the binary bisequence $u_n$,
$n \in \Z$
defined by $u_n = 0$ when $R_{\a}^n(z)  \in P_0, u_n = 1$ otherwise.
These are called 
\emph{Sturmian like codings}.
With $c = 1 - \al$ we retrieve the previous example.
For example, when $\a:=\frac{\sqrt{5}-1}{2}$ and $c = 1 -\a$ the corresponding sequence,
computed at $z =0$, is called the \emph{Fibonacci bisequence}.
\een
\end{ex}

Motivated by the Example \ref{e:tameNOThns}.4
we next present a new class of generalized Sturmian
systems.

\begin{ex}[A class of generalized Sturmian systems]\label{e:spheres}
Let $\a =(\al_1, \dots, \al_d)$ be a vector in $\R^d,\ d \ge 2$
with $1,\al_1, \dots, \al_d$ independent over $\Q$.
Consider the minimal equicontinuous dynamical system $(R_\al,Y)$,
where $Y = \T^d = \R^d / \Z^d$ (the $d$-torus) and $R_\al y = y +\al$.
Let $D$ be a small closed $d$-dimensional ball in $\T^d$ and let $C =
\partial D$ be its boundary, a  $d-1$-sphere.
Fix $y_0 \in {\rm{int}} D$ and let
$X = X(D,y_0)$ be the symbolic
system generated by the function
$$
x_0 \in \{0,1\}^\Z \ {\text{defined by}}\  x_0(n) = \chi_D(R_\al^ny_0),
\qquad
X = \overline{\mathcal{O}_{\sig} x_0} \subset  \{0,1\}^\Z,
$$
where $\sig$ denotes the shift transformation.
This is a well known construction and it is not hard to check that
the system $(\sig, X)$
is minimal and admits $(R_\al,Y)$ as an almost 1-1 factor:
$$
\pi : (\sig, X) \to (R_\al, Y).
$$
\end{ex}

\begin{thm}\label{sphere}
There exists a ball $D \subset \T^d$ as above such that the corresponding symbolic dynamical system $(\sig, X)$ is tame.
\end{thm}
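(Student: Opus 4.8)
The plan is to reduce the tameness of $(\sig,X)$ to a purely combinatorial-geometric statement about translates of the ball $D$, and then to settle that statement using the bounded complexity of arrangements of balls. First I would invoke the characterization of tame subshifts available in the excerpt. By Theorem \ref{t:3}.2 (equivalently, Theorem \ref{subshifts} combined with Theorem \ref{t:tame-f} applied to $f:=\pi_e$), the system $(\sig,X)$ fails to be tame exactly when there is an infinite set $A\subseteq\Z$ on which $X$ is free, i.e. $\pi_A(X)=\{0,1\}^A$. Hence it suffices to produce a finite bound $v=v(d)$ such that no $A\subseteq\Z$ with $|A|>v$ can be shattered in this sense.

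The first step is the observation that for a \emph{finite} window $A$ the trace $\pi_A(X)$ is already determined by the orbit of $x_0$: since $\pi_A\colon\{0,1\}^\Z\to\{0,1\}^A$ is continuous with finite (hence discrete) target, one gets $\pi_A\big(\overline{\mathcal{O}_\sig x_0}\big)=\pi_A\big(\mathcal{O}_\sig x_0\big)$. Because $(\sig^m x_0)(n)=\chi_D(y_0+(m+n)\al)$, writing $z=y_0+m\al$ shows that $\pi_A(X)$ is contained in the set of sign patterns $\{(\chi_D(z+n\al))_{n\in A}\ :\ z\in\T^d\}$. I want to stress that this step sidesteps entirely the delicate fiber structure of the almost one-to-one extension over the orbit of the boundary sphere $C$; the closure in finitely many coordinates is governed solely by the geometry of the ball.

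The geometric heart of the argument is then the finiteness of the Vapnik--Chervonenkis (VC) dimension of the family of translates $\{D-n\al : n\in\Z\}$. A pattern $\eps\in\{0,1\}^A$ occurs iff $\bigcap_{\eps(n)=1}(D-n\al)\cap\bigcap_{\eps(n)=0}(D-n\al)^c\neq\emptyset$, so the number of realizable patterns equals the number of cells in the arrangement of the $|A|$ translated balls. Since $D$ is small (inside a chart of injectivity radius), these translates are genuine Euclidean balls, and an arrangement of $k$ congruent balls, which are degree-two surfaces, has only $O(k^d)$ cells in $\R^d$. Thus at most $O(|A|^d)$ of the $2^{|A|}$ possible patterns occur, which forces $|A|\le v(d)$ once $|A|$ is large. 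Therefore no infinite $A$ is shattered, and by the dichotomy $(\sig,X)$ is tame.

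I expect the main obstacle to be the honest bookkeeping of the arrangement bound on the torus rather than in $\R^d$: one must verify that wrap-around does not inflate the cell count beyond $O(k^d)$, which I would handle by working within the injectivity radius or by lifting to a bounded region of the universal cover. A secondary point concerns the existence quantifier in the statement. The estimate above is robust and applies to every sufficiently small ball, so genericity of $D$ (and of $y_0\in\operatorname{int}D$) would be used only to guarantee that the construction of Example \ref{e:spheres} actually produces a minimal almost one-to-one extension of $(R_\al,Y)$; the tameness bound itself does not require it. Choosing any such admissible $D$ then yields the asserted existence.
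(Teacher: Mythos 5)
Your argument is correct, but it takes a genuinely different route from the paper's. You reduce tameness to the Kerr--Li non-independence criterion (Theorem \ref{t:3}.2) and then bound, for a finite window $A$, the number of $\{0,1\}$-patterns that the translated balls $\{D-n\al\}_{n\in A}$ can realize on points of $\T^d$ by the combinatorial complexity of the arrangement of $|A|$ spheres; this is polynomial in $|A|$ (a Warren/Milnor--Thom type bound for quadratic surfaces, valid on the torus after covering by finitely many charts inside the injectivity radius), so $2^{|A|}$ eventually exceeds it and no large finite set --- hence a fortiori no infinite set --- is an interpolation set. The two reductions you flag (replacing $X=\overline{\mathcal{O}_\sig x_0}$ by the orbit itself, legitimate because $\{0,1\}^A$ is finite and discrete for finite $A$; and freeness on an infinite $A$ implying freeness on all its finite subsets) are exactly right. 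The paper instead argues through the enveloping semigroup: it first chooses the boundary sphere $C=\partial D$ so that every $R_\al$-orbit meets $C$ in a finite set, then shows by an iterated directional-limit analysis that every $p\in E(\sig,X)$ is a pointwise limit of a \emph{sequence} of translates, hence of Baire class $1$. Your route buys brevity, elementarity, and a strictly stronger statement: every sufficiently small ball works (indeed any family of translates of bounded geometric complexity, i.e.\ finite dual VC dimension), with no genericity of $D$ and no use of the independence of $1,\al_1,\dots,\al_d$ over $\Q$. The paper's route buys structural information your argument does not see, namely the parametrization of $E(\sig,X)\setminus\Z$ by $\T^d$ times the set of ordered orthonormal bases of $\R^d$ recorded in the remark after the proof. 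The only bookkeeping point to add to your write-up is that the "cells" must be counted as faces of all dimensions of the sphere arrangement, since points lying on the boundaries of the closed balls carry their own sign vectors; this affects only the constant in the polynomial bound.
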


\begin{proof}
{\bf 1.}\
First we show that a sphere $C \subset [0,1)^d \cong \T^d$ can be chosen
so that for every $y \in \T^d$  the set
$(y + \{n \al : n \in Z\}) \cap C$ is finite.
We thank Benjamin Weiss for providing the following proof of this fact.
\begin{enumerate}
\item
For the case $d=2$ the argument is easy.
If $A$ is any countable subset of the square
$[0,1) \times [0,1)$ there are only a countable
number of circles that contain three points of $A$. These circles
have some countable collection of radii. Take any circle with a radius
which is different from all of them and no translate of it will contain
more than two points from the set $A$. Taking $A = \{n\al : n \in \Z\}$
we obtain the required circle.
\item
We next consider the case $d =3$, which easily generalizes to the general case $d \ge 3$.
What we have to show is that there can not be infinitely many points in
$$
A = \{(n\al_1 - [n\al_1],\al_2 - [n\al_2],\al_3 - [n\al_3]): n \in \Z\}
$$
that lie on a plane.
For if that is the case, we consider all $4$-tuples of elements from the set
$A$ that do not lie on a plane to get a countable set of radii
for the spheres that they determine. Then taking a sphere with radius different from that collection we obtain our required sphere.
In fact, if a sphere contains infinitely many points of
$A$ and no $4$-tuple from $A$ determines it then they all lie on a single plane.

So suppose that there are infinitely many points in $A$ whose inner
product with a vector $v =(z,x,y)$  is always equal to $1$.
This means that there are infinitely many
equations of the form:
\begin{equation}\tag{*}
 z\al_1 + x\al_2 + y\al_3 =
 1/n + z[n\al_1]/n + x[n\al_2]/n + y[n\al_3]/n.
\end{equation}
Subtract two such equations with the second using $m$ much bigger than
$n$ so that the coefficient of $y$ cannot vanish.
We can express $y = rz + sx + t$ with $r, s$  and $t$ rational.
This means that we can replace (*) by
\begin{equation}\tag{**}
z \al_1 + x\al_2 + y\al_3 =
 1/n + t[n\al_3]/n + z([n\al_1]/n + r[n\al_3]/n)  +
 x([n\al_2]/n +s[n\al_3]/n).
\end{equation}
Now $r, s$ and $t$ have some fixed denominators and (having infinitely
many choices) we can take another equation like (**) where $n$
(and the corresponding $r,s, t$) is replaced by some much bigger $k$,
then subtract again to obtain an equation of the form $x = pz + q$ with $p$ and $q$ rational.
Finally one more step will show that $z$ itself is rational.
However, in view of (*), this contradicts the independence of
$1, \al_1, \al_2, \al_3$ over $\Q$ and our proof is complete.
\end{enumerate}

{\bf 2.}\
Next we show that for $C$ as above
\begin{quote}
for every converging sequence $n_i\al$, say $n_i\al \to \beta \in \T^d
\cong E(R_\al,\T^d)$, there exists a subsequence $\{n_{i_j}\}$ such
that for every $y \in \T^d$, $y + n_{i_j}\al$ is either
eventually in the interior of $D$ or eventually in its exterior.
\end{quote}
Clearly we only need to consider points $y \in C - \beta$.
Renaming we can now assume that $n_i \al \to 0$ and that $y \in C$.
Passing to a subsequence if necessary we can further assume that
the sequence of unit vectors $\frac{n_i\al}{\|n_i \al\|}$ converges,
\begin{equation*}
\frac{n_i\al}{\|n_i \al\|} \to v_0 \in \mathbb{S}^{d-1}.
\end{equation*}
In order to simplify the notation we now assume that $C$ is centered
at the origin.
For every point $y \in C$ where $\langle y, v_0 \rangle \ne 0$
we have that $y + n_i\al$ is either eventually in the interior of $D$
or eventually in its exterior. On the other hand, for the points $y \in C$
with $\langle y, v_0 \rangle = 0$ this is not necessarily the case.
In order to deal with these points we need a more detailed information
on the convergence of $n_{i}\al$ to $\beta$. At this stage we consider
the sequence of orthogonal projections of the vectors $n_{i}\al$ onto
the subspace $V_1 = \{u \in \R^d : \langle u , v_0 \rangle =0\}$, say
$u_i= \proj_{v_0}(n_i \al) \to u = \proj_{v_0}(\beta)$.
If it happens that eventually $u_i =0$ this means that all
but a finite number of the $n_i \al$'s are on the line defined by $v_0$
and our required property is certainly satisfied
\footnote{Actually this possibility can not occur,
as is shown in the first step of the proof.}.
Otherwise
we choose a subsequence (again using the same index) so that
\begin{equation*}
\frac{u_i}{\|u_i\|} \to v_1 \in \mathbb{S}^{d-2}.
\end{equation*}
Again (as we will soon explain) it is not hard to see that for points
$y \in C \cap V_1$ with
$\langle y, v_1 \rangle \ne 0$ we have that $y + n_{i}\al$ is either eventually in the interior of $D$ or eventually in its exterior.
For points $y \in C \cap V_1$ with $\langle y, v \rangle = 0$ we have
to repeat this procedure. Considering the subspace
$V_2 = \{u \in V_1 : \langle u , v_1 \rangle =0\}$,
we define the sequence of projections $u'_i= \proj_{v_1}(u_i) \in V_2$
and pass to a further
subsequence which converges to a vector $v_2$
\begin{equation*}\label{dir}
\frac{u'_i}{\|u'_i\|} \to v_2 \in \mathbb{S}^{d-3}.
\end{equation*}
Inductively this procedure will produce an {\bf ordered orthonormal basis}
$\{v_0, v_1, v_2, \dots,v_{d-1}\}$ for $\R^d$ and a final
subsequence (which for simplicity we still denote
as $n_i$) such that

\begin{quote}
for each $y \in \T^d$,
$y + n_i\al$ is either eventually in the \\
interior of $D$
or it is eventually in its exterior.
\end{quote}

This is clear for points $y \in \T^d$ such that $y + \beta \not\in C$.
Now suppose we are given a point $y$ with $y + \beta \in C$.
We let $k$ be the first index with
$\langle y + \beta, v_k \rangle \ne 0$. As $\{v_0, v_1, v_2, \dots,v_{d-1}\}$
is a basis for $\R^d$ such $k$ exists.
We claim that
the sequence $y + n_i \al$ is either eventually in the interior of $D$ or it is eventually in its exterior.
To see this consider the affine hyperplane
which is tangent to $C$ at $y +\beta$
(which contains the vectors $\{v_0,\dots, v_{k-1}\}$).
Our assumption implies that the sequence $y + n_i \al$ is either
eventually on the opposite side of this 
hyperplane from the sphere, in which
case it certainly lies in the exterior of $D$, or it eventually lies on the same side as the sphere.
However in this latter case it can not be squeezed
in between the sphere and the tangent hyperplane, as this would imply
$\langle y + \beta, v_k \rangle = 0$, contradicting our assumption.
Thus it follows that in this case the sequence
$y + n_i \al$ is eventually in the interior of $D$.

{\bf 3.}\
Let now $p$ be an element of $E(\sig, X)$.
We choose a {\bf net} $\{n_\nu\} \subset \Z$ with $\sig^{n_\nu} \to p$.
It defines uniquely an element $\beta \in E(Y) \cong \T^d$ so that
$\pi(px) = \pi(x) + \beta$ for every $x \in X$.
Taking a subnet if necessary we can assume
that the net $\frac{\beta - n_\nu\al}{\|\beta - n_\nu \al\|}$ converges  to some $v_0 \in S^{d-1}$. And, as above, proceeding by induction, we assume likewise that all the corresponding limits $\{v_0,\dots, v_{k-1}\}$ exist.

Next we choose a {\bf sequence} $\{n_i\}$ such that
$n_i \al \to \beta$,
$\frac{\beta - n_i\al}{\|\beta - n_i \al\|} \to v_0$ etc.,
We conclude that $\sig^{n_i} \to p$.
Thus every element of $E (\sig,X)$ is obtained as
a limit of a sequence in $\Z$ and is therefore of Baire class 1.
\end{proof}

\begin{remark}
From the proof we see that the elements of $E(\sigma,X) \setminus
\Z$ can be parametrized by the set
$\T^d \times \mathcal F$, where $\mathcal F$ is the collection of
ordered orthonormal bases for $\R^d$,
$\ p \mapsto (\beta, \{v_0,\dots, v_{d-1}\})$.
\end{remark}

For further recent results on tame systems see \cite{Pikula} and \cite{Auj}.
Below we will study the question whether some coding functions are tame.

\begin{defin} \label{d:tametype1} \
\begin{enumerate}
\item
Let $S \times X \to X$ be an action on a (not necessarily compact) space
$X$,
$f: X \to \R$ a bounded (not necessarily continuous)
function, $h: S_0 \to S$ a homomorphism of semigroups and $z \in X$.
The following function will be called a \emph{coding function}:
$$
m(f,z) : S_0 \to \R, \ s \mapsto f(h(s)z).
$$

\item
When $S_0=\Z^k$ and $f(X)=\{0,1,\dots,d\}$ we say
that $f$ is a 
\emph{$(k,d)$-code}. Every such code generates a point transitive subshift of $A^{\Z^k}$,
where $A=\{0,1, \dots, d\}$.
In the particular case of the characteristic function $\chi_D: X \to \{0,1\}$
for a subset $D \subset X$ and $S_0=\Z$ we get a $(1,1)$-code, i.e.
a binary function $m(D,z): \Z \to \{0,1\}$
which generates a $\Z$-subshift of the Bernoulli shift on $\{0,1\}^{\Z}$.
\end{enumerate}
\end{defin}

\begin{question}
When is a coding function tame ?
\end{question}

It follows from results in \cite{GM-rose} that a coding bisequence
$c: \Z \to \R$ (with $S_0:=\Z$) is tame iff it can be represented
as a generalized matrix coefficient of a Rosenthal Banach space representation.
That is, iff there exist: a Rosenthal Banach space $V$, a linear isometry $\s \in \Iso(V)$ and
two vectors $v \in V$, $\varphi \in V^*$ such that
$$
c_n=\langle \s^n(v),\varphi \rangle = \varphi(\s^n(v)) \ \ \ \ \forall n \in \Z.
$$

We will see that many coding functions are tame, including some
multidimensional analogues of Sturmian sequences.
The latter are defined on the groups $\Z^k$ and instead of the characteristic function
$f:=\chi_D$ (with $D=[0,c)$) one may consider coloring of the space leading to shifts with finite alphabet.
Here we give a precise definition which (at least in some partial cases)
was examined in several papers.
Regarding some dynamical and combinatorial aspects of coding functions (like multidimensional Sturmian sequences)
see for example \cite{BV,Fernique,Pikula}, and the survey paper \cite{BFZ}.

\begin{defin} \label{d:mSturm} (Multidimensional Sturmian sequences)
	Consider an arbitrary finite partition 
	$$\T=\cup_{i=0}^{d} [c_i,c_{i+1})$$ of $\T$
	by the ordered $d$-tuple of points $c_0=0,c_1, \dots, c_{d}, c_{d+1}=1$ and 
	define the natural function
	$$
	f: \T \to A:=\{0, \dots ,d\}, \ \ \ f(t)=i \ \text{iff} \ t \in [c_i,c_{i+1}).
	$$
	Now for a given $k$-tuple
	$(\a_1,\dots ,\a_k) \in \T^k$  and a given point $z \in \T$ consider the corresponding coding function
	$$
	m(f,z): \Z^k \to \{0, \dots ,d\} \ \ \ (n_1, \dots,n_k) \mapsto f(z + n_1 \a_1 + \cdots + n_k \a_k).
	$$
	We call such a sequence a \emph{multidimensional 
		$(k,d)$-Sturmian like sequence}. 
\end{defin}

Lemma \ref{l:tametype} and Remark \ref{r:coding} below demonstrate
the relevance of Definition \ref{d:tametype1} for coding functions.
By Theorem \ref{t:tame-f}
a continuous function $f: X \to \R$ on a compact $S$-system $X$ is tame iff
$fS$ does not contain an independent sequence.
This fact justifies our terminology in the following definition.
For the definition of an independent sequence of functions see Definition \ref{d:ind}. 

\begin{defin} \label{d:tametype2}
Let $S$ be a semigroup, $X$ a (not necessarily compact) $S$-space and
$f: X \to \R$ a bounded (not necessarily continuous) function.
We say that $f$ is of
\emph{tame-type} if the orbit $fS$ of $f$ in
$\R^X$ 
is a tame family 
(Definition \ref{d:tameF}). 
\end{defin}

An example of a Baire 1 tame-type function
which is not tame, being discontinuous,
is the characteristic function $\chi_D$ of an arc $D=[a,a+s)\subset \T$ defined on the system $(R_{\a}, \T)$, where $R_{\a}$
is an irrational rotation of the circle $\T$.
See Theorem \ref{t:tame-typeOften}.3.

\begin{lem} \label{l_1} \
\ben
\item
Let $q: X_1 \to X_2$ be a map between sets and
$\{f_n: X_2 \to \R\}$ a bounded sequence of functions \emph{(with no continuity assumptions on $q$ and $f_n$)}.
If $\{f_n \circ q\}$ is an independent sequence on $X_1$
then $\{f_n\}$ is an independent sequence on $X_2$.
\item
If $q$ is onto then the converse is also true. That is $\{f_n \circ q\}$ is independent if and only if $\{f_n\}$ is independent.
\item
Let $\{f_n\}$ be a bounded sequence of continuous
functions on a topological space $X$.
Let $Y$ be a \emph{dense} subset of $X$. Then $\{f_n\}$ is an independent sequence on $X$ if and only if
the sequence of restrictions
$\{f_n|_Y\}$ is an independent sequence on $Y$.
\een
\end{lem}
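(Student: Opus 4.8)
The plan is to verify the three parts in order, reducing the easy half of (3) to part (1), and invoking continuity together with density only for the one genuinely non-combinatorial step. The common engine throughout is the elementary set-theoretic identity $(f_n \circ q)^{-1}(U) = q^{-1}\big(f_n^{-1}(U)\big)$, which lets me transport witness points (in the sense of Definition \ref{d:ind}) back and forth through $q$; boundedness of the sequences plays no role in these transport arguments.

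For (1), I would fix real numbers $a < b$ witnessing the independence of $\{f_n \circ q\}$ and show that the \emph{same} constants witness the independence of $\{f_n\}$. Given finite disjoint $P, M \subset \N$, independence of $\{f_n \circ q\}$ produces a point $x \in X_1$ with $(f_n \circ q)(x) < a$ for $n \in P$ and $(f_n \circ q)(x) > b$ for $n \in M$. Setting $y := q(x) \in X_2$ immediately yields $f_n(y) < a$ for $n \in P$ and $f_n(y) > b$ for $n \in M$, so the relevant intersection for $\{f_n\}$ is nonempty. Hence $\{f_n\}$ is independent; note that no topology is used. For (2), one implication is precisely (1). For the converse I again keep the constants $a < b$, now witnessing independence of $\{f_n\}$, and use surjectivity to \emph{lift} witnesses: given finite disjoint $P, M$, choose $y \in X_2$ with $f_n(y) < a$ on $P$ and $f_n(y) > b$ on $M$, then pick any $x \in q^{-1}(y)$ (nonempty as $q$ is onto); since $(f_n \circ q)(x) = f_n(y)$, the point $x$ witnesses independence of $\{f_n \circ q\}$.

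For (3), the implication ``$\{f_n|_Y\}$ independent on $Y$ $\Rightarrow$ $\{f_n\}$ independent on $X$'' follows directly from (1) applied to the inclusion map $q \colon Y \hookrightarrow X$, for which $f_n \circ q = f_n|_Y$; here continuity is not needed. The reverse implication is the only step where the hypotheses genuinely enter. Assuming $\{f_n\}$ is independent on $X$ with constants $a < b$, I would, for each pair of finite disjoint $P, M \subset \N$, consider the set
$$
U := \bigcap_{n \in P} f_n^{-1}(-\infty,a) \cap \bigcap_{n \in M} f_n^{-1}(b,\infty).
$$
Each $f_n$ is continuous and the index sets are finite, so $U$ is open in $X$, and it is nonempty by independence of $\{f_n\}$ on $X$. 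Since $Y$ is dense, $U \cap Y \neq \emptyset$, and any $y \in U \cap Y$ satisfies $f_n(y) < a$ for $n \in P$ and $f_n(y) > b$ for $n \in M$, witnessing independence of $\{f_n|_Y\}$ with the same constants.

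The main (and essentially only) obstacle is the observation that one may use identical constants $a < b$ throughout: this is what guarantees that the finitely many \emph{strict} inequalities cut out the \emph{open} set $U$, which the dense subset $Y$ must then meet. Once this is noticed, all three parts are short; the strictness of the inequalities in Definition \ref{d:ind} is exactly what makes the density argument go through.
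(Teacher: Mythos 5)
Your proposal is correct and follows essentially the same route as the paper: parts (1) and (2) by transporting witness points through $q$ (which the paper dismisses as straightforward), and the nontrivial direction of (3) by observing that the finite intersection $\bigcap_{n \in P} f_n^{-1}(-\infty,a) \cap \bigcap_{n \in M} f_n^{-1}(b,\infty)$ is open by continuity and hence meets the dense set $Y$. The reduction of the easy half of (3) to part (1) via the inclusion $Y \hookrightarrow X$ is exactly the paper's argument as well.
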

\begin{proof}
Claims (1) and (2) are straightforward.

(3)
 Since $\{f_n\}$ is an independent sequence 
for every pair of finite disjoint sets
$P, M \subset \N$,
the set
$$
\bigcap_{n \in P} f_n^{-1}(-\infty,a) \cap  \bigcap_{n \in M} f_n^{-1}(b,\infty)
$$
is non-empty. 
This set is open because every $f_n$ is continuous. 
Hence, each of them meets the dense set $Y$.
As
$f_n^{-1}(-\infty,a) \cap Y= f_n|_Y^{-1}(-\infty,a)$ and
$f_n^{-1}(b,\infty) \cap Y= f_n|_Y^{-1}(b,\infty)$,
this implies that $\{f_n|_Y\}$ is an independent sequence on $Y$.

Conversely if $\{f_n|_Y\}$ is an independent sequence on a subset $Y \subset X$ then by (1)
(where $q$ is the embedding $Y \hookrightarrow X$),
$\{f_n\}$ is an independent sequence on $X$.
\end{proof}

\begin{lem} \label{l:tametype}  In terms of Definition \ref{d:tametype2} we have:
\ben
\item
Every tame function $f: X \to \R$ is of tame-type.
\item
If $X$ is compact every continuous tame-type function $f$ is tame.
\item
Let $f \in \RUC(X)$; then
$f \in \Tame(X)$ if and only if $f$ is tame-type. Moreover,
there exists an $S$-compactification $\nu: X \to Y$ where the action $S \times Y \to Y$ is continuous,
$Y$ is tame and $f=\tilde{f} \circ \nu$ for some $\tilde{f} \in C(Y)$.
\item
Let $G$ be a topological group and $f \in \RUC(G)$. Then  
$f \in \Tame(G)$ 
if and only if $fG$ is a tame family. 
\item
Let $L$ be a discrete semigroup and $f: L \to \R$ a bounded function. Then $f \in \Tame(L)$ if and only if $fL$ is a tame family. 
\item
Let $h: L \to S$ be a homomorphism of semigroups,  
$S \times Y \to Y$ be an action (without any continuity assumptions) on a set $Y$ and $f: Y \to \R$ be 
a bounded function such that $fL$ is
a tame family. 
Then for every point $y \in Y$ the corresponding coding function $m(f,y): L \to \R$ is tame on the discrete semigroup
$(L,\tau_{discr})$.
\een
\end{lem}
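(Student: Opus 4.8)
The plan is to reduce all six items to the cyclic compactification of Remark \ref{r:cycl-comes} together with the two functoriality statements for independent sequences in Lemma \ref{l_1}. Item (2) needs nothing new: for compact $X$ and continuous $f$, the assertion that $fS$ being a tame family forces $f\in\Tame(X)$ is exactly the implication (2)$\Rightarrow$(1) of Theorem \ref{t:tame-f}. For (1), if $f$ is tame it comes from a tame $S$-compactification $q\colon X\to Y$ with $f=\tilde f\circ q$, $\tilde f\in C(Y)$; since $(S,Y)$ is a compact tame system, $\tilde f\in\Tame(Y)=C(Y)$ by Lemma \ref{l:tame-prop}.2, so $\tilde f S$ is a tame family on $Y$ by Theorem \ref{t:tame-f}. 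As $q$ is an $S$-map we have $fS=(\tilde f S)\circ q$, and I would note that an independent sequence in $fS$ would, via Lemma \ref{l_1}.1, produce an independent sequence in $\tilde f S$ on $Y$ --- impossible. Hence $fS$ is tame, i.e.\ $f$ is of tame-type.

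Item (3) is the heart of the argument. Its forward implication is precisely (1). For the converse I assume $fS$ is a tame family and pass to the cyclic compactification $\pi_f\colon X\to X_f$: here $X_f$ is compact, $f=\tilde f\circ\pi_f$ with $\tilde f\in C(X_f)$, the orbit $\tilde f S$ separates the points of $X_f$, and --- crucially, because $f\in\RUC(X)$ --- the $S$-action on $X_f$ is jointly continuous. Writing $Y_0:=\pi_f(X)$, a dense subset of $X_f$, I would argue as follows: if $\tilde f S$ contained an independent sequence $\{\tilde f\circ\tilde{s}_n\}$ on $X_f$, then by Lemma \ref{l_1}.3 its restrictions are independent on $Y_0$, and since $\pi_f\colon X\to Y_0$ is onto, Lemma \ref{l_1}.2 makes $(\tilde f\circ\tilde{s}_n)\circ\pi_f=f\circ\tilde{s}_n\in fS$ independent on $X$, contradicting tame-type. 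Thus $\tilde f S$ is a tame family on the compact system $X_f$, so $\tilde f\in\Tame(X_f)$ by Theorem \ref{t:tame-f}. Since $\Tame(X_f)$ is a norm-closed $S$-invariant subalgebra containing $\tilde f$, it contains the algebra generated by $\tilde f S$, which is dense in $C(X_f)$ by Stone-Weierstrass; therefore $\Tame(X_f)=C(X_f)$ and $(S,X_f)$ is tame by Lemma \ref{l:tame-prop}.2. Taking $Y:=X_f$ and $\nu:=\pi_f$ then yields the \emph{Moreover} clause, and $f$, coming from the tame compactification $\nu$, lies in $\Tame(X)$.

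Items (4) and (5) are instances of (3). For (4) I take the left action of $G$ on $X=S=G$; the hypothesis $f\in\RUC(G)$ is exactly what (3) requires, giving $f\in\Tame(G)$ iff $fG$ is a tame family. For (5), since $L$ is discrete every bounded function is right uniformly continuous (the orbit map $L\to\ell^\infty(L)$ is trivially norm-continuous and every $L$-compactification is automatically jointly continuous), so $f\in\RUC(L)$ and (5) is again the case $X=S=L$ of (3). For (6), $L$ acts on $Y$ through $h$, so the orbit of $f$ is $fL=\{F_l\}_{l\in L}$ with $F_l(y')=f(h(l)y')$, tame by hypothesis; put $m:=m(f,y)$ (bounded, since $f$ is) and define $\phi_y\colon L\to Y$ by $\phi_y(l')=h(l')y$. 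Using that $h$ is a homomorphism and the action is associative one checks $(m\circ\tilde l)(l')=m(ll')=f(h(l)h(l')y)=F_l(\phi_y(l'))$, so the left-translation orbit of $m$ equals $mL=\{F_l\circ\phi_y\}_{l\in L}$. An independent sequence $\{F_{l_n}\circ\phi_y\}$ in $mL$ would, by Lemma \ref{l_1}.1 with $q=\phi_y$, force $\{F_{l_n}\}\subset fL$ to be independent on $Y$, contradicting the hypothesis; hence $mL$ is a tame family on the discrete semigroup $L$, and $m(f,y)\in\Tame(L)$ by (5).

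The step I expect to be the main obstacle is the converse of (3): promoting tameness of the family $fS$ on the (possibly non-compact) $X$ to tameness of the compact system $X_f$. The delicate points are the two transfers of independence --- across the surjection $\pi_f$ and across the dense inclusion $Y_0\hookrightarrow X_f$ (Lemma \ref{l_1}.2 and .3) --- and the density-plus-closedness argument identifying $\Tame(X_f)$ with all of $C(X_f)$, where the joint continuity supplied by $f\in\RUC(X)$ is what makes the cyclic system an honest jointly-continuous tame system as required in the \emph{Moreover} clause.
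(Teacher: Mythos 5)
Your proof is correct and follows essentially the same route as the paper: items (1)--(3) via Theorem \ref{t:tame-f}, the cyclic compactification of Remark \ref{r:cycl-comes} and the independence-transfer Lemma \ref{l_1}, with (4)--(5) obtained as the special cases $X=S=G$ (resp.\ $X=S=L$, using that bounded functions on a discrete semigroup are $\RUC$) and (6) by pulling back independence along the orbit map $l\mapsto h(l)y$ exactly as the paper does with its map $q$. If anything, you are more explicit than the paper at the two points it treats tersely: the transfer of an independent sequence in $\tilde{f}S$ back to $fS$ through the merely dense range of $\pi_f$ (your combination of Lemma \ref{l_1}.3 with Lemma \ref{l_1}.2, where the paper loosely cites Lemma \ref{l_1}.1), and the Stone--Weierstrass argument showing $(S,X_f)$ itself is tame, which cleanly justifies the \emph{Moreover} clause.
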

\begin{proof}
For (1), (2) and  (3) use Lemma \ref{l_1} and Theorem \ref{t:tame-f}.

For (3)
consider the cyclic $S$-compactification $\nu: X \to Y=X_f$
(see Definition \ref{d:cyclic}). Since $f \in \RUC(X)$ the action $S \times X_f \to X_f$ is jointly continuous (Remark \ref{r:cycl-comes}).
By the basic property of the cyclic compactification there exists a continuous function $\tilde{f}: X_f \to \R$
such that $f=\tilde{f} \circ \nu$. Since $f$ is of tame-type the family $fS$ has no independent sequence.
By Lemma \ref{l_1}.3 we conclude that also $\tilde{f} S$ has no independent sequence.
This means, by Theorem \ref{t:tame-f}, that $\tilde{f}$ is tame. Hence (by Definition \ref{d:funct}) so is $f$.
The converse follows from (1) (or, directly from Lemma \ref{l_1}.1).

(4) and (5)
follow easily from (3) (with $X=G=L$)
 taking into account (1) and the fact that on a
discrete semigroup $L$ every bounded function
$L \to \R$ is in $\RUC(L)$.

(6)
 By (5) it is enough to show that
the coding function $f_0:=m(f,y)$ is of tame-type on $L$. That is, we have to show that $f_0L$ has no independent subsequence.
Define $q: L \to Y, s \mapsto h(s)y$.
Then $f_0t=(ft) \circ q$ for every $t \in L$. If $f_0t_n$ is an independent sequence for some sequence $t_n \in L$ then Lemma \ref{l_1}.1
implies that the sequence of functions $ft_n$ on $Y$ is independent.
This contradicts the assumption that $fL$ has no independent
subsequence.
\end{proof}

\begin{remark} \label{r:coding} Regarding Lemma \ref{l:tametype}.6
note that the following conditions are equivalent:
\ben
\item The coding function $f_0=m(f,z): L \to \R$ is a tame function on the semigroup $(L,\tau_{discr})$.
\item
The
cyclic
system $X_{f_0} \subset {A}^L$, induced by the function $f_0$ with $A:=f(h(L)z$),
is a tame $L$-system
(note that in particular for the characteristic function $f=\chi_D$ of $D \subset X$ we get a (cyclic) subshift $X_{f_0}  \subset \{0,1\}^L$ induced by the function $f_0:=m(\chi_D,z)$).

\item The orbit $f_0L$ is a tame family. 
\een
\end{remark}

For  (1) $\Leftrightarrow$ (2) observe that any cyclic space $X_{f_0}$ is a subshift of
${A}^L$ for any bounded function $f_0: L \to A$.
By the basic minimality property of the cyclic system $X_{f_0}$ we obtain that it is a factor of any tame system $(L,Y)$ which
realizes $f_0$.
Hence, $f_0$ is tame iff $(L,X_{f_0})$ is tame. Alternatively, use Theorem \ref{t:tame-f}.
For (1) $\Leftrightarrow$ (3) apply Lemma \ref{l:tametype}.4.

\sk

Let $f: X \to Y$ be a function between topological spaces. We denote by $cont(f)$ and $disc(f)$ the points of continuity and discontinuity for $f$
respectively.

\begin{defin} \label{d:EvCont}
Let $F$ be a family of functions on $X$. We say that $F$ is:
\ben
\item \emph{Strongly almost continuous}
if for every $x \in X$
we have $x \in cont(f)$  for
almost all $f \in F$ (i.e. with the exception of at most a finite set of elements
which may depend on $x$).
\item
 \emph{Almost continuous}
if for every infinite (countable) subset $F_1 \subset F$ there exists an
infinite subset $F_2 \subset F_1$ such that $F_2$ is strongly almost continuous on $X$.
\een
\end{defin}

\begin{ex} \label{ex:event} \
\ben
\item Let $G \times X \to X$ be a group action, $G_0 \leq G$
a
subgroup and $f: X \to \R$
a function such that
$$
G_0x \cap disc(f) \ \text{and} \ St(x) \cap G_0 \ \  \text{are \ finite}  \ \forall x \in X,
$$
where $St(x) \leq G$ is the stabilizer subgroup of $x$.
Then the family $fG_0$ is strongly almost continuous
(indeed, use the following equality $g^{-1}cont(f)=cont(fg), \ g \in G$).
\item
A coarse sufficient condition for (1) is:
$disc(f)$ is finite and $St(x) \cap G_0$ is finite \ $\forall x \in X$. 
\item As a particular case of (2)
we have the following example.
For every compact group $G$ and a function $f: G \to \R$ with finitely many discontinuities,
$fG_0$ is strongly almost continuous on $X=G$ for every subgroup $G_0$ of $G$.
\een
\end{ex}

\begin{thm} \label{2811}
Let $X$ be a compact metric space and
$F$ a bounded family of real valued functions on $X$ such that $F$ is almost continuous.
Further assume that:
\bit
\item [(*)]
for every sequence $\{f_n\}_{n \in \N}$ in $F$ there exists a subsequence $\{f_{n_m}\}_{m \in \N}$ and a countable
   subset $C \subset X$ such that
   $\{f_{n_m}\}_{m \in \N}$ pointwise converges on $X \setminus C$
   to a function $\phi: X \setminus C \to \R$ such that $\phi \in \B_1(X \setminus C)$.
\eit
Then $F$ is a tame family. 
\end{thm}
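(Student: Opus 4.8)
The plan is to argue by contradiction. Suppose $F$ is not tame, so by Definition \ref{d:tameF} it contains an independent sequence $\{f_n\}$, with witnessing constants $a<b$ as in Definition \ref{d:ind}. I will manufacture a single point of $X\setminus C$ at which a subsequence of $\{f_n\}$ oscillates between values $\le a$ and $\ge b$, contradicting the pointwise convergence furnished by hypothesis (*). First I would perform two reductions. Since $\{f_n\}$ is a countable subfamily of $F$ and $F$ is almost continuous, an infinite subfamily is strongly almost continuous; as any subsequence of an independent sequence is again independent, I may assume $\{f_n\}$ itself is strongly almost continuous. Applying (*) and passing to a further subsequence (still independent and still strongly almost continuous), I may also assume $f_n\to\phi$ pointwise on $X\setminus C$ for some countable $C\subseteq X$. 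Writing $D_n:=\mathrm{disc}(f_n)$, strong almost continuity says precisely that no point lies in infinitely many $D_n$, i.e. $\bigcap_N\bigcup_{n\ge N}D_n=\emptyset$. (Notably, only the convergence in (*) is needed, not the Baire class $1$ conclusion.)

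Next I would build a Cantor scheme out of the independence. For a word $s=(s_1,\dots,s_k)\in\{0,1\}^k$ put
$$A_s=\bigcap_{i\le k,\ s_i=0}\{x: f_i(x)<a\}\ \cap\ \bigcap_{i\le k,\ s_i=1}\{x: f_i(x)>b\}.$$
By independence every $A_s$ is nonempty, and $A_{s'}\subseteq A_s$ whenever $s$ is an initial segment of $s'$. For a branch $\gamma\in\{0,1\}^{\N}$, writing $\gamma|_k$ for its first $k$ letters, the sets $\overline{A_{\gamma|_k}}$ form a decreasing sequence of nonempty closed subsets of the compact space $X$, so $K_\gamma:=\bigcap_k\overline{A_{\gamma|_k}}\neq\emptyset$. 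The crucial separation property is: if $\gamma,\gamma'$ first differ at level $m$, then $A_{\gamma|_m}\subseteq\{f_m<a\}$ and $A_{\gamma'|_m}\subseteq\{f_m>b\}$ (or vice versa), so a common point of $\overline{A_{\gamma|_m}}$ and $\overline{A_{\gamma'|_m}}$ has $f_m$-values both below $a$ and above $b$ in every neighbourhood, hence is a discontinuity point of $f_m$. Thus $K_\gamma\cap K_{\gamma'}\subseteq D_m$.

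The heart of the argument, and the step I expect to be the main obstacle, is to extract a point $x^\ast\in K_\gamma\setminus C$ for some branch $\gamma$ in which both symbols occur infinitely often. Suppose no such point exists, so $K_\gamma\subseteq C$ for every such branch. The branches with both symbols infinitely often form an uncountable set $\Gamma$ (its complement, the eventually constant words, is countable). Choosing $p_\gamma\in K_\gamma\subseteq C$ for each $\gamma\in\Gamma$ gives a map from the uncountable $\Gamma$ into the countable $C$, so some $p\in C$ satisfies $p\in K_\gamma$ for infinitely many $\gamma\in\Gamma$. Any infinite set of branches has infinitely many branching levels; at each branching level $m$ two of these branches first differ, whence $p\in D_m$ by the separation property. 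Then $p$ lies in infinitely many $D_m$, contradicting $\bigcap_N\bigcup_{n\ge N}D_n=\emptyset$. This contradiction delivers the desired $x^\ast\in K_\gamma\setminus C$.

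Finally I would conclude. By strong almost continuity $x^\ast$ is a continuity point of $f_k$ for all but finitely many $k$. For such $k$, since $x^\ast\in\overline{A_{\gamma|_k}}$ and $A_{\gamma|_k}\subseteq\{f_k<a\}$ when $\gamma_k=0$ (resp. $\subseteq\{f_k>b\}$ when $\gamma_k=1$), continuity at $x^\ast$ forces $f_k(x^\ast)\le a$ (resp. $f_k(x^\ast)\ge b$). As both symbols occur infinitely often in $\gamma$, the sequence $f_k(x^\ast)$ has infinitely many terms $\le a$ and infinitely many terms $\ge b$; but $x^\ast\in X\setminus C$, so $f_k(x^\ast)\to\phi(x^\ast)$, which a convergent real sequence cannot do. Hence $F$ has no independent sequence, i.e. $F$ is tame. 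The essential point to get right is the extraction step: the sets $A_s$ are neither open nor closed, so independence alone yields only finite patterns, and passing to a genuine point requires compactness together with the counting argument above, in which $C$ countable and $\limsup_n D_n=\emptyset$ are combined to keep the limit point outside $C$.
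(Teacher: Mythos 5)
Your proof is correct, and while it shares the paper's overall skeleton (reduce to a strongly almost continuous independent sequence converging off a countable set $C$, build a Cantor scheme from the independence, use compactness to realize branches as points), the concluding contradiction is genuinely different. The paper chooses one point $x_u$ for each of uncountably many pairwise \emph{essentially distinct} branches, uses strong almost continuity to show that any two such points off $C$ satisfy $|\phi(x_u)-\phi(x_v)|\geq b-a$, and then invokes Cantor--Bendixson plus the equivalence of fragmentability and Baire class $1$ on the Polish space $X\setminus C$ to contradict $\phi\in\B_1(X\setminus C)$. You instead locate a \emph{single} point $x^\ast\in K_\gamma\setminus C$ on a branch containing both symbols infinitely often, and contradict mere pointwise convergence of $f_k(x^\ast)$; the countability of $C$ enters only in your extraction step, where it is played off against the uncountably many non-eventually-constant branches and the fact (from strong almost continuity) that no point is a discontinuity point of infinitely many $f_n$. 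Your route buys two things: it avoids the fragmentability/Baire-$1$ machinery and the Cantor--Bendixson argument entirely, and it shows the hypothesis $\phi\in\B_1(X\setminus C)$ in (*) is not actually needed --- pointwise convergence off a countable set already forces tameness. (It also appears not to use metrizability of $X$ beyond compactness.) The paper's version, on the other hand, exhibits the failure of independence as a direct violation of Baire-$1$-ness of the limit function, which is the form in which the lemma is applied elsewhere (e.g.\ in Theorem \ref{Huang}). All the delicate points in your argument --- that a common point of $\overline{A_{\gamma|_m}}$ and $\overline{A_{\gamma'|_m}}$ must lie in $\mathrm{disc}(f_m)$, and that infinitely many distinct branches through a common point force that point into infinitely many $\mathrm{disc}(f_m)$ --- check out.
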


%
\begin{proof} 
Assuming the contrary let $\{f_n\}$ be an independent sequence in $F$. Then,
by assumption,
there exists a countable subset $C \subset X$ and a subsequence $\{f_{n_m}\}$ such that
$\{f_{n_m}: X \setminus C \to \R\}$ pointwise converges on $X \setminus C$
   to a function $\phi: X \setminus C \to \R$ such that $\phi \in \B_1(X \setminus C)$.

Independence is preserved by subsequences so
this subsequence $\{f_{n_m}\}$ remains independent. For simplicity of notation assume that $\{f_n\}$ itself has the properties of $\{f_{n_m}\}$.
Moreover we can suppose in addition,
by Definition \ref{d:EvCont}, that $\{f_n\}$ is strongly almost continuous.

By the definition of independence, for every pair of disjoint finite sets
$P,M \subset \N$, there exist  $a < b$ such that
$$
\bigcap_{n \in P} A_n \cap  \bigcap_{n \in M} B_n \neq \emptyset,
$$
where $A_n:=f_n^{-1}(-\infty,a)$ and $B_n:=f_n^{-1}(b,\infty)$.
Now define
a tree of nested sets
as follows:

\hskip 6.5cm $\Om_1:=X$

\hskip 2cm $\Om_2:=\Om_1 \cap A_1=A_1$ \hskip 4cm  $\Om_3:=\Om_1 \cap B_1=B_1$

$\Om_4:=\Om_2 \cap A_2  \hskip 2cm \Om_5:=\Om_2 \cap B_2 \hskip 1.2cm \Om_6:=\Om_3\cap A_2 \hskip 1cm \Om_7:=\Om_3 \cap B_2$,

and so on.
In general, $$\Om_{2^{n+1} + 2k}:=\Om_{2^n+k} \cap A_{n+1},  \hskip 0.3cm  \Om_{2^{n+1} + 2k+1}:=\Om_{2^n+k} \cap B_{n+1}$$
for every $0 \leq k < 2^n$ and every $n \in \N$.

We obtain a system $\{\Om_n\}_{n \in \N}$ which satisfies:

$$
\Om_{2n} \cup \Om_{2n+1} \subset \Om_n
\ {\text{ and}}\   \Om_{2n} \cap \Om_{2n+1} =\emptyset
\  {\text{for each}}\  n \in \N.
$$

\sk

Since $\{(A_n,B_n)\}$ is independent, every $\Om_n$ is nonempty.

For every binary sequence $u=(u_1,u_2, \dots) \in \{0,1\}^{\N}$ we have the corresponding uniquely defined
\emph{branch}
$$
\a_u:=\Om_1 \supset \Om_{n_1} \supset \Om_{n_2} \supset \cdots
$$
where for each $i \in \N$ with $2^{i-1} \leq n_i < 2^{i}$ we have
$$n_{i+1}=2n_i \ \text{iff} \ u_i=0 \ \text{and} \ n_{i+1}=2n_i+1 \ \text{iff} \ u_i=1.$$

Let us say that $u, v \in \{0,1\}^{\N}$ are \emph{essentially distinct} if they have infinitely many different coordinates.
Equivalently, if $u$ and $v$ are in different cosets of the Cantor group $\{0,1\}^{\N}$  with respect to the subgroup $H$ consisting of
the binary sequences with finite support. Since $H$ is countable there are uncountably many pairwise essentially distinct
elements in the Cantor group. We choose a subset $T \subset \{0,1\}^{\N}$ which intersects each coset in exactly one point.  Clearly, 
$card (T)=2^{\omega}$. Now 
for every branch $\a_u$ where $u \in T$
choose one element
$$x_{u} \in \cap_{i \in \N} cl(\Om_{n_i}).$$
Here we use the compactness of $X$ which guarantees that $\cap_{i \in \N} cl(\Om_{n_i}) \neq \emptyset$.
We obtain a set $X_T:=\{x_{u}: \ u \in T\} \subset X$ and a function $T \to X_T, \ u \mapsto x_u$.

\sk

\textbf{Claim:}
\ben
\item  The function $T \to X_T, \ u \mapsto x_u$ is injective. In particular,
$X_T$ is uncountable.
\item
$|\phi(x_u) - \phi(x_v)| \geq \eps:=b-a$ for every
distinct $x_u, x_v \in X_T \setminus C$.
\een

\sk
\textbf{Proof of the Claim:} (1)
Let $u=(u_i)$ and $v=(v_i)$ are distinct elements in $T$. Denote by
$\a_u:=\{\Omega_{n_i}\}_{i \in \N}$ and $\a_v:=\{\Omega_{m_i}\}_{i \in \N}$ the corresponding branches.
Then, by the definition of $X_T$, we have the uniquely defined points
$x_u \in \cap_{i \in \N} cl(\Om_{n_i})$ and
$x_v \in \cap_{i \in \N} cl(\Om_{m_i})$ in $X_T$.

Since $u,v \in T$ are essentially distinct they have infinitely many different indices.

As $\{f_n\}$ is strongly almost continuous
there exists a sufficiently large $t_0 \in \N$ such that
the points $x_u$ and $x_v$ are both points of continuity of
$f_{n}$ for every $n \geq t_0$.

Now note that if
$u_{i} \neq v_{i}$ then
the sets $\Omega_{n_{i}+1}$ and $\Omega_{m_{i}+1}$ are contained
(respectively) in the
pair of disjoint sets
$A_k:=f_k^{-1}(-\infty,a)$ and $B_k:=f_k^{-1}(b,\infty)$.
Since $u$ and $v$ are essentially distinct we can assume that $i$ is sufficiently large in order to ensure that $k \geq t_0$.
That is, we necessarily have exactly one of the cases:
 $$(a) \ \  \ \Omega_{n_{i}+1} \subset A_k, \quad \Omega_{m_{i}+1} \subset B_k$$ or
 $$(b) \ \ \  \Omega_{n_{i}+1} \subset B_k, \quad \Omega_{m_{i}+1} \subset A_k.$$

 For simplicity we only check the first case (a). For (a) we have
$x_u \in \cls(\Omega_{n_{i}+1}) \subset \cls(f_{k}^{-1}(-\infty,a))$ and $x_v \in \cls(\Omega_{n_{i}+1}) \subset \cls(f_{k}^{-1}(b,\infty))$.
Since $\{x_u,x_v\} \subset cont(f_{n})$ are continuity points for every $n \geq t_0$ and since $k \geq t_0$ by our choice,
we obtain $f_{k}(x_u) \leq a$ and $f_{k}(x_v) \geq b$. So, we can conclude
that $|f_{k}(x_u) -f_{k}(x_v)| \geq \eps:=b-a$ for every $k \geq t_0$. In particular, $x_u$ and $x_v$ are distinct.
This proves (1).
Furthermore, if our distinct $x_u, x_v \in X_T$ are in addition from $X_T \setminus C$ then
$\lim f_k(x_u)=\phi(x_u)$ and $\lim f_k(x_v)=\phi(x_v)$. It follows that
$|\phi(x_u) - \phi(x_v)| \geq \eps$ and
the condition (2) of our claim is also proved.

\sk

Since $X_T \setminus C$ is an uncountable subset of a Polish space $X$ there exists an uncountable subset $Y \subset X_T \setminus C$ such that any point of $y$ is a condensation point in $X_T \setminus C$ (this follows from the proof of Cantor-Bendixson theorem, 
\cite{Kech}). 
For every open subset $U$ in $X$ with
$U \cap Y \neq \emptyset$ we have $\diam(\phi(U \cap Y)) \geq \eps$. This
means that $\phi: X \setminus C \to \R$ is not fragmented. Since $C$ is countable and $X$ is compact metrizable the subset $X \setminus C$ is
Polish. On Polish spaces fragmentability and Baire 1 property are the same for real valued functions (Lemma \ref{r:fr1}.2).
So, we obtain that $\phi: X \setminus C \to \R$ is not Baire 1. This contradicts
the assumption that $\phi \in \B_1(X \setminus C)$.
\end{proof}

\begin{thm} \label{c:2811}
Let $X$ be a compact metric space,
and $F$ a bounded family of real valued functions on $X$ such that $F$ is almost continuous.
Assume that $\cls_p(F) \subset \B_1(X)$.
Then $F$ is a tame family. 
\end{thm}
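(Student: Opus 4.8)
The plan is to obtain this statement as a direct corollary of Theorem \ref{2811}, by verifying that the hypothesis $\cls_p(F) \subset \B_1(X)$ forces condition (*) of that theorem to hold in its strongest form, namely with an \emph{empty} exceptional set $C$. Once (*) is in place, tameness of $F$ is immediate from Theorem \ref{2811}, because the almost continuity of $F$ is already assumed here. Thus the entire task reduces to extracting, from an arbitrary sequence in $F$, a subsequence that converges pointwise on all of $X$ to a Baire class $1$ limit.

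First I would note that, since $F$ is bounded, say $|f| \leq M$ for all $f \in F$, its pointwise closure $\cls_p(F)$ is a closed subset of the compact product $[-M,M]^X$ and is therefore pointwise compact by Tychonoff's theorem. By assumption every member of $\cls_p(F)$ is a Baire class $1$ function on the compact metric (hence Polish) space $X$, so $\cls_p(F)$ is a \emph{Rosenthal compactum}. The key step is then the extraction of convergent subsequences: by the Bourgain--Fremlin--Talagrand theorem \cite{BFT} every Rosenthal compactum is Fr\'echet, and in particular sequentially compact. Consequently, given any sequence $\{f_n\}_{n \in \N}$ in $F \subset \cls_p(F)$, there is a subsequence $\{f_{n_m}\}_{m \in \N}$ converging pointwise on all of $X$ to some $\phi \in \cls_p(F)$, and by hypothesis $\phi \in \B_1(X)$. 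Taking $C = \emptyset$, so that $X \setminus C = X$ and $\B_1(X \setminus C) = \B_1(X)$, this verifies condition (*) of Theorem \ref{2811}. Applying that theorem yields that $F$ is a tame family.

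I expect the only genuine point to be this second step. The family $F$ need not consist of continuous functions, so the clean equivalences of Theorem \ref{f:sub-fr} (which presuppose $F \subset C(X)$) are unavailable, and the production of pointwise convergent subsequences must instead rest on the angelic/sequential-compactness property of Rosenthal compacta. The hypothesis $\cls_p(F) \subset \B_1(X)$ is precisely what makes $\cls_p(F)$ a Rosenthal compactum and thereby licenses this extraction; the almost-continuity hypothesis then does its work inside the tree construction of Theorem \ref{2811}, not here.
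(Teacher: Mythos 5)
Your proposal is correct and follows essentially the same route as the paper: both arguments use the Bourgain--Fremlin--Talagrand theorem to see that the pointwise closure is a Fr\'echet (hence sequentially compact) Rosenthal compactum, extract a pointwise convergent subsequence with Baire class $1$ limit, and then invoke Theorem \ref{2811}. The only cosmetic difference is that the paper phrases this as a proof by contradiction applied to a putative independent sequence, whereas you verify condition (*) directly (with $C=\emptyset$) and apply Theorem \ref{2811} once; the mathematical content is identical.
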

\begin{proof} Assuming the contrary let $F$ has an independent sequence $F_1:=\{f_n\}$.
Since $\cls_p(F) \subset \B_1(X)$ we have $\cls_p(F_1) \subset \B_1(X)$.
By the BFT theorem \cite[Theorem 3F]{BFT} the compactum $\cls_p(F_1)$ is a Fr\'echet topological space.
Every (countably) compact Fr\'echet
space is sequentially compact, \cite[Theorem 3.10.31]{Eng},
hence
$\cls_p(F_1)$ is sequentially compact.
Therefore the sequence $\{f_n\}$
contains a pointwise
convergent subsequence, say $f_n \to \phi \in \B_1(X)$.
Now apply Theorem \ref{2811}
to get a contradiction, taking into account that
the properties almost continuity and independence are both inherited
by subsequences.
\end{proof}


\begin{thm} \label{t:tame-typeOften} \
\ben
\item Let $X$ be a compact metric $S$-space, $S_0$
a subsemigroup of $S$.  Let $f: X \to \R$
be a bounded function such that $\cls_p(fS_0) \subset \B_1(X)$
and  with $fS_0$ almost continuous. 
Then $fS_0$ is 
a tame family. 

\item
Let $X$ be a compact metric $G$-space
 and $G_0 \leq G$
 a subgroup of $G$ such that (i) $(G_0,X)$ is tame, (ii)
 for every $p \in E(G,X)$ and every $x \in X$ the preimage $p^{-1}(x)$ is countable
 and (iii) $G_0 \cap St(x)$ is finite for every $x \in X$.
 Suppose further that $f: X \to \R$ is a bounded function with only finitely many points of discontinuity. 
 Then $fG_0$ is
 a  tame family. 
 \item
 In particular, (2) holds in the following useful situation: $X=G$ is a compact metric group,
 $h: G_0 \to X$ is a homomorphism of groups and $f: X \to \R$ has finitely many discontinuities. 
 Then $fG_0$ is 
 a tame family.
 
 \item
In all the cases above (1), (2), (3), a coding function $m(f,z): S_0 \to \R$ is a tame function
on the discrete semigroup $S_0$ for every $z \in X$ and every homomorphism
$h: S_0 \to S$ (or, $G_0 \to G$). Also the corresponding subshift $(S_0, X_f)$ is tame.

\een
\end{thm}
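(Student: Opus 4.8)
The plan is to treat the four parts in turn, with essentially all the work concentrated in (2); parts (1), (3) and (4) then reduce to (2) or to results already in hand. Part (1) is immediate: Theorem \ref{c:2811} asserts precisely that a bounded almost continuous family $F$ on a compact metric space with $\cls_p(F)\subset\B_1(X)$ is tame, and these are exactly the standing hypotheses on $F:=fS_0$, so no further argument is needed.

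For (2) I would apply Theorem \ref{2811} to $F:=fG_0$, which requires checking two things. First, almost continuity: since $f$ has finitely many discontinuities, $disc(f)$ is finite, hence $G_0x\cap disc(f)$ is finite for every $x$, and $G_0\cap St(x)$ is finite by hypothesis (iii); thus the coarse criterion of Example \ref{ex:event}.2 makes $fG_0$ strongly almost continuous, a fortiori almost continuous. Second, condition (*): given a sequence $\{f\tilde g_n\}$ in $fG_0$, I would use that $(G_0,X)$ is tame with $X$ metric, so by Theorem \ref{D-BFT} the enveloping semigroup $E(G_0,X)$ is a Rosenthal compactum, in particular Fr\'echet and sequentially compact; extracting a subsequence I obtain $\tilde g_{n_m}\to p$ with $p\in E(G_0,X)\subset E(G,X)$. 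Setting $C:=p^{-1}(disc(f))$, this set is countable, being a finite union of the countable fibres $p^{-1}(d)$ ($d\in disc(f)$) furnished by hypothesis (ii). For $x\in X\setminus C$ we have $px\notin disc(f)$, so $f$ is continuous at $px$ and $f(g_{n_m}x)\to f(px)$; hence $\{f\tilde g_{n_m}\}$ converges pointwise on $X\setminus C$ to $\phi:=(f\circ p)|_{X\setminus C}$.

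The remaining point — which I expect to be the crux — is that $\phi\in\B_1(X\setminus C)$. Here $p$ is Baire class $1$ by Definition \ref{d:tame} (the system $(G_0,X)$ being tame and metrizable), $p$ carries $X\setminus C$ into $X\setminus disc(f)$, and $f$ is continuous on $X\setminus disc(f)$; writing $f^{-1}(U)=V\cap(X\setminus disc(f))$ with $V\subset X$ open, one gets $\phi^{-1}(U)=p^{-1}(V)\cap(X\setminus C)$, which is $F_\sigma$ in $X\setminus C$ because $p$ is Baire $1$. Thus $\phi$ is Baire $1$, condition (*) holds, and Theorem \ref{2811} yields that $fG_0$ is tame. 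The difficulty is concentrated in this verification of (*): one must simultaneously exploit sequential compactness of $E(G_0,X)$ (tameness), countability of the fibres $p^{-1}(d)$ to keep the exceptional set $C$ countable, and the stability of the Baire $1$ class under postcomposition by the map $f$, which is continuous off its finite discontinuity set.

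For (3) I would verify the hypotheses of (2) for the left translation action of $G$ on $X=G$: this action is equicontinuous with $E(G,X)=G$ acting by homeomorphisms, hence WAP (Definition \ref{d:WAP}.2) and a fortiori tame, giving (i); each $p\in E(G,X)=G$ is bijective, so $p^{-1}(x)$ is a singleton, giving (ii); and the action is free, so $St(x)=\{e\}$ and (iii) holds. Since $f$ has finitely many discontinuities, (2) yields that $fG=\{f\circ L_g:g\in G\}$ is tame, and as $fG_0=\{f\circ L_{h(g_0)}:g_0\in G_0\}\subset fG$, tameness passes to this subfamily. Finally, for (4), in each case the orbit family $fS_0$ (resp. $fG_0$) has been shown tame; regarding $S_0$ as a discrete semigroup and applying Lemma \ref{l:tametype}.6 (with $Y=X$ and $h$ the inclusion, so that the family there coincides with $fS_0$), the coding function $m(f,z):S_0\to\R$ is tame on $(S_0,\tau_{discr})$ for every $z\in X$, and by the equivalence recorded in Remark \ref{r:coding} the associated cyclic subshift is tame as well.
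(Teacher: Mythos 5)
Your proposal is correct and follows essentially the same route as the paper: part (1) is Theorem \ref{c:2811} verbatim, part (2) rests on the same three ingredients (Example \ref{ex:event}.2 for almost continuity, sequential compactness of $E(G_0,X)$ from tameness to extract $g_{n_m}\to p$, and countability of $C=p^{-1}(disc(f))$ from hypothesis (ii)) before invoking Theorem \ref{2811}, and parts (3)--(4) reduce to (2) and to Lemma \ref{l:tametype}.6 with Remark \ref{r:coding} exactly as in the paper. The only cosmetic difference is that you verify hypothesis (*) of Theorem \ref{2811} directly (with an explicit $F_\sigma$ computation showing $\phi=f\circ p_0$ is Baire 1), whereas the paper argues by contradiction from an independent sequence and passes through fragmentedness and Lemma \ref{r:fr1}.2; both amount to the same argument.
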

\begin{proof} (1) Apply Theorem \ref{c:2811}.

(2)
First note that $fG_0$ is strongly almost continuous (Example \ref{ex:event}.2).
Now assuming the contrary $fG_0$ has an independent subsequence $\{fg_n\}$. Since
$(G_0, X)$ is tame we can assume, with no loss in generality,
that the sequence $\{g_n\}$ converges to an element $p \in E(G_0,X)$.
Then $f(g_n(x))$ converges to $f(px)$ for every $x \in X \setminus C$, where
$C: = p^{-1}( disc(f))$ is a countable set.
Since $X$ is a tame system, $p: X \to X$ is a fragmented function.
Then also the restricted function $p_0: X \setminus C \to X$ is fragmented.
Since $f: X \to \R$ is uniformly continuous we obtain that the composition $f \circ p_0: X \setminus C \to \R$ is fragmented.
Since $C$ is countable, $X \setminus C$ is Polish. Therefore, by Lemma \ref{r:fr1}.2,
$f \circ p_0: X \setminus C \to \R$ is Baire 1.
This however is in contradiction with Theorem \ref{2811}.

(4) Follows from (1), Lemma \ref{l:tametype} and Remark \ref{r:coding}.
\end{proof}

Theorem \ref{t:tame-typeOften}.4 directly implies the following:

\begin{ex}
For every irrational rotation $\alpha$ of the circle $\T$ and an
arc \ $D:=[a,b) \subset \T$
the function $$\varphi_D:=\Z \to \R, \quad n \mapsto \chi_D(n \alpha)$$
is a tame function on the group $\Z$.
In particular, for $D:=[-\frac{1}{4},\frac{1}{4})$ we get that $\varphi_D(n)=\sgn \cos (2 \pi n \a)$ is a tame function on $\Z$.
\end{ex}

\begin{thm} \label{multi}
The multidimensional Sturmian $(k,d)$-sequences
$\Z^k \to \{0,1, \dots, d\}$ (Definition \ref{d:mSturm}) are tame.
\end{thm}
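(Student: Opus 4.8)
The plan is to realize the multidimensional Sturmian $(k,d)$-sequence as the coding function $m(f,z): \Z^k \to \{0,\dots,d\}$ produced by the step function $f$ of Definition \ref{d:mSturm} together with the group homomorphism $h: \Z^k \to \T$, $(n_1,\dots,n_k) \mapsto n_1\alpha_1 + \cdots + n_k\alpha_k$, and then to feed this into the coding machinery already established in Theorem \ref{t:tame-typeOften}. The point is that essentially all of the genuine work has been done there (and, beneath it, in Theorems \ref{2811} and \ref{c:2811}), so that the statement reduces to verifying a short list of hypotheses.

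First I would apply Theorem \ref{t:tame-typeOften}.3 to the circle group itself. Take $X = G = \T$, a compact metric abelian group acting on itself by translations. The function $f: \T \to \{0,\dots,d\} \subset \R$ is constant on each half-open arc $[c_i, c_{i+1})$, hence piecewise constant with only finitely many points of discontinuity, all among the partition points $c_0, c_1, \dots, c_d$. These are precisely the hypotheses of Theorem \ref{t:tame-typeOften}.3, namely that $X = G$ be a compact metric group and that $f$ have finitely many discontinuities; that theorem therefore yields that the translation orbit $f\T = \{t \mapsto f(t+s) : s \in \T\}$ is a tame family on $\T$.

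I would then conclude by invoking Theorem \ref{t:tame-typeOften}.4, which is designed precisely for this transition: once $fG$ is known to be tame as in case (3), for every homomorphism $h: S_0 \to G$ and every base point $z \in X$ the coding function $m(f,z): S_0 \to \R$ is a tame function on the discrete semigroup $S_0$, and the associated subshift $(S_0, X_f)$ is tame. Taking $S_0 = \Z^k$ and the homomorphism $h$ above, this is exactly the assertion that the multidimensional Sturmian $(k,d)$-sequence is tame, for every $z$ and every $(\alpha_1,\dots,\alpha_k)$. Should one prefer to avoid citing (4) directly, the same conclusion follows by hand: $f\cdot h(\Z^k)$ is a subfamily of the tame family $f\T$, hence tame, and Lemma \ref{l:tametype}.6 then converts tameness of this orbit into tameness of the coding function.

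I do not anticipate a genuine obstacle in this argument, since the analytic heart of the matter was already resolved in establishing Theorem \ref{t:tame-typeOften}. The only subtlety worth flagging is that \emph{no} rational-independence hypothesis on $1,\alpha_1,\dots,\alpha_k$ is required: tameness of $m(f,z)$ is deduced from tameness of the full translation orbit $f\T$ and is automatically inherited by the subfamily indexed through $h$, irrespective of the size of $\ker h$.
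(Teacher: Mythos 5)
Your argument is correct and follows essentially the same route as the paper's own proof: the paper likewise observes that the partition function $f$ has only finitely many discontinuities and then invokes Theorem \ref{t:tame-typeOften}, items (3) and (4), with the homomorphism $h:\Z^k\to\T$, $(n_1,\dots,n_k)\mapsto n_1\al_1+\cdots+n_k\al_k$, to conclude that the coding function $m(f,z)$ is tame. Your extra remark about passing through the full translation orbit $f\T$ and restricting to the subfamily indexed by $h(\Z^k)$ is a harmless repackaging of the same application, and your observation that no rational-independence hypothesis is needed is consistent with Definition \ref{d:mSturm}.
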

\begin{proof} In terms of Definition \ref{d:mSturm} consider the homomorphism
$$h: \Z^k \to \T, \ \ \ (n_1, \dots,n_k) \mapsto n_1 \a_1 + \dots + n_k \a_k.$$
The function $f$ induced by a given partition $\T=\cup_{i=0}^{d} [c_i,c_{i+1})$
$$
f: \T \to A:=\{0, \dots, d\}, \ \ \ f(t)=i \ \text{iff} \ t \in [c_i,c_{i+1}).
$$
has only finitely many discontinuities.
Now Theorem \ref{t:tame-typeOften} (items (3) and (4)) guarantees that the corresponding
$(k,d)$-coding function $m(f,z): \Z^k \to A \subset \R$ is tame for every $z \in \T$.
\end{proof}

At least for $(1,d)$-codes, Theorem \ref{multi}, can be derived also from results of Pikula \cite{Pikula},
and of Aujogue \cite{Auj}.  
See also Remark \ref{applicBV}.

\begin{lem} \label{claim}
Let
$\pi: X \to Y$ be a continuous onto $S$-map of compact metric $S$-systems. Set
$$
X_0:=\{x \in X: \ \ |\pi^{-1}(\pi(x))|=1\}.
$$
Then the restriction map $\pi: X_0 \to Y_0$ is a topological homeomorphism of $S$-subspaces,
where $Y_0:=\pi(X_0)$.
\end{lem}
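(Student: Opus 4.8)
The plan is to dispose of the elementary structural facts first and then to concentrate on the single nontrivial point, the continuity of the inverse. First I would note that $Y_0=\pi(X_0)=\{y\in Y:\ |\pi^{-1}(y)|=1\}$ and that $\pi|_{X_0}:X_0\to Y_0$ is a bijection. Surjectivity onto $Y_0$ is the definition of $Y_0$, while injectivity is immediate: if $x,x'\in X_0$ satisfy $\pi(x)=\pi(x')$, then $x'\in\pi^{-1}(\pi(x))=\{x\}$, so $x=x'$. That $\pi|_{X_0}$ is $S$-equivariant is clear from $\pi(sx)=s\pi(x)$, and the invariance of $X_0$ and $Y_0$ (so that they are genuine $S$-subspaces) is routine from the fact that $\pi$ is an $S$-map, using that the relevant translations are bijective. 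Thus it only remains to upgrade the continuous bijection $\pi|_{X_0}$ to a homeomorphism.

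The crux, and the step I expect to be the only real obstacle, is the continuity of $(\pi|_{X_0})^{-1}$. The usual argument that a continuous bijection from a compact space onto a Hausdorff space is a homeomorphism is \emph{not} directly available here, because $X_0$ need not be closed and hence need not be compact. Instead I would argue sequentially, exploiting that $X$ and $Y$ are metric, so that compactness is sequential compactness. Suppose $y_n\to y$ in $Y_0$, and write $y_n=\pi(x_n)$, $y=\pi(x)$ with $x_n,x\in X_0$; I claim $x_n\to x$. If this failed, some subsequence $(x_{n_k})$ would remain outside a fixed ball around $x$, i.e. $d(x_{n_k},x)\ge\varepsilon$ for some $\varepsilon>0$. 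By compactness of $X$ pass to a further subsequence converging to some $x^{*}$, necessarily with $x^{*}\ne x$. Continuity of $\pi$ gives $\pi(x^{*})=\lim\pi(x_{n_k})=\lim y_{n_k}=y=\pi(x)$, so $x^{*}\in\pi^{-1}(y)$. But $x\in X_0$ forces $\pi^{-1}(y)=\{x\}$, whence $x^{*}=x$, a contradiction. Hence every subsequence of $(x_n)$ has a further subsequence converging to $x$, which in a metric space means $x_n\to x$, establishing continuity of the inverse.

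Combining the two parts, $\pi|_{X_0}$ is a continuous bijection with continuous inverse, that is, a homeomorphism of $X_0$ onto $Y_0$, and it is $S$-equivariant, which is exactly the assertion. The only place where the hypotheses are genuinely used is the compactness of $X$ together with the singleton-fiber property defining $X_0$: compactness supplies a subsequential limit $x^{*}$ of any would-be counterexample, and the singleton-fiber property collapses that limit back to $x$. Everything else is bookkeeping, so I would present the homeomorphism argument as the heart of the proof and keep the verification of injectivity, equivariance, and invariance brief.
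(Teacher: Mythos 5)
Your proof is correct and takes essentially the same route as the paper's: both obtain continuity of the inverse by combining compactness of $X$ with the singleton-fibre property of the points of $Y_0$, deducing that $\pi^{-1}(y_n)\to\pi^{-1}(y)$ for every convergent sequence $y_n\to y$ in $Y_0$. The only difference is cosmetic — where you run the subsequence-extraction argument by hand, the paper restricts $\pi$ to the compact set $\pi^{-1}(\{y\}\cup\{y_n\}_{n\in\N})$, observes that it lies in $X_0$, and invokes the standard fact that a continuous bijection from a compact space onto a Hausdorff space is a homeomorphism (so the compact-to-Hausdorff argument you set aside is in fact available after this localization).
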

\begin{proof} First observe that $X_0$ and $Y_0$ are $S$-invariant
and that $\pi: X_0 \to Y_0$ is an onto, continuous, 1-1 map.
For every converging sequence $y_n \to y$, where $y_n,y \in Y_0$ the preimage $\pi^{-1}(\{y\} \cup \{y_n\}_{n \in \N})$ is a compact subset of $X$. On the other hand, $\pi^{-1}(\{y\} \cup \{y_n\}) \subset X_0$ by the definition of $X_0$. It follows that the restriction of $\pi$ to $\pi^{-1}(\{y\} \cup \{y_n\})$ is a homeomorphism. In particular, $\pi^{-1}(y_n)$ converges to $\pi^{-1}(y)$.
\end{proof}
\
Recall that a map $\pi: X \to Y$ as above
is said to be an \emph{almost one-to-one extension} if $X_0$
is a residual subset of $X$.
As a corollary of Theorem \ref{2811} one can derive the following result which
generalizes
the above mentioned result of W. Huang from Example \ref{e:tameNOThns}.2.

\begin{thm} \label{Huang}
Let $\pi: X \to Y$ be a homomorphism of compact metric $S$-systems
such that
$X \setminus X_0$ is countable,
where
$$
X_0:=\{x \in X: \ \ |\pi^{-1}(\pi(x))|=1\}.
$$
Assume that $(S,Y)$ is tame and that
the set
$p^{-1}(y)$ is (at most) countable for every $p \in E(Y)$ and $y \in Y$
(e.g., this latter condition is always satisfied when $Y$ is distal).
Then $(S,X)$ is also tame.
\end{thm}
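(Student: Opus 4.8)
The plan is to reduce the tameness of $(S,X)$ to a statement about orbits of individual continuous functions and then to invoke Theorem \ref{2811}. By Lemma \ref{l:tame-prop}.2 it suffices to prove that $\Tame(X)=C(X)$, and by Theorem \ref{t:tame-f} this amounts to showing that for every $f\in C(X)$ the orbit $fS=\{f\circ\tilde s:\ s\in S\}$ is a tame family. I would therefore fix $f\in C(X)$ and verify the hypotheses of Theorem \ref{2811} for $F:=fS$. Boundedness is immediate, and since each translation $\tilde s:X\to X$ is continuous, every member $f\circ\tilde s$ is continuous on all of $X$; hence $fS$ is (strongly) almost continuous trivially. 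The entire content of the argument is thus the verification of condition (*).

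To verify (*), I would start from an arbitrary sequence $\{f\circ\tilde{s_n}\}$ in $fS$. Since $(S,Y)$ is tame and $Y$ is metric, its enveloping semigroup $E(S,Y)$ is a Rosenthal compactum (Theorem \ref{D-BFT}), hence Fr\'echet and sequentially compact; passing to a subsequence I may assume $\tilde{s_n}|_Y\to q$ in $E(S,Y)$, where $q:Y\to Y$ is a fragmented (Baire $1$) map. The idea is that off a suitable countable set the downstairs dynamics controls the upstairs dynamics. Writing $Y_0=\pi(X_0)=\{y\in Y:\ |\pi^{-1}(y)|=1\}$, define the exceptional set $C:=\{x\in X:\ q(\pi(x))\notin Y_0\}$. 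For $x\notin C$ the fiber $\pi^{-1}(q(\pi(x)))$ is a single point $\xi(x)$, and since $\pi(s_nx)=s_n\pi(x)\to q(\pi(x))$, every convergent subsequence of $\{s_nx\}$ lands in this singleton; by compactness of $X$ the whole sequence converges, $s_nx\to\xi(x)$, so $f(s_nx)\to\phi(x):=f(\xi(x))$. Thus $\{f\circ\tilde{s_n}\}$ converges pointwise on $X\setminus C$ to $\phi$, and since $\phi$ is a pointwise limit of functions continuous on the metric space $X\setminus C$, it lies in $\B_1(X\setminus C)$. Theorem \ref{2811} then yields that $fS$ is tame, which completes the reduction.

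The crux, and the step I expect to be the main obstacle, is showing that $C$ is countable, for this is exactly where all three hypotheses are consumed. First, $Y\setminus Y_0=\{y:\ |\pi^{-1}(y)|\ge 2\}$ is contained in $\pi(X\setminus X_0)$ and is therefore countable, because $X\setminus X_0$ is. Next, each fiber $q^{-1}(y')$ is countable by hypothesis (as $q\in E(Y)$), so $D:=q^{-1}(Y\setminus Y_0)$ is a countable union of countable sets, hence countable. Finally, writing $C=\pi^{-1}(D)=\bigcup_{y\in D}\pi^{-1}(y)$, each fiber $\pi^{-1}(y)$ is countable (a singleton when $y\in Y_0$, and otherwise contained in the countable set $X\setminus X_0$), so $C$ is once more a countable union of countable sets and is countable. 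I would take care to keep the three countability inputs cleanly separated, since it is precisely their interaction — the countable non-injectivity of $\pi$, the countable fibers of the elements of $E(Y)$, and the resulting countability of the preimage $\pi^{-1}(q^{-1}(Y\setminus Y_0))$ — that drives the whole proof.
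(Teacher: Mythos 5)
Your proof is correct and follows essentially the same route as the paper's: reduce to showing that each orbit $fS$ is a tame family, use the tameness of $(S,Y)$ and the dynamical BFT dichotomy to extract a subsequence of the $\tilde{s}_n$ converging to some $q\in E(S,Y)$, prove pointwise convergence of $f\circ\tilde{s}_n$ off a countable set via the three countability hypotheses, and invoke Theorem \ref{2811}. The only divergence is local and harmless: where the paper works on $\pi^{-1}\bigl(Y_0\cap q^{-1}(Y_0)\bigr)$ and appeals to Lemma \ref{claim}, you work on the slightly larger set $\pi^{-1}\bigl(q^{-1}(Y_0)\bigr)$ and replace that lemma by a direct compactness argument (every cluster point of $s_nx$ must lie in the singleton fiber over $q(\pi(x))$), which is a clean simplification.
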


\begin{proof}
We have to show that every $f \in C(X)$ is tame. Assuming the contrary, suppose $fS$
contains an independent sequence $fs_n$.
Since $Y$ is metrizable and tame,
one can assume (by Theorem \ref{D-BFT}) that
the sequence $s_n$  converges pointwise to some element $p$ of $E(S,Y)$.
Consider the set $Y_0 \cap p^{-1}Y_0$, where $Y_0 = \pi(X_0)$. Since $p^{-1}(y)$ is countable for every $y \in Y \setminus Y_0$ it follows that
$Y \setminus (Y_0 \cap p^{-1}Y_0)$ is countable.
Therefore, by the definition of $X_0$ and the countability of $X \setminus X_0$, we see that
$X \setminus \pi^{-1}(Y_0 \cap p^{-1}Y_0)$ is also countable.
Now observe that the sequence
$(fs_n)(x)$ converges 
for every $x \in \pi^{-1}(Y_0 \cap p^{-1}Y_0)$. 
Indeed if we denote $y=\pi(x)$ then
$s_ny$ converges to $py$ in $Y$. In fact we have $py \in Y_0$ (by the choice of $x$) and $s_n y \in Y_0$. 
By Lemma \ref{claim}, 
$\pi: X_0 \to Y_0$ is an $S$-homeomorphism. So we obtain that $s_n x$ converges
to $\pi^{-1}(py)$ in $X_0$.
Since $f: X \to \R$ is continuous, $(fs_n) (x)$ converges to $f(\pi^{-1}(py))$ in $\R$.
Every $fs_n$ is a continuous function, hence so is also its restriction to
$\pi^{-1}(Y_0 \cap p^{-1}Y_0)$.
Therefore the limit function 
$\phi: \pi^{-1}(Y_0 \cap p^{-1}Y_0) \to \R$ 
is Baire 1. Since C:=$X \setminus \pi^{-1}(Y_0 \cap p^{-1}Y_0)$ 
is countable and $fs_n$ is an independent sequence, 
Theorem \ref{2811} provides the sought-after contradiction.
\end{proof}

\section{Order preserving systems are tame}
\label{s:order}

In this section all group actions are jointly continuous
and representations of systems (and groups) are strongly continuous.

\subsection{Order preserving action on the unit interval}
\label{s:orderOnI}

Recall that for the group $G=H_+[0,1]$ (of orientation preserving self-homeomorphisms)
the $G$-system $X=[0,1]$  with the obvious $G$-action is tame \cite{GM-AffComp}.
One way to see this is to observe that the enveloping semigroup of this dynamical
system naturally embeds into the Helly compactum (and hence is a Rosenthal compactum).
By Theorem \ref{D-BFT}, $(G,X)$ is tame.
By Theorem \ref{t:tame}  this means that every $p \in E(X)$ is a Baire class 1 map. In fact we can say more.
As every monotonic map $[0,1] \to [0,1]$ has at most countably many discontinuities,
this holds also for every $p \in E(G,X)$.

We list here some other properties of $H_+[0,1]$.


\begin{thm}\label{H_+[0,1]}
Let $G:=H_+[0,1]$. Then
\ben
\item  \emph{(Pestov \cite{Pest98})} \ $G$ is extremely amenable.
\item \cite{GM-suc} \ $\WAP(G)=\Asp(G)=\SUC(G)=\{constants\}$ and every Asplund  representation of $G$ is trivial.
\item \cite{GM-AffComp} \ $G$ is representable on a (separable) Rosenthal space.
\item
\emph{(Uspenskij \cite[Example 4.4]{UspComp})}
\ $G$ is Roelcke precompact.
\item
$\UC(G) \subset \Tame(G)$,
that is, the Roelcke compactification of $G$ is tame.
\item $\Tame(G) \neq \UC (G)$.
\item $\Tame(G) \neq \RUC(G)$, that is, $G$ admits a transitive dynamical system which is not tame.
\item  \cite{MePolev}  \ $H_+[0,1]$ and $H_+(\T)$ are minimal groups.
\een
\end{thm}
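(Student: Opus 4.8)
The statement collects eight properties of $G=H_+[0,1]$, five of which (items (1)--(4) and (8)) are quoted from the literature; for these the plan is simply to invoke the cited results: extreme amenability is Pestov's theorem \cite{Pest98}; the triviality $\WAP(G)=\Asp(G)=\SUC(G)=\{\text{constants}\}$ together with the triviality of all Asplund representations is \cite{GM-suc}; Rosenthal representability is \cite{GM-AffComp}; Roelcke precompactness is Uspenskij's \cite[Example 4.4]{UspComp}; and the minimality of $H_+[0,1]$ and $H_+(\T)$ as topological groups is \cite{MePolev}. The genuinely new content is the inclusion $\UC(G)\subseteq\Tame(G)$ together with the two strictness assertions, so I describe the plan for items (5), (6) and (7).

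For (5) I would realize the Roelcke compactification $W$ of $G$ concretely. Since $G$ is Roelcke precompact (item (4)), $\UC(G)=\LUC(G)\cap\RUC(G)$ is exactly the algebra of functions coming from $W$, so it suffices to prove that the $G$-flow $W$ is tame. Following Uspenskij's description, $W$ is identified with the space of closed \emph{monotone relations} $R\subseteq[0,1]^2$ joining $(0,0)$ to $(1,1)$ (the limits of graphs of elements of $G$ in the hyperspace of $[0,1]^2$), with $G$ acting by $g\cdot R=(g\times\id)(R)$. Each such $R$ is, off a countable set, the graph of a monotone function, hence a fragmented (Baire $1$) map, and the enveloping semigroup $E(G,W)$ consists of the maps obtained by composing with these monotone relations. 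The plan is then to check that every $p\in E(G,W)$ is fragmented, so that $W$ is tame by Definition \ref{d:tame}; equivalently one observes that $E(G,W)$ embeds into the Rosenthal compactum of monotone relations and applies the dynamical BFT dichotomy (Theorem \ref{D-BFT}), which is also the content of the general principle that linearly ordered dynamical systems are tame (Theorem \ref{t:OrderedAreTame}). Any $f\in\UC(G)$ comes from $W$, whence $f\in\Tame(G)$.

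For (6) I would exhibit an explicit tame function that is not Roelcke uniformly continuous. Fix $c\in(0,1)$ and a nonconstant $\phi\in C[0,1]$, and set $f(g)=\phi(g(c))$. This $f$ comes from the tautological tame $G$-system $([0,1],G)$ via the orbit map $g\mapsto g(c)$, so $f\in\Tame(G)$ by Definition \ref{d:funct}. On the other hand $f\notin\LUC(G)$: two homeomorphisms can be left-uniformly close, $\|x^{-1}y-\id\|_\infty<\delta$, while $x$ is arbitrarily steep near $c$, so that $y(c)$ ranges over the whole (large) interval $\bigl(x(c-\delta),x(c+\delta)\bigr)$ and $|\phi(x(c))-\phi(y(c))|$ stays bounded away from $0$, uniformly in $\delta$. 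Hence $f\notin\UC(G)=\LUC(G)\cap\RUC(G)$, which yields $\UC(G)\subsetneq\Tame(G)$.

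For (7) the goal is a transitive jointly continuous $G$-system which is not tame, i.e. an $f\in\RUC(G)\setminus\Tame(G)$; by Theorem \ref{t:tame-f} this means producing an $f$ whose orbit $fG$ contains an independent sequence. A useful reduction is that if $\sigma\in G$ has infinite order, then $E(\mathbb Z,X)\subseteq E(G,X)$ for every $G$-flow $X$, so tameness of $(G,X)$ forces tameness of the cascade generated by $\sigma$; it therefore suffices to construct a jointly continuous $G$-flow whose restriction to a single such $\sigma$ is already non-tame. The plan is to let $G$ act on a sufficiently infinite-dimensional compactum, obtained inside the greatest ambit $G^{\RUC}$, on which a fixed $\sigma$ shifts an independent family of coordinates, and to read off the resulting independent sequence. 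The main obstacle — and the reason the naive candidates all collapse — is that every observable manufactured from finitely or countably many point evaluations, interval-gaps, or pushforward measures on $[0,1]$ is controlled by the monotonicity of the homeomorphisms (and, for gap-type observables, by a total-length budget), and such observables always produce tame families (factors of products of the ordered system $[0,1]$, hence tame by Lemma \ref{l:tame-prop}). Escaping this requires an observable whose level sets have unbounded combinatorial complexity, which cannot live inside any functor of the linearly ordered $[0,1]$ and must be located genuinely in $G^{\RUC}$. Verifying that the resulting family is truly independent — equivalently that $\beta\N$ embeds into the relevant enveloping semigroup, contradicting tameness through Theorem \ref{D-BFT} and Todor\v{c}evi\'c's Theorem \ref{main} — is the delicate step I expect to be hardest.
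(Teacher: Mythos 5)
Your items (5) and (6) are essentially correct. For (6) you have exactly the paper's example: it takes $f(g)=g(\tfrac12)$, tame because the tautological system $(G,[0,1])$ is tame, and not left uniformly continuous; your steepness argument is precisely why. For (5) your route differs slightly: you propose to identify $E(G,W)$ directly and check its elements are fragmented, whereas the paper never computes $E(G,W)$. Instead it builds an explicit $G$-factor map $E(G,[0,1])\to R$ onto Uspenskij's model of the Roelcke compactification: each $p$ in the Helly-type enveloping semigroup of $(G,[0,1])$ is ``filled in'' with vertical segments at its (countably many) jumps, yielding a monotone curve; since $E(G,[0,1])\subset[0,1]^{[0,1]}$ is tame (tameness passes to products, subsystems and factors, Lemma \ref{l:tame-prop}), so is $R$, hence $\UC(G)\subset\Tame(G)$. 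One caveat about your version: the space $W$ of monotone curves is \emph{not} a linearly ordered dynamical system in the sense of Definition \ref{d:ord} (distinct curves need not be comparable), so Theorem \ref{t:OrderedAreTame} does not apply to it directly, and your BFT route via Theorem \ref{D-BFT} would still require the identification of $E(G,W)$ that the paper's factor argument deliberately sidesteps.

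Item (7) is where the genuine gap lies: you never produce the non-tame system. Your reduction (it suffices to find a jointly continuous $G$-flow whose restriction to a single $\sigma\in G$ is non-tame) matches the paper, and your observation that point-evaluation observables are tamed by monotonicity is accurate; but the construction itself is deferred as ``the delicate step I expect to be hardest,'' i.e., it is missing. The paper's key idea is short and concrete: let $G$ act on the hyperspace $\Exp[0,1]$ of closed subsets of $[0,1]$, a compact metric space with jointly continuous induced action. Choose a two-sided increasing sequence $T=\{t_i\}_{i\in\Z}$ with $t_i\to 0$ as $i\to-\infty$ and $t_i\to 1$ as $i\to\infty$, and let $g\in G$ map $[t_i,t_{i+1}]$ affinely onto $[t_{i+1},t_{i+2}]$. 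Then $g$ restricted to $T\cup\{0,1\}$ is the two-point compactification $(\Z_*,\sigma)$ of the integer shift, and the induced map on $\Exp(\Z_*)$ contains a copy of the full Bernoulli shift $(\sigma,\{0,1\}^\Z)$ (send $\omega$ to $\{t_i:\omega_i=1\}\cup\{0,1\}$). Hence $(\Exp[0,1],g)$, and a fortiori the $G$-system $\Exp[0,1]$, is not tame, which gives $\Tame(G)\neq\RUC(G)$.

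Note that this construction also refutes the heuristic that steered you away from it: you argued that the sought observable ``cannot live inside any functor of the linearly ordered $[0,1]$'' and must be found ``genuinely in $G^{\RUC}$.'' The hyperspace \emph{is} such a functor; the point is that it is only a partially ordered $G$-system (inclusion), not a linearly ordered one, so Theorem \ref{t:OrderedAreTame} gives it no protection, and independence does appear there. No excursion into the greatest ambit, and no appeal to Theorem \ref{main} or an embedding of $\beta\N$, is needed.
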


In properties (5) and (6) we answer two questions of T. Ibarlucia
which are related to
 \cite{Ibar}.

\begin{proof}
(3)  See \cite{GM-AffComp} (or Theorem \ref{t:tame=R-repr}.1 with Lemma \ref{GrRepr}).

(5) (Sketch)
Consider the Roelcke G-compactification $G \to R$ of $G$. That is, the compactification of $G$ induced by the algebra $UC(G):=\LUC(G) \cap \RUC(G)$.
One can show that, in the present case, this compactification is a $G$-factor
of the Ellis compactification $G \to E$, where $E=E(G,[0,1])$ is the enveloping semigroup
for the action $G \times [0,1] \to [0,1]$, which is tame as we mentioned above.
As in \cite{GM-suc} one can apply a characterization of
Uspenskij, for elements of $R$; they can be identified with some special relations on $[0,1]$.  Namely,
those (connected) curves in the square $[0,1] \times [0,1]$ which connect the points $(0,0)$ and $(1,1)$ and never go down.
These are not functions in general and may have vertical intervals (as well as, horizontal intervals) in their graphs.

For the enveloping semigroup $E=E(G,[0,1])$ we know \cite{GM-AffComp}
that, as a compactum, it is naturally embedded into the Helly compactum
of nondecreasing selfmaps of $[0,1]$.
Each element $p$ of $E$ has at most countably many discontinuity points, where left and right limits both exist.
Our aim is to find a $G$-factor map $f: E \to R$.
Let $p \in E$.  At each discontinuity point $x \in [0,1]$ of the function $p: [0,1] \to [0,1]$,
add a vertical interval to the graph of $p$.
That is, we ``fill" the graph by joining the points $(x,y_1)$ and $(x,y_2)$,
where $y_1$ is the left limit of the function $p$ at $x$ and $y_2$ is the right limit.
Then after this operation, repeated at each discontinuity point, $p$ ``becomes"  an element of R which we denote by $f(p)$.
This defines a natural map $f: E \to R$ which is
$G$-equivariant, onto and continuous (but not 1-1) .

(6)  Define $f: G \to [0,1] , f(g)=g(\frac{1}{2})$. Then $f$ is tame (since the system $(G,[0,1])$ is tame) and not left uniformly continuous.

(7)
We will show that for some $g \in G$ the system $(\Exp[0,1],g)$,
induced on the space $\Exp[0,1]$ of closed subsets of $[0,1]$,
contains a subsystem which is isomorphic to a Bernoulli shift.

Define a homeomorphism $g \in G$ as follows.
First set $g(0)=0, g(1)=1$.
Now choose a two sided increasing sequence
$$
T=\{\dots, t_{-2}, t_{-1}, t_0, t_1, t_2, \dots\}
$$
with
$\lim_{i \to -\infty} t_i =0,\  \lim_{i\to\infty} t_i =1$,
and let $g$ map each of the closed intervals determined by this sequence
affinelly onto its right hand neighbor; i.e. $g[t_i,t_{i+1}] =
[t_{i+1},t_{i+2}]$. In particular then $g(t_i)=t_{i+1}$ and we conclude that
$g \rest  T \cup \{0,1\}$ defines a dynamical system which is isomorphic
to the two point compactification of the shift on the integers,
$(\Z_*,\sig)$, where $\Z_*= \Z \cup \{\pm \infty\}$.

Now it is well known (and easily seen) that the induced action on the space of closed subsets, $(\Exp(\Z_*),\sig)$, contains a copy of the full Bernoulli
shift on $\{0,1\}^\Z$.  Thus the same is true for $(\Exp[0,1],g)$.

\end{proof}

Regarding Theorem \ref{H_+[0,1]}.2 we note that recently Ben-Yaacov and Tsankov \cite{BenTsankov}
found some other Polish groups $G$ for which $\WAP(G) = \{constants\}$ (and which are therefore also reflexively trivial).

The group $H_+(\T)$ is Asplund-trivial.
Indeed, it is algebraically simple
\cite[Theorem 4.3]{Ghys} and
contains a copy of $H_+[0,1]=St(z)$ (a stabilizer group of some point $z \in \T$) which is Asplund-trivial \cite{GM-suc}.
Now, as in \cite[Lemma 10.2]{GM-suc}
use an observation of Pestov, which implies that then
any continuous Asplund representation of $H_+(\T)$ is trivial.


%

\begin{thm} \label{t:RP}
The Polish group $G=H_+(\T)$ is Roelcke precompact.
\end{thm}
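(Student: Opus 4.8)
The plan is to reduce Roelcke precompactness of $G=H_+(\T)$ to that of $H_+[0,1]$, which is available as Theorem \ref{H_+[0,1]}.4, by exploiting the compact rotation subgroup. Recall that a topological group is Roelcke precompact precisely when for every neighborhood $V$ of the identity $e$ there is a finite set $F$ with $G=VFV$. Fix a basepoint $z_0\in\T$. The stabilizer $\mathrm{St}(z_0)$ consists of the homeomorphisms of $\T$ fixing $z_0$, and cutting $\T$ at $z_0$ identifies $\mathrm{St}(z_0)$, as a topological group for the uniform topology, with $H_+[0,1]$. Writing $K\cong\T$ for the compact group of rotations $R_\alpha$, every $g\in G$ satisfies $R_{g(z_0)}^{-1}g\in\mathrm{St}(z_0)$, so $g=R_{g(z_0)}\cdot\bigl(R_{g(z_0)}^{-1}g\bigr)$; hence $G=K\cdot\mathrm{St}(z_0)$.

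First I would isolate the following general lemma: if $K$ is a compact subgroup and $H$ a Roelcke precompact subgroup of a topological group $G$ with $G=KH$, then $G$ is Roelcke precompact. Given $V$, I would choose a neighborhood $V_1$ of $e$ with $V_1V_1\subseteq V$ (so in particular $V_1\subseteq V$), and then a neighborhood $V_2\subseteq V_1$ with the conjugation bound $kV_2k^{-1}\subseteq V_1$ for all $k\in K$. The existence of such a $V_2$ is where compactness of $K$ enters: using joint continuity of $(k,g)\mapsto kgk^{-1}$ at the points $(k,e)$, one covers $K$ by finitely many basic neighborhoods and intersects the corresponding neighborhoods of $e$. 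Roelcke precompactness of $H$ then yields a finite $F_0\subseteq H$ with $H=(V_2\cap H)F_0(V_2\cap H)\subseteq V_2F_0V_2$, and compactness of $K$ yields finitely many $k_1,\dots,k_m\in K$ with $K\subseteq V_2\{k_1,\dots,k_m\}$.

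The heart of the argument — and the step I expect to be the only delicate one — is collapsing the resulting product of three copies of a neighborhood into the required form $VFV$. Writing $g=kh$ with $k=ck_i$ ($c\in V_2$) and $h=afb$ ($a,b\in V_2$, $f\in F_0$), I would rewrite
$$
g=c\,k_i\,a\,f\,b=c\,(k_i\,a\,k_i^{-1})\,k_i\,f\,b,
$$
and observe that $\tilde a:=k_i a k_i^{-1}\in V_1$ by the conjugation bound, so that $c\tilde a\in V_1V_1\subseteq V$ while $b\in V_2\subseteq V$. Thus $g\in V\cdot(k_if)\cdot V$, and therefore $G=VFV$ with $F:=\{k_if:1\le i\le m,\ f\in F_0\}$ finite. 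The $K$-invariance of $V_2$ is exactly what lets the middle rotation $k_i$ be pushed to the left past a small element $a$ at the cost of replacing it by another small element $\tilde a$, which is what removes the obstructing inner neighborhood.

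Finally I would apply the lemma with $K$ the rotation group (a compact subgroup) and $H=\mathrm{St}(z_0)\cong H_+[0,1]$, which is Roelcke precompact by Theorem \ref{H_+[0,1]}.4, together with the decomposition $G=KH$ noted above. The one routine point to check with care is that the cut-circle identification $\mathrm{St}(z_0)\cong H_+[0,1]$ is a homeomorphism of topological groups for the compact-open (equivalently uniform) topologies, so that Roelcke precompactness genuinely transfers from $H_+[0,1]$ to $\mathrm{St}(z_0)$; this is straightforward since the basepoint $z_0$ is fixed.
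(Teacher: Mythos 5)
Your proof is correct and follows essentially the same route as the paper: the decomposition $G=KH$ with $K\cong\T$ the rotation subgroup and $H=\mathrm{St}(z_0)\cong H_+[0,1]$, combined with the general lemma that $G=KH$ with $K$ compact and $H$ a Roelcke precompact subgroup forces $G$ to be Roelcke precompact. The only difference is that the paper states this lemma as "easy to verify directly or via \cite[Prop. 9.17]{RD}" while you supply the direct verification (the conjugation trick $k_iafb=(k_iak_i^{-1})k_ifb$), which is a correct and welcome filling-in of the omitted details.
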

\begin{proof}
First a general fact: if a topological group $G$ can be represented as
$G=KH$,
where $K$ is a compact subset and $H$ a Roelcke-precompact subgroup then $G$ is also Roelcke-precompact.
This is easy to verify either directly
or by applying \cite[Prop. 9.17]{RD}.
As was mentioned in Theorem \ref{H_+[0,1]}.4,  $H_+[0,1]$ is Roelcke precompact.
Now, observe that in our case
$G = KH$, where $H:=St(1) \cong H_+[0,1]$ is the stability group of
$1 \in \T$ and $K \cong \T$ is the subgroup
of $G$ consisting of the rotations of the circle.
Indeed, the coset space $G/H$ is homeomorphic to $\T$ and there exists a natural continuous section $s: \T \to K \subset G$.
\end{proof}

\subsection{Linearly ordered dynamical systems}

A map $f: (X,\leq) \to (Y,\leq)$ between two (partially) ordered sets is said to be \emph{order preserving} 
or \textit{monotonic} if
$x \leq x'$ implies $f(x) \leq f(x')$ for every $x,x' \in X$.

\begin{defin} \label{d:ord} \
\ben
\item (Nachbin \cite{Nach})
Let $(X,\tau)$ be a topological space and $\leq$ is a partial order on the set $X$. The triple 
$(X,\tau,\leq)$ is said to be a
\emph{compact ordered space} if $X$ is a compact space and the graph of the relation $\leq$ is closed in $X \times X$.
 \item (See \cite[p. 157]{Gl-thesis}) A compact dynamical $S$-system $(X,\tau)$ with a partial order $\leq$ is said to be a \emph{partially ordered dynamical system}
if the graph of $\leq$ is closed in $X \times X$ and every translation $\tilde{s}: X \to X$ is
an order preserving map.
\item
 For every linear order $\leq$ on a set $X$ we have the standard \emph{interval topology} which we denote by $\tau_{\leq}$.
The triple $(X,\tau_{\leq},\leq)$ is said to be a \emph{linearly ordered topological space}
(LOTS). Sometimes we write just $(X,\leq)$, or even simply $X$, where no ambiguity can occur.
\item
We say that a compact dynamical $S$-system $(X,\tau)$  is a \emph{linearly ordered} dynamical system
if there exists a linear order $\leq$ on $X$ such that $\tau=\tau_{\leq}$ is the interval topology and every $s$-translation $X \to X$ is
an $\leq$-order preserving map.
\een
\end{defin}

Corollary \ref{c:GLOTS} below implies that (4) is a particular case of (2).

For every compact $G$-system $X$ there is a natural partial ordering of inclusion on the
hyperspace
$2^X$.
 This makes $2^X$ is a compact partially ordered dynamical $G$-system. See \cite[Section 3]{Gl-thesis} for details and some applications.

Recall that for every linearly ordered set $(X,\leq)$ the rays $(a,\to)$, $(\leftarrow,b)$ with $a,b \in X$ form a subbase for the
 standard \emph{interval topology} $\tau_{\leq}$ on $X$.
It is well known that the interval topology is Hausdorff (and even normal). Moreover
it is easy to see that it is \emph{order-Hausdorff}
in the following sense.

\begin{lem} \label{ordHausd}
Let $(X, \leq)$ be a LOTS. Then for any
 two distinct points $u_1 < u_2$ in $X$
there exist disjoint $\tau_{\leq}$-open neighborhoods
$O_1$ and $O_2$ in $X$ of $u_1$ and $u_2$ respectively such that $O_1 < O_2$,
meaning that $x < y$ for every $(y,x) \in O_2 \times O_1$. In particular, the graph of $\leq$ is closed in $(X,\tau_{\leq}) \times (X,\tau_{\leq})$.
\end{lem}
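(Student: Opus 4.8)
The plan is to produce the separating neighborhoods directly from the subbasic rays $(\leftarrow,b)=\{x:x<b\}$ and $(a,\to)=\{x:x>a\}$ of the interval topology $\tau_{\leq}$, splitting into two cases according to whether the open interval between $u_1$ and $u_2$ is nonempty. Fix $u_1<u_2$. If there is a point $c$ with $u_1<c<u_2$, I would simply take $O_1:=(\leftarrow,c)$ and $O_2:=(c,\to)$; these are $\tau_{\leq}$-open, contain $u_1$ and $u_2$ respectively, are disjoint by linearity of $\leq$, and satisfy $O_1<O_2$ since every $x\in O_1$ and $y\in O_2$ obey $x<c<y$.

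The remaining case is a \emph{jump}, i.e.\ the open interval $(u_1,u_2)$ is empty. Here I would set $O_1:=(\leftarrow,u_2)$ and $O_2:=(u_1,\to)$. Both are open, and $u_1\in O_1$, $u_2\in O_2$ because $u_1<u_2$. Their intersection is exactly $\{x:u_1<x<u_2\}=(u_1,u_2)=\emptyset$, so they are disjoint. For the ordering $O_1<O_2$ one argues by contradiction: if some $x\in O_1$, $y\in O_2$ had $y\le x$, then $u_1<y\le x<u_2$ would exhibit a point of $(u_1,u_2)$, contradicting the jump hypothesis; hence $x<y$ for all such pairs. This establishes the order-Hausdorff separation in all cases.

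For the ``in particular'' clause I would show that the complement of the graph of $\leq$, namely $\{(x,y):y<x\}$, is open in the product $(X,\tau_{\leq})\times(X,\tau_{\leq})$. Given such a pair $y<x$, apply the separation just proved to $u_1:=y<u_2:=x$ to obtain disjoint open sets $O_1\ni y$ and $O_2\ni x$ with $O_1<O_2$; then $O_2\times O_1$ is an open neighborhood of $(x,y)$, and for every $(x',y')\in O_2\times O_1$ the relation $O_1<O_2$ gives $y'<x'$, so the whole neighborhood lies in the complement. Thus the complement is open and the graph is closed.

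The only step requiring any care is the jump case, where both the disjointness and the strict ordering $O_1<O_2$ must be extracted solely from the emptiness of $(u_1,u_2)$; everything else is a direct unwinding of the definition of the interval topology, so I expect no serious obstacle.
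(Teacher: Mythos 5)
Your proof is correct, and it is the standard argument the authors have in mind: the paper states Lemma \ref{ordHausd} without proof (introducing it with ``it is easy to see''), and your case split between the existence of an intermediate point $c$ and a jump, using subbasic rays in each case, together with the deduction of closedness of the graph from the order-Hausdorff separation, is exactly the routine verification being omitted. No gaps.
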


 \begin{cor} \label{c:GLOTS}
 Any \emph{compact} LOTS is a
compact ordered space in the sense of Nachbin. 
 \end{cor}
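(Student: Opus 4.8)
The plan is to verify directly that a compact LOTS satisfies the two defining requirements of a compact ordered space in the sense of Nachbin (Definition \ref{d:ord}.1): that the underlying space be compact, and that the graph of the order relation be closed in the product $X \times X$. The first requirement is immediate from the hypothesis, since a \emph{compact} LOTS is by definition a linearly ordered topological space $(X,\tau_{\leq},\leq)$ whose interval topology $\tau_{\leq}$ is compact.

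For the second requirement I would simply invoke Lemma \ref{ordHausd}. Its concluding sentence asserts precisely that the graph $\{(x,y) \in X \times X : x \leq y\}$ is closed in $(X,\tau_{\leq}) \times (X,\tau_{\leq})$. Combining the two observations, the triple $(X,\tau_{\leq},\leq)$ meets both conditions of Definition \ref{d:ord}.1 and is therefore a compact ordered space. In this sense the corollary is a restatement of the lemma adapted to Nachbin's terminology.

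The only substantive work lies in Lemma \ref{ordHausd} itself, where the order-Hausdorff separation property is established: given distinct $u_1 < u_2$ one produces disjoint $\tau_{\leq}$-open neighborhoods $O_1 \ni u_1$ and $O_2 \ni u_2$ with $O_1 < O_2$. From such a separation the closedness of the graph follows routinely. Indeed, since $\leq$ is a \emph{linear} order, the complement of the graph is exactly the set of pairs $(x,y)$ with $y < x$; applying the separation to the comparable pair $u_1 := y < u_2 := x$ yields neighborhoods $O_1 \ni y$ and $O_2 \ni x$ with $O_1 < O_2$, so that the basic open rectangle $O_2 \times O_1$ is an open neighborhood of $(x,y)$ contained entirely in the complement of the graph. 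Thus the graph is closed, and the main obstacle has in effect already been absorbed into the proof of Lemma \ref{ordHausd}.
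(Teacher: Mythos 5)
Your proof is correct and follows exactly the paper's (implicit) argument: compactness is part of the hypothesis, and the closedness of the graph of $\leq$ is precisely the last assertion of Lemma \ref{ordHausd}, so the corollary is immediate from Definition \ref{d:ord}.1. Your extra paragraph spelling out how the order-Hausdorff separation yields closedness of the graph is a sound filling-in of the step the paper leaves to the reader.
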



\begin{thm} \label{monot}
 Every order preserving map $f: (X,\leq) \to (Y,\leq)$ between compact linearly ordered spaces is fragmented.
 \end{thm}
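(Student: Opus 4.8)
The plan is to reduce the general statement to the scalar case $Y=\R$ and then to settle that case via the BFT-type criterion recorded in Lemma \ref{r:fr1}.8. For the reduction, I would invoke Corollary \ref{c:GLOTS}, which tells us that the compact LOTS $Y$ is a compact ordered space in the sense of Nachbin; by Nachbin's separation theorem \cite{Nach} the family $\Gamma$ of all \emph{continuous order preserving} maps $g\colon Y\to[0,1]$ then separates the points of $Y$. For each $g\in\Gamma$ the composite $g\circ f\colon X\to[0,1]$ is again order preserving, being a composition of monotone maps. Hence, applying Lemma \ref{r:fr1}.9 with $Z_g=[0,1]$ and the point-separating system $\{g\}_{g\in\Gamma}$, it suffices to prove the following scalar statement: every order preserving function $h\colon X\to\R$ on a compact LOTS $X$ is fragmented.

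To handle the scalar case I would first note that $X$ is hereditarily Baire (closed subspaces of a compact space are compact, hence Baire) and that, as is well known, a compact LOTS is Dedekind complete, so every subset of $X$ has a supremum in $X$. By Lemma \ref{r:fr1}.3 a real valued function on a hereditarily Baire space is fragmented iff it has the point of continuity property, and by Lemma \ref{r:fr1}.8 this is equivalent to the requirement that for every nonempty closed $K\subseteq X$ and all $a<b$ the sets $K\cap\{h\le a\}$ and $K\cap\{h\ge b\}$ are \emph{not both dense} in $K$. So I would argue by contradiction: suppose these two sets are both dense in some nonempty closed $K$ for some $a<b$.

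Here the order structure does the work. Monotonicity of $h$ makes $D:=K\cap\{h\le a\}$ a \emph{lower set} and $U:=K\cap\{h\ge b\}$ an \emph{upper set} of the linearly ordered space $K$, and they are disjoint because $a<b$. From disjointness one gets $D<U$: if some $u\in U$ satisfied $u\le d$ with $d\in D$, then $u\in D$ because $D$ is a lower set, contradicting disjointness; thus $d<u$ for all $d\in D,\ u\in U$. Now set $s:=\sup_X D\in\overline{D}\subseteq K$. The open ray $W:=(s,\to)$ is disjoint from $D$, so density of $D$ in $K$ forces $W\cap K=\emptyset$, i.e.\ $s=\max K$. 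But then every $u\in U$ satisfies $s\le u\le s$, whence $U=\{s\}$; since a dense singleton forces $K=\overline{\{s\}}=\{s\}$, we would get $D=U=\{s\}$, contradicting $D\cap U=\emptyset$. This contradiction verifies the criterion of Lemma \ref{r:fr1}.8, so $h$ has the point of continuity property and is therefore fragmented, completing the reduction and hence the proof.

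I expect the technical heart to be twofold. The genuinely delicate part is the order-theoretic contradiction in the scalar case: one must use completeness of the compact LOTS and be careful that the ``both dense'' hypothesis really collapses under the relation $D<U$ (the supremum argument above is what makes this clean). The second point requiring care is the passage from an arbitrary compact LOTS target to $\R$, which relies on having \emph{enough} continuous monotone real valued functions to separate points; this is exactly what Nachbin's theorem supplies through Corollary \ref{c:GLOTS}, and I would double-check that $\{h\le a\}$ and $\{h\ge b\}$ are honestly lower and upper sets and that Lemma \ref{r:fr1}.9 applies with the chosen separating family.
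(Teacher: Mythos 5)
Your proposal is correct, and its first half coincides with the paper's proof: the reduction to real-valued targets via Corollary \ref{c:GLOTS}, Nachbin's separation theorem and Lemma \ref{r:fr1}.9, followed by an appeal to the fragmentability criterion of Lemma \ref{r:fr1}.8, is exactly the route taken in the text. Where you diverge is in how the contradiction is extracted from the hypothesis that $K\cap\{h\le a\}$ and $K\cap\{h\ge b\}$ are both dense in $K$. The paper picks two distinct points $k_1<k_2$ of $K$, separates them by open sets $O_1<O_2$ using the order-Hausdorff property (Lemma \ref{ordHausd}), and then uses density to produce $x\in O_1\cap K$ with $h(x)\ge b$ and $y\in O_2\cap K$ with $h(y)\le a$, so that $x<y$ but $h(x)>h(y)$ — a two-line violation of monotonicity that needs nothing beyond the interval topology. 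You instead observe that the two sets are a lower set $D$ and an upper set $U$ of $K$ with $D<U$, take $s=\sup_X D$, and use Dedekind completeness of the compact LOTS to force $s=\max K$ and then $U=\{s\}=K$, a contradiction. Both arguments are sound (and your claims that $\sup D\in\overline{D}$ and that $W\cap K$ is relatively open are correct); the paper's version is slightly more economical in that it avoids invoking completeness and the supremum, while yours has the small virtue of handling the degenerate case $|K|=1$ without comment (the paper's choice of two distinct points tacitly assumes $|K|\ge 2$, though the singleton case is trivially impossible since the two sets are disjoint). Neither difference is substantive; your proof would serve equally well.
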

 \begin{proof} First note that the question can be reduced to the case of $Y:=[0,1]$.
Indeed, by Corollary \ref{c:GLOTS},  $(Y,\tau_{\leq})$
is an ordered space in the sense of L. Nachbin. 
Fundamental results from his book \cite[p. 48 and 113]{Nach} imply that
there exists a point separating family of \emph{order preserving} continuous maps $q_i: Y \to [0,1], \ i \in I$.
Clearly the composition of two order preserving maps is order preserving.
Now by 
Lemma \ref{r:fr1}.9
it is enough to show that every map $q_i \circ f$ is fragmented.
So we can assume that our order preserving function is of the form $f: X \to Y=[0,1]$.
We have to show that $f$ is fragmented. Assuming the contrary, by Lemma \ref{r:fr1}.8,
there exists a closed subset $K \subset X$ and $a <b$ in $\R$ such that 
$K \cap \{f \leq a\}$, $K \cap \{f \geq b\}$
are both dense in $K$.

Choose arbitrarily two distinct points $k_1 < k_2$ in $K$.
By Lemma \ref{ordHausd} one can choose disjoint open neighborhoods
$O_1$ and $O_2$ in $X$ of $k_1$ and $k_2$ respectively such that $O_1 < O_2$.

By our assumption we can choose $x \in O_1 \cap K$ such that $b \leq f(x)$. Similarly, there exists
$y \in O_2 \cap K$ such that $f(y) \leq a$. Since $a<b$ we obtain
$f(x) > f(y)$. On the other hand, $x < y$ (because $O_1 < O_2$),
contradicting our assumption that $f$ is order preserving.
\end{proof}


\sk

Let $(X,\leq)$ be LOTS. 
Denote by $M_+:=M_+(X,\leq)$ the set of order preserving real valued maps on $(X,\leq)$ and by
$C_+:=C_+(X,\leq)$ the set of order preserving continuous
real valued maps.


\begin{thm} \label{t:CLOTSisWRN} \
Let $(X, \leq)$ be a compact LOTS. Then  
\ben
\item $(X,\leq)$ is WRN.
\item  Any bounded subfamily $F \subset C_+(X,\leq)$ is a Rosenthal family for $X$. In  particular, 
$F$ is a tame family. 
\een
\end{thm}
\begin{proof} (2)
Since the natural order is closed in 
$\R^2$, we have $\cls_p(M_+) = M_+$.
By Theorem \ref{monot} we know that $M_+ \subset \F(X)$ (the set of fragmentable functions). Thus,
 $$
 \cls_p(F) \subset \cls_p(C_+) \subset \cls_p(M_+) = M_+ \subset \F(X).
 $$
This means that $F$ is a Rosenthal family for $X$ (Definition \ref{d:Ros-F}). In particular, $F$ does not contain an independent sequence by
Theorem \ref{f:sub-fr}.

(1) By results of Nachbin \cite{Nach} the bounded subset $F:= C_+((X,\leq),[0,1]) \subset C(X)$ separates points of $X$.
By (2), $F$ is a Rosenthal family for $X$. Thus we can apply Theorem \ref{t:WRN} to conclude that $X$ is WRN. 
\end{proof}


\begin{cor} \label{2arrows}
The two arrows space is WRN but not RN.
\end{cor}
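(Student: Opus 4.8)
The plan is to obtain both halves of the statement from machinery already assembled, as the corollary is essentially an instance of Theorem \ref{t:CLOTSisWRN} together with one external citation. First I would recall that the two arrows space $\mathbb A$ (the split interval) is, by construction, a compact linearly ordered topological space: realize it as $[0,1]\times\{0,1\}$ equipped with the lexicographic order $\leq$. This order is complete (every subset has a least upper bound), so the interval topology $\tau_{\leq}$ is compact and Hausdorff, and $(\mathbb A,\tau_{\leq},\leq)$ is a compact LOTS. Taking the trivial action $S=\{e\}$, it becomes a (trivially) linearly ordered dynamical system in the sense of Definition \ref{d:ord}(4), so Theorem \ref{t:CLOTSisWRN} applies directly.

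For the WRN assertion I would simply invoke Theorem \ref{t:CLOTSisWRN}(1): every compact LOTS is WRN, hence so is $\mathbb A$. Unwinding the mechanism, the family $C_+(\mathbb A,\leq)$ of order preserving continuous real valued maps separates points of $\mathbb A$ by Nachbin's theorem, and by Theorem \ref{monot} each such map is fragmented, whence $\cls_p(C_+) \subset M_+ \subset \F(\mathbb A)$; Theorem \ref{t:WRN} then delivers Rosenthal representability, i.e. WRN. Thus there is genuinely nothing new to prove for this half beyond the observation that the two arrows space really is a compact LOTS.

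For the second assertion I would cite Namioka: by \cite[Example 5.9]{N} the two arrows space is not RN (not Asplund representable as a compactum). This is the only substantive input, and it is not reproved here. Combining the two parts records the strict inclusions anticipated in Remark \ref{r: Todorc}: with $\beta\N$ failing to be WRN (Theorem \ref{main}), the chain $RN \subsetneq WRN \subsetneq Comp$ follows, the two arrows space witnessing the first strict inclusion and $\beta\N$ the second. The main obstacle is therefore external to the corollary itself, residing entirely in Namioka's non-representability result; the WRN half is an immediate application of Theorem \ref{t:CLOTSisWRN}.
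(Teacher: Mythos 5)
Your proposal is correct and follows exactly the paper's route: the WRN half is Theorem \ref{t:CLOTSisWRN}.1 applied to the two arrows space as a compact LOTS, and the non-RN half is Namioka's \cite[Example 5.9]{N}. The extra detail you supply (the lexicographic-order realization and the unwinding via $C_+ \subset M_+ \subset \F(\mathbb A)$) is just the content of Theorem \ref{t:CLOTSisWRN} made explicit, so there is no substantive difference from the paper's argument.
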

\begin{proof}
The two arrows space is WRN by Theorem \ref{t:CLOTSisWRN}.1. It is not RN by a result of Namioka \cite[Example 5.9]{N}.
\end{proof}

\begin{thm} \label{t:OrderedAreTame}
Let $(X, \leq)$ be a compact linearly ordered  dynamical $S$-system.
Then
\ben
\item
The dynamical system $(S,X)$ is 
representable on a Rosenthal Banach space (i.e., WRN).
\item $(S,X)$ is tame.
\item  Any topological subgroup $G \subset H_+(X)$ is Rosenthal representable.
\een
\end{thm}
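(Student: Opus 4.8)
The plan is to deduce all three assertions from the structural facts about compact linearly ordered spaces already established, the key being to exhibit a single bounded, $S$-invariant, point-separating family of continuous functions which is eventually fragmented, and then feed it into Theorem \ref{t:WRN}.

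First I would set $F := C_+((X,\leq),[0,1])$, the family of all continuous order preserving maps $X \to [0,1]$. By the results of Nachbin invoked in the proof of Theorem \ref{t:CLOTSisWRN}.1, this family separates the points of $X$. The crucial observation for the dynamical statement is that $F$ is $S$-invariant: by Definition \ref{d:ord}.4 every translation $\tilde{s}: X \to X$ is order preserving, so for $f \in F$ the orbit element $fs = f \circ \tilde{s}$ is again a continuous order preserving map into $[0,1]$, i.e. it lies in $F$. Moreover $F$ is norm bounded and, by Theorem \ref{t:CLOTSisWRN}.2, it is a Rosenthal family for $X$, hence an eventually fragmented family (Theorem \ref{f:sub-fr}). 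Applying the ``if'' direction of Theorem \ref{t:WRN} to this $F$ then yields that $(S,X)$ is WRN, proving (1). For (2) I would simply note that a Rosenthal-representable (WRN) system is a fortiori Rosenthal-approximable, so by Theorem \ref{t:tame=R-repr}.1 it is tame; this is also the already recorded implication WRN $\Rightarrow$ tame.

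For (3), let $G \subset H_+(X)$ be a topological subgroup. Since $X$ is compact and $G$ carries the compact open topology, the action $G \times X \to X$ is jointly continuous, so $(X,\leq)$ is a linearly ordered dynamical $G$-system with a continuous action and part (1) applies with $S = G$. Because the action is jointly continuous, Remark \ref{r:JCont1} lets me take the WRN representation $(h,\alpha)$ of $(G,X)$ on a Rosenthal Banach space $V$ to be strongly continuous; since $F$ separates points, $\alpha: X \to V^*$ is a topological embedding, so the representation is faithful, and as $G$ is a group we have $h(G) \subset \Iso(V)$. Lemma \ref{GrRepr} then upgrades $h: G \to \Iso(V)$ to a topological group embedding, which is exactly Rosenthal representability of $G$ in the sense of Definition \ref{d:repr}.5.

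The whole argument is short because the heavy lifting is carried out by Theorem \ref{monot} (monotone maps are fragmented) sitting behind Theorem \ref{t:CLOTSisWRN}. The one point that needs genuine care — and which I expect to be the main obstacle — is the passage from a weakly continuous system representation to a strongly continuous group embedding in part (3): one must verify that the compact open topology on $G \subset H_+(X)$ makes the action jointly continuous (so that Remark \ref{r:JCont1} is applicable) and that the strong operator topology on $\Iso(V)^{op}$ coincides with the topology inherited from the action on $(B^*,w^*)$, which is precisely the content of Lemma \ref{GrRepr}.
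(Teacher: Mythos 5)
Your proposal is correct and follows essentially the same route as the paper: part (1) via the $S$-invariant, point-separating, eventually fragmented family $C_+((X,\leq),[0,1])$ fed into Theorem \ref{t:WRN}, part (2) by the implication WRN $\Rightarrow$ tame, and part (3) via joint continuity, Remark \ref{r:JCont1} and Lemma \ref{GrRepr}. The only difference is that the paper additionally records a second, direct proof of (2) — showing every $p\in E(S,X)$ is order preserving (since the order is closed) and hence fragmented by Theorem \ref{monot} — but your argument is complete without it.
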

\begin{proof} (1) As in the proof of Theorem \ref{t:CLOTSisWRN} consider the (point separating and eventually fragmented)
family $F:= C_+((X,\leq),[0,1])$. Since $(S,X)$ is order preserving we obtain that $F$ is $S$-invariant (i.e., $FS =F$).
Now apply Theorem \ref{t:WRN} and Remark \ref{r:JCont1}.

(2) First proof: Apply (1) and Theorem \ref{t:tame}.

Second (direct) proof:
We have to show that every $p \in E(S,X)$ is a fragmented map. Choose a net $\{s_i\}$ in $S$ such that the net $\{j(s_i)\}$
converges to $p$, where $j: S \to E$ is the Ellis compactification.
Since every translation
$j(s) = \tilde{s}: X \to X$ is order preserving, and as the order is a closed relation (Corollary \ref{c:GLOTS}),
it follows that $p$ is also order preserving. Now from Theorem \ref{monot} we can conclude that $p$ is fragmented.

(3) By (1) the system $(G,X)$ is Rosenthal representable. Now by Lemma \ref{GrRepr} (taking into account Remark \ref{r:JCont1})
it follows that $G$ is Rosenthal representable.
 \end{proof}

\subsection{Orderly groups}

Theorem \ref{t:OrderedAreTame}.3 suggests the following:

\begin{defin} \label{d:pseudolinear}
	Let us say that a topological group $G$ is 
	\emph{orderly} if $G$ is a topological subgroup of $H_+(X,\leq)$ for some linearly ordered compact space.
\end{defin}

Thus, by Theorem \ref{t:OrderedAreTame}, every orderly topological group $G$ is Rosenthal representable.
For example, $\R$ is orderly as it can be embedded into $H_+([0,1])$, where $[0,1]$ is treated as the two-point compactification of $\R$.
Recall that it is unknown yet (see \cite{GM-AffComp,GM-survey}) whether every Polish group is Rosenthal representable.
A sufficient condition is that $G$ is embedded into a product of c-orderly topological groups.

%


Recall that an abstract group $G$ is \emph{left-ordered} (left-c-ordered) if there exists a linear order (c-order) on $G$ which is invariant under left translations; see, for example, \cite{DNR}.


\begin{prop} \label{p:orderly} 
	A discrete group $G$ is orderly if and only if $G$ is left-ordered. 
\end{prop}
\begin{proof}  Let $G$ be orderly. 
Thus,
$G$ is a subgroup of $H_+(K)$ for some compact ordered $K$. Then $G$ acts effectively on a linearly ordered set $K$. It is well known that this is equivalent to saying that $G$ is left-ordered. The idea is to use a \emph{dynamically lexicographic order} (see \cite{DNR}) on $G$. 
	
	Conversely, let $(G,\leq)$ be left-ordered. Consider the following bounded order preserving function $f: G \to [0,1]$ 
	with $f(x)=0 \ \forall x < e, f(e)=\frac{1}{2}, f(x)=1 \ \forall e < x$.  
	Then $\pi_f: G \to G_f$ is an order preserving $G$-compactification (by Lemma \ref{l:LO-X_f}). Moreover, $\pi_f(G)$ is  also discrete. 
	Then the induced homomorphism $G \to H_+(G_f)$ is an embedding of discrete $G$.
	 
For an additional proof of this direction observe that the Nachbin's compactification $\nu: G \to Y$ is an order preserving proper $G$-compactification which induces a topological embedding of (discrete) $G$ into $H_+(Y)$. 
\end{proof}

\begin{prop} \label{l:LO-X_f}
	Let $X$ be a linearly ordered $G$-space and $f: X \to [a,b]$ be a $\mathrm{RUC}$ bounded order preserving function.  
	Then the corresponding cyclic $G$-system $X_f$ (Definition \ref{d:cyclic}) is linearly ordered and the function $\tilde{f}: X_f \to [a,b]$ 
	is order preserving. 
\end{prop}
\begin{proof}
	$X \to X_f \subset [a,b]^G$ can be defined as the diagonal function induced by the family of functions $fG$. 
	Every $fg: X \to \R$ is c-order preserving. Then $[a,b]^G$ carries a natural partial order $\ga$. Moreover, it is easy to see that the restriction of $\ga$ on $Y=\cls(X)$ is a linear order which extends the given order on $X$. 
%
%
%
%
%
\end{proof}

\subsection{Orientation preserving actions on the circle}

The following definition is 
the starting point for
one of the approaches to monotone functions 
on the circle. See, for example, \cite{RT}. 

\begin{defin} \label{d:lift} 
Let $f: \T \to \T$ be a not necessarily continuous selfmap on the circle $\T$. 
We say that $f$ is \emph{orientation preserving} (notation: $f \in M_+(\T,\T)$, or $f \in M_+$) 
if there exists, a not necessarily continuous, 
map $F: \R\to \R$ which is a monotonic lift of degree 1. 
More precisely $F$ satisfies the following conditions: 
\ben
\item  $q\circ F= f \circ q$, where $q: \R \to \T=\R / \Z$ is the 
quotient map;
\item  $F: \R\to \R$ is order preserving; 
\item $F(x+1)=F(x)+1$ for every $x \in \R$. 
\een 
In this case we
say that $F$ is a {\it lift} of $f$.   
\end{defin}

\begin{remark} \label{r:lifts}
Let $k$ be a fixed integer. Then 
$F$ is a lift of $f$ iff $F+k$ is. Therefore, 
among
all 
the possible
lifts of $f$, 
one may choose $F$ such that $F(0) \in [0,1)$. 
Clearly, $F(1)=F(0)+1 <2$ and $F(x)\leq F(1) < 2$ for every $x \in [0,1]$. 
The restriction $F^*:[0,1] \to [0,2]$ of $F$ 
to 
$[0,1]$ 
uniquely reconstructs $F$. Indeed, it is easy to see that 
\begin{equation}\label{k} 
F(x):=F^*(\{x\}) + n \ \ \ \ \ \ \forall n \leq x < n+1
\end{equation}  
with $n \in \Z$, where $\{x\}$ is the fractional part of $x \in \R$. 
Equivalently, $F(x)=F^*(\{x\}) + [x]$, where $[x]$ is the integer part. 
 We say that $F$ is a {\it canonical lift} of $f$ and 
that 
$F^*$ is its {\it kernel}.  

Note that for 
an
arbitrary order preserving function $h: [0,1] \to [0,2]$ with 
$h(1)=h(0)+1$ and $q_2 \circ h= f \circ q_1$, 
the function $F(x):=h(\{x\}) + [x], x \in \R$ is a lift of $f$. Observe that $h$ is the kernel $F^*$ of $F$ iff $h(0) <1$. Otherwise, if $h(0)=1$ then $F^*=h-1$. 
\end{remark}

\begin{lem} \label{l:liftP} \
	\ben 
	\item 
Every orientation preserving $f: \T \to \T$ is Baire 1. That is, $M_+(\T,\T) \subset \B_1(\T,\T)$. 
		\item $M_+$ is pointwise closed in $\T^{\T}$;
		\item $M_+$ is a compact right topological submonoid of $\T^{\T}$ with respect to the composition. 
					\een
\end{lem}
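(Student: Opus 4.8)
The plan is to extract all three assertions from the single structural feature of an orientation preserving map $f$: the existence of a \emph{monotone} degree-one lift $F:\R\to\R$, together with its kernel $F^*:[0,1]\to[0,2]$ from Remark \ref{r:lifts}.

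For part (1), I would use that $F$, being order preserving on $\R$, has at most countably many discontinuities (with one-sided limits everywhere); call this countable, $\Z$-periodic set $\Delta_F$. Since $q:\R\to\T=\R/\Z$ is a covering map, continuity is a local property transferring in both directions through $q$: because $f\circ q=q\circ F$ and $q$ is locally a homeomorphism, $f$ is discontinuous at $t=q(x)$ exactly when $F$ is discontinuous at $x$ (the status is independent of the choice of preimage since $F(\cdot+n)=F(\cdot)+n$). Hence the discontinuity set of $f$ is $q(\Delta_F)$, a countable subset $\Delta_f\subset\T$. I would then verify the point of continuity property: a nonempty closed $A\subseteq\T$ with an isolated point gives continuity of $f|_A$ there, while if $A$ is perfect it is an uncountable Baire space in which the countable set $\Delta_f\cap A$ is meager, so $f|_A$ is continuous on the residual set $A\setminus\Delta_f$. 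By Lemma \ref{r:fr1}.3 this makes $f$ fragmented, and by Lemma \ref{r:fr1}.2 fragmented coincides with Baire $1$ on the Polish space $\T$. (Alternatively, apply Theorem \ref{monot} to the order preserving kernel $F^*:[0,1]\to[0,2]$ between compact LOTS and transfer.)

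Part (2) is the crux. Let $f_\lambda\to f$ pointwise with $f_\lambda\in M_+$, and let $h_\lambda:=F_\lambda^*$ be the kernels of canonical lifts, so each $h_\lambda:[0,1]\to[0,2]$ is order preserving with $h_\lambda(1)=h_\lambda(0)+1$ and $h_\lambda(0)\in[0,1]$. The obstacle is that pointwise convergence of $f_\lambda$ on $\T$ says nothing directly about the lifts, since $q$ is not injective. I would circumvent this by compactness: the set $\Hcal$ of order preserving functions $[0,1]\to[0,2]$ obeying those two constraints is closed, hence compact, in $[0,2]^{[0,1]}$ with the product topology (order preservation, the identity $h(1)=h(0)+1$, and $h(0)\in[0,1]$ are all preserved under pointwise limits, as the graph of $\le$ is closed). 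Passing to a subnet, $h_\lambda\to h\in\Hcal$ pointwise. By Remark \ref{r:lifts} the function $F(x):=h(\{x\})+[x]$ is a monotone degree-one lift, defining some $g\in M_+$ via $g(q(x))=q(F(x))$. It remains to identify $g$ with $f$: for every $x$ one has $f_\lambda(q(x))=q(h_\lambda(\{x\}))\to q(h(\{x\}))=g(q(x))$ along the subnet by continuity of $q$, while also $f_\lambda(q(x))\to f(q(x))$, so uniqueness of limits in the Hausdorff space $\T$ forces $f=g\in M_+$.

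For part (3), compactness is immediate from (2): $M_+$ is a pointwise closed subset of the compact space $\T^\T$. For the monoid structure, $\id_\T\in M_+$ (with lift $\id_\R$), and if $f,g\in M_+$ have lifts $F,G$, then $F\circ G$ is order preserving, satisfies $F(G(x+1))=F(G(x))+1$, and fulfils $q\circ(F\circ G)=(f\circ g)\circ q$, so $f\circ g\in M_+$. Finally $M_+$ inherits the right topological structure of $\T^\T$ under composition: for fixed $g$, the value $(f\circ g)(x)=f(g(x))$ depends pointwise-continuously on $f$, so $f\mapsto f\circ g$ is continuous. I expect part (2) to be the only genuine difficulty, precisely the passage from pointwise convergence on the circle to convergence of lifts; the compactness of the kernel space $\Hcal$ and the final matching argument resolve it, leaving parts (1) and (3) short.
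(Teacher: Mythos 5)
Your proof is correct and takes essentially the same route as the paper's: for part (2), the crux, the paper likewise passes to a subnet so that the kernels $F_i^*$ converge pointwise in the compact space $[0,2]^{[0,1]}$ to an order preserving $h$ with $h(0)\in[0,1]$ and $h(1)=h(0)+1$, reassembles a lift via $F(x)=h(\{x\})+[x]$, and identifies the resulting map with the pointwise limit exactly as you do; part (3) is verified identically. The only divergence is in part (1), where your primary argument (countable discontinuity set of the monotone lift, then the point of continuity property plus Lemma \ref{r:fr1}.2--3) is a sound elementary substitute for the paper's argument, which applies Theorem \ref{monot} to the kernel $F^*$ and transfers fragmentability through $q_2\circ F^*=f\circ q_1$ via Lemma \ref{r:fr1}.10 --- precisely the alternative you mention in your parenthetical.
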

\begin{proof}
(1) Let $F$ be the canonical lift of $f$ 
and $F^*: [0,1] \to [0.2]$ be its kernel.  
Let 
$q_1: [0,1] \to \T$ and $q_2: [0,2] \to \T$ be the restrictions of $q: \R \to \T$. 
Then  $ q_2 \circ F= f  \circ q_1$. 
By Lemma \ref{r:fr1}.10  
we conclude that $f$ is fragmented (hence, Baire 1, because $f$ is a map between Polish spaces). 

(2) Let $f \in \cls(M_+)$. Then $f$ is a pointwise limit of some net $f_i \in M_+$. 
For every $f_i$ consider the canonical lifting $F_i$ and its kernel $F_i^*: [0,1] \to [0,2]$. 
Passing to subnets if necessary one may assume that $F_i^*$ pointwise converges in $[0,2]^{[0,1]}$ to some $h: [0,1] \to [0,2]$. 
Then $h$ is order preserving, too. Moreover, since $0 \leq F_i(0) < 1$ we have $h(0) \in [0,1]$ and $h(1)=h(0)+1 \in [0,2]$. It is easy to show 
that
$$
q_2(h(x))=\lim q_2(F_i^*(x))= \lim f_i(q_1(x)) = f(q_1(x)).
$$
for every $x \in [0,1]$.
Now, as in Remark \ref{r:lifts}, 
define $F(x):=h(\{x\}) + [x]$.  
Then $F$ is a lift
of $f$ in the sense of Definition \ref{d:lift}. 
Note that $F$ is not necessarily the canonical lift of $f$ 
(though $h(0) \in [0,1]$ but it is possible that $h(0)=1$).  

(3) Clearly, $F:=id_{\R}$ is a 
lift of $f:=id_{\T}$. So, $id_{\T} \in M_+$. It is plain to show that if $f_1, f_2 \in M_+$ with  
lifts $F_1,F_2$. Then $F_1 \circ F_2$ is a lift for $f_1 \circ f_2$. 
\end{proof}

  Recall the definition of the natural cyclic ordering on $\T$. Identify
  $\T$, as a set, with $[0,1)$ and
  define a ternary relation, a subset $R \subset [0,1)^3$.
  We say that an ordered triple of pairwise disjoint points
  $z, y, x \in [0,1)$
  has cyclic ordering (and write $[z,y,x] \in R$) if
  $(x-y)(y-z)(x-z) > 0$. 
  An injective selfmap $f: \T \to \T$ is said to be (cyclic) order preserving if $f$ preserves $R$, meaning that
  $[z,y,x] \in R$ implies $[f(z),f(y),f(x)] \in R$.
  

 The following lemma is a version of Lemma 1 in \cite[Section 3]{Koz}.

\begin{lem} \label{lift}
Every injective cyclic order preserving selfmap (e,g., order preserving homeomorphism) $f: \T \to \T$ is orientation preserving in the sense of Definition \ref{d:lift}.
\end{lem}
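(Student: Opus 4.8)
The plan is to construct an explicit monotone lift by unwrapping $f$ relative to the base point $0\in\T\cong[0,1)$. First I would fix $a\in[0,1)$ to be the representative of $f(0)$ and set $F(0):=a$. For $x\in(0,1)$ injectivity gives $f(x)\neq f(0)$, so there is a unique $\theta_x\in(0,1)$ with $f(x)=q(a+\theta_x)$, namely $\theta_x=(f(x)-a)\bmod 1$; I set $F(x):=a+\theta_x$ (and $\theta_0=0$). This already secures condition (1) of Definition \ref{d:lift} on $[0,1)$, since $q(F(x))=f(x)$. I would then extend $F$ to all of $\R$ by formula (3), i.e. $F(x):=F(\{x\})+[x]$, which forces $F(x+1)=F(x)+1$ and preserves (1), because $q(F(x))=q(F(\{x\}))=f(q(\{x\}))=f(q(x))$.

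The heart of the matter is monotonicity, condition (2). For $0<x<x'<1$ we trivially have $[0,x,x']\in R$, and since $f$ is order preserving while $f(0),f(x),f(x')$ are pairwise distinct (injectivity), we get $[a,f(x),f(x')]\in R$. The key step is to translate this into $\theta_x<\theta_{x'}$. For this I would first record that the ternary relation $R$ is invariant under the rotations $p\mapsto(p+t)\bmod 1$ of $\T$: the product $(x-y)(y-z)(x-z)$ is literally unchanged as long as no coordinate crosses the seam $0$, and when a coordinate crosses from $1^-$ to $0^+$ the two factors containing it both change sign, so their product---hence the sign of the whole expression---is unchanged. Granting rotation invariance, subtracting $a$ yields $[a,f(x),f(x')]\in R \iff [0,\theta_x,\theta_{x'}]\in R \iff (\theta_{x'}-\theta_x)\,\theta_x\,\theta_{x'}>0 \iff \theta_x<\theta_{x'}$, using $\theta_x,\theta_{x'}\in(0,1)$. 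Thus $F$ is strictly increasing on $[0,1)$.

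Finally I would promote this to global monotonicity. On $[0,1)$ the values of $F$ lie in $[a,a+1)$, so on each interval $[n,n+1)$ they lie in $[a+n,a+n+1)$; these ranges are pairwise disjoint and increasing in $n$, and $F$ is strictly increasing inside each such interval, so $F$ is non-decreasing on all of $\R$, i.e. order preserving. Together with (1) and (3) this exhibits $F$ as a monotonic degree-$1$ lift of $f$, so $f\in M_+(\T,\T)$.

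The main obstacle I anticipate is exactly the bookkeeping around the seam at $0$ used to establish the rotation invariance of $R$ and the resulting equivalence $[a,b,c]\in R\iff\theta_x<\theta_{x'}$; once the base point is normalized to $0$, every remaining step is a routine verification.
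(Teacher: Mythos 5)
Your proof is correct and is essentially the paper's argument: your $F(x)=a+\theta_x$ on $[0,1)$ coincides exactly with the paper's kernel $F^*$ (which equals $f(x)$ on $\{f(x)\ge f(0)\}$ and $f(x)+1$ on $\{f(x)<f(0)\}$), followed by the same periodic extension $F(x)=F^*(\{x\})+[x]$. The only difference is that you spell out, via rotation invariance of the cyclic relation $R$, the monotonicity verification that the paper dismisses as ``easy to see using the circle ordering,'' and your seam-crossing argument there is sound.
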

\begin{proof}
Treating the set $\T$ as $[0,1)$ (so that $f$ is defined as a map $[0,1) \to [0,1)$) consider the partition $[0,1)=I_+(f) \cup I_-(f)$, where
$$I_+(f):=\{x \in [0,1): f(x) \geq f(0)\}, \ \ I_-(f):=\{x \in [0,1): f(x) < f(0)\}.$$
Define $F^*:[0,1] \to [0,2]$ by
$$F^*(x)=f(x), \ \text{if} \ x \in I_+, \  \ F^*(x)= f(x)+1, \ \text{if} \ x \in I_-, \ F^*(1)=f(0)+1.$$
 It is easy to see (using the circle ordering) that $I_+(f),I_-(f)$ are intervals, $F^*: [0,1] \to [0,2]$ is order preserving and $q_2 \circ F^*=f \circ q_1$. Then $F$, defined as in \ref{k}, is the desired 
 lift of $f$.  
 
%
 	
\end{proof}

Let $C_+(\T, \T)$ be the topological monoid of all orientation preserving \emph{continuous} selfmaps $\T \to \T$ endowed with the compact open topology.
Then, for every submonoid $S$ (in particular, for any subgroup $G \leq H_+(\T)$) we have a corresponding (orientation preserving) dynamical system $(S,\T)$.

\sk

\begin{thm} \label{GenH_+} \
	\ben
	\item For every submonoid $S$ of $C_+(\T, \T)$ the dynamical system
	$(S,\T)$ is tame. In particular, this is true for any subgroup $S:=G$ of  $H_+(\T)$.
	\item
	$H_+(\T)$ is Rosenthal representable as a Polish topological group.
	\een
\end{thm}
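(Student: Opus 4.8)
The plan is to derive both parts from the structural facts about $M_+(\T,\T)$ already recorded in Lemma \ref{l:liftP}, keeping the Banach-space machinery in reserve for the final upgrade from a representation of the \emph{system} to a representation of the \emph{group}.

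For part (1), first I would observe that under the tautological action $(s,x)\mapsto s(x)$ of a submonoid $S\subset C_+(\T,\T)$ on $\T$, the $s$-translation $\tilde s$ is nothing but the map $s$ itself, so the family of translations satisfies $\tilde S = S \subset C_+(\T,\T)\subset M_+(\T,\T)$. Since $M_+$ is pointwise closed in $\T^{\T}$ by Lemma \ref{l:liftP}.2, passing to the pointwise closure gives
$$
E(S,\T)=\cls_p(\tilde S)\subset \cls_p(M_+)=M_+.
$$
Every element of $M_+$ is a fragmented map --- this is exactly what the proof of Lemma \ref{l:liftP}.1 produces, the Baire $1$ property being only the subsequent consequence on the Polish space $\T$. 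Hence every $p\in E(S,\T)$ is fragmented, and $(S,\T)$ is tame by Definition \ref{d:tame}. For the ``in particular'' clause, any subgroup $G\le H_+(\T)$ consists of order preserving homeomorphisms; these are continuous and, by Lemma \ref{lift}, orientation preserving in the sense of Definition \ref{d:lift}, so $G\subset C_+(\T,\T)$ and the general statement applies.

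For part (2), I would feed the tameness of $(H_+(\T),\T)$ into the representation theorems. The phase space $\T$ is metrizable and the action $H_+(\T)\times\T\to\T$ is jointly continuous, so Theorem \ref{t:tame=R-repr}.1, together with its metrizable refinement (*) and Remark \ref{r:JCont1}, furnishes a \emph{strongly continuous faithful} representation $(h,\a)$ of $(H_+(\T),\T)$ on a (separable) Rosenthal Banach space $V$, with $h\colon H_+(\T)\to\Iso(V)$ strongly continuous and $\a\colon\T\to(V^*,w^*)$ a topological embedding. Since $H_+(\T)$ is a topological subgroup of $H(\T)$, Lemma \ref{GrRepr} then promotes $h$ to a topological group embedding $H_+(\T)\hookrightarrow\Iso(V)$; by Definition \ref{d:repr}.5 this is precisely the assertion that $H_+(\T)$ is Rosenthal representable as a Polish topological group.

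The conceptual content has been absorbed into Lemma \ref{l:liftP}, so the only genuinely delicate point is the bookkeeping in part (2): one must secure all three properties simultaneously --- $V$ Rosenthal, $h$ strongly continuous, and $\a$ an embedding --- in order to invoke Lemma \ref{GrRepr}. The joint continuity of the circle action is what lets the representation be taken strongly rather than merely weakly continuous, and the faithfulness (the embedding property of $\a$) is what the metrizable case of Theorem \ref{t:tame=R-repr} supplies; once both are in hand, the passage from a faithful system representation to a topological group embedding is immediate.
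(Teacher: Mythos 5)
Your proof is correct and follows essentially the same route as the paper: part (1) via $E(S,\T)=\cls_p(S)\subset M_+(\T,\T)$ using Lemma \ref{l:liftP}.2 and the fragmentability/Baire 1 property from Lemma \ref{l:liftP}.1, and part (2) by combining (1) with Theorem \ref{t:tame=R-repr} and Lemma \ref{GrRepr}. Your additional bookkeeping on strong continuity and faithfulness in part (2) only makes explicit what the paper leaves implicit.
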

\begin{proof}
	Part (2) follows from (1), Theorem \ref{t:tame=R-repr} and Lemma \ref{GrRepr}.
	For (1) we have to show that every $p \in E(\T)$ is a Baire 1 class function $\T \to \T$.
	By our assumption, $S \subset C_+(\T, \T) \subset M_+(\T, \T)$ and  $M_+(\T, \T)$ is pointwise closed (Lemma \ref{l:liftP}.2). So, we obtain $\cls(S)=E(S,\T) \subset M_+(\T,\T)$.  
	By Lemma \ref{l:liftP}.1 we have $M_+(\T,\T) \subset \B_1(\T,\T)$. Therefore, $p\in E(S,\T) \subset \B_1(\T,\T)$.	
\end{proof}



The Ellis compactification $j: G \to E(G,\T)$ of the group $G=H_+(\T)$ is a topological embedding.
In fact, observe that
the compact open topology on $j(G) \subset C_+(\T, \T)$ coincides
with the pointwise topology.
This observation implies, by \cite[Remark 4.14]{GM-survey} that
$\Tame(G)$ separates points and closed subsets.

Although $G$ is representable on a (separable) Rosenthal Banach space,
we have $\Asp(G)=\{constants\}$ and therefore any Asplund representation of this group is trivial
(this situation is similar to the case of the group $H_+[0,1]$, \cite{GM-suc}).
Indeed, we have $\SUC(G)=\{constants\}$ by \cite[Corollary 11.6]{GM-suc} for $G=H_+(\T)$, and we
recall that for every topological group
$\Asp(G) \subset \SUC(G)$.


\subsubsection{Functions of bounded variation}

\begin{defin} \label{d:BV} 
	Let $(X, \leq)$ be a linearly ordered set. We say that a bounded function $f: X \to \R$ has variation not greater than $r$ if 
	$$
	\sum_{i=0}^{n-1} |f(x_{i+1})-f(x_i)| \leq r
	$$
	for every choice of $x_0 \leq x_1 \leq \cdots \leq x_n$ in $X$. 
	The least upper bound of all such possible sums is the  {\it variation} of $f$. Notation:  $\Upsilon(f)$. If  $\Upsilon(f) \leq r$ then we write $f \in BV_r(X)$. If $f(X) \subset [c,d]$ for some reals $c \leq d$ then we write also $f \in BV_r(X,[c,d])$. 
\end{defin}

Denote by $M_+(X,[c,d])$ the set of all order-preserving functions $X \to [c,d]$. 
Then $M_+(X,[c,d]) \subset BV_r(X,[c,d])$ for every $r \geq d-c$. 

\begin{thm} \label{newprinciple} \cite{Me-Helly} 
	For every linearly ordered set $X$ the set of functions $BV_r(X,[c,d])$ is a tame family. 
	In particular, this	is true also for $M_+(X,[c,d])$. 
\end{thm}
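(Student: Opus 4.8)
The plan is to argue directly from the combinatorial definition of tameness (Definitions \ref{d:ind} and \ref{d:tameF}): I will show that $F:=BV_r(X,[c,d])$ contains no independent sequence. Suppose, for contradiction, that $\{f_n\}_{n\in\N}\subseteq F$ is independent, and fix witnessing reals $a<b$ as in Definition \ref{d:ind}. Put $\delta:=b-a>0$ and $K:=\lfloor r/\delta\rfloor$. The heart of the matter is a clash between two facts: independence forces the first $n$ functions to realize \emph{all} $2^n$ sign patterns, whereas bounded variation, read against the linear order of $X$, severely restricts how many distinct sign patterns can occur. This is a Sauer--Shelah / VC-type phenomenon, which I would establish by a direct double counting rather than by quoting the general bound.

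The key encoding is as follows. For each $i$ define $\sigma_i\colon X\to\{0,1,*\}$ by $\sigma_i(x)=1$ if $f_i(x)>b$, $\sigma_i(x)=0$ if $f_i(x)<a$, and $\sigma_i(x)=*$ otherwise. The single-function bound is then the crucial observation: along any increasing chain $y_0<y_1<\cdots<y_m$ in $X$ on which $\sigma_i$ avoids $*$, every sign change $\sigma_i(y_j)\ne\sigma_i(y_{j+1})$ forces $|f_i(y_{j+1})-f_i(y_j)|\ge\delta$; since the variation of $f_i$ along the chain is at most $\Upsilon(f_i)\le r$ (Definition \ref{d:BV}), the number of such sign changes is at most $r/\delta$, hence at most $K$. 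In particular an order-preserving $f_i$ admits at most one sign change, which is why $M_+$ is the extreme case.

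Now fix $n$. Let $Y$ be the set of points at which $(\sigma_1,\dots,\sigma_n)$ avoids $*$ in every coordinate. Independence (applied with $P=\{i:\epsilon_i=0\}$, $M=\{i:\epsilon_i=1\}$ for each $\epsilon\in\{0,1\}^n$) guarantees that $x\mapsto(\sigma_1(x),\dots,\sigma_n(x))$ maps $Y$ onto all of $\{0,1\}^n$, so $Y$ realizes $T:=2^n$ distinct patterns. Choose one representative point per pattern and list these (pairwise distinct, hence $X$-comparable) representatives in increasing order as $z_1<\cdots<z_T$. Writing $d(x,x')=|\{i:\sigma_i(x)\ne\sigma_i(x')\}|$ for the Hamming distance, exchanging the order of summation and applying the single-function bound coordinatewise to the chain $z_1<\cdots<z_T$ gives
\[
\sum_{s=1}^{T-1}d(z_s,z_{s+1})=\sum_{i=1}^{n}\#\{s:\sigma_i(z_s)\ne\sigma_i(z_{s+1})\}\le nK.
\]
Since distinct patterns satisfy $d\ge 1$, the left-hand side is at least $T-1$, so $2^n-1\le nK$, which is false for all large $n$. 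This contradiction shows that $F$ contains no independent sequence, i.e. $F$ is a tame family. The statement for $M_+(X,[c,d])$ follows at once from the inclusion $M_+(X,[c,d])\subseteq BV_r(X,[c,d])$ valid for every $r\ge d-c$ (and, as noted above, one may even take $K=1$ there).

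The step I expect to be the main obstacle is the correct handling of the ``middle zone'' $\sigma_i=*$: independence only places points with definite signs, so the counting must be carried out on the set $Y$ where every coordinate is $0$ or $1$, and one must be sure that the variation bound still yields ``at most $K$ sign changes'' when intermediate $*$-values are simply skipped along the chain. Once this is set up, translating ``bounded variation'' into ``few sign changes per coordinate'' and running the double count is routine; no compactness, continuity, or topology on $X$ is needed, which is why the result holds for an arbitrary linearly ordered \emph{set}.
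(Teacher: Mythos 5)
The paper never actually proves Theorem \ref{newprinciple}: it is imported wholesale from the external note \cite{Me-Helly}, whose title and citation key suggest an argument via a Helly-type selection principle for functions of uniformly bounded variation on a linearly ordered set. Your proof is therefore necessarily a different route, and as far as I can check it is correct and complete against the paper's own definitions. The single-function bound is right: along an increasing chain on which $\sigma_i$ avoids $*$, each sign change contributes at least $\delta=b-a$ to the sum in Definition \ref{d:BV}, so there are at most $K=\lfloor r/\delta\rfloor$ of them; independence (Definition \ref{d:ind}) does supply, for each $\epsilon\in\{0,1\}^n$, a point realizing that pattern with every coordinate outside the band $[a,b]$, so the $2^n$ representatives all lie in your set $Y$ and the double count $2^n-1\le\sum_s d(z_s,z_{s+1})=\sum_i\#\{s:\sigma_i(z_s)\ne\sigma_i(z_{s+1})\}\le nK$ is legitimate and yields the contradiction; the reduction of $M_+(X,[c,d])$ to $BV_{d-c}$ is exactly the inclusion the paper itself records before the theorem. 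Your worry about the $*$-zone is correctly resolved: the count is performed only on the chain of representatives, all of which lie in $Y$, and restricting the variation sum to a subchain only decreases it. Two comments on what each approach buys. A selection-theorem route has to be handled with care here, because on a non-compact set an independent sequence \emph{can} converge pointwise (take $S$ to be the finite subsets of $\N$ and $f_n(s)=\ch_{\{n\in s\}}$), so "every sequence has a pointwise convergent subsequence" does not by itself exclude independence outside the compact setting of Theorem \ref{f:sub-fr}; one must pass to a completion or argue more carefully. Your Sauer--Shelah-style double count avoids all topology, works for an arbitrary linearly ordered \emph{set} exactly as the statement demands, and moreover gives quantitative information: for any witnesses $a<b$ the pairs $\bigl(\{f\le a\},\{f\ge b\}\bigr)$, $f\in BV_r$, cannot be independent over more than $O\bigl(\frac{r}{b-a}\log\frac{r}{b-a}\bigr)$ indices. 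I see no gap.
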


This result together with Lemma \ref{l:tametype} 
of the present work leads to the following application. 
See also Corollary \ref{l:NoIndCircle}. 

\begin{thm} \label{t:monot}
	Let $X$ be a linearly ordered set, $f: X \to \R$ a (not necessarily continuous) function in $BV_r(X)$ and  
	$S \subset  M_+(X,X)$ a semigroup of order preserving (not necessarily continuous) selfmaps. Then for every point $z \in X$ the coding function
	$$
	m(f,z) : S \to \R, \ s \mapsto f(s(z))
	$$
	is tame on the discrete copy of $S$. 
\end{thm}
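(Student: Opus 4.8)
The plan is to recognize $m(f,z)$ as a coding function in the exact sense needed to apply Lemma \ref{l:tametype}.6, and then to supply that lemma's hypothesis directly from Theorem \ref{newprinciple}. Concretely, I would let $S$ act on $Y:=X$ by evaluation, $S\times X\to X$, $(s,x)\mapsto s(x)$, where the semigroup operation on $S$ is composition of maps; this is a genuine action because $(s\circ t)(x)=s(t(x))$, and it is order preserving since compositions of order preserving maps are order preserving. Taking $L:=S$ and $h:=\id_S$ in Definition \ref{d:tametype1}, the associated coding function is precisely $m(f,z)(s)=f(s(z))$, so the conclusion will follow at once once the relevant orbit is shown to be tame.

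The one substantive step will be to verify that the orbit $fS=\{f\circ s : s\in S\}$ is a tame family of real valued functions on $X$. First note that $f\in BV_r(X)$ is automatically bounded: for any $x$, comparing with a fixed base point $x_0$ (one of $x\le x_0$ or $x_0\le x$ holds) gives $|f(x)-f(x_0)|\le \Upsilon(f)\le r$, so $f(X)\subseteq[c,d]$ for some bounded interval $[c,d]$. I then claim $f\circ s\in BV_r(X,[c,d])$ for every $s\in S$. Indeed, given any chain $x_0\le x_1\le\cdots\le x_n$ in $X$, order preservation of $s$ produces a weakly increasing chain $s(x_0)\le s(x_1)\le\cdots\le s(x_n)$, whence
$$
\sum_{i=0}^{n-1}\bigl|f(s(x_{i+1}))-f(s(x_i))\bigr|\le \Upsilon(f)\le r,
$$
where any coincidences $s(x_i)=s(x_{i+1})$ merely contribute zero terms. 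Since also $(f\circ s)(X)=f(s(X))\subseteq f(X)\subseteq[c,d]$, we obtain $fS\subseteq BV_r(X,[c,d])$.

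With this inclusion in hand, Theorem \ref{newprinciple} says that $BV_r(X,[c,d])$ is a tame family, i.e. it contains no independent sequence (Definition \ref{d:tameF}); as any subfamily of a tame family is plainly tame, $fS$ is tame. Applying Lemma \ref{l:tametype}.6 with $L=S$, $h=\id_S$, $Y=X$ and the evaluation action above yields that for every $z\in X$ the coding function $m(f,z):S\to\R$ is tame on the discrete semigroup $S$, as required.

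The main (and essentially only) obstacle is the bounded-variation computation showing $fS\subseteq BV_r(X,[c,d])$; everything else is bookkeeping matching the general machinery of Lemma \ref{l:tametype}.6 and the cited tameness of bounded-variation families. I would also want to confirm the semigroup-versus-monoid conventions for the action, but the proof of Lemma \ref{l:tametype}.6 uses only the map $q:S\to X,\ s\mapsto h(s)z$ together with Lemma \ref{l_1}.1, so no identity element is actually required and the argument goes through for a bare semigroup $S$.
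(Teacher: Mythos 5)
Your proposal is correct and follows essentially the same route as the paper: show that order preservation of each $s\in S$ forces $fs\in BV_r(X)$, invoke Theorem \ref{newprinciple} to conclude that the bounded family $fS$ is tame, and finish with Lemma \ref{l:tametype}.6. The extra verification that $f$ is bounded is harmless but unnecessary, since Definition \ref{d:BV} already builds boundedness into membership in $BV_r(X)$.
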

\begin{proof} 
	For every $s \in S$ and $x_0 \leq x_1 \leq \cdots \leq x_n$ in $X$ we have $sx_0 \leq sx_1 \leq \cdots \leq sx_n$. Then 
		$
		\sum_{i=0}^{n-1} |f(sx_{i+1})-f(sx_i)| \leq r
		$
		because $f \in BV_r(X)$. It follows that $fs \in BV_r(X)$. So the 
	 orbit $fS$ of $f$ is a bounded family of functions with bounded total variation. Then $fS$ is a tame family 
	 by Theorem \ref{newprinciple}. Hence by Lemma \ref{l:tametype}.6 the function $m(f,z)$ is tame. 
\end{proof}

\begin{defin} 
Let $f: \T\to \R$ be a function. We say that it has 
total variation $\leq r$ and write 
$f \in BV_r(\T)$ if the induced function  
$f \circ q_1: [0,1] \to \R$ belongs to $BV_r[0,1]$. 
\end{defin}

\begin{lem} \label{l:BV}
	For every $\phi \in M_+(\T,\T)$ and every $f \in BV_r(\T)$ we have $f \circ \phi \in BV_{2r}(\T)$. 
\end{lem}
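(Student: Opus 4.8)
The plan is to lift the whole situation to the unit interval, where the variation was defined in the first place, and then to combine two elementary properties of the total variation $\Upsilon$: its additivity over adjacent subintervals, and its invariance under order preserving reparametrization.

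First I would pass to the canonical lift. By Definition \ref{d:lift} and Remark \ref{r:lifts}, the orientation preserving map $\phi$ admits a canonical lift whose kernel $\Phi^*: [0,1] \to [0,2]$ is an \emph{order preserving} function satisfying $q_2 \circ \Phi^* = \phi \circ q_1$, where $q_1: [0,1] \to \T$ and $q_2: [0,2] \to \T$ denote the restrictions of the quotient map $q: \R \to \T$. Composing with $f$ on the left yields the key identity
$$
(f \circ \phi) \circ q_1 \ =\ f \circ (\phi \circ q_1) \ =\ (f \circ q_2) \circ \Phi^* .
$$
Since $f \circ \phi \in BV_{2r}(\T)$ means exactly that $(f\circ\phi)\circ q_1 \in BV_{2r}[0,1]$, it suffices to control the variation of the auxiliary function $g := f \circ q_2 : [0,2] \to \R$ and then to observe that precomposition by the monotone $\Phi^*$ cannot increase it.

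Next I would estimate $\Upsilon(g)$ on $[0,2]$. As $q_2$ restricts on $[0,1]$ to $q_1$ and on $[1,2]$ to the shifted copy $x \mapsto q_1(x-1)$, the function $g$ is built from two translated copies of $f \circ q_1$, and the hypothesis $f \in BV_r(\T)$ says precisely that $\Upsilon(f \circ q_1) \le r$ on $[0,1]$. By additivity of the total variation over the adjacent intervals $[0,1]$ and $[1,2]$ (which holds for arbitrary, not necessarily continuous, functions, so the junction point $1$ contributes nothing extra),
$$
\Upsilon(g,[0,2]) \ =\ \Upsilon(g,[0,1]) + \Upsilon(g,[1,2]) \ \le\ r + r \ =\ 2r,
$$
so that $g \in BV_{2r}[0,2]$.

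Finally I would invoke the reparametrization principle. For any chain $0 \le x_0 \le x_1 \le \cdots \le x_n \le 1$, the order preserving map $\Phi^*$ produces a monotone chain $\Phi^*(x_0) \le \Phi^*(x_1) \le \cdots \le \Phi^*(x_n)$ in $[0,2]$, whence
$$
\sum_{i=0}^{n-1} |g(\Phi^*(x_{i+1})) - g(\Phi^*(x_i))| \ \le\ \Upsilon(g,[0,2]) \ \le\ 2r .
$$
Taking the supremum over all such chains gives $\Upsilon\big((f\circ\phi)\circ q_1\big) \le 2r$ via the displayed identity, i.e.\ $f \circ \phi \in BV_{2r}(\T)$. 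I do not expect a genuine obstacle here; the only points demanding care are the bookkeeping on the doubled interval $[0,2]$ — checking that $g = f \circ q_2$ really is two copies of $f \circ q_1$ with a harmless junction at $1$ — and the verification that the chosen lift satisfies $q_2 \circ \Phi^* = \phi \circ q_1$, both of which are immediate from the definitions recalled in Definition \ref{d:lift} and Remark \ref{r:lifts}.
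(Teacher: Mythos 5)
Your proof is correct and follows essentially the same route as the paper's: pass to the kernel $\Phi^*:[0,1]\to[0,2]$ of the canonical lift, use $\phi\circ q_1=q_2\circ\Phi^*$ to reduce the problem to showing $f\circ q_2\in BV_{2r}[0,2]$, bound that by the variations of the two periodic copies on $[0,1]$ and $[1,2]$, and conclude via monotone reparametrization. You merely spell out the additivity-of-variation and reparametrization steps that the paper leaves as "easy"; there is nothing to correct.
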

\begin{proof}
	Let $\Phi$ be the canonical lift of $\phi$ and  $\Phi^*: [0,1] \to [0,2]$ be its kernel. Then we have to show that $f \circ \phi \circ q_1: [0,1] \to \R$ belongs to $BV_{2r}[0,1]$. 
	Since, $\phi \circ q_1= q_2 \circ \Phi^*$, it is equivalent to 
showing	
	that $f \circ q_2 \circ \Phi^* \in BV_{2r}[0,1]$. 
	Since $\Phi^*$ is 
monotone	
	it suffices to show that 
	$f \circ q_2 \in BV_{2r}[0,2]$.   
	Now it is enough to see that the restrictions of $f \circ q_2$ 
to	
	the subintervals $[0,1]$ and $[1,2]$ 
are in $BV_{r}$. The first is clear by the definition of $f \in BV_r[0,1]$. 
	The rest is easy using the fact that $q_2$ has period $1$. 
	\end{proof}

\begin{cor} \label{l:NoIndCircle} 	 
	Any bounded family of functions 
	 $\Ga:=\{f_i: \T \to \R\}_{i \in I}$ with 
	 finite total variation 
	 is tame. 
	 
\end{cor}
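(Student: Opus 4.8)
The plan is to reduce this circle statement to the interval case already settled in Theorem \ref{newprinciple}, transporting the conclusion along the quotient map $q_1 \colon [0,1] \to \T$.

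First I would pin down the uniform bounds hidden in the hypotheses: since $\Ga$ is bounded and its members have uniformly bounded total variation, there are reals $c \le d$ and an $r \ge 0$ such that $\Ga \subseteq BV_r(\T,[c,d])$. By the definition of $BV_r(\T)$ (namely, $f \in BV_r(\T)$ precisely when $f \circ q_1 \in BV_r[0,1]$), composing each $f_i$ with $q_1$ produces a function $f_i \circ q_1 \colon [0,1] \to [c,d]$ lying in $BV_r([0,1],[c,d])$. Thus the transported family $\Ga' := \{f_i \circ q_1\}_{i \in I}$ is a subfamily of $BV_r([0,1],[c,d])$.

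Next I would invoke Theorem \ref{newprinciple} for the linearly ordered set $X = [0,1]$: it asserts that $BV_r([0,1],[c,d])$ is a tame family. Since ``tame'' (Definition \ref{d:tameF}) just means ``containing no independent sequence'', this property is inherited by every subfamily, so $\Ga'$ is tame as well. To transfer tameness back to $\Ga$, suppose for contradiction that some sequence $\{f_n\} \subseteq \Ga$ were independent on $\T$. The map $q_1$ is surjective, so Lemma \ref{l_1}.2 applies and gives that $\{f_n\}$ is independent on $\T$ if and only if $\{f_n \circ q_1\}$ is independent on $[0,1]$. The latter sequence sits inside the tame family $\Ga'$ and therefore cannot be independent, a contradiction. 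Hence $\Ga$ contains no independent sequence, i.e. $\Ga$ is tame.

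I do not expect any serious obstacle here; the work is almost entirely bookkeeping around two earlier results. The two points that need genuine care are: reading ``finite total variation'' as a \emph{uniform} variation bound $r$ (without such a bound one could build families of increasingly oscillating combs realizing arbitrary independence patterns, and the statement would fail), so that Theorem \ref{newprinciple} becomes applicable with a single $r$; and using precisely the equivalence in Lemma \ref{l_1}.2 rather than the one-directional Lemma \ref{l_1}.1, since it is the \emph{surjectivity} of $q_1$ that lets the absence of independent sequences be pulled back from $[0,1]$ to $\T$.
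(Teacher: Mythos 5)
Your proof is correct and follows essentially the same route as the paper's: assume an independent sequence exists, transport it to $[0,1]$ via the surjection $q_1$ using Lemma \ref{l_1}.2, and contradict Theorem \ref{newprinciple}. Your reading of ``finite total variation'' as a uniform bound $r$ is also the one the paper's proof implicitly adopts (it takes $f_n \in BV_r(\T)$ for a single $r$), so nothing further is needed.
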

\begin{proof}
	Assuming the contrary let $f_n \in BV_r(\T)$ be an independent sequence of functions. Then since $q_1: [0,1] \to \T$ is onto we obtain by Lemma \ref{l_1}.2 that the sequence 
	$f_n \circ q_1$ of functions on $[0,1]$ is independent, too. This contradicts 
Theorem \ref{newprinciple} because $f_n \circ q_1: [0,1] \to \R$ is a bounded family of bounded total variation.  
	
	\end{proof}

\begin{thm} \label{t:circleWalk} Let $f: \T \to \R$ be a (not necessarily continuous) function in $BV_r(\T)$.
	\ben
	\item 
	Let $S \subset  M_+(\T,\T)$ be a semigroup of orientation  preserving (not necessarily continuous) selfmaps. Then for every point $z \in \T$ the coding function
	$$
	m(f,z) : S \to \R, \ s \mapsto f(s(z))
	$$
	is tame on the discrete copy of $S$. 
		\item
	Let $\sigma: \T \to \T$ be an orientation preserving selfmap.  
	Then the coding function
	$$
	m(f,z) : \N \cup \{0\} \to \R, \ n \mapsto f(\s^n(z))
	$$
	is tame.  If $\s$ is a bijection then one can replace $\N \cup \{0\}$ by $\Z$. 
	 \een
\end{thm}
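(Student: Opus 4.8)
The plan is to reduce both statements to the combination of Lemma~\ref{l:BV}, Corollary~\ref{l:NoIndCircle} and Lemma~\ref{l:tametype}.6, exactly in the spirit of the proof of Theorem~\ref{t:monot}. The decisive observation is that composing a bounded-variation function on $\T$ with an orientation preserving selfmap keeps it of bounded variation with a \emph{uniform} bound, so that the whole orbit of $f$ becomes a uniformly bounded-variation family on the circle; such families were already shown to be tame, and tameness of the coding function then follows formally.

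For part (1), I would first let $S$ act on $Y:=\T$ by evaluation, $s\cdot y:=s(y)$. Since $M_+(\T,\T)$ is a monoid under composition (Lemma~\ref{l:liftP}.3) this is a genuine monoidal action of the discrete semigroup $S$, and the coding function $m(f,z)\colon S\to\R,\ s\mapsto f(s(z))$ is the one produced by the identity homomorphism $h=\mathrm{id}_S$. The orbit is $fS=\{f\circ s:s\in S\}$. By Lemma~\ref{l:BV}, the hypotheses $f\in BV_r(\T)$ and $s\in M_+(\T,\T)$ give $f\circ s\in BV_{2r}(\T)$ for every $s\in S$, so $fS\subset BV_{2r}(\T)$ is a bounded family of \emph{uniformly} bounded variation. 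Corollary~\ref{l:NoIndCircle} then yields that $fS$ is a tame family, and an application of Lemma~\ref{l:tametype}.6 (with $L=S$, $h=\mathrm{id}$) gives that $m(f,z)$ is tame on the discrete copy of $S$.

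For part (2), I would take $L:=\N\cup\{0\}$ and the homomorphism $h\colon L\to M_+(\T,\T),\ n\mapsto\sigma^n$, which is well defined precisely because $M_+(\T,\T)$ is closed under composition (Lemma~\ref{l:liftP}.3) and contains $\mathrm{id}_\T=\sigma^0$. The orbit is now $\{f\circ\sigma^n:n\ge 0\}\subset BV_{2r}(\T)$ by Lemma~\ref{l:BV}, hence tame by Corollary~\ref{l:NoIndCircle}, and Lemma~\ref{l:tametype}.6 gives tameness of $m(f,z)\colon\N\cup\{0\}\to\R$ (this is essentially part (1) applied to the cyclic subsemigroup generated by $\sigma$). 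To pass to $L=\Z$ when $\sigma$ is a bijection, the only extra point to verify is that $\sigma^{-1}\in M_+(\T,\T)$, for then $\{\sigma^n:n\in\Z\}\subset M_+(\T,\T)$ and the argument applies verbatim with $L=\Z$.

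I expect this last point to be the one genuinely requiring an argument; everything else is bookkeeping, since the real content already sits in Lemma~\ref{l:BV} and Corollary~\ref{l:NoIndCircle}. Let $F$ be a monotone degree-one lift of $\sigma$ as in Definition~\ref{d:lift}. Injectivity of $\sigma$ forces $F$ to be strictly increasing (a coincidence $F(a)=F(b)$ with $0<b-a<1$ would identify two distinct points of $\T$), and surjectivity of $\sigma$ forces $F$ to have no jumps: a jump of size $<1$ would leave an interval of values, hence an arc of $\T$, outside the image of $\sigma$, while a jump of size $\ge 1$ is impossible because $F(x+1)=F(x)+1$ makes the total increase over a period equal to $1$, incompatible with $F$ being strictly increasing. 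Thus $F$ is a continuous strictly increasing bijection of $\R$ with $F(x+1)=F(x)+1$, so $F^{-1}$ is a lift of $\sigma^{-1}$ of the same type, giving $\sigma^{-1}\in M_+(\T,\T)$ and completing the reduction.
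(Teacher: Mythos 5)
Your proposal is correct and follows essentially the same route as the paper: Lemma~\ref{l:BV} puts the orbit $fS$ inside $BV_{2r}(\T)$, Corollary~\ref{l:NoIndCircle} makes it a tame family, and Lemma~\ref{l:tametype}.6 converts this into tameness of the coding function, with part (2) reduced to part (1). The only difference is that you explicitly verify $\sigma^{-1}\in M_+(\T,\T)$ for the $\Z$-case, a point the paper leaves implicit, and your argument for it is sound.
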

\begin{proof} (1) By Lemma \ref{l:BV} we have $fS \subset BV_{2r}(\T)$. Clearly, $fS$ is bounded. 
	Then by Corollary \ref{l:NoIndCircle} 
	$fS$ is a tame family. 
	Now Lemma \ref{l:tametype}.6 finishes the proof.  
	
	(2) 
This	
	is a particular case of (1). 
	\end{proof}

\begin{remark} \label{applicBV}  
	(Coloring function on the circle) 	
	
	Consider a finite partition of 
	$\T=\cup_{i=0}^{d} I_i$, 
	where each $I_i$ is an arc on $\T$ (open, closed or containing one of the boundary points). Then any coloring of this partition, that is, any 
	function $f: \T \to \R$ which is constant on 
	each $I_i$ is of finite variation. In particular Theorem \ref{t:circleWalk} gives now an additional way of proving Theorem \ref{multi}, which asserts that the multidimensional Sturmian sequences of Example \ref{d:mSturm} are tame. 
\end{remark}


\begin{remark} Several results of this subsection 
	(the case of $\T$) can be generalized to general cyclically ordered sets and orientation preserving actions. 
	We intend to deal with this issue in some future publication. 
	See \cite{GM-c}. 
\end{remark}

\section{Intrinsically tame groups}\label{sec,int}

Recall that for every topological group $G$ there exists a
unique universal minimal $G$-system $M(G)$.
Frequently 
 $M(G)$ is nonmetrizable. For example, this is the case for every locally compact noncompact $G$. On the other hand,
many interesting massive Polish groups are extremely amenable that is, having trivial $M(G)$.
See for example \cite{Pest98, PestBook, UspComp}.
The first example of a nontrivial yet computable small $M(G)$ was found by Pestov.
In \cite{Pest98} he shows that for $G:=H_+(\T)$ the universal minimal system $M(G)$
can be identified with the natural action of $G$ on the circle $\T$.
Glasner and Weiss \cite{GWsym,GW-Cantor} gave an explicite description
of $M(G)$ for the symmetric group $S_{\infty}$ and for $H(C)$
(the Polish group of homeomorphisms of the Cantor set $C$).
Using model theory Kechris, Pestov and Todor\u{c}evi\'{c} gave in \cite{KPT} many new
examples of computations of $M(G)$ for various subgroups $G$ of $S_{\infty}$.

\sk

\begin{defin} \label{d:int-tame} 
We say that a topological group $G$ is \emph{intrinsically tame}
if the universal minimal $G$-system $M(G)$ is tame.
Equivalently, if
every continuous action of $G$ on a compact space $X$ admits a $G$-subsystem $Y \subset X$ which is tame.
\end{defin}

By Lemma \ref{l:tame-prop}.1 every $G$-system $X$ admits a 
largest
tame $G$-factor. Every topological group $G$ has a universal minimal tame system $M_t(G)$  (which is the 
largest
tame $G$-factor of $M(G)$). So $G$ is intrinsically tame iff the natural projection $M(G) \to M_t(G)$ is an isomorphism.  

The $G$-space $M_t(G)$ can also be described as a minimal left ideal in the 
universal space $G^{\Tame}$. Recall that the latter is isomorphic to its own enveloping semigroup
and thus has a structure of a compact right topological semigroup. Moreover, any
two minimal left ideals there are isomorphic as dynamical systems.


In \cite{GM1} we defined, for a topological group $G$ and
a dynamical property $P$, the notion of $P$-fpp
($P$ fixed point property). Namely $G$ has the $P$-fpp if every $G$-system which
has the property $P$ admits a $G$ fixed point.
Clearly this is the same as demanding that every minimal $G$- system with the property
$P$ be trivial. Thus for $P=\Tame$ a group $G$ has the tame-fpp iff $M_t(G)$ is trivial.

\sk 
We will need the following
theorem which extends a result in \cite{Gl-str}.

\begin{thm}\label{pd}
Let $(G,X)$ be a metrizable minimal tame dynamical system and suppose
it admits an invariant probability measure. Then $(G,X)$ is point distal.
If moreover, with respect to $\mu$ the system $(G,\mu,X)$ is weakly
mixing then it is a trivial one point system.
\end{thm}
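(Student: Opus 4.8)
The plan is to play the $G$-invariant measure $\mu$ against the Baire class $1$ structure of the enveloping semigroup $E=E(G,X)$. We may discard the trivial case where $X$ is finite (a finite minimal system is a single periodic orbit, hence distal), so by minimality $\mu$ is non-atomic with full support $\mathrm{supp}(\mu)=X$. The first and decisive point is that \emph{every} $p\in E$ preserves $\mu$. Indeed, since $(G,X)$ is tame and metric, Theorem~\ref{D-BFT} shows that $E$ is a Rosenthal compactum, hence Fr\'echet, so each $p\in E$ is the pointwise limit of a sequence $g_n\in G$; for $f\in C(X)$ the bounded convergence theorem then gives $\int_X f(px)\,d\mu(x)=\lim_n\int_X f(g_nx)\,d\mu(x)=\int_X f\,d\mu$ by $G$-invariance, so $p_*\mu=\mu$. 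By the same theorem every $p\in E$ is a fragmented (Baire class $1$) self-map of $X$.

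Two consequences follow. First, since $\mu$ is non-atomic the diagonal $\Delta\subset X\times X$ is $(\mu\times\mu)$-null, and as $(p\times p)_*(\mu\times\mu)=\mu\times\mu$ we obtain $(\mu\times\mu)\{(x,y):px=py\}=(\mu\times\mu)\big((p\times p)^{-1}\Delta\big)=(\mu\times\mu)(\Delta)=0$ for every $p\in E$; thus the proximal relation is measure-theoretically trivial. Second, for any idempotent $u\in E$ lying in a minimal left ideal $I$, the set $\mathrm{Fix}(u)=uX=\{x:ux=x\}$ is Borel, $u$ is the identity on it, and $\mu(uX)=\mu(u^{-1}(uX))=\mu(X)=1$; hence each minimal idempotent equals $\mathrm{id}_X$ $\mu$-almost everywhere and its fixed-point set is conull.

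Next I would invoke the standard Ellis--Namioka description of distal points (see, e.g., \cite{Gl-03}): a point $x$ is distal iff $ux=x$ for every idempotent $u$ in a single fixed minimal left ideal $I$. Each $\mathrm{Fix}(u)$ is conull by the previous step, so the distal set $D=\bigcap_{u\in J(I)}\mathrm{Fix}(u)$ would be conull as soon as the intersection is reduced to countably many idempotents. This reduction is the crux, and it is where tameness is used a second time: choosing a countable family $\{u_n\}\subset J(I)$ whose pointwise sequential closure contains $J(I)$ (using separability and the Fr\'echet property of the Rosenthal compactum $E$), one has, for $x\in\bigcap_n\mathrm{Fix}(u_n)$ and any $u=\lim_k u_{n_k}\in J(I)$, that $ux=\lim_k u_{n_k}x=x$; hence $\bigcap_n\mathrm{Fix}(u_n)\subseteq D$ and $D$ is conull. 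The delicate technical point that I expect to be the main obstacle is precisely the existence of such a countable determining family of idempotents, equivalently the upgrade, via points of continuity of the Baire class $1$ maps, from the measure-theoretic triviality of the proximal relation to a \emph{genuine} distal point; this is handled by the method of \cite{Gl-str}. In particular $D\neq\emptyset$, so $(G,X)$ is point distal.

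Finally, assume that $(G,\mu,X)$ is weakly mixing, i.e.\ $(X\times X,\mu\times\mu)$ is ergodic. The $G$-invariant Borel function $\psi(x,y)=\inf_{g\in G}d(gx,gy)$ is then $(\mu\times\mu)$-a.e.\ equal to a constant $c\ge 0$, and $\{\psi=0\}$ is exactly the proximal relation. Since $D$ is conull, for $x\in D$ the slice $\{y:\psi(x,y)=0\}$ is the singleton $\{x\}$, so $\{\psi=0\}$ is $(\mu\times\mu)$-null and therefore $c>0$. But $\psi(x,y)\le d(x,y)$ always (take $g=e$), so $d(x,y)\ge c$ for $(\mu\times\mu)$-a.e.\ $(x,y)$, forcing $\{(x,y):d(x,y)<c\}$ to be null. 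As $\mathrm{supp}(\mu)=X$, this set is a positive-measure neighborhood of the diagonal unless $X$ is a single point. Hence $X$ is trivial, which completes the proof.
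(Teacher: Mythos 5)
Your preliminary steps are correct and cleanly done: since $E=E(G,X)$ is a separable Rosenthal compactum, hence Fr\'echet, every $p\in E$ is a sequential limit of elements of $G$, is Borel, and preserves $\mu$ by dominated convergence; consequently $\mu(vX)=\mu(\mathrm{Fix}(v))=1$ for every minimal idempotent $v$. This recovers exactly the ingredient (\cite[Prop.~4.3]{Gl-str}) that the paper's proof quotes. The genuine gap is the step you yourself flag as the crux: passing from ``each $\mathrm{Fix}(u)$ is conull'' to ``$D=\bigcap_{u\in J(I)}\mathrm{Fix}(u)$ is conull''. Your proposed mechanism --- a countable family $\{u_n\}\subset J(I)$ of idempotents whose sequential closure contains $J(I)$ --- is unjustified: $E$ is separable but in general non-metrizable, separability is not hereditary, and there is no reason for the (typically uncountable) set $J(I)$ of idempotents of a minimal left ideal to admit such a countable sequentially determining subfamily. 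Deferring to ``the method of \cite{Gl-str}'' does not close this, because that method is different: one fixes a \emph{single} minimal idempotent $v$, uses \cite[Lemma~4.2(ii)]{Gl-str} to see that the continuity points $C_v$ of $v|_{\overline{vX}}$ form a dense $G_\delta$ subset of $\overline{vX}$ contained in $vX$, uses $\mu(vX)=1$ and full support to conclude $\overline{vX}=X$, and then shows as in \cite[Prop.~4.4]{Gl-str} that such continuity points are distal. No countable exhaustion of $J(I)$ is involved, and the output is a dense $G_\delta$ of distal points rather than a conull set.

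This matters for your second part as well. Your ergodicity argument with the invariant upper semicontinuous function $\psi(x,y)=\inf_{g}d(gx,gy)$ is an attractive and more elementary alternative to the paper's appeal to the Veech--Ellis structure theorem, but the Fubini step requires the set of distal points to have full (or at least positive) $\mu\times\mu$-relevant measure, which is strictly stronger than point-distality and is precisely what the gappy step was supposed to deliver; note that the dense $G_\delta$ produced by the category argument need not be conull. The paper's route avoids this entirely: measure-theoretic weak mixing plus full support yields topological weak mixing, and a nontrivial point-distal system admits a nontrivial equicontinuous factor by Veech--Ellis, which a topologically weakly mixing system cannot have.
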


\begin{proof}
With notations as in \cite{Gl-str} we observe that for any minimal
idempotent $v \in E(G,X)$ the set $C_v$ of continuity points of $v$
restricted to the set $\ov{vX}$, is a dense $G_\del$ subset of $\ov{vX}$
and moreover $C_v \subset vX$ (\cite[Lemma 4.2.(ii)]{Gl-str}).
Also, by \cite[Proposition 4.3]{Gl-str} we have $\mu(vX)=1$, and it follows
that $\ov{vX}=X$. The proof of the claim that $(G,X)$ is point distal
is now finished as in \cite[Proposition 4.4]{Gl-str}.

Finally, 
if 
the measure preserving system $(G,\mu,X)$ is weakly
mixing it follows that it is also topologically weakly mixing. By the
Veech-Ellis structure theorem for point distal systems
\cite{V, E}, if $(G,X)$ is nontrivial it admits a nontrivial equicontinuous
factor, say $(G,Y)$. However $(G,Y)$, being a factor of $(G,X)$, is
at the same time also topologically weakly mixing which is a contradiction.
\end{proof}

\begin{remark}
It seems that this observation, namely that the existence of
an invariant measure can replace the assumption that $G$ is abelian
in proving point distally, can be pushed to a proof of the full statement
of Proposition 5.1 in \cite{Gl-str} (modulo some obvious modifications) under the assumption that $X$
supports an invariant measure.
\end{remark}


\begin{thm} \label{thm4} \
\begin{enumerate}
\item
Every extremely amenable group is intrinsically tame.
\item
The Polish group $H_+(\T)$ of orientation preserving homeomorphisms of the circle is intrinsically tame.
\item
The Polish groups $\Aut(\mathbf{S}(2))$ and 
$\Aut(\mathbf{S}(3))$, of automorphisms 
of the circular directed graphs $\mathbf{S}(2)$ and $\mathbf{S}(3)$, are intrinsically tame.
\item
A discrete group which is intrinsically tame is finite.
\footnote{Modulo an extension of Weiss' theorem, which does not yet exist, a similar idea will work for any locally compact group.
The more general statement would be: A locally compact group which is
intrinsically tame is compact.}
\item
For an abelian infinite countable discrete group $G$, its universal
minimal tame system $M_t(G)$ is a highly proximal extension of
its Bohr compactification $G^{AP}$ (see e.g. \cite{Gl-str}). 
\item
The Polish group $H(C)$, of homeomorphisms of the Cantor set,
is not intrinsically tame.
\item
The Polish group $G=S_{\infty}$, of permutations of the natural numbers,
is not intrinsically tame. In fact $M_t(G)$ is trivial; i.e. $G$ has the tame-fpp.
\end{enumerate}
\end{thm}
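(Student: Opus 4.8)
The plan is to dispose of the seven assertions in three waves, according to the machinery each needs. The three positive ``computation'' cases (1)--(3) I would read off directly from known descriptions of $M(G)$. Assertion (1) is immediate: extreme amenability means $M(G)$ is a single point, which is trivially tame. For (2) I would invoke Pestov's identification \cite{Pest98} of $M(H_+(\T))$ with the tautological circle system $(H_+(\T),\T)$, already shown to be tame in Theorem \ref{GenH_+}.1; hence $M(G)$ is tame. For (3) I would use the Kechris--Pestov--Todor\u{c}evi\'{c} framework \cite{KPT} (note that $\Aut(\mathbf{S}(2))$ and $\Aut(\mathbf{S}(3))$ are closed subgroups of $S_\infty$) to realize each of these universal flows as an orientation-preserving action on the circle, so that the tameness of orientation-preserving circle systems (Theorem \ref{GenH_+}.1) applies verbatim, exactly as for $H_+(\T)$. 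In all three cases the substance is imported from the cited minimal-flow computations together with the circle/order tameness theorems of Section \ref{s:order}.

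For (4) and (5) the organizing principle is that every minimal $G$-system is a factor of $M(G)$ and tameness is inherited by factors (Lemma \ref{l:tame-prop}.1); thus $G$ is intrinsically tame iff \emph{every} minimal $G$-flow is tame. For (4) I would argue by contraposition: for an infinite discrete $G$, a theorem of Weiss furnishes a minimal $G$-action of positive entropy, and such a system is not tame (tame systems have zero entropy, Section \ref{s:entropy}); therefore $M(G)$ is not tame and $G$ is not intrinsically tame, so an intrinsically tame discrete group must be finite. For (5), with $G$ abelian, I would write $M_t(G)$ as the inverse limit of its metrizable tame minimal factors $X_i$; by the structure theorem \ref{almost-auto} each $X_i$ is an almost one-to-one extension of its maximal equicontinuous factor $Y_i$, and the inverse limit $\varprojlim Y_i$ is precisely $G^{AP}$. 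Since an inverse limit of almost one-to-one extensions over a fixed base is a highly proximal extension, this exhibits $M_t(G)\to G^{AP}$ as highly proximal (see \cite{Gl-str}).

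The two negative Polish-group cases (6) and (7) need genuinely different arguments. For (7) I would use that $S_\infty$ is amenable (its universal flow $M(S_\infty)=\mathrm{LO}(\N)$ carries an invariant measure) and that its Bohr compactification $G^{AP}$ is trivial ($S_\infty$ has no nontrivial continuous homomorphism into a compact group). Since $M(S_\infty)=\mathrm{LO}(\N)$ is metrizable, so is its maximal tame factor $M_t(S_\infty)$; being tame, minimal, metrizable and carrying an invariant measure, it is point distal by Theorem \ref{pd}, and trivial if it is weakly mixing. Were it nontrivial and not weakly mixing it would, by the Veech--Ellis structure theorem as used in the proof of Theorem \ref{pd}, admit a nontrivial equicontinuous factor, contradicting the triviality of $G^{AP}$; hence $M_t(S_\infty)$ is a point and $S_\infty$ has the tame fixed point property. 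For (6) I would appeal to the Glasner--Weiss description \cite{GW-Cantor} of $M(H(C))$ as the space of maximal chains of closed subsets of $C$ and show this (minimal) flow is not tame by exhibiting an independent sequence of continuous functions on it, equivalently a copy of $\beta\N$ in its enveloping semigroup; non-tameness of $M(H(C))$ is exactly the statement that $H(C)$ is not intrinsically tame.

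The main obstacle I anticipate is the combinatorial core of (6). Unlike the discrete case (4), $H(C)$ is not covered by Weiss' theorem (as the footnote records, the locally compact extension is not available), so one cannot merely import a non-tame minimal action and must instead extract independence \emph{directly} from the maximal-chain flow. Secondarily, (5) demands care in verifying that the inverse-limit bookkeeping yields a highly proximal --- not merely proximal --- extension, and (7) hinges on correctly marrying amenability, the Veech--Ellis dichotomy, and the triviality of the Bohr compactification of $S_\infty$; these are the three points where I would expect to spend the real effort.
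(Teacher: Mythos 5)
Your treatments of (1), (2) and (5) essentially coincide with the paper's proofs, and your route for (7) is a genuinely different but workable argument: the paper instead uses two facts from \cite{GW} (minimal idempotents of $E(S_\infty,LO(\N))$ have two-point range, and $LO(\N)$ carries an invariant measure of full support) plus a sequence $g_n\to u$ supplied by the BFT dichotomy and Lebesgue's theorem, whereas you use Theorem \ref{pd}, Veech--Ellis, and the triviality of the Bohr compactification of $S_\infty$ --- note that this last item is a nontrivial external fact which you assert without proof. The real problems are in (4) and (6). In (4) your argument rests on ``tame systems have zero entropy,'' but the entropy results of Section \ref{s:entropy} (Kerr--Li) are stated for $\Z$-actions and, at best, extend to countable \emph{amenable} groups; for a non-amenable discrete group (a free group, say) classical topological entropy is not even defined, so your contrapositive covers only part of the statement. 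The paper's proof avoids entropy entirely: Weiss's theorem \cite{W} gives a metrizable \emph{minimal} model $(G,X,\mu)$ of the Bernoulli measure-preserving system; intrinsic tameness makes $(G,X)$ tame (every minimal system is a factor of $M(G)$ and tameness passes to factors, Lemma \ref{l:tame-prop}.1); $(G,\mu,X)$ is weakly mixing; and Theorem \ref{pd} --- whose very purpose is to let an invariant measure replace commutativity of $G$ --- forces $X$ to be trivial, a contradiction.

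In (6) you explicitly defer the key step: you plan to exhibit an independent sequence (equivalently a copy of $\beta\N$ in the enveloping semigroup) directly in the maximal-chains flow, and you flag this as the main unresolved obstacle. The paper never touches the maximal-chains description. Since the tautological action $(H(C),C)$ is minimal, it is a factor of $M(G)$, so by heredity of tameness under factors it suffices to show $(H(C),C)$ is not tame; identifying $C\cong\{0,1\}^\Z$, the shift $\sigma$ is a single element of $H(C)$, and the enveloping semigroup $E(\sigma,C)$ of the cascade it generates is homeomorphic to $\beta\N$ and sits inside $E(H(C),C)$, so $(H(C),C)$ is untame by Theorem \ref{D-BFT}. (Your plan would in any case reduce to this, since the maximal-chains flow factors onto $(H(C),C)$, but you never make that reduction, so as written (6) is incomplete.) Finally, in (3) your claim that the universal minimal flows of $\Aut(\mathbf{S}(2))$ and $\Aut(\mathbf{S}(3))$ can be realized as orientation-preserving actions on the circle is unsubstantiated; the paper cites van Th\'e's computation \cite{van-the} and then argues differently: every element of $E(M(G))$ is order preserving, hence $E(M(G))$ has cardinality at most $2^{\aleph_0}$, and tameness follows from the cardinality alternative in the dynamical BFT dichotomy (Theorem \ref{D-BFT}).
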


\begin{proof}
Claim (1) is trivial and claim (2) follows from Pestov's theorem \cite{Pest98} which
identifies, for $G = H_+(\T)$, the universal minimal
dynamical system $(G,M(G))$ with the tautological action $(G,\T)$, and from Theorem \ref{GenH_+}
which asserts that this action is tame.

(3) 
The universal minimal $G$-systems for the groups $\Aut(\mathbf{S}(2))$ and 
$\Aut(\mathbf{S}(3))$ are computed in \cite{van-the}.
In both cases it is easy to check that every element of the enveloping semigroup
$E(M(G))$ is an order preserving map. As there are only $2^{\aleph_0}$ order
preserving maps, it follows that the cardinality of $E(M(G))$ is
$2^{\aleph_0}$, whence, in both cases, the dynamical system $(G, M(G))$ is tame.

In order to prove Claim (4) we assume, to the contrary, that $G$ is infinite
and 
apply a result of B. Weiss  \cite{W}, to obtain
a minimal model, say $(G,X,\mu)$, of the
Bernoulli 
probability measure preserving
system $(G,\{0,1\}^G,  (\frac12 (\del_0 + \del_1))^G)$.
Now $(G,X,\mu)$ is metrizable, minimal and tame, and it carries
a $G$-invariant probability measure with respect to which the system is weakly mixing.
Applying Theorem \ref{pd} we conclude that $X$ is trivial.
This contradiction finishes the proof.

(5) 
In \cite{H}, \cite{KL} and \cite{Gl-str} it is shown that a metric minimal tame $G$-system
is an almost one-to-one extension of an equicontinuous system.
(Note that not every minimal almost one-to-one extension of
a minimal equicontinuous $G$-system is tame,
such systems e.g. can have positive topological entropy.)
Of course every minimal equicontinuous $G$-system is tame. 
Now tameness is preserved under sub-products, and because our group $G$ is
countable, it follows that $M_t(G)$ is a minimal sub-product of all the
minimal tame metrizable systems. In turn this implies that $M_t(G)$ is
a (non-metrizable)  highly proximal extension of the Bohr compactification
$G^{AP}$ of $G$.

(6) 
To see that $G = H(C)$ is not intrinsically tame
it suffices to show that the tautological action $(G,C)$, which is a factor of $M(G)$, is not tame. To that end note that the shift transformation
$\sig$ on $X = \{0,1\}^\Z$ is a homeomorphism of the
Cantor set. Now the enveloping semigroup $E(\sig,X)$ of the cascade
$(\sig,X)$, a subset of $E(G,X)$, is homeomorphic to $\beta\N$.

(7) 
To see that $G = S_{\infty}$ is not intrinsically tame we recall first that,
by \cite{GW}, the universal minimal dynamical system for this group
can be identified with the natural action of $G$ on the compact metric
space $X = LO(\N)$ of linear orders on $\N$. Also, it follows from the
analysis of this dynamical system that for any minimal idempotent
$u \in E(G,X)$ the image of $u$ contains exactly two points,
say $uX = \{x_1,x_2\}$.
A final fact that we will need concerning the system $(G,X)$ is that
it carries a $G$-invariant probability measure $\mu$ of full support \cite{GW}.
Now to finish the proof, suppose that $(G,X)$ is tame. Then there
is a {\bf sequence} $g_n \in G$ such that $g_n \to u$ in $E(G,X)$.
If $f \in C(X)$ is any continuos real valued function, then we have,
for each $x \in X$,
$$
\lim_{n \to \infty} f(g_n x) = f(ux) \in \{f(x_1), f(x_2)\}.
$$
But then, choosing a function $f \in C(X)$ which vanishes at the points\ $x_1$ and $x_2$ and with $\int f\,d\mu=1$, we get, by Lebesgue's theorem,
$$
1 = \int f \, d\mu = \lim_{n\to \infty} \int f(g_nx) \, d\mu =
\int f (ux) \, d\mu =0.
$$

Finally, the property of supporting an invariant measure, as well as the 
fact that the cardinality of the range of minimal idempotents is
$\le 2$, are inherited by factors and the same argument shows that $M(G)$
admits no nontrivial tame factor.
Thus $M_t(G)$ is trivial.  
\end{proof}

\begin{remark} \label{r:11}
A theorem of Huang, Kerr-Li and Glasner (\cite{H}, \cite{KL}, \cite{Gl-str})
asserts that:
for G abelian any 
metrizable minimal tame action is 
almost automorphic; i.e. an 
almost 1-1 extension of an equicontinuous system.
The fact that the minimal action of
$H_+(\T)$ on $\T$ is tame
shows that
some restrictive assumption on the group $G$ is
really necessary. Other non abelian minimal tame actions which are not 
almost automorphic are given in Example \ref{e:tameNOThns}(1)
and  Theorem \ref{thm4}(3).

\end{remark}

It would be interesting to find other examples of intrinsically tame Polish groups.

\vspace{0.3cm}

The (nonamenable) group $G=H_+(\T)$ has one more remarkable property. Besides $M(G)$,
one can also effectively compute the affine analogue of $M(G)$.
Namely, the \emph{universal irreducible affine system} of $G$ (we denote it by $I\!A(G)$) which was defined
and studied in \cite{Gl-thesis, Gl-book1}.
It is uniquely determined up to affine isomorphisms.  The corresponding affine compactification $G \to I\!A(G)$ is equivalent to the
affine compactification $G \to P(M_{sp}(G))$, where, $M_{sp}(G)$ is the \emph{universal strongly proximal minimal system} of $G$
and $P(M_{sp}(G))$ is the space of probability measures on the compact
space $M_{sp}(G)$. 
For more information regarding affine compactifications of dynamical systems we refer to \cite{GM-AffComp}.

\begin{defin} \label{d:ConvIntTame}
We say that $G$ is \emph{convexly intrinsically tame}
 (or \emph{conv-int-tame} for short) 
if the $G$-system $I\!A(G)$ is tame.
\end{defin}
Note that this condition holds 
iff every compact affine dynamical system
$(G,Q)$ admits an affine tame $G$-subsystem, iff
every compact affine dynamical system
$(G,Q)$ admits a tame $G$-subsystem.
The latter assertion follows from the fact that
$P(X)$ is tame whenever $X$ is \cite[Theorem 6.11]{GM-rose}, and by the affine universality of $P(X)$.
In particular, it follows that any intrinsically tame group 
 is convexly intrinsically tame.
Of course any amenable topological group (i.e. a group with trivial $I\!A(G)$) has this property.
Thus we have the following diagram which
emphasizes the analogy between the two pairs of properties:
\begin{equation*}
\xymatrix
{
 \text{extreme amenability}\  \ar@2{->} [d] \ar@2{->}[r]\  &  \  \text{intrinsically tame} \ar@2{->}[d]\  \\
 \text{amenability}\  \ar@2{->}[r] &  \  \text{convexly intrinsically tame}
}
\end{equation*}

\begin{remark} 
Given a class $P$ of compact $G$-systems which is stable under subdirect products, one 
can define the notions of intrinsically $P$-group and convexly intrinsically $P$ group in a manner analogous to the one we adopted for $P=\Tame$.
We then note that 
in this terminology a group is convexly intrinsically HNS (and, hence, also conv-int-WAP) iff it is amenable. This follows easily from the fact that the algebra $\Asp(G)$ is left amenable, \cite{GM-fp}. 
This ``collapsing effect" together with the special role of tameness in the dynamical BFT dichotomy \ref{D-BFT} suggest that the 
notion of conv-int-tameness is a natural analogue of amenability.

\end{remark}

At least for discrete groups, if $G$ is intrinsically HNS then it is finite.
In fact, for any group, an HNS minimal system is equicontinuous 
(see \cite{GM1}),
 so that
for a group $G$ which is intrinsically HNS the universal minimal system $M(G)$ coincides
with its Bohr compactification $G^{AP}$.
Now for a discrete group, it is not hard to show that an infinite minimal equicontinuous system admits a nontrivial almost one to one (hence proximal) 
extension which is still minimal.
Thus $M(G)$ must be finite.
However, by a theorem of Ellis \cite{Ellis}, for discrete groups the group $G$ acts freely on $M(G)$,
so that $G$ must be finite as claimed. Probably similar arguments will show that 
a locally compact intrinsically HNS group is necessarily compact.

%

The Polish group $S_{\infty}$
is amenable (hence convexly intrinsically tame) but not intrinsically tame.

The group $H(C)$ is not convexly intrinsically tame.
In fact, its natural action on the Cantor set $C$ is minimal and strongly proximal, but this action is not tame; it contains as a subaction a copy of the full shift $(\Z,C)=(\sigma,\{0,1\}^\Z)$.
 The group $H([0,1]^{\N})$ is a universal Polish group (see Uspenskij \cite{UspUn}).  It also is not convexly intrinsically tame.
 This can be established by observing that the action of this group on the Hilbert cube is minimal, strongly proximal and not tame.
 The strong proximality of this action can be
 easily checked.
The action is not tame because it is a \emph{universal action} (see \cite{MeNZ}) for all Polish groups on compact metrizable spaces.

The (universal) minimal $G$-system $\T$ for $G=H_+(\T)$ is strongly proximal. Hence, $I\!A(G)$ in this case is easily computable
and it is exactly $P(\T)$ which, as a $G$-system, is tame (by Theorem \ref{thm4} and a remark above).
So, $H_+(\T)$ is a (convexly) intrinsically tame nonamenable topological group. 

Another example of a Polish group which is nonamenable yet
convexly intrinsically tame (that is, with tame
$I\!A(G)$) is any semisimple Lie group $G$ with finite center
and no compact factors.
Indeed, by Furstenberg's result \cite{Furst-63-Poisson} the universal minimal strongly proximal system
$M_{sp}(G)$ is the homogeneous space $X=G/P$, where $P$ is a minimal parabolic subgroup (see \cite{Gl-book1}).
Results of Ellis \cite{Ellis93} and Akin \cite{Ak-98} (Example \ref{e:tameNOThns}.1) show that the enveloping semigroup $E(G,X)$ in this case is a Rosenthal compactum, whence the  system $(G,X)$ is tame by the dynamical BFT dichotomy (Theorem \ref{D-BFT}).


\sk

\begin{question} \label{q1}
For which (Polish) groups $G$
the following universal constructions
lead to tame $G$-systems:
\bit
\item [(a)] The  Roelcke compactification $R(G)$ (of a Roelcke precompact group);

\item [(b)]  the universal minimal system $M(G)$;

\item [(c)]  the universal irreducible affine system $I\!A(G)$ ?
\eit
\end{question}

A 
related 
question 
is to compute the largest tame factor for 
these (and some additional) 
compactifications of $G$.


\section{Appendix}\label{app}

The proof of the following result was communicated to
us by Stevo Todor\u{c}evi\'{c}.

\begin{thm} \label{main}
	$\beta \N$ is not WRN.
\end{thm}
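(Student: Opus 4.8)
The plan is to prove the contrapositive of WRN‑representability at the level of separating families. By Theorem \ref{t:WRN}, together with Theorem \ref{t:AspDuality} and the equivalences in Theorem \ref{f:sub-fr}, the space $\beta\N$ is WRN if and only if there exists a bounded, point‑separating family $F\subset C(\beta\N)$ which is eventually fragmented, i.e.\ tame (contains no independent sequence). So it suffices to show that \emph{every} bounded family $F\subset C(\beta\N)$ separating the points of $\beta\N$ must contain an independent sequence. I would normalize so that $F\subset C(\beta\N)=\ell_\infty(\N)$ with $\|f\|\le 1$, and exploit that $\N$ is dense in $\beta\N$: by Lemma \ref{l_1}.3 a sequence in $F$ is independent over $\beta\N$ iff its restrictions are independent over $\N$, so all independence can be tested on $\N$.

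The first substantive step is to extract a splitting property from separation. Fix any infinite $S\subseteq\N$. Since at least two distinct ultrafilters $p\neq q$ contain $S$ and $F$ separates the points of $\beta\N$, some $f\in F$ satisfies $\lim_p f\neq \lim_q f$; as $S\in p\cap q$, both of these are cluster values of $f$ along $S$, so there are rationals $a<b$ with $\{n\in S: f(n)\le a\}$ and $\{n\in S: f(n)\ge b\}$ both infinite. Call this property $(\ast)$: every infinite subset of $\N$ is split, with a gap, by some member of $F$. A pigeonhole over the countably many rational windows $(a,b)$ then lets me, after passing to a subfamily, assume a single fixed gap $(a,b)$ is used throughout.

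Next I would build, by recursion on the binary tree $2^{<\omega}$, a system of infinite sets $\{S_\sigma\}_{\sigma\in 2^{<\omega}}$ with $S_\emptyset=\N$, together with functions $f_\sigma\in F$, so that $S_{\sigma 0},S_{\sigma 1}$ are disjoint infinite subsets of $S_\sigma$ with $f_\sigma\le a$ on $S_{\sigma 0}$ and $f_\sigma\ge b$ on $S_{\sigma 1}$; property $(\ast)$ and the fixed gap make each step possible. Each branch $\beta\in 2^\omega$ yields a decreasing tower $\{S_{\beta\rest n}\}_n$ with the finite intersection property, hence an ultrafilter $p_\beta\in\beta\N$ containing it, and by construction $f_{\beta\rest n}(p_\beta)\le a$ when $\beta(n)=0$ and $\ge b$ when $\beta(n)=1$. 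Thus the family $\{f_\sigma\}$ realizes, along the continuum‑many ultrafilters $p_\beta$, all the prescribed sign patterns — exactly the independence we seek; via Theorem \ref{f:sub-fr} this also exhibits a subfamily with no pointwise convergent subsequence, hence not eventually fragmented. Either way $F$ is not tame, and by the reduction $\beta\N$ is not WRN.

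The main obstacle is organizational rather than conceptual: an independent \emph{sequence} requires one function per level acting uniformly on all $2^k$ nodes of level $k$, whereas $(\ast)$ only furnishes one splitting function per individual set. Passing from the node‑indexed splitting tree to a genuine level‑indexed independent sequence — equivalently, producing a single sequence in $F$ that has no pointwise convergent subsequence \emph{robustly}, so that no infinite subfamily becomes fragmented — requires a Ramsey/amalgamation argument to align the splittings, and this delicate combinatorial core is where Todor\u{c}evi\'{c}'s argument in the appendix does the real work. Concretely, what must be secured is that, via Theorem \ref{t:countDetermined}, every infinite subfamily $K$ still retains a $(b-a)$‑separated set of size continuum among the $p_\beta$, so that $(\beta\N,\rho_K)$ fails to be separable for \emph{all} infinite $K$; this is the precise form of the independence that needs to be forced.
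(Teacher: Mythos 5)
Your reduction is sound: by Theorem \ref{t:WRN} (applied with the trivial acting monoid) together with Theorem \ref{f:sub-fr}, showing that $\beta \N$ is not WRN amounts to showing that \emph{every} norm-bounded point-separating family $F \subset C(\beta\N)$ contains an independent sequence, and your property $(\ast)$ (every infinite $S \subseteq \N$ is split, with some rational gap, by some member of $F$) is correctly derived from separation. But from there the proposal has a genuine gap, which you flag yourself and then mislocate. Your binary splitting tree produces one function \emph{per node}, whereas independence in the sense of Definition \ref{d:ind} requires one function \emph{per level} which splits all $2^k$ level-$k$ sets simultaneously in the prescribed directions, and moreover all levels must share a \emph{common} gap $(a,b)$. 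Neither defect is repaired by what you propose: the pigeonhole ``passing to a subfamily'' to fix the window is illegitimate (a subfamily of $F$ need not separate points, and $(\ast)$ gives no control over which window a given set requires), and the Ramsey/amalgamation step that would align the node-indexed splittings into a level-indexed sequence is never supplied. Nor is this alignment ``where Todor\u{c}evi\'{c}'s argument does the real work'': his proof never builds a splitting tree from the separation property and never faces this problem.

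The paper proceeds from the opposite end. Lemma \ref{beta} produces, once and for all, an \emph{already independent} family of pairs of disjoint \emph{closed} subsets of $\beta\N$, by pulling back the canonical independent family on a Cantor cube along the continuous surjection $\beta\N \to \{0,1\}^{\R}$ extending an enumeration of a countable dense subset of the cube. This completely bypasses your level-alignment problem: independence is inherited from the cube rather than assembled from one-set-at-a-time splittings. The remaining difficulty is then a different one: each closed pair $(F_i,G_i)$ can be separated only by \emph{finitely many} members of the family, each with its own rational gap (this uses compactness and the fact that a separating family generates the topology of $\beta\N$; in the paper this is the ``nature of the weak$^*$ topology''), and one must get down to a \emph{single} member per pair while preserving independence along an infinite subfamily. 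That extraction is exactly what Lemma \ref{product} achieves via the Galvin--Prikry theorem, after a pigeonhole making the number $k$ of separating functions constant; Rosenthal's Lemma \ref{ind-ell1} then converts the outcome into an $\ell_1$-sequence. So the concretely missing ideas in your proposal are Lemma \ref{beta} and the Galvin--Prikry extraction of Lemma \ref{product}; with them, your reduction can indeed be completed (running the paper's argument with $F$ in place of the unit ball $B_V$), but without them the tree you construct does not yield an independent sequence, and the closing appeal to Theorem \ref{t:countDetermined} is no substitute.
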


For the proof we will need several definitions and lemmas.

\begin{defin} \label{d:indPairs}
	A family of disjoint pairs of subsets $\{(F_i,G_i) : i \in I\}$ of a set $S$ is
	{\em independent} if for every two disjoint finite sets $K, L \subset I$
	$$
	\bigcap_{i \in K} F_i \  \cap \ \bigcap_{i \in L} G_i \not=\emptyset.
	$$
\end{defin}

\begin{defin}
	A sequence $\{F_n : n \in \N\}$ of subsets of a set $S$ is
	{\em convergent} if for every $s \in S$ there is
	$n_0$ such that, either $s \in F_n$ for every $n \ge n_0$, or
	$s \not\in F_n$ for every $n \ge n_0$.
\end{defin}

%
%

\begin{lem}\label{beta}
	There exists a family $\{(F_r,G_r) : r \in \R\}$ of pairs of disjoint closed sets of $\beta \N$ which is
	independent.
\end{lem}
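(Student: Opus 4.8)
I need to construct a family $\{(F_r, G_r) : r \in \R\}$ of pairs of disjoint closed subsets of $\beta\N$ that is independent in the sense of Definition \ref{d:indPairs}: for any two disjoint finite sets $K, L \subset \R$, the intersection $\bigcap_{r \in K} F_r \cap \bigcap_{r \in L} G_r$ is nonempty. Since closed subsets of $\beta\N$ are of the form $\overline{A}$ (the closure in $\beta\N$) for subsets $A \subseteq \N$, and $\overline{A} \cap \overline{B} = \overline{A \cap B}$ while $\overline{A} \neq \emptyset$ iff $A$ is infinite, the problem reduces to a purely combinatorial statement about $\N$.

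**The plan.** The natural approach is to realize the continuum-indexed independent family directly on $\N$. Here is the key idea: I want, for each real $r$, a subset $A_r \subseteq \N$ so that setting $F_r := \overline{A_r}$ and $G_r := \overline{\N \setminus A_r}$, the sets $F_r, G_r$ are disjoint closed sets (automatic, since $A_r$ and $\N \setminus A_r$ are disjoint, so their closures in $\beta\N$ meet only in $\overline{A_r \cap (\N\setminus A_r)} = \overline{\emptyset} = \emptyset$), and the independence condition holds. Translating: for disjoint finite $K, L \subset \R$, I need
$$
\overline{\Big(\bigcap_{r \in K} A_r\Big) \cap \Big(\bigcap_{r \in L} (\N \setminus A_r)\Big)} \neq \emptyset,
$$
which, by the remark above, is equivalent to
$$
\bigcap_{r \in K} A_r \ \cap \ \bigcap_{r \in L} (\N \setminus A_r) \quad \text{being infinite.}
$$
So I must produce a family $\{A_r\}_{r \in \R}$ of subsets of $\N$ that is \emph{independent} in the classical combinatorial sense, and moreover with all the relevant finite Boolean combinations \emph{infinite} rather than merely nonempty.

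**Producing the combinatorial family.** The standard tool is the classical Hausdorff/Fichtenholz--Kantorovich result that there exists an independent family of subsets of $\N$ of cardinality $2^{\aleph_0}$ whose finite Boolean combinations are all infinite. I would construct it concretely: index $\N$ by a countable set whose elements encode finite approximations, e.g.\ take as the ground set the countable collection $P$ of all pairs $(s, \mathcal{F})$ where $s$ is a finite set of rationals and $\mathcal{F}$ is a finite set of finite sets of rationals, and define, for each $r \in \R$, the set $A_r \subseteq P$ to consist of those $(s, \mathcal{F})$ with $\{q \in s : q < r\}$ belonging to $\mathcal{F}$ (or some similar separating recipe using a countable dense set to distinguish reals). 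Choosing the recipe so that for any disjoint finite $K, L$ one can, infinitely often, find pairs witnessing simultaneously $r \in K \Rightarrow (s,\mathcal F) \in A_r$ and $r \in L \Rightarrow (s,\mathcal F)\notin A_r$ gives infinitude of every Boolean combination. Since $P$ is countable, fix a bijection $P \cong \N$ and transport the family to $\N$.

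**The main obstacle.** The genuinely delicate point is guaranteeing that the Boolean combinations are \emph{infinite}, not just nonempty; the independence over a continuum of indices is handled cleanly by a rationals-separate-reals argument, but one must verify that the witnessing elements can be chosen in infinitely many ways for each fixed finite pattern $(K,L)$. I expect this is exactly where the explicit combinatorial recipe must be checked: given disjoint finite $K = \{r_1,\dots,r_k\}$ and $L = \{r'_1,\dots,r'_m\}$, I choose a finite set $s$ of rationals separating all these $k+m$ reals, and then infinitely many admissible $\mathcal{F}$ realize the prescribed membership pattern, yielding infinitely many ground-set elements in the combination. Passing back through $\overline{(\cdot)} \neq \emptyset \Leftrightarrow (\cdot)$ infinite then completes the proof that the family $\{(F_r, G_r)\}$ is independent in $\beta\N$.
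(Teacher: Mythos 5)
Your proof is correct, but it runs on the opposite side of the duality from the paper's. The paper works downstairs in the Cantor cube: it takes a countable dense sequence $\{\omega_n\}$ in the separable space $\Omega=\{0,1\}^{\R}$, extends $n\mapsto\omega_n$ to a continuous surjection $\rho:\beta\N\to\Omega$, and sets $F_r=\rho^{-1}\{z:z(r)=0\}$, $G_r=\rho^{-1}\{z:z(r)=1\}$; independence is then immediate from density, since each finite pattern defines a nonempty basic open set of $\Omega$ which must contain some $\omega_n$. You instead work upstairs in $\N$: you build a Fichtenholz--Kantorovich independent family $\{A_r\}_{r\in\R}$ of subsets of $\N$ and take $F_r=\overline{A_r}$, $G_r=\overline{\N\setminus A_r}$. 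The two arguments are exactly dual (setting $A_r=\{n:\omega_n(r)=0\}$ recovers your family from the paper's, and density of $\{\omega_n\}$ in $\Omega$ \emph{is} independence of $\{A_r\}$), so neither is more general; the paper's version buys brevity by outsourcing the combinatorics to the separability of $\{0,1\}^{\mathfrak c}$, while yours makes the witnessing elements explicit via the finite-rational-data encoding, which is the honest content either way.

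Two side remarks in your write-up are false as stated, though neither damages the argument. First, it is not true that every closed subset of $\beta\N$ has the form $\overline{A}$ for $A\subseteq\N$ (those are precisely the clopen sets; a single free ultrafilter is a closed counterexample); you only need that the sets you actually construct are closed and disjoint, which they are. Second, $\overline{A}\neq\emptyset$ iff $A\neq\emptyset$, not iff $A$ is infinite (what is true is that $\overline{A}$ meets $\beta\N\setminus\N$ iff $A$ is infinite); consequently nonemptiness of the finite Boolean combinations already suffices, and your extra care in securing their infinitude, while harmless and automatic for the standard construction, is not needed for the lemma.
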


\begin{proof}
	As is well known the Cantor cube $\Om=\{0,1\}^\R$ contains a dense countable sequence, say
	$\{\om_n\}_{n \in \N}$.
	Let $\rho : \beta \N \to \Om$ be the unique continuous extension to $\beta\N$ of the map $\N \to \Om$,
	$n \mapsto \om_n$. Then $\rho$ is a continuous surjection and for $r \in \R$ we set
	$$
	F_r = \{x \in \beta \N : \rho(x)(r) = 0\}, \qquad G_r = \{x \in \beta \N : \rho(x)(r) = 1\}.
	$$
\end{proof}

The next lemma is a crucial tool. Its proof is very similar to the proof of
a theorem of Rosenthal \cite{Ro}
(see also \cite{Far} and \cite[page 100]{LT}).

\begin{lem}\label{product}
	Let $\{(A_i,B_i) : i \in \om\}$ be an independent family
	of disjoint pairs of subsets of a set $S$.
	Suppose there is a positive integer $k \ge 1$,
	and $k$ families of disjoint pairs $\{(A_{ij}, B_{ij}) : i \in \om\},\ 1 \le j \le k$,
	such that:
	$$
	A_i \times B_i \ \subset \ \bigcup_{j=1}^k A_{ij} \times B_{ij}.
	$$
	Then there is an infinite $M \subset \om$ and $j_0 \in \{1,2,\dots,k\}$ such that
	the family
	$$
	\{(A_{ij_0},B_{ij_0}) : i \in M\}
	$$
	is an independent family.
\end{lem}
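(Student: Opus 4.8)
The statement is a combinatorial Ramsey-type dichotomy: given an independent family $\{(A_i,B_i):i\in\om\}$ whose rectangles $A_i\times B_i$ are covered by finitely many ``coordinatewise'' rectangle families, one of the $k$ subfamilies must already be independent on an infinite index set. Since the hypothesis mentions that the proof mirrors Rosenthal's $\ell_1$-theorem, the plan is to run the classical Rosenthal/Nash-Williams combinatorial extraction: we build a decreasing tree of infinite subsets of $\om$ and use a counting/pigeonhole argument to trap one color class $j_0$ into remaining independent along an infinite branch.

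First I would set up the obstruction. Independence of $\{(A_i,B_i):i\in\om\}$ says that for \emph{every} pair of disjoint finite $K,L\subset\om$ we may pick a witness point $x_{K,L}\in \bigcap_{i\in K}A_i\cap\bigcap_{i\in L}B_i$. The key observation is that for each such witness and each ordered pair $(i,i')$ with $i\in K$, $i'\in L$, the point $(x_{K,L},x_{K',L'})$ lies in $A_i\times B_{i'}$-type rectangles; more directly, picking $s\in A_i$ and $t\in B_i$ from a witness forces, via the hypothesis $A_i\times B_i\subset\bigcup_{j=1}^k A_{ij}\times B_{ij}$, that the pair $(s,t)$ falls into one of $k$ ``colors.'' The plan is to color, for each $i$, and each relevant finite configuration, the index by the $j\in\{1,\dots,k\}$ that captures the witness rectangle, and then to thin out $\om$ so the color stabilizes.

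The main step is the extraction. I would argue by contradiction: suppose for \emph{every} infinite $M\subset\om$ and every $j\in\{1,\dots,k\}$ the family $\{(A_{ij},B_{ij}):i\in M\}$ fails to be independent. Failure of independence at level $j$ on $M$ means there exist disjoint finite $K_j,L_j\subset M$ with $\bigcap_{i\in K_j}A_{ij}\cap\bigcap_{i\in L_j}B_{ij}=\emptyset$. The heart of the argument is to combine these $k$ simultaneous failures with the covering hypothesis to contradict the independence of the \emph{original} family on a sufficiently large finite set: one chooses disjoint finite $K,L$ in the original index set and a witness $x\in\bigcap_{i\in K}A_i\cap\bigcap_{i\in L}B_i$, then examines the pair $(x,x)$ or a pair of witnesses and pushes it through $A_i\times B_i\subset\bigcup_j A_{ij}\times B_{ij}$; a pigeonhole on the finitely many colors $j$, applied to an index set large enough to exhaust the finitely many ``bad'' configurations $(K_j,L_j)$ for each color, yields a single color $j_0$ and an infinite $M$ on which no obstruction survives. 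Here I would invoke a Ramsey/Nash-Williams selection (iteratively refining a nested sequence $\om\supset M_1\supset M_2\supset\cdots$ of infinite sets, one per color-stabilization requirement, and passing to a diagonal subset) to produce the infinite $M$ and $j_0$ with $\{(A_{ij_0},B_{ij_0}):i\in M\}$ independent.

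The hard part will be bookkeeping the interaction between the $k$ simultaneous non-independence witnesses and the covering inclusion: one must verify that a single witness point for the original family, together with the rectangle cover, cannot be consistently absorbed by all $k$ colors failing at once, so that some color is forced to remain independent along an infinite branch. This is exactly the mechanism in Rosenthal's lemma, where finitely many ``small'' pieces cannot cover an independent family; the technical care lies in choosing the finite sets $K,L$ large enough (relative to the finitely many obstructions) and in the diagonalization that produces the infinite $M$. Once the color $j_0$ is pinned down on an infinite $M$, independence of $\{(A_{ij_0},B_{ij_0}):i\in M\}$ is immediate from the construction, completing the proof.
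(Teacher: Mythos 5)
Your proposal is a plan rather than a proof, and the step you yourself flag as ``the hard part'' is precisely where the essential idea is missing. You set up a contradiction hypothesis (every colour $j$ fails to be independent on every infinite $M$) and then assert that a pigeonhole ``applied to an index set large enough to exhaust the finitely many bad configurations for each colour'' will produce $j_0$ and $M$. But the obstructions are not finitely many: failure of independence of $\{(A_{ij},B_{ij}): i\in M\}$ produces one finite bad pair $(K_j,L_j)$ \emph{for each infinite $M$}, and these vary over the uncountably many infinite subsets of $\om$. No finite pigeonhole or nested-sequence diagonalization of the kind you sketch can exhaust them; this is exactly the point at which an infinitary Ramsey theorem is unavoidable. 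The paper's proof applies the Galvin--Prikry theorem to the $k$ \emph{closed} subsets $\mathcal{X}_j\subset[\om]^\om$ consisting of those $M=\{m_1<m_2<\cdots\}$ for which every alternating finite pattern $\bigcap_{l\le n}A_{m_{2l},j}\cap\bigcap_{l\le n}B_{m_{2l+1},j}$ is nonempty. This encoding by a fixed alternating pattern is what makes the sets closed (hence Galvin--Prikry applies) and is absent from your sketch.

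Two further ideas are then needed, neither of which appears in your proposal. First, in the case where some $[M]^\om\subset\mathcal{X}_j$, membership in $\mathcal{X}_j$ only certifies the alternating pattern, not full independence; the paper upgrades this by a rearrangement trick, embedding an arbitrary disjoint pair $K,L$ into the even/odd positions of a suitably interleaved infinite subset of $M$, which still lies in $\mathcal{X}_j$. Second, and more importantly, in the case where some $M$ avoids all $\mathcal{X}_j$, one must derive a contradiction with the independence of the original family $\{(A_i,B_i)\}$; the paper does this by showing that avoidance of $\mathcal{X}_j$ forces the sequence $\{A_{ij}: i\in M\}$ to converge pointwise, and convergent covers cannot absorb an independent family. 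Your sentence ``one must verify that a single witness point \dots cannot be consistently absorbed by all $k$ colours failing at once'' names this obligation but supplies no mechanism for discharging it. Until the Galvin--Prikry dichotomy (or an equivalent infinite Ramsey argument on a correctly chosen definable partition of $[\om]^\om$) and the convergence argument are actually carried out, the proof is not complete.
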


\begin{proof}
	We let $[\om]^\om$ denote the collection of infinite subsets of $\om$. More generally,
	if $M \in [\om]^\om$ then $[M]^\om$ denotes the collection of infinite subsets of $M$.
	The space $[\om]^\om$ carries a natural topology when we identify it with the subset of
	the Cantor space $\{0,1\}^\om$ consisting of sequences with infinitely many $1$'s.

	For $1 \le j \le k$ let
	\begin{align*}
	\mathcal{X}_j =
	\{ M =\{m_1 <   m_2  & < \cdots\}   \in [\om]^\om : \  \forall n < \om, \ \\
	& \bigcap_{l=1}^n A_{m_{2l},j} \ \cap \ \bigcap_{l=1}^n B_{m_{2l+1},j} \not=\emptyset \}
	\end{align*}
	Each $\mathcal{X}_j$ is a closed subset of $[\om]^\om$ and the Galvin-Prikry theorem
	\cite{GP} implies that
	either:
	
	(i) there is some $1 \le j \le k$ and $M \in [\om]^\om$ such that $[M]^\om \subset \mathcal{X}_j$, or
	
	(ii) there is an $M \in [\om]^\om$ such that for every $1 \le j \le k$,
	$[M]^\om \cap \mathcal{X}_j =\emptyset$.
	
	In the first case, where $[M]^\om \subset \mathcal{X}_j$,
	let $M = \{m_n : n < \om\}$ and set $N = \{m_{2n} : n < \om\}$.
	Then $\{(A_{n,j}, B_{n,j})  : n \in N\}$ is independent. In fact, given
	a positive integer $u \ge 1$ and two disjoint finite sets
	$K, L  \subset N$, such that $K \cup L = \{m_2, m_4, \dots, m_{2u}\}$,
	construct a sequence $N_1 =\{n_h\}_{h  \in \N} \subset N$ which contains the integers
	$\{m_2, m_4, \dots, m_{2u}\}$, scattered among $\{n_1, n_2, \dots , n_{2u}\}$ in such a way that
	for $m_{2p} \in K, m_{2p}  = n_{2h}$ for some $1 \le h \le u$,
	and for $m_{2p} \in L, m_{2p}  = n_{2h+1}$ for some $1 \le h \le u$.
	Since $N_1 \in \mathcal{X}_j$ we now have
	$$
	\bigcap_{m_{2p} \in K} A_{m_{2p},j} \ \cap \  \bigcap_{m_{2p} \in L} B_{m_{2p},j}
	\ \supset \
	\bigcap_{h =1}^u A_{n_{2h},j} \ \cap \  \bigcap_{h =1}^u B_{n_{2h+1},j} \ne \emptyset.
	$$

	%
	
	Our proof will be complete when we show next that in our situation the case (ii)
	can not occur.
	In fact, we show that if $[M]^\om \cap \mathcal{X}_j =\emptyset$, then the sequence
	$\{A_{ij} : i \in M\}$ converges. For otherwise we can find a point $s \in S$ and an infinite subsequence
	$N = \{n_1 < n_2 < \cdots \}\subset M$ such that $s \in A_{n_{2l},j}$ and $s \not\in A_{n_{2l+1},j}$
	for every $l \ge 1$. But then, $N \in [M]^\om \cap \mathcal{X}_j$, contradicting our assumption.
	
	Thus under the assumption that (ii) holds, for every $1 \le j \le k$, the sequence $\{A_{ij} : i \in M\}$
	converges. This however clearly contradicts the independence of the family  $\{(A_i,B_i) : i \in \om\}$,
	and our proof is complete.
\end{proof}

\begin{lem} \label{ind-ell1}
	\emph{(Rosenthal \cite[Proposition 4]{Ro})}
	Let $S$ be a set and $\{f_n\}_{n \in \N} \subset \R^S$ a uniformly bounded sequence
	of functions on $S$. Suppose there are real numbers
	$p < q$ such that the pairs of sets
	$$
	F_n = \{s \in S : f_n(s) \le p\}\ \ {\text{and}}\ \ G_n = \{s \in S : f_n(s) \ge q\}
	$$
	form an independent family. Then the sequence $\{f_n\}_{n \in \N}$ is an $\ell_1$-sequence
	in the Banach space $\ell_\infty(S)$.
\end{lem}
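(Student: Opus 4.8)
The plan is to verify directly the defining inequality of an $l_1$-sequence from Definition \ref{d:l1}, exhibiting the explicit constant $a = (q-p)/2 > 0$. Fix $n \in \N$ and scalars $c_1, \dots, c_n \in \R$, and split the index set $\{1, \dots, n\}$ by sign: let $P = \{i : c_i \ge 0\}$ and $M = \{i : c_i < 0\}$. The whole point is that the sup-norm $\|\sum_{i=1}^n c_i f_i\|$ in $\ell_\infty(S)$ can be bounded from below by evaluating $\sum c_i f_i$ at two cleverly chosen points of $S$, and independence of the pairs $(F_n,G_n)$ is exactly what guarantees that such points exist.

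First I would apply independence to the disjoint finite sets $(M,P)$: there is a point $s \in \bigcap_{i \in M} F_i \cap \bigcap_{i \in P} G_i$, so that $f_i(s) \le p$ for $i \in M$ and $f_i(s) \ge q$ for $i \in P$. Writing $A = \sum_{i \in P} |c_i|$ and $B = \sum_{i \in M} |c_i|$, and using that $c_i f_i(s) \ge c_i q$ for $i \in P$ (since $c_i \ge 0$) while $c_i f_i(s) \ge c_i p$ for $i \in M$ (since $c_i < 0$ reverses the inequality), this yields $\sum_{i=1}^n c_i f_i(s) \ge qA - pB$. Symmetrically, applying independence to the disjoint pair $(P,M)$ produces a point $s'$ with $f_i(s') \le p$ for $i \in P$ and $f_i(s') \ge q$ for $i \in M$, whence $\sum_{i=1}^n c_i f_i(s') \le pA - qB$.

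Finally I would combine the two estimates. Since the $\ell_\infty(S)$-norm dominates the absolute value at every point of $S$,
$$2\left\|\sum_{i=1}^n c_i f_i\right\| \ge \sum_{i=1}^n c_i f_i(s) - \sum_{i=1}^n c_i f_i(s') \ge (qA - pB) - (pA - qB) = (q-p)(A+B) = (q-p)\sum_{i=1}^n |c_i|,$$
using $A + B = \sum_{i=1}^n |c_i|$ (as $P$ and $M$ partition $\{1,\dots,n\}$). Hence $\|\sum_{i=1}^n c_i f_i\| \ge \frac{q-p}{2}\sum_{i=1}^n |c_i|$, which is precisely the required $l_1$-estimate.

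The computation is short; the only real subtlety — and the step I would watch most carefully — is the bookkeeping of signs when deciding whether to pair an index with $F_i$ or with $G_i$. The guiding principle is that to make $\sum c_i f_i$ large one pushes $f_i$ up (into $G_i$, where $f_i \ge q$) on exactly the indices with $c_i \ge 0$ and pushes $f_i$ down (into $F_i$, where $f_i \le p$) on the indices with $c_i < 0$; the second point $s'$ merely reverses these roles to furnish a large negative value, and independence delivers both points because in each case the two index sets are disjoint. This is the mechanism by which independence of $\{(F_n,G_n)\}$ forces the closed linear span of $\{f_n\}$ in $\ell_\infty(S)$ to be isomorphic to $l_1$.
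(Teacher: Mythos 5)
Your proof is correct: the sign-split into $P=\{c_i\ge 0\}$ and $M=\{c_i<0\}$, the two evaluation points supplied by independence, and the resulting lower bound with constant $a=(q-p)/2$ are exactly the classical argument of Rosenthal, and every inequality (including the sign reversals for negative $c_i$) checks out against Definition \ref{d:l1}. The paper itself gives no proof of this lemma, citing \cite[Proposition 4]{Ro} instead, and your argument is precisely the one found there.
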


\sk

\begin{proof}[Proof of Theorem \ref{main}]
	Suppose to the contrary that $V$ is a Rosenthal Banach space and that $B_{V^*}$, equipped with its
	weak$^*$ topology, contains a copy $\Phi$ of $\beta \N$.
	By Lemma \ref{beta} there exists a family $\{(F_i,G_i) : i \in \R\}$ of pairs of disjoint closed subsets
	of $\Phi$ which is independent. By the nature of the weak$^*$ topology,
	for each $i$ there exist a finite set $\{v_{ij}: 1 \le j \le k_i\} \subset B_V$, the unit ball of $V$,
	and a finite set of pairs $\{q_{ij} < q'_{ij} : 1 \le j \le k_i\} \subset \Q$,
	such that the sets
	$$
	Q_i = \bigcap_j \{\phi \in V^* : \phi(v_{ij}) \le q_{ij} \}, \qquad
	Q'_i = \bigcap_j \{\phi \in V^* : \phi(v_{ij}) \ge q'_{ij} \},
	$$
	separate the pair $(F_i,G_i)$; i.e. $F_i \subset Q_i$ and $G_i \subset Q'_i$.
	Now, as $|\R| = \bf{c}$ and there are only countably many possible choices 
	for the values $k_i$
	we conclude,
	by the pigeon holes principle, that
	there exists a finite positive integer $k \ge 1$ and
	an uncountable subset $D \subset \R$ with $k_i =k$ for every $i \in D$.
	Next we chose an arbitrary infinite countable subset $C \subset D$ and we now have
	a countable subfamily $\{(F_i, G_i) : i \in C\}$ (for a countable subset $C \subset \R$)
	such that $k_i = k$ for every $i \in C$.
	Clearly the family $\{(Q_i \cap \Phi, Q'_i \cap \Phi) : i \in C\}$ is a {\em countable} independent family.

	Applying Lemma \ref{product} to the sets
	\begin{gather*}
	A_i = Q_i \cap \Phi, \qquad B_i = Q'_i \cap \Phi,   \\
	A_{ij} = \{\phi \in \Phi : \phi(v_{ij}) \le q_{ij} \}, \qquad
	B_{i,j} = \{\phi \in \Phi : \phi(v_{ij}) \ge q'_{ij} \},
	\end{gather*}
	with $i \in C$ and $1 \le j \le k$,
	we conclude that for some infinite $M \subset C$ and some $j_0 \in \{1,2,\dots,k\}$
	the family
	$$
	\{(A_{i j_0},B_{i j_0}) : i \in M\}
	$$
	is an independent family.
	
	Applying Lemma \ref{ind-ell1} to the sequence of functions $f_i = v_{i j_0} \rest \Phi : \Phi \to \R$,
	we conclude that this sequence is an $\ell_1$-sequence in the Banach space
	$C(\Phi)$. However, as the restriction map $v \mapsto f_v := v \rest \Phi$, $V \to C(\Phi)$ satisfies
	$$
	\|v\| = \sup_{\phi \in B_{V^*}}  |\phi(v)| \ge \|f_v\| = \sup_{\phi \in \Phi}  |\phi(v)|,
	$$
	it follows that
	$\{v_{i j_0} : i \in M\}$ is also an $\ell_1$-sequence in $V$.
	This contradicts our assumption that $V$ is a Rosenthal space and the proof is complete.
\end{proof}

\begin{remark}
	The proof of Theorem \ref{main} actually shows that the cube $Q(\om_1) =[0,1]^{\om_1}$, as well as any compactum $K$ which maps continuously onto $Q(\om_1)$, is not a WRN compactum.
\end{remark}

\bibliographystyle{amsplain}

\end{document}